\renewcommand{\T}{\mathbb{T}}
\renewcommand{\meanval}[1]{\mathbb{E}\left[#1\right]}
\renewcommand{\mod}{{\,{\rm mod}\,}}
\newcommand{\Tr}[1]{\mathrm{Tr}\left(#1\right)}
\newcommand{\Volt}{\textrm{Volt}}
\newcommand{\dAL}{\textrm{dAL}}
\newcommand{\dSc}{\textrm{dS}}
\newtheorem{lemma}{Lemma}[section]
\newtheorem{assumption}{Hypotheses}[section]
\newtheorem{theorem}[lemma]{Theorem}
\newtheorem{proposition}[lemma]{Proposition}
\newtheorem{corollary}[lemma]{Corollary}
\newtheorem{remark}[lemma]{Remark}
\newtheorem{definition}[lemma]{Definition}
\newcommand{\diag}[1]{\textbf{diag}\left( #1\right)}
\numberwithin{equation}{section}
\author{G. Mazzuca\footnote{Department of Mathematics, Tulane University, New Orleans, Louisiana, USA. \newline
\textit{email:} gmazzuca@tulane.edu}, R. Memin\footnote{IMT, 
Toulouse, France. \newline
\textit{email:} ronan.memin@math.univ-toulouse.fr}}
\title{CLT for $\beta$-ensembles at high-temperature, and for integrable systems: a transfer operator approach.}
\begin{document}
\maketitle

\begin{abstract}
In this paper, we prove a polynomial Central Limit Theorem for several integrable models, and for the $\beta$-ensembles at high-temperature with polynomial potential. Furthermore, we connect the mean values, the variances and the correlations of the moments of the Lax matrices of these integrable systems with the ones of the $\beta$-ensembles. Moreover, we show that the local functions' space-correlations decay exponentially fast for the considered integrable systems. For these models, we also established a Berry--Esseen type bound.
\end{abstract}

\tableofcontents

\section{Introduction}
In this paper, we establish a Central Limit Theorem for eigenvalue fluctuations of the Lax matrices of several integrable models, and for the classical $\beta$-ensembles of random matrix theory in the so-called high-temperature regime. Motivated by the recent observation that both topics are closely related, we connect, for the considered integrable systems, the limiting variance in the CLT to the one of the associated $\beta$-ensemble in the high-temperature regime, by establishing a relation between their free energies. Furthermore, we establish exponential decay of space correlation for the considered integrable models and a Berry--Esseen type bound.

We start by describing the case of the Toda chain, which will serve both as a motivation for the results and as a way to introduce the key objects of the present work.
\subsection{The Toda chain: an introductory case}
\label{subsection Toda}

The classical Toda chain (or Toda lattice) \cite{Toda1970} is the  dynamical system described by the following Hamiltonian: 
	\begin{equation}
	H_{T}(\bp,\bq):=\frac{1}{2} \sum_{j=1}^{N}{p_j^2} + \sum_{j=1}^{N}V_T(q_{j+1}-q_{j} )\ , \quad  V_T(x) =  e^{- x} +  x - 1\, ,
	\label{toda}
	\end{equation}
	
	with periodic boundary conditions  $ \, q_{j+N} = q_{j} + \Omega \quad \forall \, j \in \mathbb{Z}$, where $\Omega > 0$ is fixed. Its equations of motion take the form
	\begin{equation}
	\label{Todaeq}
	\dot{q}_j=\dfrac{\partial H_T}{\partial p_j}=p_j,\quad \dot{p}_j=-\dfrac{\partial H_T}{\partial q_j}=V_T'(q_{j+1}-q_{j}) -V_T'(q_{j}-q_{j-1}), \;\;j=1,\dots,N\, .
	\end{equation}
	
	It is well known that the Toda chain is an integrable system \cite{Toda1970,Henon1974}. One way to prove it is to put the Toda equations  in Lax pair form. This was obtained by Flaschka \cite{Flaschka1974}, and Manakov \cite{Manakov1975} through the following {\em non-canonical} change of coordinates:
	
	\begin{equation}
	a_j := -p_j \, , \qquad b_j:=  e^{\frac{1}{2}(q_j-q_{j+1})} \equiv e^{-\frac{1}{2}r_j}, \qquad 1 \leq j \leq N\,,
	\label{bavariable}
	\end{equation}
	where $r_j=q_{j+1}-q_j$ is the relative distance between particles $j$ and $j+1$.

	Defining the Lax operator $L$ as the periodic Jacobi matrix
	\cite{VanMoerbeke1976}
	\begin{equation} \label{eq:L_toda}
	L := \left( \begin{array}{ccccc}
	a_{1} & b_{1} & 0 & \ldots &  b_{N} \\
	b_{1} & a_{2} & b_{2} & \ddots & \vdots \\
	0 & b_{2} & a_{3} & \ddots & 0 \\
	\vdots & \ddots & \ddots & \ddots & b_{N-1} \\
	b_{N} & \ldots & 0 & b_{N-1} & a_{N} \\
	\end{array} \right) \, ,
	\end{equation}
	and the antisymmetric matrix $B$ 
	\begin{equation}
	B :=\frac{1}{2} \left( \begin{array}{ccccc}
	0 & b_{1} & 0 & \ldots &  -b_{N} \\
	-b_{1} & 0 & b_{2} & \ddots & \vdots \\
	0 & -b_{2} & 0 & \ddots & 0 \\
	\vdots & \ddots & \ddots & \ddots & b_{N-1} \\
	b_{N} & \ldots & 0 & -b_{N-1} & 0 \\
	\end{array} \right) \, ,
	\end{equation}
	a straightforward calculation shows that the equations of motions \eqref{Todaeq} are equivalent to
	\begin{equation}
        \label{laxToda}
	\dot{L}= \left[B;L\right]\, ,
	\end{equation}
	where $[B;L] = BL-LB$ is the commutator of two matrices.
	This form implies that the eigenvalues of $L$ are conserved, and therefore $\Tr{L^k}$, $k=1,\ldots,N$ are constants of motions for the Toda lattice, so the system is integrable \cite{Babelon}. We call these quantities \textit{conserved fields}. Also, note that the sum of the streches $\sum_{j=1}^N r_j = \Omega$
    is conserved along the dynamic.

    Let $\alpha>0$ and $V:\R \to \R$ be a Lebesgue measurable function such that for some $C\in \R$, $\lim_{x\to \pm \infty} \frac{V(x)}{x^2} > 0$ and $V(x)>C$ for all $x\in \R$.
    We introduce, after \cite{Spohn1}, the probability measure on $\R^{N}\times\R^N_+$
	
	\begin{equation}\label{eq:Gibbs_todaIntro}
	\di \mu_{N,\text{Toda}}^{\alpha,V}(\ba,\bb) :=    \frac{1}{Z_{N}^\text{Toda}(\alpha,V)} \  \prod_{j=1}^N b_j^{2\alpha -1}\mathbbm{1}_{b_j>0}e^{-\Tr{V(L)} } \di \ba \,  \di \bb \,,
	\end{equation}
    where $L$ is a periodic tridiagonal matrix of the form \eqref{eq:L_toda} and
    $$ \Tr{V(L)}=\sum_{j=1}^N V(\lambda_j(L))\,,$$
    denoting by $\lambda_j(L)$, $1\leq j \leq N$ the eigenvalues of $L$.
    
	The measure $\mu_{N,\text{Toda}}^{\alpha,V}$ is called \textit{Generalized Gibbs Ensemble} (GGE) of the Toda chain with pressure parameter $\alpha >0$ and potential $V$. Because both quantities $\Tr{V(L)}$ and
    $$ \prod_{j=1}^N b_j = \exp\left\{-\frac{1}{2}\sum_{j=1}^N r_j\right\}$$
    are constants of motion, $\mu_{N,T}^{\alpha,V}$ is invariant under the Toda flow, this is a consequence of a more general result, see \cite{Guionnet2002,mazzuca_int_sys}.
    
 Note that the conditions on $V(x)$ are enough to ensure that the normalizing constant $Z^\text{Toda}_N(\alpha,V)$ is finite. It is called the \textit{partition function} of the system, and it reads
 $$ Z^\text{Toda}_N(\alpha,V) = \int_{\R^N\times \R_+^N}\prod_{j=1}^N b_j^{2\alpha -1}\mathbbm{1}_{b_j>0}e^{-\Tr{V(L)} } \di \ba \,  \di \bb \,.$$

In the framework of generalized hydrodynamics, it is expected that for generic initial data, the long-time distribution of the Toda chain converges towards a GGE whose pressure $\alpha$ and potential $V$ are prescribed by the initial condition. Formally, it is expected that for sufficiently regular observables $f:\R^N\times \R_+^N \to \R$ we have as $t\to +\infty$
$$ \frac{1}{t}\int_0^t f(\ba(s)),\bb(s)) \di s \to \E_N^{\alpha,V}[f(\ba,\bb)]\,,$$
where the expectation is taken under $\mu^{\alpha,V}_{N,T}$, and where $\alpha$, $V$ depend on $(\ba(0),\bb(0))$. From there, understanding the GGE with arbitrary potential and pressure parameter is key to understanding the Toda lattice's long-time behaviour. 

Under $\mu^{\alpha,V}_{N,\text{Toda}}$, $L$ is a random matrix whose law is invariant under the Toda flow. It turns out, as was observed by Spohn in \cite{Spohn1}, that its spectral properties can be linked with the ones of the real $\beta$-ensemble in the high-temperature regime.

Fix $N\in \N$, $\beta>0$ and $V:\R\to \R$ Lebesgue measurable such that for some $\delta>0$, $c\in \R$, we have for all $x\in \R$
$$ V(x) \geq (N\beta + 1 + \delta) \log|x| + c\,.$$
The real $\beta$-ensemble of size $N$ with potential $V$ is the probability measure on $\R^N$ given by
\begin{equation}
    \label{def:real_beta_ens}
    d\mathbf{P}^{\beta,V}_N(\lambda_1,\ldots, \lambda_N)=\frac{1}{\mathbf{Z}^{\beta,V}_N}\prod_{1\leq i < j \leq N}|\lambda_i-\lambda_j|^\beta e^{-\sum_{j=1}^N V(\lambda_j)}\di \boldsymbol{\lambda}\,.
\end{equation}
When $\beta \in \{1,2,4\}$ and $V$ is quadratic, this measure is the joint law of the (unordered) spectrum of respectively the GOE, GUE and GSE (see \cite[Section 2.5]{AGZ} for example), giving matrix representation for the $\beta$-ensemble in those specific cases. In \cite{dued}, Dumitriu and Edelman gave a tridiagonal matrix model for the $\beta$-ensemble of size $N$ with quadratic potential for \textbf{any choice of} $\boldsymbol{\beta>0}$: 

Let $T$ be the following random tridiagonal matrix with independent coefficients (up to the symmetry): 

\begin{itemize}
    \item The diagonal entries $T_{i,i}$ are standard Gaussians
    \item For $1\leq i \leq N-1$, the off-diagonal entry $T_{i,i+1}=T_{i+1,i}$ is distributed as $\frac{1}{\sqrt{2}}\chi_{\beta(N-i)}$\,,
\end{itemize}
Where for $a>0$, the $\chi_a$ law is given by the density 
$$f_a(x)=\frac{2^{1-a/2}}{\Gamma(a/2)}x^{a-1}e^{-x^2/2}\,.$$
Then, \cite[Theorem 2.12]{dued} asserts that the distribution of the eigenvalues of $T$ is given by the $\beta$-ensemble of size $N$ with potential $V(x)=x^2/2$.

With this last point point in mind, let 
\begin{equation} 
\label{tridiagBeta}
	T := \left( \begin{array}{ccccc}
	a_{1} & b_{1} & 0 & \ldots &  0 \\
	b_{1} & a_{2} & b_{2} & \ddots & \vdots \\
	0 & b_{2} & a_{3} & \ddots & 0 \\
	\vdots & \ddots & \ddots & \ddots & b_{N-1} \\
	0 & \ldots & 0 & b_{N-1} & a_{N} \\
	\end{array} \right) \, 
\end{equation}
be the (nonperiodic) tridiagonal matrix whose entries are distributed according to the measure 
\begin{equation}
    \di \mu^{\beta,V}_{N,\text{beta}} = \frac{1}{Z^\text{beta}_N(\beta,V)}\prod_{j=1}^{N-1} b_j^{\beta(N-j)-1}\mathbbm{1}_{b_j > 0} e^{ - \Tr{V(T)}}\di\ba\di\bb\,,
\end{equation}
with $\ba=(a_1,\ldots,a_N)$ and $\bb=(b_1,\ldots,b_{N-1})$. Then, the eigenvalues of $T$ are distributed according to the $\beta$-ensemble \eqref{def:real_beta_ens} with potential $V$, the case $V=\frac{x^2}{2}$ being the case where $T$ has independent entries described above.

The parameter $\beta$ is allowed to be $N$-dependent and can be interpreted as an inverse temperature. In this paper, we call the \textbf{high-temperature regime} the framework where $\beta=\frac{2\alpha}{N}$ for some fixed $\alpha>0$. We then define the high temperature version of $\mu^{\beta,V}_{N,\text{beta}}$ as 
\begin{equation}
    \label{def:HTbeta}
    \di\mu^{\alpha,V}_{N,\text{HT}} := \di\mu^{2\alpha/N,V}_{N,\text{beta}} = \frac{1}{Z^\text{HT}_N(\alpha,V)}\prod_{j=1}^{N-1} b_j^{2\alpha\frac{N-j}{N}-1}\mathbbm{1}_{b_j> 0} e^{-\Tr{V(T)}}\di\ba\di\bb\,.
\end{equation}

We can now sketch the link between the Toda chain distributed according to a GGE and the high temperature real $\beta$-ensemble. Consider the GGE \eqref{eq:Gibbs_todaIntro} with quadratic potential $V(x)=x^2/2$. Then, under $\mu^{\alpha, x^2/2}_{N,\text{Toda}}$, the entries of $L$ become independent (up to the symmetry) and their law is given by
\begin{itemize}
    \item the diagonal entries $L_{i,i}$ are standard Gaussian
    \item the off diagonal entries $L_{i,i+1}=L_{i+1,i}$, $1\leq i \leq N$ (where the indices are taken modulo $N$) are distributed as $\frac{1}{\sqrt{2}}\chi_{2\alpha}$.
\end{itemize}
With the choice $\beta=2\alpha/N$ in \eqref{def:real_beta_ens}, the entries near the top left corner of Dumitriu-Edelman representation (\textit{i.e.} the matrix $T$ distributed according to $\mu^{\alpha,x^2/2}_{N,\text{HT}}$) resemble the ones of the top left corner of $L$ when $(\ba,\bb)$ is distributed according to $\mu^{\alpha,x^2/2}_{N,\text{Toda}}$. One can then hope to pull this link between both models for more general choices of $V$. As already mentioned, this fundamental remark was made by Spohn in \cite{Spohn1} where the author was able to derive the limit of the empirical measure of eigenvalues of the Toda Lax matrix $L$ under the GGE in terms of the limiting measure of the $\beta$-ensemble in the high-temperature regime, both with polynomial potential $V$. Namely, denoting the empirical measure of eigenvalues of any $N\times N$ complex matrix $A$
\begin{equation}
    \label{emp.measure}
    \nu_N(A) = \frac{1}{N}\sum_{j=1}^N \delta_{\lambda_j(A)},
\end{equation}
Spohn argued that with $(\ba,\bb)$ distributed according to $\mu^{\alpha,V}_{N,\text{Toda}}$, the empirical measure $\nu_N(L)$ converges almost surely as $N\to \infty$ towards some deterministic measure $\nu^V_\alpha$, which is given in terms of the limiting empirical measure $\mu^V_\alpha$ of the high temperature $\beta$-ensemble, $\beta=\frac{2\alpha}{N}$, by
\begin{equation}
    \label{eq:link emp.measures Toda}
    \nu^V_\alpha = \partial_\alpha\left( \alpha \mu^V_\alpha \right)\,,
\end{equation}
where the last equality is understood in the weak sense, i.e. tested against bounded continuous functions. This was proved for the quadratic potential in \cite{mazzuca2021mean} and generalized to potentials with polynomial growth in \cite{GMToda}, where the authors established large deviations for the empirical measure of the Toda Lax matrix and related the associated rate function with the one of the $\beta$-ensemble at high temperature.

In the present paper, as an application of Theorem \ref{THM:FINAL_CLT_INTRO}, we push the comparison between both models further by linking the \textit{fluctuations} of the empirical measure of the Toda chain to the ones of the high temperature $\beta$-ensemble. 

Namely, we show that taking a \textbf{polynomial potential} $V$, considering
\begin{itemize}
    \item[\textbf{case 1:}] the periodic tridiagonal matrix $L$ distributed according to $\mu_{N,\text{Toda}}^{\alpha,V}$, eq. \eqref{eq:Gibbs_todaIntro}
    \item[\textbf{case 2:}] the tridiagonal matrix $T$ distributed according to $\mu^{\alpha,V}_{N,\text{HT}}$, eq. \eqref{def:HTbeta},
\end{itemize}
we have for some $\sigma^2_\text{Toda}(\alpha,V,g)>0$ and $\sigma^2_\text{HT}(\alpha,V,g)>0$ the following convergences in distribution: 

\textbf{in case 1},
$$ \sqrt{N}\left( \int_{\R} g\di \nu_N(L) - \lim_{N\to +\infty} \int_{\R} g\di \nu_N(L) \right) \xrightarrow{N\to\infty} \mathcal{N}(0,\sigma^2_\text{Toda}(\alpha,V,g))$$
where $\nu_N(L)$ \eqref{emp.measure} is the empirical measure of $L$.

\textbf{in case 2},
$$ \sqrt{N}\left( \int_{\R} g\di \nu_N(T) - \lim_{N\to +\infty} \int_{\R} g\di \nu_N(T) \right) \xrightarrow{N\to\infty} \mathcal{N}(0,\sigma^2_\text{HT}(\alpha,V,g))$$
where $\nu_N(T)$ \eqref{emp.measure} is the empirical measure of $T$.

Furthermore, we establish that both limiting variances are linked through 
\begin{equation}
    \label{eq:linkvariancesToda}
    \sigma^2_\text{Toda}(\alpha,V,g)=\partial_\alpha\left(\alpha \sigma^2_\text{HT}(\alpha,V,g)\right)\,,
\end{equation}
which is the analogue of \ref{eq:link emp.measures Toda} at the level of fluctuations of the empirical measures of eigenvalues.

This result is particularly relevant in the context of the so-called \textit{Generalized Hydrodynamics}, a recent physical theory that allows computing the correlation functions for classical integrable models, for an introduction to the subject see \cite{Doyon_notes,Spohn4}. According to this theory, one of the main ingredients to compute the correlation functions for the integrable model at hand is to be able to calculate the correlation functions for the conserved fields at time $0$. We are able to access these quantities as a consequence of Theorem \ref{THM:FINAL_CLT_INTRO}. We show how to do it in Section \ref{section:application}. We also mention the recent work \cite{mazzuca2023equilibrium}, where the authors made molecular dynamics simulations of the correlation functions of the Toda lattice, and they compared them with the predictions of linear Generalized Hydrodynamics, showing an astonishing agreement.

The relation between the Toda chain and the high-temperature regime of the real $\beta$-ensemble is not a unique case --- it is an instance of a broader picture linking two types of models:
\begin{itemize}
    \item[Type 1)] discrete space, continuous time integrable system with nearest neighbour interaction endowed with some GGE
    \item[Type 2)] high temperature $\beta$-ensemble on some given curve of the complex plane.
\end{itemize} 
Indeed, an analogue result to the one of Spohn for the Toda lattice holds true also for several other pairs integrable system/$\beta$-ensemble at high-temperature matrix model as it was described in \cite{mazzuca_int_sys,mazzuca2021generalized,mazzucamemin,Spohn4,Spohn3}.

In the present manuscript, we extend the aforementioned association. We establish a similar outcome analogous to the one delineated for the \textit{fluctuations} of the Toda lattice and the real $\beta$-ensemble at high temperature. This extension is applicable to other pairs of integrable systems and $\beta$-ensembles at high temperatures, as presented in Table \ref{tab:relationsIntro}.  The correspondence between the systems of this table was already described in \cite{mazzuca_int_sys} at the level of empirical measures under particular choices of potentials defining the GGEs and their associated $\beta$-ensemble. We introduce the systems at stake in Section \ref{section:application}.
\begin{table}[h]
    \centering

\begin{tabular}{|c|c|}

\hline
		 \textbf{Integrable System} (Type 1) & \textbf{$\beta$-ensemble at high-temperature } (Type 2)\\
		\hline
		
		      Toda lattice & Real  \\
		 \hline
		
		       Defocusing Ablowitz-Ladik lattice  & Circular \\
		\hline
	
		       Exponential Toda lattice	& 	Laguerre \\
		\hline
		
		  Defocusing Schur flow & Jacobi \\
		\hline
		
		     Volterra lattice & Antisymmetric \\
		\hline
	\end{tabular}
 \caption{Integrable systems and random matrix ensembles}
    \label{tab:relationsIntro}
\end{table} 

In the next section, we model the present picture by introducing two types of matrices and of distribution for their entries: the first type models are the Lax matrix of integrable systems, distributed according to a GGE, and the second type models the matrix representation of the associated $\beta$-ensemble in the high-temperature regime. We then state the main results of this paper.

\subsection{Main results of the paper}

We consider the following matrix models.

\begin{itemize}
    \item Type 1-\textit{i}) \textbf{Periodic Jacobi matrices}, which are periodic tridiagonal matrix of the form
    \begin{equation}
	\left( \begin{array}{ccccc}
	a_{1} & b_{1} & 0 & \ldots &  b_{N} \\
	b_{1} & a_{2} & b_{2} & \ddots & \vdots \\
	0 & b_{2} & a_{3} & \ddots & 0 \\
	\vdots & \ddots & \ddots & \ddots & b_{N-1} \\
	b_{N} & \ldots & 0 & b_{N-1} & a_{N} \\
	\end{array} \right) \,,
	\end{equation}
	for $\ba=(a_1,\ldots,a_N)\in \R^N$, $\bb=(b_1,\ldots,b_N)\in\R^N_+$.

   \item Type 1-\textit{ii}) \textbf{Periodic positive definite Jacobi matrices}, which are periodic tridiagonal matrix $L$ of the form

   \begin{equation}
       L = BB^\intercal\,,
   \end{equation}
   where $B\in \text{Mat}(\R,N)$ read 
    \begin{equation}
	B = \left( \begin{array}{ccccc}
	a_{1}  & b_{1} & 0 & \ldots &  0 \\
	   0 & a_{2} & b_{2} & \ddots & \vdots \\
	0 & 0 & a_{3} & \ddots & 0 \\
	\vdots & \ddots & \ddots & \ddots & b_{N-1} \\
	b_{N} & \ldots & 0 & 0 & a_{N} \\
	\end{array} \right) \,,
	\end{equation}
	for $\ba=(a_1,\ldots,a_N)\in \R^N_+$, $\bb=(b_1,\ldots,b_N)\in\R^N_+$.

        \item Type 1-\textit{iii}) \textbf{Antisymmetric tridiagonal periodic matrices}: 

               \begin{equation}
	 \left( \begin{array}{ccccc}
	0 & a_{1} & 0 & \ldots &  -a_{N} \\
	-a_{1} & 0& a_{2} & \ddots & \vdots \\
	0 & -a_{2} &0 & \ddots & 0 \\
	\vdots & \ddots & \ddots & \ddots & a_{N-1} \\
	a_{N} & \ldots & 0 & -a_{N-1} &0 \\
	\end{array} \right) \,,
	\end{equation}
	for $\ba=(a_1,\ldots,a_N)\in \R^N_+$.
	\item Type 1-\textit{iv}) \textbf{Periodic CMV (after Cantero, Moral and Velazquez) matrices}, which are $2N\times2N$ unitary matrices given by 
	\begin{equation}
		\cE  = \cL \cM\,,
	\end{equation}
	where we define $\cL$ and $\cM$ in the following way. Let $\ba = (a_1,\ldots a_{2N})$ be complex numbers of the unit disk $\D$. Define the  $2\times2$ unitary matrix  $\Xi_j$ 		
	\begin{equation}
	\label{def:Xi}
		\Xi_j = \begin{pmatrix}
			\wo a_j & \rho_j \\
			\rho_j & -a_j
		\end{pmatrix}\, ,\quad j=1,\dots, 2N\, ,\quad \rho_j = \sqrt{1-|a_j|^2}\,.
	\end{equation}
	Then, $\cL$ and $\cM$ are the $2N\times 2N$ matrices

	\begin{equation}
		\cL = \begin{pmatrix}
			\Xi_{1} \\
			& \Xi_3 \\
			&& \ddots \\
			&&&\Xi_{2N-1}
		\end{pmatrix} \, ,\qquad
		\cM= \begin{pmatrix}
			-a_{2N}&&&&& \rho_{2N} \\
			& \Xi_2 \\
			&&& \ddots \\
			&&&&\Xi_{2N-2}\\
			\rho_{2N} &&&&& \wo a_{2N}
		\end{pmatrix}\,.
	\end{equation}
	The matrix $\cE$ is a unitary pentadiagonal periodic matrix.
	\item Type 1-\textit{v}) \textbf{Two diagonals periodic matrices}  given by \, \\
	\begin{align}
	\begin{pmatrix}
	0&a_1&0&\cdots&\tikz[remember picture]\node[inner sep=0pt] (b) {$b_{N-r+1}$};&0&0&0\\
	0&0&a_2&\cdots&0&b_{N-r+2}&0&0\\
	\vdots&\ddots&\ddots&\ddots&\ddots&\ddots&\ddots&\\
	0&0&0&\cdots&a_{r-1}&0&0&b_N\\
	b_1&0&\cdots&\cdots&0&a_r&0&\tikz[remember picture]\node[inner sep=0pt] (a) {0};\\
	0&b_2&0\cdots&&\ddots&\ddots&\ddots&\\
	\vdots&\ddots&\ddots&\ddots&\ddots&0&0&a_{N-1}\\
	a_N&0&\cdots&b_{N-r}&\cdots&0&0&0\\
	\end{pmatrix}
		  \begin{tikzpicture}[overlay, remember picture]
		\draw[stealth-] (a.east)++(1,0) -- node[above] {$r+1$ row }++ (1,0);
		\draw[stealth-] (b.north)++(0,0.1) -- node[right] {$N-r$ column }++ (0,0.5);
	\end{tikzpicture}
	\end{align}
	Where $\ba,\bb \in \R^N_+$. In applications, we consider either $a_1=a_2=\ldots=a_N=1$ or $b_1=b_2=\ldots=b_N=1$.
\end{itemize}

We also consider the  non-periodic counterparts of the previous matrices. More specifically:

\begin{itemize}
    \item Type 2-\textit{i}) \textbf{Jacobi matrices}, which are symmetric tridiagonal matrices
    \begin{equation}
	    \begin{pmatrix}
	    a_1 & b_1 & & & \\
			b_1 & a_2 & b_2 & &\\
			& b_2 & \ddots & \ddots &\\
			&& \ddots & \ddots &  b_{N-1}  \\
			 &&&  b_{N-1} &a_N
	    \end{pmatrix},
	   \end{equation}
	   where $\ba \in \R^N$ and $\bb \in \R^{N-1}_+$.

       \item Type 2-\textit{ii}) \textbf{Positive definite Jacobi matrices}, tridiagonal matrix $T$ of the form

   \begin{equation}
       T = BB^\intercal\,,
   \end{equation}
   where $B\in \text{Mat}(\R,N)$ read 
    \begin{equation}
	B = \left( \begin{array}{ccccc}
	a_{1}  & b_{1} & 0 & \ldots &  0 \\
	   0 & a_{2} & b_{2} & \ddots & \vdots \\
	0 & 0 & a_{3} & \ddots & 0 \\
	\vdots & \ddots & \ddots & \ddots & b_{N-1} \\
	0 & \ldots & 0 & 0 & a_{N} \\
	\end{array} \right) \,,
	\end{equation}
	for $\ba=(a_1,\ldots,a_N)\in \R^N_+$, $\bb=(b_1,\ldots,b_{N-1})\in\R^N_+$.

    \item Type 2-\textit{iii}) \textbf{Tridiagonal Antisymmetric matrices}: 

               \begin{equation}
	\left( \begin{array}{ccccc}
	0 & a_{1} & 0 &  &   \\
	-a_{1} & 0& a_{2} & \ddots &  \\
	0 & -a_{2} &0 & \ddots & 0 \\
	    & \ddots & \ddots & \ddots & a_{N-1} \\
	    &  & 0 & -a_{N-1} &0 \\
	\end{array} \right) \,,
	\end{equation}
	for $\ba\in \R^{N-1}_+$.
	   \item Type 2-\textit{iv}) \textbf{CMV matrices}, $2N\times 2N$ unitary matrices of the form
	   \begin{equation}
	\fE = \fL\fM\,,
	\end{equation}
	where
	   \begin{equation}
		\fL = \mbox{diag}\left(\Xi_{0},\Xi_2,\Xi_4, \ldots,\Xi_{2N}\right)\qquad \text{and} \qquad \fM = \mbox{diag}\left(\Xi_1,\Xi_3,\Xi_{5} \ldots,\Xi_{2N-1}\right)\,,
	\end{equation}
	and the blocks $\Xi_j$, $j=1,\dots, 2N-1$ are defined as 	
	\begin{equation}
		\Xi_j = \begin{pmatrix}
			\wo a_j & \rho_j \\
			\rho_j & -a_j
		\end{pmatrix}\, ,\quad j=1,\dots, 2N-1\, ,\quad \rho_j = \sqrt{1-|a_j|^2}\,.
	\end{equation}
    for $\mathbf{a}=(a_1,\ldots,a_{2N})\in \mathbb{D}^{2N}$,  while $\Xi_{0} = (1)$ and $\Xi_{2N} = (\wo a_{2N})$ are $1\times 1$ matrices. 
    The matrix $\cE$ is a pentadiagonal matrix.
\end{itemize}

The periodic matrices (Type 1) that we consider are the \textit{Lax matrices} of some integrable models. These are particular dynamical systems that are Liouville integrable, and their integrability is proved by obtaining a Lax pair $(L,A)$ \cite{Lax1968} representation of the model, meaning that the equations of motions for each of these systems are equivalent to the following linear system of ordinary differential equations for some  matrices $L,A$

\begin{equation}
	\dot{L} \equiv\frac{\di L}{\di t} = [L;A] = LA-AL\,.
\end{equation}

This formulation is useful since it implies that $\{\Tr{L^k}\}_{k=1}^N$ are a system of independent constants of motion $\left(\dfrac{\di}{\di t}\Tr{L^k}=0)\right)$ for the system at hand, so the system is integrable in  classical sense. We call these quantities \textit{conserved fields}. For a comprehensive introduction to classical integrable systems, we refer to \cite{Babelon}.

We introduce each integrable model with its matrix representation in Section \ref{section:application}, but we anticipate that
\begin{itemize}
    \item the \textbf{Toda lattice} \cite{Toda1970} has as Lax matrix a periodic Jacobi matrix (type $1$-\textit{i}),
    \item the \textbf{Exponential Toda lattice} \cite{mazzuca_int_sys} has a positive definite periodic Jacobi matrix (type $1$-\textit{ii}),
    \item the \textbf{Volterra lattice} \cite{mazzuca_int_sys} has an antisymmetric periodic one (type $1$-\textit{iii}), 
    \item the \textbf{Ablowitz-Ladik lattice} \cite{Ablowitz1974} and the \textbf{Schur flow}  \cite{Golinskii} have a periodic CMV one (type $1$-\textit{iv}),
    \item the family of \textbf{Itoh--Narita--Bogoyavleskii} \cite{Bogoyavlensky1988} lattices have a bidiagonal periodic one (type $1$-\textit{v}).
\end{itemize}
We endow the \textbf{periodic matrices of type $\boldsymbol{1}$} with the so-called \textit{Generalized Gibbs Ensemble} of the corresponding dynamical system. An important property of these measures is that they are invariant with respect to the dynamics of the corresponding integrable system. These Generalized Gibbs Ensembles are probability measures $X^N$, where $X$ is a convex subset of $\R^d$, $d=1$ or $2$ (with $\C$ identified with $\R^2$) depending on the considered model, of the form
\begin{equation}
    \label{type1}
    \mu^{(1)}_N = \frac{1}{Z^{(1)}_N(\alpha,\Re P)}\left(\prod_{j=1}^N F(x_j,\alpha)\right) e^{-\Tr{\Re P(L)}}\di \bx\,.
\end{equation}
Here $L$ is a matrix of type 1, and the integration variable $\bx=(x_1,\ldots,x_N)$ is to be understood as the entries of $L$. In our applications, we will specify the set $X$ to be
\begin{itemize}
    \item model 1-\textit{i}) $\bx = (\ba,\bb)\in \R^N\times \R_+^N$, $X=\R\times \R_+$
    \item model 1-\textit{ii}) $\bx = (\ba,\bb)\in \R_+^{2N}$, $X=\R_+\times \R_+$
    \item model 1-\textit{iii}) $\bx = \ba\in \R_+^N$, $X=\R_+$
    \item model 1-\textit{iv}) $\bx=\ba \in \D^N$, $X=\D$
    \item model 1-\textit{v}) $\bx=\ba$ (resp. $\bx=\bb$) $\in \R_+^N$ if $b_1=\ldots=b_N=1$ (resp. if $a_1=\ldots=a_N=1$), $X=\R_+$.
\end{itemize}
For our applications, we consider polynomial potentials: $\Re P$ stands for the real part of a polynomial $P\in \C[X]$. In all models except type 1-\textit{iv}, we take directly $P\in \R[X]$. Model 1-\textit{iv} is the only one with complex entries, so in this case we consider just $\Re P$. Furthermore, $P$ has to be chosen in such a way that the measure \eqref{type1} is normalizable, \textit{i.e.} such that the associated partition function converges. Note that with this choice of potential, the trace $\Tr{\Re P(L)}= \Re \Tr{P(L)}$ is simply expressed either in terms of the diagonal entries of powers of $L$, or in terms of the eigenvalues of the matrix $L$.

We recover the GGE of the Toda lattice \eqref{eq:Gibbs_todaIntro} with polynomial potential taking for $x=(a,b)\in \R\times \R_+$ 
$$ F(x,\alpha)=F(a,b,\alpha)=b^{2\alpha -1}\,.$$
In this case, taking $P\in \R[X]$ polynomial with even degree $n\geq 2$ and positive leading coefficient ensures the convergence of the partition function $Z^{\text{Toda}}_N(\alpha,P)$ of the model.
\ \\ \ \\

The \textbf{non-periodic matrices of type $\boldsymbol{2}$} are related to the classical $\beta$-ensembles. More specifically
\begin{itemize}
    \item The \textbf{Real $\beta$-ensemble} has a Jacobi matrix representation \cite{dued1} (type 2-\textit{i})
    \item The \textbf{Laguerre $\beta$-ensemble} has a positive definite Jacobi matrix representation \cite{dued1} (type 2-\textit{ii})
    \item The \textbf{Antisymmetric $\beta$-ensemble} has a tridiagonal antisymmetric matrix representation \cite{DF10} (type 2-\textit{iii})
    \item The \textbf{Circular} and \textbf{Jacobi $\beta$-ensemble} have a CMV matrix representation \cite{Killip2004} (type 2-\textit{iv})
\end{itemize}
We introduce properly those ensembles in Section \ref{section:application}. As in \eqref{def:HTbeta} for the case of the real $\beta$-ensemble, we consider these ensembles in the high-temperature regime, meaning that the parameter $\beta$ scales with the matrix size $N$ as $\beta=\frac{2\alpha}{N}$, for some $\alpha\in\R_+$ independent of $N$.

In all the cases we consider, the joint density of the entries of the matrix representation of the $\beta$-ensembles with polynomial potential can be expressed as

\begin{equation}
    \label{type2}
    \mu^{(2)}_N =  \frac{1}{Z^{(2)}_N(\alpha,\Re P)}\left(\prod_{j=1}^{N-1} F\left(x_j,\alpha\left(1-\frac{j}{N}\right)\right)\right)R(x_N) e^{-\Tr{\Re P(T)}}\di \bx\,.
\end{equation}
As before, this probability measure is the entry-wise law of $T$;  furthermore, in our applications, the space is the same as in the corresponding type 1 model:

\begin{itemize}
    \item model 2-\textit{i}) $\bx=(\ba,\bb) \in \R^N\times \R_+^{N}$
    \item model 2-\textit{ii}) $\bx=(\ba,\bb)\in \R_+^{2N}$
    \item model 2-\textit{iii} $\bx = \ba \in \R^{N}_{+}$
    \item model 2-\textit{iv}) $\bx=\ba \in \D^{2N}$
\end{itemize}
In the previous equation, $R$ is a \textit{distribution} defining a probability measure supported on a subset of $X$: we denote, with an abuse of notation, $R(x)\di x$ the integration against $R$. Thus, for some $X'\subset X$, $\int_{X'} R(x)\di x =1$ and for any $\varphi \in C^{\infty}_c(X)$ with support disjoint from $X'$, $\int_X \varphi(x) R(x) \di x=0$.

In this notation, the type 2 models are defined on the same space as the type 1. As an example, considering  $x=(a,b)\in \R\times \R_+$ 
$$ F(x,\alpha) = F(a,b,\alpha) =  b^{2\alpha-1}\,, \qquad R(x) = R(a,b) = \delta_0(b)\,,$$
we recover the tridiagonal representation of the real $\beta$-ensemble in the high-temperature regime \eqref{def:HTbeta} with polynomial potential. As for the matrices of type 1, $P\in \R[X]$ except for the model 2-\textit{iv} associated with CMV matrices, which is the only model of type 2 with complex entries. We remark that the function $F$ specified for the real $\beta$-ensemble at high temperature is the same as the one for the Toda lattice \eqref{eq:Gibbs_todaIntro}.

As we already mentioned, we focus on the fluctuations (or \textit{linear statistics}) around the equilibrium measure of these general models, where we choose the functions $F$, $P$ and the distribution $R$ so that the partition functions
\begin{equation}
    \begin{split}
        & Z^{(1)}_N(\alpha,\Re P) = \int_{X^N} \left(\prod_{j=1}^N F(x_j,\alpha)\right) e^{-\Tr{\Re P(L)}}\di \bx\, \\
        & Z^{(2)}_N(\alpha,\Re P) = \int_{X^N} \left(\prod_{j=1}^{N-1}  F\left(x_j,\alpha\left(1-\frac{j}{N}\right)\right)\right) R(x_N)  e^{-\Tr{\Re P(T)}}\di \bx 
    \end{split}
\end{equation}
are finite for all $N$. Recall that $X$ is a convex subset of $\R^d$. In our applications, $d=1$ or $2$, and $\C$ is identified with $\R^2$.

Specifically, we focus on analysing the fluctuations

\begin{equation}
\label{eq:fluc}
    \int_\C Q\di\nu_N - \lim_{N\to\infty} \int_\C Q\di\nu_N \,,
\end{equation}
for \textbf{polynomial test functions} $Q$, where $\nu_N$ is the empirical measure of eigenvalues of the considered matrix $M$:

\begin{equation}
\label{eq:empirical_measure}
    \nu_N = \frac{1}{N}\sum_{j=1}^N\delta_{\lambda_j(M)}\,,
\end{equation}
with $\lambda_j(M)$ denoting the eigenvalues of $M$. In this equation, $\delta_x$ denotes the Dirac delta function centred at $x$. Note that the existence of the limit 
\begin{equation}
    \label{limitEmpirical}
    \lim_{N\to \infty} \int_\C Q\di\nu_N 
\end{equation}
is not straightforward: we establish it in Theorem \ref{THM:FINAL_CLT_INTRO}, see Remark \ref{remark moments}.

Our paper utilizes a transfer operator technique to analyze the random variable \eqref{eq:fluc} for polynomial potentials $\Re P$.

The study of statistical spectral properties of Lax matrices was initiated by Spohn in \cite{Spohn1}, establishing the link between the Lax matrix of the Toda chain distributed according to any of the GGEs,
and the real $\beta$-ensemble at high temperature, as sketched in Section \ref{subsection Toda}. See also\cite{mazzuca2021mean}. In \cite{Spohn1,Spohn2,Spohn3}, the author studies space-time correlation functions for the Toda lattice and, leveraging the theory of Generalized Hydrodynamics \cite{Doyon_notes}, he argues that they have a ballistic behaviour, characterized by symmetrically located peaks that travel in opposite directions at a constant speed and decay as $t^{-1}$ when $t$ approaches infinity. To achieve this result, Spohn computed the density of states of the Toda Lax matrix by connecting the Generalized Gibbs Ensemble of the Toda lattice to the real $\beta$-ensemble in the high-temperature regime \cite{Allez2012}. Later, one of the authors and T. Grava connected the Generalized Gibbs ensemble for the Ablowitz-Ladik lattice and the Schur flow to the Circular $\beta$-ensemble and the Jacobi $\beta$-ensemble, respectively, in the high-temperature regime \cite{Hardy2021,Forrester2021}, as did H. Spohn independently \cite{mazzuca2021generalized,Spohn3}. For all these models, a large deviation principle for their mean density of states was developed in \cite{mazzucamemin,GMToda}. Furthermore, in \cite{mazzuca_int_sys}, the authors established connections between the classical Gibbs ensemble for the Exponential Toda lattice and the Volterra lattice with the Laguerre ensemble \cite{dued1} and the Antisymmetric $\beta$-ensemble \cite{DF10}, respectively.

In addition to integrable systems, our study also involves the classical $\beta$-ensembles. Specifically, we investigate the random variable \eqref{eq:fluc} for these ensembles in the high-temperature regime. The study of these quantities was initiated by Johansson in \cite{johansson}, where the author derived a central limit theorem (CLT) for the Gaussian Unitary Ensemble with a polynomial potential. This result was subsequently generalized to other models and different values of $\beta$ in \cite{BoG2,Shc1,Dumitriu2012}, and more recently in \cite{Bourgade2022}, where the authors also obtained a rigidity result for the eigenvalues of the $\beta$-ensembles. See also \cite{lambert2021mesoscopic} for a refinement of the CLT for the circular $\beta$-ensemble at a mesoscopic scale. Other relevant works include \cite{Nakano2018}, where a CLT was derived for the $\beta$-ensemble in the high-temperature regime with a quadratic potential, \cite{Hardy2021}, where a CLT for the Circular $\beta$-ensemble at high temperatures was obtained using a normal approximation method, and the recent paper \cite{memin2023}, where the authors derived a CLT for the real $\beta$-ensemble in the high-temperature regime for general confining potentials. We also mention the work \cite{Serfaty2021}, where Coulomb gases in dimensions $\td \geq 2$ were investigated, and local laws were studied for various temperature regimes. Finally, we mention the recent works \cite{johansson2021multivariate} and \cite{Courteaut2021,Courteaut2022}, where the authors established super-exponential bounds for the convergence of moments of the CUE, COE, and CSE to a Gaussian vector.

\paragraph{Statement of the results.}
We come to precise statements of the main results of the present paper. To do that, we introduce the concept of \textit{circular functions}.      
\begin{definition}
\label{def:circular}
            Let $N\in \N$ and $s\geq 1$. Write $N = sM + \ell$ where $0\leq \ell < s$. We say that a function $Y: X^N \to \C$ is \textbf{$s$-circular} if there exist two functions $y\,:\, X^{2s} \to \C$, and  $\widetilde{y} \, : \, X^{2s + \ell} \to \C$ such that

            \begin{equation}
                Y(\bx) = \sum_{j=1}^{M-1} y(\bx_j, \bx_{j+1}) + \widetilde{y}(\bx_1,\bx_M,x_{sM + 1}, \ldots, x_{N})\,,
            \end{equation}
            where $\bx_j = (x_{(j-1)s+1}, \ldots, x_{js})$ and $s$ is independent of $N$ for $N$ big enough. We call $y$ a \textbf{seed} of $Y$, $\widetilde{y}$ a \textbf{weed} of $Y$, and $s$ the \textbf{circular index} of $Y$. When we are not interested in the specific circular index of the function $Y$, we just say that $Y$ is circular.
\end{definition}

\begin{remark}
    We notice that
    \begin{itemize}
        \item The seed of a function is not unique, indeed $\hat{y}(\bx_j,\bx_{j+1},\bx_{j+2},\bx_{j+3}) =  y(\bx_j, \bx_{j+1}) +  y(\bx_{j+1}, \bx_{j+2}) + y(\bx_{j+2}, \bx_{j+3})$ is another seed;
        \item the sum of a $s_1$-circular function $Y_1$ and a $s_2$-circular function $Y_2$ is a $s_3$-circular function $Y_3$, where $s_3$ is the smallest common multiple between $s_1,s_2$. In this situation, we say that $y_1, y_2$ are \textbf{compatible seeds} of $Y_1,Y_2$ if 
    \begin{itemize}
        \item $y_1+y_2$ is a seed of $Y_3$;
        \item the circular indexes of $y_1,y_2$ is $s_3$;
        \item $y_1$ is a seed of $Y_1$, and $y_2$ is a seed of $Y_2$
    \end{itemize}
    \end{itemize}
\end{remark}

Furthermore, the following lemma holds true

\begin{lemma}
\label{LEM:CIRCULAR_TRACE}
    Consider any type 1-2 matrix $M$. Then for any polynomial $P\in \C[X]$, $\Tr{P(M)}$ is circular.
\end{lemma}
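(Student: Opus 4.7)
The plan is to reduce to monomials $\Tr{M^k}$ by linearity, and then to exploit the band structure of $M$ together with the translation invariance of its entries in the bulk to decompose $\Tr{M^k}$ as a bulk sum indexed by pairs of adjacent blocks (the seed) plus a boundary correction (the weed). Writing $P(X) = \sum_{k=0}^d c_k X^k$, linearity of the trace gives $\Tr{P(M)} = \sum_{k=0}^d c_k \Tr{M^k}$, and since linear combinations of circular functions are again circular (by the second item of the remark following Definition~\ref{def:circular}), it suffices to prove the claim for each $\Tr{M^k}$. Every matrix $M$ in the list has a fixed bandwidth $w$ independent of $N$ ($w=1$ for the tridiagonal types, $w=2$ for CMV, and $w$ controlled by the fixed parameter $r$ for the INB type 1-v); moreover, outside the seam (for the periodic type 1 matrices) or the first and last rows (for type 2 matrices), the entries of $M$ are produced by the same translation-invariant local rule applied to shifted variables. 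Consequently, $(M^k)_{ii}$ is a function of the $x_j$ with $j$ in a window $W_i$ of radius at most $kw$ around $i$ (distances taken modulo $N$ in the periodic case), and its expression is translation invariant whenever $W_i$ avoids the seam and the physical boundary.

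I would then choose $s$ independent of $N$ large enough that $s \geq 2kw$ and that $s$ is a multiple of the structural period of $M$ (e.g.\ $s$ even for CMV), and write $N = sM + \ell$ with $0 \leq \ell < s$. Each bulk index $i \in \{1,\ldots,sM\}$ is then assigned to a unique pair of adjacent blocks: for $2 \leq j \leq M$, indices in the left half of $\bx_j$ (positions $(j-1)s+1,\ldots,(j-1)s+s/2$) go to the pair $(\bx_{j-1},\bx_j)$, while for $1 \leq j \leq M-1$, indices in the right half of $\bx_j$ (positions $(j-1)s+s/2+1,\ldots,js$) go to $(\bx_j,\bx_{j+1})$. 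Since $s/2 \geq kw$, the window $W_i$ of any such bulk $i$ is contained in its assigned pair of blocks, and by translation invariance the sum of $(M^k)_{ii}$ over all $i$ assigned to a given pair $(\bx_j,\bx_{j+1})$ equals the value at $(\bx_j,\bx_{j+1})$ of a single function $y$ that does not depend on $j$.

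The contributions that remain --- coming from the left half of $\bx_1$, the right half of $\bx_M$, and the leftover indices $sM+1 \leq i \leq N$ --- have windows $W_i$ contained in $\bx_1 \cup \bx_M \cup \{x_{sM+1},\ldots,x_N\}$; this uses for the type 1 matrices that $s \geq kw$, so the wrap-around starting from an index near $1$ only reaches into the end of the matrix (and not further back). Packaging these contributions defines $\widetilde{y}(\bx_1,\bx_M,x_{sM+1},\ldots,x_N)$, and by construction $\Tr{M^k} = \sum_{j=1}^{M-1} y(\bx_j,\bx_{j+1}) + \widetilde{y}(\bx_1,\bx_M,x_{sM+1},\ldots,x_N)$, which is exactly the $s$-circular decomposition required. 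The main technical point, rather than a genuine obstacle, is careful bookkeeping: verifying that the bulk formula for $(M^k)_{ii}$ is truly independent of the block index $j$ --- which for CMV requires $s$ to be even because of the period-$2$ block structure of $\cL\cM$, and for the INB lattice requires $s \geq 2kr$ --- and that all seam-crossing and boundary-touching contributions are indeed absorbed into $\widetilde{y}$.
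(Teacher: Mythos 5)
Your proposal is correct and follows essentially the same approach as the paper: reduce to monomials, use the fixed bandwidth of $M$ to make $(M^k)_{ii}$ a local and (in the bulk) shift-invariant function of the entries, and package the seam/boundary contributions into the weed. The paper organizes the bookkeeping slightly differently, first proving in an auxiliary lemma that $\Tr{A^\ell}$ is a \emph{cyclic} (shift-invariant) function for circular-like matrices, with Toeplitz-like matrices differing by a correction depending on boundedly many variables, and then invoking locality to split off a seed and weed; your direct assignment of each diagonal term to a pair of adjacent blocks, with $s\geq 2kw$, achieves the same decomposition.
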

We detail the proof of this Lemma in Appendix \ref{app:circ}. 

Given this definition and this lemma, we can proceed by stating our main result. First, we state the assumptions we make for the definition of our measures of interest $\mu^{(1)}_N$ \eqref{type1} and $\mu^{(2)}_N$ \eqref{type2}. \textbf{Let $M$ be a matrix of type $1$ or of type $2$, endowed with the distribution $\mu^{(1)}_N$ or $\mu^{(2)}_N$} on $X^N$, $X\subset \R^d$, $d=1$ or $2$.

Throughout the paper, we will consider polynomial potentials with the following properties, depending on the considered model.
\begin{assumption}
    \label{polynomial}
    \item Models 1-\textit{i} and 2-\textit{i}: $P\in \R[X]$, with degree $d\geq 2$ and positive leading coefficient
    \item Models 1-\textit{ii} and 2-\textit{ii}: $P\in \R[X]$, with degree $d\geq 1$ and positive leading coefficient
    \item Models 1-\textit{iii} and 2-\textit{iii}: $P\in \R[X]$, with $P$ of the form $P(x)=(-1)^{d} a_d x^{2d} + Q(x)$ with $a_d > 0$ and $Q\in \R[X]$ with degree smaller than $2d-1$. 
    \item Models 1-\textit{iv} and 2-\textit{iv}: $P\in \C[X]$
    \item Model 1-\textit{v}: $P\in \R[X]$ with degree $j(r+1)$ for some $j\geq 1$ (with $r$ appearing in the definition of the matrix model) and positive leading coefficient.
\end{assumption}

\begin{assumption}
\label{general_assumptions}
To prove our main result, we assume the following:
\begin{enumerate}
 \item[HP 1.] $X$ is a convex subset of $\R^d$, $d=1$ or $2$ ($\C$ being identified with $\R^2$).
 
    \item[HP 2.] $F(x,\alpha) \, : \, X \times (0,\infty)\to \R_+$ is such that for any $\alpha>0$ $F(\cdot,\alpha) \in C^1(X)$, and for any $x\in X, F(x,\cdot)\in C^\infty((0,+\infty))$ ;
        \item[HP 3.] The seed $W$ and weed $\wt W$ of $\Tr{\Re P(M)}$ are bounded away from $-\infty$
  \item[HP 4.] for all $\alpha\in (0,\infty)$, $F(\cdot,\alpha) \in L^{1}(X)$, and $[c,d]\subset (0,+\infty)$ one can find $g_{c,d}\in L^2(X)$ such that for all $c\leq \alpha\leq d$, $\left| \sqrt{F(\cdot,\alpha)}\right|,\ \left|\partial_\alpha \sqrt{F(\cdot,\alpha)}\right|\leq g_{c,d}$; moreover,  there exist a $\tc\in\N$ and $\varepsilon_0$ such that for all $\varepsilon<\varepsilon_0$ there exists a compact set $\mathcal{O}_\varepsilon\subseteq X$ and $d_1,d_2,d_3>0$, depending on $\varepsilon$, such that
    \begin{itemize}
        \item $||F(\cdot,\alpha)||_1 = d_1 \alpha^{-\tc}(1 + o(1))$ as $\alpha\to 0$
        \item  $||F(\cdot,\alpha)||_{1,\mathcal{O}_\varepsilon} = \int_{\mathcal{O}_\varepsilon }\vert F(x,\alpha) \vert \di x = d_2\alpha^{-\tc}(1 + o(1) )$ as $\alpha\to 0$
        \item  $||F(\cdot,\alpha)||_{1,\mathcal{O}^c_\varepsilon} \leq d_3$ here $\mathcal{O}_\varepsilon^c = X \setminus \mathcal{O}_\varepsilon$
        \item There exists a continuous function $w\, :\, X^k \to \R$ such that for $\bx,\by \in \mathcal{O}_\varepsilon$
        \begin{equation}
            W(\bx,\by) = w(\bx) + w(\by) + o(\varepsilon)\,.
        \end{equation} 
        \item If $\varepsilon_1<\varepsilon_2$ then $\mathcal{O}_{\varepsilon_1}\subseteq \mathcal{O}_{\varepsilon_2}$
    \end{itemize}

    \item[HP 5.] $R$ is a distribution defining a probability measure on a subset of $X$.
\end{enumerate}
Here $L^p(X,\cB)$ is the usual $L^p$ space.
\end{assumption}

\begin{remark}
The definition of circular function and seeds was introduced in this context in \cite{Giorgilli2014}. 
\end{remark}

Let $s\in \N$ and $t\in \R$. Then, for $P$, $F$ satisfying the previous assumptions, both
$$ \left( \prod_{j=1}^N F(x_j,\alpha) \right)e^{-\mathrm{Tr} \big(\Re P(L) + it\Re L^s\big)} $$
and 
$$ \left( \prod_{j=1}^{N-1} F\left(x_j,\alpha\left(1-\frac{j}{N}\right)\right) \right)e^{-\mathrm{Tr} \big(\Re P(L) + it\Re L^s} \big)R(x_N)$$
are integrable on $X^N$. We denote by 
$$ Z^{(1)}_{N}(\alpha, \Re P + it \Re z^s)\text{ and }Z^{(2)}_{N}(\alpha, \Re P + it \Re z^s) $$
the associated integrals.

Under these assumptions, we  prove our main theorem: 

\begin{theorem}
\label{THM:FINAL_CLT_INTRO}
Under Hypotheses \ref{polynomial}-\ref{general_assumptions}. Consider $\mu^{(1)}_{N},\, \mu^{(2)}_{N}$ \eqref{type1}-\eqref{type2},  and let $s\in \N$,
 $W,h$ be compatible seeds of $\Tr{\Re P (L)}$ and $\Tr{\Re L^s}$, and assume that $|h|^a e^{-W}$ is bounded, for $1\leq a \leq 3$. Then, there exist two continuous functions 

\begin{align}
    &A(x)\,:\, \R_+ \longrightarrow \R\,, \\
    &\sigma^2(x)\, : \, \R_+ \longrightarrow \R_+\,,
\end{align}
such that under $\mu^{(1)}_{N}$ \eqref{type1}
$$\left(\Tr{\Re L^s} -NA(\alpha)\right)/\sqrt{N}$$ 
converges to a Gaussian distribution $\cN(0,\sigma^2(\alpha))$ as $N$ tends to infinity. Similarly, under $\mu^{(2)}_{N}$ \eqref{type2},  
$$\left(\Tr{\Re T^s} -N\int_0^1 A(\alpha x)\di x\right)/\sqrt{N}$$
converges to a Gaussian distribution $\cN(0,\int_0^1\sigma^2(\alpha x)\di x)$ as $N$ tends to infinity.
Furthermore, defining the \textit{free energies} $\cF^{(1)}(\alpha,\Re P ),\cF^{(2)}(\alpha,\Re P )$ as

\begin{align}
    \label{eq:free_periodic}
    &\cF^{(1)}(\alpha,\Re P ) = -\lim_{N\to\infty} \frac{1}{N}\ln\left(Z_N^{(1)}(\alpha,\Re P )\right)\,, \\
    \label{eq:free_fix}
    & \cF^{(2)}(\alpha,\Re P) = - \lim_{N\to\infty} \frac{1}{N}\ln\left(Z_N^{(2)}(\alpha,\Re P)\right)\,,
\end{align}
then
\begin{multicols}{2}
\begin{enumerate}
    \item[i.] $ \cF^{(1)}(\alpha,\Re P ) =  \partial_\alpha \left( \alpha \cF^{(2)}(\alpha,\Re P)\right)$
    \item[ii.] $A(\alpha) = i\partial_{t}\cF^{(1)}(\alpha,\Re P +it \Re z^s)_{\vert_{t=0}}$
    \item[iii.] $\int_0^1 A(\alpha x)\di x = i\partial_{t}\cF^{(2)}(\alpha,\Re P +it\Re z^s)_{\vert_{t=0}}$ 
    \item[iv.] $\sigma^2(\alpha) = \partial^2_{t}\cF^{(1)}(\alpha,\Re P +it\Re z^s)_{\vert_{t=0}}$
    \item[v.] $\int_0^1\sigma^2(\alpha x)\di x= \partial^2_{t}\cF^{(2)}(\alpha,\Re P +it\Re z^s)_{\vert_{t=0}}$
\end{enumerate}
\end{multicols}

\end{theorem}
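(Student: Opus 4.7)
The plan is to derive both CLTs from the characteristic functions
\begin{equation*}
  \chi^{(1)}_N(t) = e^{-it\sqrt{N}A(\alpha)}\,\frac{Z^{(1)}_N\bigl(\alpha,\Re P + \tfrac{it}{\sqrt{N}}\Re z^s\bigr)}{Z^{(1)}_N(\alpha,\Re P)},
\end{equation*}
and its type~2 analogue $\chi^{(2)}_N(t)$, showing $\chi^{(1)}_N(t) \to e^{-\sigma^2(\alpha)t^2/2}$ and $\chi^{(2)}_N(t) \to \exp\bigl(-\tfrac{t^2}{2}\int_0^1\sigma^2(\alpha x)\di x\bigr)$. By Lemma~\ref{LEM:CIRCULAR_TRACE}, we pick compatible seeds $W,h$ of $\Tr{\Re P(M)}$ and $\Tr{\Re M^s}$ with a common circular index $s$; writing $N = sM + \ell$ and symmetrising $W,h$ in their two block arguments, the type~1 partition function rewrites as $\Tr(K_{\alpha,t}^M)$ plus a weed correction, where $K_{\alpha,t}$ is the integral operator on $L^2(X^s)$ with kernel
\begin{equation*}
  K_{\alpha,t}(\bx,\by) = \sqrt{\textstyle\prod_{j=1}^s F(x_j,\alpha)F(y_j,\alpha)}\;\exp\bigl(-W(\bx,\by) - i t\, h(\bx,\by)\bigr).
\end{equation*}
The type~2 construction is identical except that $\alpha$ is replaced by a slowly varying family $\alpha_j = \alpha(1 - j/M)$ along the product and the trace becomes a matrix element $\langle \varphi_\ell,\, K_{\alpha_{M-1},t}\cdots K_{\alpha_0,t}\,\psi_\ell\rangle$ whose boundary vectors encode $R(x_N)$ together with the weed.

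The first technical task is the spectral analysis of $K_{\alpha,t}$: for $\alpha$ in a compact subset of $(0,\infty)$ and $t$ in a complex neighbourhood of $0$, show that $K_{\alpha,t}$ is Hilbert--Schmidt with a simple eigenvalue $\lambda(\alpha,t)$ of largest modulus, separated by a spectral gap, depending jointly analytically on $(\alpha,t)$. The Hilbert--Schmidt bound at $t=0$ follows from the $L^2$-integrability of $\sqrt{F}$ ensured by HP~4 together with the lower bound on $W$ from HP~3, and the hypothesis that $|h|^a e^{-W}$ is bounded for $1 \le a \le 3$ promotes this to Hilbert--Schmidt estimates on three $t$-derivatives of the kernel, giving a uniform $O(t^3)$ remainder for $\lambda(\alpha,t)$. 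Simplicity of $\lambda(\alpha,0)$ and the spectral gap come from Jentzsch/Perron--Frobenius applied to the strictly positive kernel $K_{\alpha,0}$, and Kato--Rellich perturbation theory extends these properties to the complex neighbourhood.

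The free-energy asymptotics then read
\begin{equation*}
  \cF^{(1)}(\alpha,\Re P + it\Re z^s) = -\tfrac{1}{s}\log\lambda(\alpha,t) + o_N(1), \qquad
  \cF^{(2)}(\alpha,\Re P + it\Re z^s) = -\tfrac{1}{s}\int_0^1 \log\lambda(\alpha x,t)\,\di x + o_N(1),
\end{equation*}
the type~2 formula arising as the Riemann-sum limit of $\tfrac{1}{M}\sum_j \log\lambda(\alpha_j,t)$, valid by joint continuity of $\lambda$. Identity~(i) is then immediate from the substitution $u = \alpha x$ in $\alpha \int_0^1 \log\lambda(\alpha x)\,\di x = \int_0^\alpha \log\lambda(u)\,\di u$. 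The symmetry $K_{\alpha,-t} = K_{\alpha,t}^\ast$ (from symmetry of the seeds and reality of $W,h$) forces the expansion $\log\lambda(\alpha,t) = \log\lambda(\alpha,0) + i c_1(\alpha)\, t - c_2(\alpha)\, t^2 + O(t^3)$ with $c_1,c_2 \in \R$ continuous in $\alpha$ and $c_2 \ge 0$ (since $t=0$ maximises $|\lambda(\alpha,t)|$); defining $A(\alpha) = c_1(\alpha)/s$ and $\sigma^2(\alpha) = 2c_2(\alpha)/s$ yields characterisations (ii)--(v) directly. Substituting $t \mapsto t/\sqrt{N}$ in the Taylor expansion, the $\sqrt{N}$-order contribution to $\log(Z^{(1)}_N/Z^{(1)}_N)$ is cancelled by the prefactor $e^{-it\sqrt{N}A(\alpha)}$, and the quadratic term survives to produce $\chi^{(1)}_N(t) \to e^{-\sigma^2(\alpha)t^2/2}$; the parallel computation with $\int_0^1 \log\lambda(\alpha x,\cdot)\,\di x$ in place of $\log\lambda(\alpha,\cdot)$ delivers the type~2 limit, and L\'evy's continuity theorem then gives the two CLTs.

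The main obstacle will be the spectral step, in particular establishing a quantitative spectral gap for $K_{\alpha,0}$ uniformly over the relevant range of $\alpha$ (especially near $\alpha = 0$, where $F(\cdot,\alpha)$ has only $\alpha^{-\tc}$ integrability by HP~4), and handling the inhomogeneous product $K_{\alpha_{M-1},t}\cdots K_{\alpha_0,t}$ in the type~2 analysis, where no single underlying operator exists and one must compare neighbouring $K_{\alpha_j,t}$ quantitatively to legitimise the Riemann-sum step.
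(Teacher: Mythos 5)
Your outline follows the same route as the paper: rewrite $Z_N^{(1)}$ and $Z_N^{(2)}$ via a kernel (transfer) operator on $L^2(X^k)$, invoke Jentzsch's theorem to get a simple dominant eigenvalue $\lambda(\alpha,t)$ at $t=0$, propagate this to small $t$ by perturbation theory, express the free energies as $-\tfrac{1}{k}\log\lambda(\alpha,t)$ and its Riemann-sum analogue, Taylor-expand in $t$, and pass to the Gaussian limit via L\'evy; identity (i) is indeed the change of variables $u=\alpha x$. The two points you flag as ``the main obstacle'' --- the uniform spectral gap as $\alpha\to0$ and the control of the inhomogeneous product --- are precisely the technical content of Lemma~\ref{lem:differentiability} and Theorem~\ref{thm:hard_asymptotic}, so you have located the work correctly; you have not, of course, carried it out (the paper handles the first by a rank-one comparison operator $\mathcal{T}^\varepsilon_\alpha$ built from the split $W(\bx,\by)\approx w(\bx)+w(\by)$ on $\mathcal{O}_\varepsilon$ from HP~4, and the second via $\|\cL^{(j)}_{t,\alpha}-\cL_{t,\alpha(1-j/M)}\|\leq C\alpha/M$ combined with the decomposition $\cL^{(j)}=\widehat\lambda^{(j)}\pi^{(j)}+\cQ^{(j)}$).

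Two substantive remarks on your argument. First, the ``symmetrising $W,h$ in their two block arguments'' step is not legitimate: replacing the seed $y$ by $y'(\bx,\by)=\tfrac12(y(\bx,\by)+y(\by,\bx))$ does not in general yield a seed of the same circular function (the reindexed sum $\sum_j y(\bx_{j+1},\bx_j)$ differs from $\sum_j y(\bx_j,\bx_{j+1})$), so $K_{\alpha,-t}=K_{\alpha,t}^*$ may fail. Fortunately the conclusions you extract from it hold without symmetry: since $W,h$ are real, $K_{\alpha,-t}=\overline{K_{\alpha,t}}$, hence $\lambda(\alpha,-t)=\overline{\lambda(\alpha,t)}$, giving $c_1,c_2\in\R$; and $|k_{\alpha,t}|=k_{\alpha,0}$ gives $\|K_{\alpha,t}^n\|\leq\|K_{\alpha,0}^n\|$ for all $n$, hence $|\lambda(\alpha,t)|\leq\lambda(\alpha,0)$ and $c_2\geq 0$. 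The paper sidesteps all of this by the Nagaev--Guivarc'h Theorems~\ref{thm:nagagui_thm}--\ref{thm:nagagu_thm_mod}, which read off $A\in\R$ and $\sigma^2\geq0$ directly from L\'evy's continuity theorem. Second, ``jointly analytically on $(\alpha,t)$'' and Kato--Rellich in a complex $t$-neighbourhood overclaims: the hypothesis that $|h|^a e^{-W}$ is bounded only for $1\le a\le3$ yields $C^{1,3}$ regularity of the operator family (Lemma~\ref{lem:differentiability}), not analyticity; this is, however, exactly enough for the order-$3$ Taylor remainder you need, so the gap is cosmetic rather than fatal.
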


\begin{remark}\label{remark moments}Let us notice that 
\begin{itemize}
    \item The previous theorem also holds for $\Im z^s$ in place of  $\Re z^s$. 
    \item As a consequence of Theorem \ref{THM:FINAL_CLT_INTRO}, we deduce the convergence in law of both 
    $$\frac{1}{N}\Re\Tr{L^s}\text{ and }\frac{1}{N}\Im\Tr{M^s}$$
    as $N\to +\infty$, for any $L$ among the considered models. Furthermore, the limits are deterministic, therefore we deduce the convergence of the moments, as $N\to +\infty$
    $$ \lim_{N\to +\infty}\frac{1}{N}\Tr{L^s}=\int_\C z^s \di \nu_N(z)\,.$$
    In the case of the Ablowitz-Ladik lattice (matrix of type 1-\textit{iii}) or of the circular $\beta$-ensemble in the high-temperature regime (matrix of type 2-\textit{iii}), this is a consequence of the following facts
    \begin{itemize}
        \item[1)] The empirical measures are supported on the circle (as empirical measures of unitary matrices)
        \item[2)]  They converge (see for \cite{mazzucamemin} for Ablowitz-Ladik, \cite{Hardy2021} for circular $\beta$-ensemble for example)
        \item[3)] Polynomial functions are bounded on the circle.
    \end{itemize}
    In general, even in cases where we know the empirical measure to converge, the fact that the previous limit exists is not trivial, since polynomial functions are not bounded on non-compact sets.
\end{itemize}
\end{remark}

\begin{remark}
\label{rem:Toda_proof}
    Let us check here that Theorem \ref{THM:FINAL_CLT_INTRO} applies to the case of the Toda lattice, and its random matrix counterpart, \textit{i.e.} the real $\beta$-ensemble in the high-temperature regime. The points that need to be discussed are HP 3., HP 4. and that $|h|^s e^{-W}$ is bounded for $1\leq s \leq 3$.

    Let $L$ be a type $1$---\textit{i} matrix with coordinates $\bx=(\ba,\bb)\in X_0^N=(\R\times \R_+)^N$ and consider $\mu^{(1)}_N$ with $P$ satisfying Hypotheses \ref{polynomial} and 
    $$F(x,\alpha)=F(a,b,\alpha)=b^{2\alpha-1}\,.$$ 
    The latter function is not integrable on $X_0$, so it doesn't satisfy Hypotheses \ref{general_assumptions} HP3. Instead, we take advantage of the following fact: there are some $c>0$, $C\in \R$ such that for all $x\in \R$,
    $$P(x) \geq C + cx^2.$$ 
    Thus, $\Tr{P(L)} \geq CN + c\Tr{L^2} = CN + c\sum_{j=1}^N a_j^2 + 2b_j^2$, therefore setting 
    $$ \wt F(a,b,\alpha) = b^{2\alpha -1} e^{-\frac{c}{2}(a^2+ 2b^2)}\,,$$
    the measure $\mu^{(1)}_N$ reads
    $$ \frac{1}{Z^{(1)}_N(\alpha,P)} \left(\prod_{j=1}^N \wt F(x_j,\alpha)\right) e^{-(\Tr{P(L)}-\frac{c}{2}\Tr{L^2})}\di \bx\,.$$
    The modified potential $P(x)- \frac{c}{2}x^2$ still fulfils Hypotheses \ref{polynomial}, and the function $\wt F$ then satisfies the prescribed assumptions.
    Indeed, $\partial_\alpha\sqrt{F(a,b,\alpha)}=\ln(b)b^{\alpha-\frac{1}{2}}e^{-\frac{c}{4}(a^2+2b^2)}$ thus we can easily find, for any compact $K\subset \R_+^*$, a function $g_K\in L^2(\R_+\times \R)$ such that for any $\alpha\in K$, and for any $(a,b)\in \R_+\times \R$
    $$ \sqrt{F(a,b,\alpha)},\ \left|\partial_\alpha \sqrt{F(a,b,\alpha)} \right| \leq g_K(a,b)\,.$$
    The remaining points of HP3. hold for $\tc=1$. Indeed, $\| F(\cdot,\alpha) \|_{L^1(X)}= c^{-\alpha}2^{1-\alpha}\Gamma(\alpha)$, that as $\alpha\to 0$ is of order $\alpha^{-1}$, and with $\mathcal{O}_\varepsilon=[0,\varepsilon]^2$, we have
    $$ \int_{\mathcal{O}_\varepsilon} b^{2\alpha-1}e^{-\frac{c}{2}(a^2+2b^2)}\di a\di b\geq \int_0^\varepsilon e^{-\frac{ca^2}{2}}\di a  \int_0^\varepsilon b^{2\alpha -1}e^{-c}\di b =: \frac{d_\varepsilon}{\alpha}\left( 1 + o(1)\right)\,,$$
    as $\alpha\to 0$; moreover, $d_\varepsilon\to0$ as $\varepsilon\to 0$ and it is independent of $\alpha$.

    HP4. and the boundedness of $|h|^s e^{-W}$, $1\leq s \leq 3$ is a consequence of Theorem \ref{thm:struct}, see Corollary \ref{cor:struct}.
\end{remark}

\begin{remark}
\label{rem:operatorRegularity}
In the central part of the proof, we introduce a family of operators that act on $L^2(X^k)$ as follows:

$$ \cL_{\alpha,t}f(\by)=\int_{X^k}f(\bx)\prod_{j=1}^k \sqrt{F(x_j,\alpha)F(y_j,\alpha)}e^{-(W+ith)(\by,\bx)}\di \bx \di\by\,.$$
Based on our assumptions, each $\cL_{\alpha,t}$ is a Hilbert-Schmidt operator, meaning that the kernel
$$(\bx,\by)\mapsto\prod_{j=1}^k \sqrt{F(x_j,\alpha)F(y_j,\alpha)}e^{-(W+ith)(\by,\bx)}$$ 
belongs to $L^2(X^k\times X^k)$. Consequently, $\cL_{\alpha,t}$ is a compact operator. Then, we have the following:

\begin{itemize}
   
    \item Under Hypotheses \ref{general_assumptions} and the assumption that $|h|^a \exp(-W)$ is bounded for $1\leq a \leq 3$, we show in Lemma \ref{lem:differentiability} that $(\alpha,t)\mapsto \cL_{\alpha,t}$ is a regular, operator valued function: specifically, $(\alpha,t)\mapsto \cL_{\alpha,t}$ is differentiable with respect to $\alpha$ and three times differentiable with respect to $t$; 
     \item As a consequence of Jentzsh's theorem \cite[Theorem 137.4]{ZaanenBook} and an approximation argument, $\cL_{\alpha,0}$ possesses a simple dominant eigenvalue with a spectral gap uniform in $\alpha$.
\end{itemize}
These two properties ensure that for sufficiently small $t$, the operator $\cL_{\alpha,t}$ possesses a simple dominant eigenvalue $\wt \lambda(\alpha,t)$, and the function $t\mapsto \wt\lambda(\alpha,t)$ is three times differentiable. The function $\wt \lambda(\alpha,t)$ is the key to our analysis, because of the equality, established in Lemma \ref{lem:free_energy}
$$ \cF^{(1)}(\alpha,\Re P + it \Re z^s) = -\frac{1}{k}\ln\left( \wt \lambda(\alpha,t) \right)\,,$$
where $\cF^{(1)}$ is the free energy defined in \eqref{eq:free_periodic}.

Note that we only require the existence of a second derivative with respect to $t$ in the proof of the main theorem, while the existence of a third derivative is used in the proof of the Berry-Esseen bound, as stated in Theorem \ref{thm:BerryEsseen}. The differentiability with respect to $\alpha$ is utilized in the proof of Theorem \ref{thm:hard_asymptotic}.
\end{remark}

As a by-product, we can also compute the so-called \textit{susceptibility matrix} $\mathcal{C}$ for integrable models. This is the matrix of the space-correlation functions of the conserved fields, i.e.

\begin{equation}
\label{def:susceptibility}
    \mathcal{C}_{m,n} =\lim_{N\to\infty} \frac{1}{N}\left( \E_1\left[\Tr{\Re{L^m}}\Tr{\Re L^n}\right] - \E_1\left[\Tr{\Re L^m}\right]\E_1\left[\Tr{\Re L^n}\right]\right)\,,
\end{equation}
where $L$ is the Lax matrix of the integrable system at hand and the mean values are taken according to the corresponding Generalized Gibbs ensemble. The computation of such quantities is relevant to obtain the decay of the correlation functions for these integrable systems, as shown by Spohn in \cite{Spohn2}. In particular, we can prove the following:

\begin{theorem}
\label{THM:SUSCEPTIBILITY}
Under the same hypotheses as Theorem \ref{THM:FINAL_CLT_INTRO}. Consider $\mu^{(1)}_{N}$ \eqref{type1} and define the \textit{free energy} $\cF^{(1)}(\alpha,\Re P)$ as in \eqref{eq:free_periodic}, then

\begin{equation}
\label{eq:susceptibility}
    \mathcal{C}_{m,n} = \partial_{t_1}\partial_{t_2}\left(\cF^{(1)}(\alpha, \Re P +it_1\Re z^m+it_2 \Re z^n)\right)_{\vert_{t_1=t_2=0}}\,.
\end{equation}
\end{theorem}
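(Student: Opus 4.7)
The strategy is to realize both sides of \eqref{eq:susceptibility} as the mixed second derivative, evaluated at $(t_1,t_2)=(0,0)$, of a two-parameter perturbed log-partition function, and then to justify passing to the $N\to\infty$ limit using the transfer-operator machinery developed for Theorem \ref{THM:FINAL_CLT_INTRO}. Concretely, consider
\begin{equation}
Z_N^{(1)}(\alpha,\Re P+it_1\Re z^m+it_2\Re z^n)=\int_{X^N}\!\Big(\prod_{j=1}^N F(x_j,\alpha)\Big)e^{-\Tr{\Re P(L)}-it_1\Tr{\Re L^m}-it_2\Tr{\Re L^n}}\di\bx.
\end{equation}
Differentiation under the integral sign, which is legitimate by HP 4 of Hypotheses \ref{general_assumptions} and the boundedness conditions assumed in Theorem \ref{THM:FINAL_CLT_INTRO}, together with the standard identity $\partial_{t_1}\partial_{t_2}\ln Z=\partial_{t_1}\partial_{t_2}Z/Z-(\partial_{t_1}Z)(\partial_{t_2}Z)/Z^2$, gives
\begin{equation}
-\frac{1}{N}\,\partial_{t_1}\partial_{t_2}\ln Z_N^{(1)}\Big|_{t_1=t_2=0}=\frac{1}{N}\Big(\E_1\!\left[\Tr{\Re L^m}\Tr{\Re L^n}\right]-\E_1\!\left[\Tr{\Re L^m}\right]\E_1\!\left[\Tr{\Re L^n}\right]\Big),
\end{equation}
the right-hand side of which converges to $\mathcal{C}_{m,n}$ by definition \eqref{def:susceptibility}. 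Hence the theorem reduces to the commutation $\partial_{t_1}\partial_{t_2}\lim_{N}N^{-1}\ln Z_N^{(1)}=\lim_N N^{-1}\partial_{t_1}\partial_{t_2}\ln Z_N^{(1)}$ on a neighbourhood of the origin in $(t_1,t_2)$.

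To justify this interchange, I would extend the one-parameter transfer-operator construction recalled in Remark \ref{rem:operatorRegularity} to two parameters. Let $h_1,h_2$ be seeds of $\Tr{\Re L^m}$ and $\Tr{\Re L^n}$ chosen compatible with a seed $W$ of $\Tr{\Re P(L)}$, and let $k$ denote their common circular index, which exists by Lemma \ref{LEM:CIRCULAR_TRACE} and the compatibility discussion after Definition \ref{def:circular}. On $L^2(X^k)$, introduce
\begin{equation}
\cL_{\alpha,t_1,t_2}f(\by)=\int_{X^k}f(\bx)\prod_{j=1}^k\sqrt{F(x_j,\alpha)F(y_j,\alpha)}\,e^{-(W+it_1h_1+it_2h_2)(\by,\bx)}\di\bx.
\end{equation}
Under Hypotheses \ref{polynomial}--\ref{general_assumptions} and the boundedness of $|h_j|^ae^{-W}$ for $1\le a\le 3$ and $j=1,2$, which follows for the polynomial potentials at hand exactly as in Remark \ref{rem:Toda_proof}, $\cL_{\alpha,t_1,t_2}$ is Hilbert--Schmidt, and the arguments of Lemma \ref{lem:differentiability} adapt verbatim to show that $(t_1,t_2)\mapsto\cL_{\alpha,t_1,t_2}$ is jointly $C^2$ in operator norm. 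Combined with the Jentzsch-type statement for $\cL_{\alpha,0,0}$ quoted in Remark \ref{rem:operatorRegularity} (which supplies a simple dominant eigenvalue with uniform spectral gap), analytic perturbation theory then produces, for $(t_1,t_2)$ in a neighbourhood of the origin, a unique simple isolated dominant eigenvalue $\wt\lambda(\alpha,t_1,t_2)$ that depends in a jointly $C^2$ fashion on $(t_1,t_2)$.

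Invoking the representation of the free energy established in Lemma \ref{lem:free_energy}, one has
\begin{equation}
\cF^{(1)}(\alpha,\Re P+it_1\Re z^m+it_2\Re z^n)=-\frac{1}{k}\ln\wt\lambda(\alpha,t_1,t_2),
\end{equation}
and the convergence $N^{-1}\ln Z_N^{(1)}\to k^{-1}\ln\wt\lambda$ is uniform in $(t_1,t_2)$ on a neighbourhood of the origin together with its partial derivatives up to order two, by the same spectral-decomposition estimates that underlie the proof of Theorem \ref{THM:FINAL_CLT_INTRO}. This uniform convergence legitimates the interchange of $\lim_{N}$ and $\partial_{t_1}\partial_{t_2}$, which combined with the first step yields \eqref{eq:susceptibility}. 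The main obstacle is thus to lift the one-parameter operator-analytic framework of the paper to the two-parameter setting while preserving uniform control of the spectral gap and of the eigenprojector with respect to $(t_1,t_2)$; once this is in place, the identification of $\mathcal{C}_{m,n}$ with the mixed second derivative of $\cF^{(1)}$ is a direct second-cumulant computation.
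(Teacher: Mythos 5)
Your proposal is correct and follows the same route the paper implicitly relies on: the paper only asserts that Theorem \ref{THM:SUSCEPTIBILITY} ``follows from the four previous lemmas'' without spelling out the two-parameter extension, and the argument you sketch—express $\mathcal{C}_{m,n}$ as the mixed second cumulant of a two-parameter tilted partition function, extend the transfer operator $\cL_{\alpha,t_1,t_2}$ with joint $C^2$ regularity and uniform spectral gap (the cross-bound $|h_1h_2|e^{-W}\le\frac12(|h_1|^2+|h_2|^2)e^{-W}$ covering the mixed terms), and then invoke the two-parameter analogue of the free-energy identity of Lemma \ref{lem:free_energy} to interchange $\lim_N$ and $\partial_{t_1}\partial_{t_2}$—is exactly the natural filling of that gap.
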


\begin{remark}
    In view of Theorem \ref{THM:FINAL_CLT_INTRO}, we can rewrite  \eqref{eq:susceptibility} as 
    \begin{equation}
\label{eq:susceptibility_beta}
    \mathcal{C}_{m,n} = \partial_\alpha\partial_{t_1}\partial_{t_2}\left(\alpha \left(\cF^{(2)}(\alpha, \Re P +it_1\Re z^m+it_2 \Re z^n)\right)\right)_{\vert_{t_1=t_2=0}}\,.
\end{equation}
In our context, the previous equality implies that we can compute the susceptibility matrix of the integrable systems that we are considering in terms of just the free energy of the corresponding classical $\beta$-ensemble in the high-temperature regime. 
\end{remark}

Furthermore, considering the type 1 measures \eqref{type1},  we investigate the space-correlations for \textit{local} functions, defined as follows.
Given a differentiable function $F \colon X^N \to \C$, we define its {\em support} as the set 
        \begin{equation}
        	\label{def:supp}
        	{\rm supp }\, F := \left\{ \ell \in \{ 1, \ldots, N\} \colon \ \ \   \frac{\partial F}{\partial x_\ell}\notequiv 0 \right\}
        \end{equation}
        and its  {\em diameter}  as 
        \begin{equation}
        	\label{diameter}
        	{\rm diam} \left({\rm supp }\, F\right) := \sup_{i, j \in {\rm supp}\, F} \td_{N}(i,j) , 
        \end{equation}
        where $\td$ is the  {\em periodic distance}  
        \begin{equation}
        	\label{p.dist}
        	\td (i,j) := \min \left( |i-j|, \ N - |i-j| \right) . 
        \end{equation}
        Note that $0\leq \td (i,j) \leq N/2$.
        
        We say that a function $F$ is \textit{local} if ${\rm diam} \left({\rm supp }\, F\right)$ is uniformly bounded in $N$, i.e. there exists a constant $\tc\in \N$ such that ${\rm diam} \left({\rm supp }\, F\right) \leq \tc$, and $\tc$ is independent of $N$. We remark that these definitions were already introduced in \cite{Grava2020}.

Given this definition, we can prove the following lemma

\begin{theorem}[Decay of correlations]
\label{thm:decay}
    Consider a matrix $L$ of type 1 with polynomial potential $\Re P$ such that $\mu_N^{(1)}$ \eqref{type1} is well-defined, let $I,J : X^k \to \R $ two local functions such that   $\int_{X^k\times X^k}\left|I(\bx)\prod_{i=1}^k F(x_i,\alpha)e^{-W(\bx,\by)}\right\vert^2\di \bx \di \by < \infty $, where $W$ is one seed of $\Tr{\Re P(L)}$, and analogously for $J(\bx)$. Write $N=kM+\ell$, and let $j\in \{1,\ldots,M\}$. Then there exists some $0<\mu<1$ such that
    $$ \E_{1}\left[I(\bx_1)J(\bx_j)\right]-\E_{1}\left[I(\bx_1)\right]\E_{1}\left[J(\bx_j)\right]=O(\mu^{M-j}+\mu^j)\,.$$

\end{theorem}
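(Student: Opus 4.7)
The strategy is a transfer-operator/spectral-gap argument, building on the setup of Remark~\ref{rem:operatorRegularity}. By Lemma~\ref{LEM:CIRCULAR_TRACE}, $\Tr{\Re P(L)}$ is $k$-circular with seed $W$, so writing $N=kM+\ell$ and $\bx_i=(x_{(i-1)k+1},\dots,x_{ik})$ one can factor
$$\mu^{(1)}_N(d\bx)\;\propto\;\prod_{i=1}^M G(\bx_i,\alpha)\,\exp\Bigl({-}\sum_{i=1}^{M-1}W(\bx_i,\bx_{i+1})-\widetilde W\Bigr)\,d\bx,$$
with $G(\bx,\alpha)=\prod_{j=1}^kF(x_j,\alpha)$. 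Splitting $G=\sqrt{G}\sqrt{G}$ and absorbing each half into the adjacent $W$-factor recovers the kernel of the operator $\cL:=\cL_{\alpha,0}$ of Remark~\ref{rem:operatorRegularity}, while integrating out the $\ell$ leftover coordinates against $e^{-\widetilde W}$ and their $F$-weights produces a boundary function $\Psi(\bx_1,\bx_M)$ that couples only the two endpoints. In this encoding, any expectation $\E_1[f(\bx_1,\bx_j,\bx_M)]$ becomes a kernel integral against $\Psi(\bx_1,\bx_M)\sqrt{G(\bx_1)G(\bx_M)}\,[\cL^{j-1}](\bx_1,\bx_j)[\cL^{M-j}](\bx_j,\bx_M)$, divided by the corresponding normalisation $Z$ (the case $f\equiv 1$); the $L^2$-integrability hypothesis on $I,J$ ensures that multiplication by $I$ and $J$ composes with $\cL$ to give bounded, in fact Hilbert--Schmidt, objects.

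The key spectral input comes from Remark~\ref{rem:operatorRegularity}: $\cL$ has a simple dominant eigenvalue $\lambda_1>0$ with rank-one projector $P_1=|\phi_+\rangle\langle\phi_-|$, separated from the rest of the spectrum by a gap uniform in $N$. Writing $\cL=\lambda_1 P_1+R$ with $\|R^n\|\leq C(\mu\lambda_1)^n$ for some fixed $\mu\in(0,1)$, I would expand $\cL^{j-1}M_J\cL^{M-j}$ into four terms by replacing each $\cL$-block by $\lambda_1 P_1+R$. The leading $P_1 M_J P_1$ piece, once divided by its share of the normalisation, reconstructs exactly $\E_1[I(\bx_1)]\,\E_1[J(\bx_j)]$: the rank-one nature of $P_1$ factorises the $\bx_1$- and $\bx_M$-dependence (carried by $\Psi$) away from the $\bx_j$-dependence (carried by $M_J$). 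Each of the three remaining cross terms contains at least one factor of $R^{j-1}$ or $R^{M-j}$, and is therefore bounded in operator norm by $C\mu^{\min(j-1,M-j)}\lambda_1^{M-1}$; dividing by $Z\sim\lambda_1^{M-1}$ times a boundary integral, and controlling the remaining pairings by Cauchy--Schwarz together with the $L^2$-assumption on $I,J$, produces the stated bound $O(\mu^j+\mu^{M-j})$.

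The main technical obstacle is the bookkeeping of the boundary data: the weed $\widetilde W$ together with the $\ell$ leftover coordinates couple $\bx_1$ to $\bx_M$, so the chain is not a clean trace and one cannot simply quote a Perron--Frobenius estimate on $\Tr(\cL^M)$. One must check that $\Psi$ pairs non-trivially with $\phi_\pm$, so that the $P_1 M_J P_1$ contribution is genuinely dominant in the ratio, and -- more delicately -- that exactly the same boundary constants appear in $\E_1[IJ]$ and in $\E_1[I]\E_1[J]$, so that they cancel upon subtraction and only the subleading $R^{j-1}/R^{M-j}$ terms survive. The uniformity in $N$ of $\mu$, which is what makes the announced bound meaningful as $N\to\infty$, is exactly the uniform spectral gap asserted in Remark~\ref{rem:operatorRegularity}, itself a consequence of Jentzsch's theorem applied to the strictly positive kernel defining $\cL$.
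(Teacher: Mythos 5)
Your plan follows the same route as the paper's proof: encode the expectation via the transfer operator $\cL=\cL_{\alpha,0}$, insert a multiplication operator $M_J$ at position $j$ and the boundary/weed data into a boundary function (the paper's $G_{\bx}$, $G_{\bx}^{(I)}$), expand each power $\cL^n=\wt\lambda^n\pi_0+\cQ_0^n$, and observe that the rank-one projector $\pi_0=|\varphi_1\rangle\langle\varphi_1|$ makes the leading $\pi_0 M_J\pi_0$ contribution factor into exactly $\E_1[I]\E_1[J]$, leaving only the remainder powers $\cQ_0^{j-O(1)}$, $\cQ_0^{M-j}$ whose norms give $O(\mu^j+\mu^{M-j})$. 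The technical obstacle you flag (checking $\Psi$ pairs non-trivially with the Perron eigenfunction, so the leading term genuinely dominates and the boundary constants cancel upon subtraction) is precisely what the paper settles by positivity of $\varphi_1=\varphi(\cdot,\alpha,0)$ a.e.\ from Jentzsch's theorem and the explicit rank-one algebra $\mathcal{S}_0(\pi_0\cL^{(J)}\pi_0 G^{(I)})=\mathcal{S}_0(\pi_0 G^{(I)})\,\mathcal{S}_0(\pi_0\cL^{(J)}\pi_0 G)/\mathcal{S}_0(\pi_0 G)$; so your outline is correct and matches the paper's argument up to index bookkeeping.
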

This result establishes the exponential decay of space correlations between two local functions acting on distinct segments of the chain, with the decay rate determined by the distance between the corresponding particle sets. Thanks to this Theorem, we rigorously justify the assumption made by H. Spohn regarding the decay of space correlations between the \textit{local conserved fields} and their currents in Section \ref{sec:toda_gaussian}, where we apply the aforementioned result. We remark that the same reasoning can be applied to other integrable systems considered in our study.

Finally, we also prove a Berry-Essen type bound for the aforementioned integrable models. Specifically, we prove the following
\begin{theorem}
\label{thm:BerryEsseen_intro}
Fix $s\in \N$. Under Hypotheses \ref{general_assumptions}. Consider a matrix $L$ of type 1 with polynomial potential $\Re P$ such that $\mu_N^{(1)}$ \eqref{type1} is well-defined, and let $W,h$ be the compatible seed of $\Tr{\Re P(L)}$ and $\Tr{\Re L^s}$. Assume that $|h|^a e^{-W}$ is bounded for $1\leq a \leq 3$,   so that
\begin{equation}
\label{eq:ration_BE}
    \mathbb{E}_1\left[e^{-it\Tr{\Re L^s}}\right] = \frac{Z_{kM+\ell}^{(1)}(\alpha, \Re P+it\Re x^s)}{Z_{kM+\ell}^{(1)}(\alpha,\Re P)}\,.
\end{equation}
Then, there exists $A\in \R$, $\sigma,C >0$ such that if $Y\sim\cN(0,\sigma^2)$ we have for any interval $J$ of the real line
\begin{equation}
    \left| \mathbb{P}\left( \left[\Tr{L^s} - N A \right]/\sqrt{N} \in J\right) - \mathbb{P}\left(Y\in J\right)\right| \leq \frac{C}{\sqrt{N}}\,.
\end{equation}
\end{theorem}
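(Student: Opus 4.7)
The plan is to derive the Berry-Esseen bound via Esseen's smoothing inequality: for any $T>0$,
\begin{equation}
\sup_{J}\left| \mathbb{P}\!\left(\tfrac{\Tr{L^s}-NA}{\sqrt{N}}\in J\right)-\mathbb{P}(Y\in J)\right| \leq \frac{1}{\pi}\!\int_{-T}^{T}\frac{|\phi_N(t)-e^{-\sigma^2 t^2/2}|}{|t|}\,dt+\frac{c_0}{T},
\end{equation}
where $\phi_N(t)=\mathbb{E}_1[\exp(it(\Tr{L^s}-NA)/\sqrt{N})]$. Setting $T=c\sqrt{N}$ reduces the task to bounding the integral by $O(1/\sqrt{N})$. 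By the identity \eqref{eq:ration_BE} and Lemma \ref{lem:free_energy} (cited in Remark \ref{rem:operatorRegularity}), $\phi_N$ factorises through the transfer operator $\cL_{\alpha,t}$ introduced there: with $N=kM+\ell$, one has
\begin{equation}
\phi_N(t)=e^{-it\sqrt{N}A}\,\frac{\langle u_\ell,\,\cL_{\alpha,t/\sqrt{N}}^{M}\,v_\ell\rangle}{\langle u_\ell,\,\cL_{\alpha,0}^{M}\,v_\ell\rangle},
\end{equation}
for explicit boundary vectors $u_\ell,v_\ell$ built from $F$, $W$, and the weed $\widetilde W$.

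The first step is to apply analytic perturbation theory. Since $\cL_{\alpha,0}$ has a simple dominant eigenvalue $\widetilde\lambda(\alpha,0)$ with a spectral gap (Jentzsch), and $t\mapsto \cL_{\alpha,t}$ is a $C^3$ operator-valued map (Lemma \ref{lem:differentiability}), for $|t|\leq t_0$ independent of $N$ the operator $\cL_{\alpha,t}$ retains a simple dominant eigenvalue $\widetilde\lambda(\alpha,t)$ separated from the rest of the spectrum, with associated rank-one spectral projector $\Pi_{\alpha,t}$ depending smoothly on $t$. This yields the decomposition $\cL_{\alpha,t}^M=\widetilde\lambda(\alpha,t)^M\Pi_{\alpha,t}+R_{\alpha,t}^M$ with $\|R_{\alpha,t}\|\leq (1-\eta)\widetilde\lambda(\alpha,t)$ for some $\eta>0$ uniform in $(\alpha,t)$ on the relevant range. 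Substituting into the expression above, the ratio becomes
\begin{equation}
\phi_N(t)=e^{-it\sqrt{N}A}\left(\frac{\widetilde\lambda(\alpha,t/\sqrt{N})}{\widetilde\lambda(\alpha,0)}\right)^{M}\bigl(1+O(e^{-cM})+O(t/\sqrt{N})\bigr),
\end{equation}
so it suffices to analyse $\widetilde\lambda(\alpha,\cdot)^M$.

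The second step is the Taylor expansion of $\ln\widetilde\lambda(\alpha,t)$ at $t=0$. The identification (from Theorem \ref{THM:FINAL_CLT_INTRO} parts (ii),(iv)) gives
\begin{equation}
\ln\widetilde\lambda(\alpha,t)=\ln\widetilde\lambda(\alpha,0)+ikAt-\frac{k\sigma^2}{2}t^2+O(|t|^3),
\end{equation}
with $\sigma^2>0$ and the remainder bounded uniformly thanks to $C^3$ regularity. Replacing $t$ by $t/\sqrt{N}$ and raising to the power $M=(N-\ell)/k$, the linear term compensates exactly with the $e^{-it\sqrt{N}A}$ prefactor (up to an $O(1/\sqrt{N})$ boundary correction coming from $\ell$ and the weed contribution), the quadratic term produces $\exp(-\sigma^2 t^2/2)$, and the cubic remainder gives an error of size $|t|^3/\sqrt{N}$. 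This yields, on the range $|t|\leq \delta\sqrt{N}$ for some small $\delta>0$,
\begin{equation}
|\phi_N(t)-e^{-\sigma^2 t^2/2}|\leq \frac{C(|t|^3+|t|)}{\sqrt{N}}\,e^{-\sigma^2 t^2/4},
\end{equation}
whose $|t|^{-1}$-weighted integral over $|t|\leq \delta\sqrt{N}$ is $O(1/\sqrt{N})$.

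The main obstacle, and the last step, is to control $\phi_N$ on the complementary range $\delta\sqrt{N}\leq |t|\leq c\sqrt{N}$, where the perturbative expansion is no longer valid. The key point is that for $|u|\in[\delta,t_0]$ with $\delta>0$, the operator $\cL_{\alpha,u}$ still has a simple dominant eigenvalue (by the smoothness of the spectrum in $u$), and a strict inequality $|\widetilde\lambda(\alpha,u)|<\widetilde\lambda(\alpha,0)$ holds: indeed, equality would force the kernel $(\bx,\by)\mapsto e^{-iuh(\bx,\by)}$ to be trivial against the Perron eigenfunction, contradicting the positivity arguments behind Jentzsch's theorem. By compactness of $[\delta,t_0]$ one obtains $|\widetilde\lambda(\alpha,u)|\leq (1-\eta')\widetilde\lambda(\alpha,0)$ uniformly, hence $|\phi_N(t)|\leq C(1-\eta')^M=O(e^{-cN})$ on $\delta\sqrt{N}\leq |t|\leq \eta_0\sqrt{N}$ via the spectral decomposition. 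The cutoff $\eta_0\sqrt{N}$ comes from the radius of validity of perturbation theory ($t/\sqrt{N}\leq t_0$); extending to the full range $|t|\leq c\sqrt{N}$ of Esseen requires an independent argument showing a uniform bound $|\phi_N(t)|\leq 1-c''$ for $|t|\geq \delta$, which again follows from the strict spectral inequality above applied at $u=t/\sqrt{N}\geq t_0$ (where we can restrict to $u\leq \pi$ by periodicity in the unitary case, or use decay estimates on oscillatory integrals of $e^{-ith}$ against $F\cdot e^{-W}$ in the non-compact cases). Combining all pieces, the Esseen integral is $O(1/\sqrt{N})$, which together with $c_0/T=O(1/\sqrt{N})$ completes the proof.
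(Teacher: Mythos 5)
Your overall plan---Esseen/Feller smoothing with $T\propto\sqrt{N}$, the transfer-operator decomposition \eqref{eq:decompositon_j_1}, and a third-order Taylor expansion of $\ln\wt\lambda(\alpha,\cdot)$---is the same as the paper's (see Theorem \ref{thm:BerryEsseen}). The paper also splits the Esseen integral into $[0,N^{-1}]$ (handled by a direct Lipschitz estimate on the ratio \eqref{eq:ration_BE}) and $[N^{-1},\varepsilon\sqrt{N}]$ (handled by the perturbative expansion), which you compress into a single uniform bound; that is fine provided the $O(t/\sqrt{N})$ term in your claimed inequality is actually established near $t=0$, which again follows from the $C^1$ regularity of $c_{k,\ell}(\alpha,\cdot)$.

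The real divergence is your last paragraph. You set $T=c\sqrt{N}$ with $c$ unspecified, obtain the desired bound on $|t|\leq\delta\sqrt{N}$, and then declare the control of $\phi_N$ on $[\delta\sqrt{N},c\sqrt{N}]$ to be ``the main obstacle''. There is no such obstacle in the paper's proof: one simply chooses $T=\varepsilon\sqrt{N}$ with $\varepsilon\leq\delta$, so that $t/\sqrt{N}\in[0,\varepsilon]$ stays inside the radius of validity of the perturbation theory throughout. The residual error $C/T=C/(\varepsilon\sqrt{N})$ is still $O(1/\sqrt{N})$ because $\varepsilon$ is a fixed constant. By choosing $c$ too large you have manufactured a problem and then attempted to solve it by two sketchy claims: (i) that $|\wt\lambda(\alpha,u)|<\wt\lambda(\alpha,0)$ strictly on $[\delta,t_0]$ because ``equality would force the kernel to be trivial against the Perron eigenfunction'', and (ii) that for $u\geq t_0$ one can fall back on ``decay estimates on oscillatory integrals''. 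Neither is substantiated. Jentzsch's theorem applies to the positive kernel $k_{\alpha,0}$, not to the complex-valued kernel $k_{\alpha,u}$ for $u$ of order $1$; the spectral radius inequality is a form of aperiodicity and does require a genuine argument, and the operator $\cL_{\alpha,u}$ is not even guaranteed to retain a simple dominant eigenvalue once $u$ leaves the perturbative neighbourhood. The decay-of-oscillatory-integral claim for the non-compact models ($X=\R\times\R_+$ etc.) is a nontrivial stationary-phase/van der Corput type statement that you have not verified under the hypotheses of the paper. The clean fix is simply to delete the last paragraph and take $T=\delta\sqrt{N}$ from the start.
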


\textbf{The paper is organized as follows.} In Section \ref{section:application} we deduce a central limit theorem for the systems in table \ref{tab:relationsIntro} by applying Theorem \ref{THM:FINAL_CLT_INTRO}. In Section \ref{sec:transfer_operator}, we use the transfer operator method to explicitly compute the partition functions for the models in table \ref{tab:relationsIntro}. In Section \ref{section:generalTheory}, we prove some generalization of Nagaev--Guivarc'h Theorem, which is the main probability tool that allows us to prove our main result. In Section \ref{section:proof_of_the_result}, we prove the main results of our paper, i.e Theorems \ref{THM:FINAL_CLT_INTRO}-\ref{THM:SUSCEPTIBILITY}. In Section \ref{section:technical}, we prove the technical result that we stated in Section \ref{sec:transfer_operator} and we prove Theorems \ref{thm:decay}-\ref{thm:BerryEsseen_intro}. Finally, in Section \ref{section:conclusion}, we provide concluding remarks and outline potential avenues for future advancements in this field. We defer some of the technical results of the paper in Appendix \ref{app:circ}-\ref{App:proof differentiability}.

\section{Application}
\label{section:application}

In this section, we show how to apply Theorem \ref{THM:FINAL_CLT_INTRO} to obtain a CLT for the integrable systems and for the classical $\beta$-ensembles in the high-temperature regime in Table \ref{tab:relationsIntro}. 

Specifically, we are able to prove that all the integrable systems in Table \ref{tab:relationsIntro} in the periodic case have a Generalized Gibbs ensemble of the form $\mu^{(1)}_{N}$ \eqref{type1}. Meanwhile, the $\beta$-ensembles at high-temperature are characterized by a probability  distribution of the form $\mu^{(2)}_{N}$ \eqref{type2}. In this way, we proved a further connection between the theory of integrable systems and Random Matrix Theory. Indeed, in view of Theorem \ref{THM:FINAL_CLT_INTRO} and Theorem \ref{THM:SUSCEPTIBILITY}, for any integrable system in the previous table, we can relate its free energy, moments, variances and covariances  with the corresponding quantities of the random matrix model on the same line.   Moreover, in the final part of this section, we consider the family of INB lattices that do not have a known $\beta$-ensemble counterpart. Despite that, we are still able to derive the existence of a polynomial central limit theorem. Finally, applying the decay of correlations of Theorem \ref{thm:decay}, we are able to compute the currents of the Toda chain in terms of the limiting variance in the CLT, justifying a computation by Spohn in \cite{Spohn4}.

\subsection{The Toda lattice, and the real $\beta$-ensemble at high-temperature}
\label{sec:toda_gaussian}

\paragraph{The Toda lattice.}

In this subsection, we apply the previous results to deduce the central limit theorem for the empirical measure in the setup of the Toda chain. We then apply the decay of correlations established in Theorem \ref{thm:decay} to deduce a formulation of the \textit{currents} of the Toda chain in terms of the limiting variance in the CLT, justifying a computation by Spohn, \cite{Spohn4}. Finally, we relate the limiting variance and free energy of the Toda chain with the ones of the  $\beta$-ensemble at high-temperature.

To begin with, our aim is to obtain a central limit theorem for the conserved fields when the initial data is sampled according to the Generalized Gibbs ensemble of the Toda chain, which we rewrite here for convenience. \textbf{To ease the notations, we denote it by $\mu_T$ instead of} $\mu_{N,\text{Toda}}^{\alpha,P}$ \textbf{in the sequel, and we denote the associated partition function by $Z_N^T(\alpha,P)$.}
\begin{equation}\label{eq:Gibbs_toda}
	\di \mu_{T} :=    \frac{1}{Z_{N}^T(\alpha,P)} \  \prod_{j=1}^N b_j^{2\alpha -1}\mathbbm{1}_{b_j>0}e^{-\Tr{P(L)} } \di \ba \,  \di \bb \,,
	\end{equation}
where $P$ is a polynomial of even degree with positive leading coefficient, and $\alpha > 0$. Also recall that $L$ is a periodic Jacobi matrix, \textit{i.e.} a matrix of type 1-\textit{i}.

We want to apply Theorem \ref{THM:FINAL_CLT_INTRO} to this model. To do that, we need some preparation. First, we notice that the Lax matrix of the periodic Toda lattice is a circular-like matrix, thus we can apply Lemma \ref{LEM:CIRCULAR_TRACE} to ensure that the traces of powers of $L$ are circular functions. Furthermore, from the following Theorem, whose proof can be found in \cite{Grava2020}, one can deduce that the seeds of even powers of the trace are bounded from below, see Remark \ref{rem: toda local potential}.
	
		\begin{theorem}[cf. Theorem 3.1 \cite{Grava2020}]
	\label{thm:struct}
		For any {$1 \leq m \leq N-1$}, consider the matrix $L$ given by \eqref{eq:L_toda}. One has 
\begin{equation}
		\label{Jm.sum}
		\Tr{L^m}=  \sum_{j=1}^N h_{j}^{(m)} \, ,
		\end{equation}		
		where $ h_{j}^{(m)}:= [L^m]_{jj}$ is given explicitly by
		\begin{equation}\label{eq:general_super_motzkin}
 h_{j}^{(m)} (\bp,\br)= \sum_{(\bn,\bq)\in \cA^{(m)}}  \rho^{(m)}(\bn,\bq) \prod_{i = -\wt m }^{\wt m-1} a_{j+1}^{q_i} \prod_{i = -\wt m+1 }^{\wt m -1} b_{j+i}^{2n_i} \, ,
		\end{equation}
where  it is understood $a_j \equiv a_{j \mod N +1}, \, b_j \equiv b_{j \mod N+1}$ and  $\cA^{(m)}$ is the set

		\begin{equation}
		\label{cAm}
			\begin{split}
			\cA^{(m)} := \Big\{(\bn,\bq) \in \N^{\mathbb{Z}}_0 \times \N^{\mathbb{Z}}_0 \ \colon \ \ \ 
			& \sum_{i= -\wt m }^{\wt m-1} \left(2n_i + q_i\right) = m  , \\
& \forall i \geq 0, \ \ \ n_i = 0 \Rightarrow n_{i+1} = q_{i+1} = 0,  \,  
\\
& \forall i < 0, \ \ \ n_{i+1} = 0 \Rightarrow n_{i}= q_i = 0  
			\Big\}.
			\end{split}
		\end{equation}
		The quantity  $\wt m := \floor{m/2}$, $\N_0=\N\cup\{0\}$
		and $\rho^{(m)}(\bn, \bq) \in \N $ is 
		given by 
		\begin{align}
		\label{rhom}
								\rho^{(m)}(\bn,\bq) := &\binom{n_{-1} + n_0 + q_0}{q_0}\binom{n_{-1} + n_0}{n_0}
	\prod_{i=-\wt m \atop i \neq -1}^{ \wt m -1}\binom{n_i + n_{i+1} +q_{i+1} -1}{q_{i+1}}\binom{n_i + n_{i+1} -1}{n_{i+1}} \, .
\end{align}	

	\end{theorem}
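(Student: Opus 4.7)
The plan is to interpret $[L^m]_{jj}$ as a weighted sum over closed walks of length $m$ on the graph underlying the periodic Jacobi matrix $L$. Expanding
$$[L^m]_{jj}=\sum_{i_1,\ldots,i_{m-1}}L_{j,i_1}L_{i_1,i_2}\cdots L_{i_{m-1},j},$$
only sequences $(i_0=j,i_1,\ldots,i_{m-1},i_m=j)$ such that consecutive indices are equal or differ by $\pm 1 \bmod N$ contribute, i.e.\ closed walks of length $m$ on the cycle $C_N$ with self-loops. The weight of such a walk is $\prod a_{i_k}^{\text{(loops at }i_k)}\prod b_e^{\text{(traversals of }e)}$. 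The assumption $m\leq N-1$ is essential: it prevents a walk of length $m$ from wrapping around the cycle, so each walk lives on a finite path centered at $j$, and one can unambiguously label its edges by integers $i$ relative to $j$.

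Next I would introduce the profile of a walk: let $n_i$ be the number of traversals of the edge $\{j+i,j+i+1\}$ in one direction (equal to the traversals in the other direction since the walk is closed), and let $q_i$ be the number of self-loops used at vertex $j+i+1$. The walk's weight is then $\prod_{i}a_{j+i+1}^{q_i}\prod_{i}b_{j+i}^{2n_i}$, matching the product in \eqref{eq:general_super_motzkin}. The length constraint $\sum (2n_i+q_i)=m$ is immediate. The support constraints in \eqref{cAm} encode connectivity: since the walk starts at $j$ and must return, if $n_{i+1}=0$ for $i\geq 0$ then no edge further to the right is ever used, forcing $n_{i'}=q_{i'}=0$ for $i'>i$ as well; the negative index case is symmetric. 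This isolates the admissible support to an interval containing the origin.

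The heart of the argument is the enumerative identity $\rho^{(m)}(\bn,\bq)=\#\{\text{walks with profile }(\bn,\bq)\}$. One way to organize the counting is vertex-by-vertex: at an interior vertex $j+i+1$ the walk makes a total of $n_i+n_{i+1}+q_{i+1}$ visits, consisting of $n_i$ arrivals from the left, $n_{i+1}$ arrivals from the right, and $q_{i+1}$ self-loops (each visit being followed by one of three step-types, chosen so that the walk is consistent). The number of ways to interleave these step-types, once one fixes which vertex the walk returns to and accounts for the "first visit" that is forced by the global structure, gives the two binomials $\binom{n_i+n_{i+1}+q_{i+1}-1}{q_{i+1}}\binom{n_i+n_{i+1}-1}{n_{i+1}}$. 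The $-1$'s and the distinguished factor at $i=-1$ arise from fixing the starting vertex at $j$ (which breaks a cyclic symmetry present at the starting vertex, but absent at interior ones). Multiplying across all vertices in the support yields the product form of $\rho^{(m)}$.

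Two formalisations of this counting are natural. The cleanest is a bijective one: split the walk at its visits to $j$ into an ordered sequence of excursions, each lying strictly to one side of $j$; an inductive decomposition peeling off the outermost vertex of an excursion reduces the count at level $i$ to the count at level $i\pm 1$, and the recursion exactly produces the claimed binomial factors. Alternatively one may invoke the BEST-type transfer-matrix count of Eulerian circuits in the multigraph whose edge multiplicities are $(2n_i,q_i)$; on a linear (path) graph with a prescribed starting vertex, this specializes to the displayed product of binomials. The main obstacle is this combinatorial counting: keeping track of the orientation of edge traversals, the placement of self-loops, and the boundary effect at $j$ requires care, but once the inductive peeling or transfer-matrix argument is set up the formula \eqref{rhom} follows mechanically. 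Summing $h_j^{(m)}$ over $j=1,\ldots,N$ then gives the decomposition \eqref{Jm.sum}.
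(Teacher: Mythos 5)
The paper does not actually prove Theorem \ref{thm:struct}; it cites it verbatim from \cite{Grava2020}, where it is obtained by enumerating what those authors call super-Motzkin paths. Your closed-walk interpretation is the same object in different language: expanding $[L^m]_{jj}$ gives a sum over closed walks of length $m$ on $C_N$ with self-loops, the hypothesis $m\leq N-1$ confines each walk to an interval around $j$, and recording the displacement from $j$ at each step turns the walk into a bounded lattice path. Classifying paths by the profile $(\bn,\bq)$ and identifying $\rho^{(m)}$ as the number of paths with a given profile is exactly the strategy of the cited reference, so the scaffolding of your proposal is correct and matches the source.

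The genuine gap is that the central enumerative identity, $\rho^{(m)}(\bn,\bq)=\#\{\text{closed walks at }j\text{ with profile }(\bn,\bq)\}$, is asserted rather than established. You claim the binomials of \eqref{rhom} ``follow mechanically'' from an inductive peeling or a BEST-type count, but you never carry either through, and the explanation you do give is garbled: you attribute the $-1$ in $\binom{n_i+n_{i+1}+q_{i+1}-1}{q_{i+1}}\binom{n_i+n_{i+1}-1}{n_{i+1}}$ to ``breaking a cyclic symmetry at the starting vertex,'' but the $-1$ occurs at the \emph{interior} vertices, not at the root. The actual mechanism is the opposite of what you describe: at an interior vertex $v$ on the right of $j$, the first arrival is forced to come from the $j$-side and the last departure is forced to go back toward $j$, so one of the $n_i+n_{i+1}$ departures is not free, which is where the $-1$ comes from; at the root $j$ the walk starts there at time $0$, no arrival is forced, and all $n_{-1}+n_0+q_0$ departures are free, giving the un-shifted binomials in the $i=-1$ factor. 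Your inductive-peeling idea (splitting into excursions, peeling the extremal vertex, and reducing the support interval) is the right way to make this rigorous, but until it is actually executed and shown to reproduce \eqref{rhom} with the correct placement of the $-1$'s, the heart of the theorem is missing from the proposal.
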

The combination of these results leads to the following corollary
	
	\begin{corollary}
	\label{cor:struct}
	Fix $m\in \N$, and consider the matrix $L$ \eqref{eq:L_toda}. Then for $N$ big enough, there exists some $k=k(m)\in \N$, such that $\Tr{L^m}$ is a $k$-circular function, with seed $V$ and weed $\widetilde V$. Furthermore, both $V,\widetilde V$ are bounded from below, away from $-\infty$. Furthermore, for $m$ even, for any local polynomial $u(\ba,\bb)$ with diameter $\td < k$ the function $|u|^se^{-V},\, s\in \mathbb{N}$ is bounded from above.
	\end{corollary}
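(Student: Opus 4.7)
The plan is to leverage the explicit formula from Theorem \ref{thm:struct}, which writes $\Tr{L^m} = \sum_{j=1}^N h_j^{(m)}$ with each $h_j^{(m)}$ a polynomial in the variables $(a_{j+i},b_{j+i})_{|i|\le \wt m}$. Since the support of $h_j^{(m)}$ has diameter at most $m$, any choice of block size $k=k(m)>m$ ensures that each $h_j^{(m)}$ touches at most two consecutive blocks $\bx_r=(x_{(r-1)k+1},\dots,x_{rk})$. The seed $V(\bx_r,\bx_{r+1})$ will be constructed by collecting, for each $r$, the $h_j^{(m)}$'s whose support lies entirely in blocks $r$ and $r+1$ (using a fixed convention, e.g.\ attaching $h_j^{(m)}$ to the pair whose left block contains $j$). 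The remaining finitely many terms --- those crossing the wrap-around between blocks $M$ and $1$ in the periodic chain, together with those involving the leftover variables $x_{kM+1},\dots,x_N$ --- are collected into the weed $\wt V$. This yields the $k$-circular structure directly.

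For the lower bound on $V$ and $\wt V$ in the case $m$ even, the argument is immediate: $L^m=(L^{m/2})^\intercal L^{m/2}$ is positive semidefinite, so each $h_j^{(m)}=\sum_\ell (L^{m/2})_{j\ell}^2\ge 0$, and hence $V$ and $\wt V$ are nonnegative as finite sums of nonnegative diagonal entries. For $m$ odd, the natural seed is typically not bounded below (e.g.\ for $m=1$, $V=\sum a_j$ is unbounded below), so the approach is to combine $V$ with a small constant multiple of the nonnegative seed of $\Tr{L^{m+1}}$: exploiting the non-uniqueness of the seed (Definition \ref{def:circular}) together with Young-type inequalities of the form $|a|^m\le \epsilon |a|^{m+1}+C_\epsilon$ applied monomial-by-monomial in \eqref{eq:general_super_motzkin} yields a uniform lower bound.

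For the third assertion (with $m$ even), formula \eqref{eq:general_super_motzkin} shows that $V(\bx_r,\bx_{r+1})$ contains, with strictly positive coefficients $\rho^{(m)}$, the monomials $a_j^m$ (arising from the multi-index $q_0=m$, all other entries zero) and $b_j^{m}$ (arising from $n_0=m/2$, all other entries zero) for every index $j$ lying in the two blocks; in particular, $V\to+\infty$ whenever any of its arguments goes to infinity. A local polynomial $u(\ba,\bb)$ with diameter less than $k$ depends only on a uniformly bounded number of variables inside at most two consecutive blocks, so $|u|^s$ has polynomial growth in the block variables. Since $e^{-V}$ decays super-polynomially along every unbounded direction and $u,V$ are continuous (hence locally bounded) on compact sets, one concludes that $|u|^s e^{-V}$ is bounded on $X^{2k}$.

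The main obstacle is the $m$-odd case of the lower bound: a single $\Tr{L^m}$ in isolation need not admit a bounded-below seed, so the claim becomes truly useful only once combined with the rest of the full potential $\Tr{\Re P(L)}$, where the even leading term of $P$ provides the required domination over all lower-order odd contributions. This is exactly what is needed in Remark \ref{rem:Toda_proof}, and the combination is straightforward once the monomial structure of the individual $h_j^{(m)}$ provided by Theorem \ref{thm:struct} is in hand.
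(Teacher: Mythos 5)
Your construction of the seed and weed by partitioning the $h_j^{(m)}$ into block pairs is correct and is the same strategy as the paper (cf.\ Corollary \ref{cor:nice} in Appendix \ref{app:circ}), though the precise assignment rule needs a little care: you must attach $h_j^{(m)}$ to the block pair that fully contains its \emph{support} $\{j-\wt m,\ldots,j+\wt m\}$, not simply the block containing $j$, else the left tail of $h_j^{(m)}$ spills into the block $r-1$. The positive-semidefiniteness argument for the lower bound when $m$ is even --- $L^m = (L^{m/2})^\top L^{m/2}$ so $h_j^{(m)} = \sum_\ell (L^{m/2})_{j\ell}^2 \geq 0$ --- is clean, correct, and in fact more elementary and self-contained than the paper's reference to super-Motzkin path combinatorics; this is a genuine improvement.

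There is, however, a genuine gap in your treatment of the $m$-odd lower bound, and in fact you contradict yourself. You first claim that ``combining $V$ with a small constant multiple of the nonnegative seed of $\Tr{L^{m+1}}$'' together with the Young inequality $|a|^m \le \epsilon|a|^{m+1} + C_\epsilon$ ``yields a uniform lower bound.'' This cannot work: the freedom in choosing a seed (Definition \ref{def:circular}) is the freedom to redistribute terms between $y$ and $\wt y$ while keeping the \emph{sum} equal to $\Tr{L^m}$. Adding $\epsilon$ times the seed of $\Tr{L^{m+1}}$ changes the sum to $\Tr{L^m} + \epsilon\Tr{L^{m+1}}$, and the surplus $\epsilon\Tr{L^{m+1}}$ cannot be absorbed into the weed, since the weed depends only on $O(1)$ many coordinates. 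Your own example $m=1$ already shows that \emph{no} bounded-below seed can exist: if $V \geq -C$ and $\wt V \geq -C$ then $\sum_j V + \wt V \geq -NC/k$, but $\Tr{L} = \sum a_j$ is unbounded below at rate $N$. Your final paragraph correctly identifies the resolution --- the bound is really a statement about the seed of the full potential $\Tr{\Re P(L)}$ with $P$ of even degree and positive leading coefficient, which is how it is used in Remark \ref{rem:Toda_proof} --- but that does not prove the corollary as literally stated. You should either note explicitly that the ``bounded from below'' claim should be read with the restriction to $m$ even (as the third assertion already is), or prove the statement for the seed of $\Tr{\Re P(L)}$ rather than of the individual power $\Tr{L^m}$, where the domination $P(x) \geq C + cx^{d}$, $d$ even, makes your Young-type argument legitimate at the level of the combined seed.

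A secondary point on the third assertion: the fact that $V$ contains $a_j^m$ and $b_j^m$ with positive coefficients holds only for those $j$ whose $h_j^{(m)}$ was actually assigned to the block pair in question; for indices near the boundary of the pair, $V$ may not grow in those coordinates. This is harmless provided one shifts $u$ so that its support lies within the ``controlled'' range, which is always possible by cyclicity once $\td < k$, but it should be stated.
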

	
	\begin{remark}
 \label{rem: toda local potential}
    We notice that one can prove that the function $V,\widetilde V$ are lower bounded away from $-\infty$ using the explicit formula in Theorem \ref{thm:struct} or applying the properties of super-Motzkin paths used for the proof of the Theorem in \cite{Grava2020}.
	\end{remark}

    We apply the previous Corollary to the Gibbs measure of the Toda lattice \eqref{eq:Gibbs_toda}, so it can be written as
    
	\begin{equation}
	\begin{split}
	    	\di \mu_{T} &=    \frac{1}{Z_{N}^T(\beta,P)}   \prod_{j=1}^N b_j^{2\alpha -1}\mathbbm{1}_{b_j>0}\exp\Big(-\sum_{j=1}^M V(\ba_j,\bb_j,\ba_{j+1},\bb_{j+1})\\ & -  \wt V(\ba_M,a_{kM+1},\ldots, a_{kM+\ell},\ba_1, \bb_M, b_{kM+1},\ldots, b_{kM+\ell},\bb_1) \Big) \di \ba \,  \di \bb \,.
	\end{split}
	\end{equation}
    Now we can apply Theorem \ref{THM:FINAL_CLT_INTRO}, see Remark \ref{rem:Toda_proof}, and we deduce the following
	
		\begin{corollary}[CLT for the Toda lattice]
	Consider the Lax matrix L \eqref{eq:L_toda} of the Toda lattice distributed according to the Generalized Gibbs Ensemble \eqref{eq:Gibbs_toda}, and assume that $P(x)$ is a polynomial of even degree with positive leading order coefficient. Then, defining the Free energy $\cF_{T}(\alpha,P)$ as
	
	\begin{equation}
 \label{freeEnergyToda}
	    \cF_{T}(\alpha,P) = -\lim_{N\to\infty}\frac{1}{N}\ln(Z^{T}_N(\alpha,P))\,,
	\end{equation}
	for all $j\in\N$ fixed, we have the following weak limit 
	
	\begin{equation}
	    \lim_{N\to\infty}  \frac{\Tr{L^j} - \meanval{\Tr{L^j}}  }{\sqrt{N}} \rightharpoonup \cN(0,\sigma^2)\,.
	\end{equation}

        Where 
        \begin{equation}
            \meanval{\Tr{L^j}} = iN \partial_t \cF_{T}(\alpha,P+itx^j)_{\vert_{t=0}}\,, \quad \sigma^2 = \vert\partial^2_t \cF_{T}(\alpha,P+itx^j)_{\vert_{t=0}}\vert\,.
        \end{equation}
	
	\end{corollary}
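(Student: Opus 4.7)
The approach is to apply Theorem \ref{THM:FINAL_CLT_INTRO} directly, with the CLT and the identification of the centering and variance as derivatives of the free energy $\cF_T$ following from its statement; essentially all the work consists in checking the hypotheses of that theorem for the Toda GGE. First, $\mu_T$ in \eqref{eq:Gibbs_toda} is of the form $\mu^{(1)}_N$ in \eqref{type1} with $X=\R\times\R_+$, weight $F(a,b,\alpha)=b^{2\alpha-1}\mathbbm{1}_{b>0}$, and $L$ a periodic Jacobi matrix (type $1$-\textit{i}). Hypothesis \ref{polynomial} for this model is satisfied by the assumption that $P$ has even degree with positive leading coefficient. Because $F$ alone is not integrable on $X$, I follow Remark \ref{rem:Toda_proof}: since $P(x)\geq C+cx^2$ for some $c>0$, I rewrite $\mu_T$ with modified weight $\widetilde F(a,b,\alpha)=b^{2\alpha-1}\mathbbm{1}_{b>0}\,e^{-c(a^2+2b^2)/2}$ and modified potential $P(x)-\frac{c}{2}x^2$. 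The modified potential still fulfils Hypothesis \ref{polynomial}, and $\widetilde F$ satisfies Hypotheses \ref{general_assumptions} HP 1--HP 5 with $\tc=1$ and $\mathcal{O}_\varepsilon=[0,\varepsilon]^2$, as detailed in the remark.

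Second, by Lemma \ref{LEM:CIRCULAR_TRACE} both $\Tr{P(L)}$ and $\Tr{L^j}$ are circular functions of the entries of $L$. Corollary \ref{cor:struct} then furnishes, for $N$ large enough, compatible seeds $W$ of $\Tr{P(L)}$ and $h$ of $\Tr{L^j}$ with $W$ bounded below and $|h|^a e^{-W}$ bounded above for $1\leq a\leq 3$. Since $L$ is real symmetric, $\Tr{L^j}=\Tr{\Re L^j}$, so every hypothesis of Theorem \ref{THM:FINAL_CLT_INTRO} is in place. Applying it produces continuous functions $A(\alpha),\sigma^2(\alpha)$ with
\begin{equation}
\frac{\Tr{L^j}-NA(\alpha)}{\sqrt N}\rightharpoonup \cN(0,\sigma^2(\alpha)),
\end{equation}
together with the identities (points ii and iv of the theorem, using $\cF^{(1)}=\cF_T$ by \eqref{freeEnergyToda} and \eqref{eq:free_periodic}) $A(\alpha)=i\partial_t\cF_T(\alpha,P+itx^j)_{|t=0}$ and $\sigma^2(\alpha)=|\partial_t^2\cF_T(\alpha,P+itx^j)_{|t=0}|$.

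It remains to replace the centering $NA(\alpha)$ by $\meanval{\Tr{L^j}}$. Differentiating the partition function under the integral at $t=0$ gives $\meanval{\Tr{L^j}}=i\partial_t\ln Z_N^T(\alpha,P+itx^j)_{|t=0}$, and the transfer-operator analysis underlying Theorem \ref{THM:FINAL_CLT_INTRO} (see Remark \ref{rem:operatorRegularity}) provides regularity of the dominant eigenvalue in $(\alpha,t)$ that is uniform in $N$, yielding the refined estimate $\meanval{\Tr{L^j}}=NA(\alpha)+o(\sqrt N)$; Slutsky's lemma then transfers the CLT to the stated centering. The main obstacle is the boundedness requirement $|h|^a e^{-W}\leq C$ in the second step: one must pick compatible seeds whose growth is supported on the \emph{same} entries, so that the even-degree exponential decay from $W$ dominates the polynomial factor $|h|^a$. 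This is exactly where the explicit combinatorial description of the seeds furnished by Theorem \ref{thm:struct} is essential.
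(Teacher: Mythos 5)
Your proof follows essentially the same route as the paper: verify Hypotheses \ref{polynomial}--\ref{general_assumptions} via Remark \ref{rem:Toda_proof} (modified weight $\widetilde F$ absorbing a quadratic piece of $P$), invoke Lemma \ref{LEM:CIRCULAR_TRACE} and Corollary \ref{cor:struct} to get compatible seeds $W,h$ with $W$ bounded below and $|h|^a e^{-W}$ bounded, then apply Theorem \ref{THM:FINAL_CLT_INTRO}. That is exactly the paper's argument, and the checks you run are the right ones.

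The one place where you go beyond the paper's write-up is the last paragraph, where you replace the abstract centering $NA(\alpha)$ of Theorem \ref{THM:FINAL_CLT_INTRO} by the finite-$N$ expectation $\meanval{\Tr{L^j}}$. The paper simply writes $\meanval{\Tr{L^j}} = iN\partial_t\cF_T(\alpha,P+itx^j)|_{t=0}$, effectively treating this identity between a finite-$N$ expectation and $N$ times a limit quantity as exact. Strictly it is only asymptotic, so the CLT centered at the true expectation requires a bound $\meanval{\Tr{L^j}} - NA(\alpha) = o(\sqrt N)$, which you correctly observe can be extracted from the transfer-operator expansion of $\ln Z_N^T$ (the $M$-linear term matches $NA(\alpha)$ and the remainder is $O(1)$, coming from $\wt c_{k,\ell}$ plus the $o(M^{-q})$ error), followed by Slutsky. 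This is a genuine clarification of a gap the paper elides, not a different method. One small further remark on your "main obstacle": the boundedness $|h|^a e^{-W}\leq C$ does not actually require matching the degrees of $P$ and $x^j$, since $W$, built from an even-degree $P$ with positive leading coefficient, has exponential decay that dominates any polynomial; the delicate point, which Theorem \ref{thm:struct} supplies, is that the seed $W$ genuinely controls all the entries appearing in $h$, which is why the seeds must be taken compatible, as you say.
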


    Moreover, we can also apply Theorem \ref{THM:SUSCEPTIBILITY} to compute the correlation between the conserved fields at time zero, indeed the theorem immediately implies that
	   
	   \begin{equation}
    \label{eq:susc_in_free}
	       \lim_{N\to\infty} \frac{\meanval{\Tr{L^j}\Tr{L^n}} - \meanval{\Tr{L^j}}\meanval{\Tr{L^n}}}{N} = \partial_{t_1}\partial_{t_2} \cF_{T}(\alpha,P+it_1x^j +it_2x^n)_{\vert_{t_1,t_2=0}}\,,
	   \end{equation}
	   where the mean value is taken with respect to the Gibbs measure of the Toda lattice \eqref{eq:Gibbs_toda}. We notice that this implies that we can compute the susceptibility matrix of the Toda lattice \eqref{eq:susceptibility} in terms of the derivative of the Free energy.

\subsubsection{The Toda chain's currents}

Since the conserved fields are local quantities, they must satisfy a local conservation law. Following the notation of \cite{spohn2021hydrodynamic}, we define 

\begin{equation}
    Q^{[n],N}_j = L^n_{j,j} \,,
\end{equation}
where $L\in \textrm{Mat}(N,\R)$ is \eqref{eq:L_toda}. We can easily compute the evolution equation for such quantities as

\begin{equation}
    \frac{d}{d t} Q^{[n],N}_j = (BL^n-L^nB) = b_{j-1}L^n_{j,j-1} - b_{j}L^n_{j+1,j}\,.
\end{equation}
Defining $J^{[n],N}_j = b_{j-1}L^n_{j,j-1} $, we have 
$$ \frac{d}{d t} Q^{[n],N}_j = J^{[n],N}_j - J^{[n],N}_{j+1} $$
and we say that $J^{[n],N}_j$ is the current of the local conserved field $Q^{[n],N}_j$. In particular, defining the matrix $L^{\downarrow}$ as
\begin{equation}
    L^\downarrow_{i,j} = \begin{cases}
        L_{i,j} \quad \text{if} \; j < i \; \text{or}\; i=1,j=N \\
        0 \quad \text{otherwise}
    \end{cases}\,
\end{equation}
we can recast the previous definition as 
\begin{equation}
    J^{[n],N}_j =  (L^nL^\downarrow)_{j,j}\,.
\end{equation}
We notice that both $Q^{[n],N}_j$ and $J^{[n],N}_j$ depend on time, and we adopt the convention that if not explicitly written the evaluation is at time $0$. Furthermore, we define

\begin{equation}
	\label{eq:total}
    Q^{[n],N} = \sum_{j=1}^N Q^{[n],N}_j\,, \qquad  J^{[n],N} = \sum_{j=1}^N J^{[n],N}_j\,,
\end{equation}
and we refer to $Q^{[n],N}$ as the $n^{th}$-conserved field, and to $J^{[n],N}$ as the $n^{th}$-total current.

The evaluation of the expected values of both the currents $J^{[n],N}_j$ and the total current  $J^{[n],N}$ according to the Generalized Gibbs ensemble \eqref{eq:Gibbs_toda} is one of the crucial steps to apply the theory of Generalized Hydrodynamics to the Toda lattice, as it is explained in \cite{spohn2021hydrodynamic}. In this manuscript, the author used some heuristic arguments to explicitly derive the expression for these quantities, here we rigorously justify his argument applying Theorem \ref{thm:decay}. 

First, we extend the definition of $Q_j^{[n],N}$ and $J_j^{[n],N}$ for $n=0$, setting $Q_j^{[0],N} = r_j$, and $J_j^{[0],N} = -p_j = -Q_j^{[1],N}$. We notice that $\sum_{j=1}^N J_j^{[0],N} = -\sum_{j=1}^N Q_j^{[1],N}$ is still a conserved field. We are now in position to show how to compute the limiting Toda average current

\begin{equation}
    \lim_{N\to \infty}\frac{1}{N}\mathbb{E}\left[ J^{[n],N}\right]\,,
\end{equation}
in terms of the susceptibility matrix \eqref{eq:susceptibility} of the Toda chain, so in particular of the derivative of the Free energy \eqref{eq:susc_in_free}. Indeed, we prove the following:

\begin{lemma}
    Consider the Lax matrix L \eqref{eq:L_toda} of the Toda lattice distributed according to the Generalized Gibbs Ensemble \eqref{eq:Gibbs_toda}, and assume that $P(x)$ is a polynomial of even degree with positive leading order coefficient. Then, for any fixed $n\in \N$, and $\alpha \in \R_+$ defining the total currents $J^{[n],N}$  as in \eqref{eq:total} we have the following equality
    
    \begin{equation}
    	\lim_{N\to\infty}\frac{1}{N}\E\left[J^{[n],N}\right] = \int_0^{\alpha}\partial_{t_1}\partial_{t_2} \cF_{T}(s,P+it_1x +it_2x^n)_{\vert_{t_1,t_2=0}}\di s.
    \end{equation}
\end{lemma}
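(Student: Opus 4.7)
The strategy is to identify both sides with $\alpha\,C^{\beta}_{1,n}(\alpha)$, where $C^{\beta}_{1,n}(\alpha) := \partial_{t_1}\partial_{t_2}\cF^{(2)}(\alpha, P + it_1 x + it_2 x^n)\vert_{t_1 = t_2 = 0}$ is the cross-susceptibility in the high-temperature real $\beta$-ensemble. For the right-hand side, combining Theorem~\ref{THM:SUSCEPTIBILITY} with the free-energy identity $\cF_{T}(\alpha,\cdot) = \partial_\alpha[\alpha\,\cF^{(2)}(\alpha,\cdot)]$ of Theorem~\ref{THM:FINAL_CLT_INTRO}(i) rewrites the integrand as $C_{1,n}^T(s) = \partial_s\bigl(s\,C^{\beta}_{1,n}(s)\bigr)$, and the fundamental theorem of calculus yields $\int_0^\alpha C_{1,n}^T(s)\,ds = \alpha\,C^{\beta}_{1,n}(\alpha)$.

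For the left-hand side, cyclic invariance of $\mu_{T}$ under spatial shifts gives $\tfrac{1}{N}\mathbb{E}^{T}[J^{[n],N}] = \mathbb{E}^{T}[J_1^{[n],N}]$, so it suffices to study $F(\alpha) := \lim_{N\to\infty}\mathbb{E}_\alpha^{T}[J_1^{[n],N}]$. The symmetry $L^\intercal = L$, cyclicity of the trace, and the decomposition $L = L^\downarrow + \mathrm{diag}(L) + L^\uparrow$ with $L^\uparrow = (L^\downarrow)^\intercal$ yield the algebraic identity
\begin{equation*}
2\,J^{[n],N} \;=\; \Tr{L^{n+1}} \;-\; \sum_{j=1}^{N} a_j\,(L^n)_{jj},
\end{equation*}
whence $F(\alpha) = \tfrac12\bigl(A_{n+1}(\alpha) - B_n(\alpha)\bigr)$, where $A_{n+1}(\alpha) := \lim_N\tfrac{1}{N}\mathbb{E}^{T}[\Tr{L^{n+1}}]$ is furnished by Theorem~\ref{THM:FINAL_CLT_INTRO}, and $B_n(\alpha) := \lim_N\tfrac{1}{N}\mathbb{E}^{T}[\sum_j a_j(L^n)_{jj}]$ exists by the transfer-operator analysis of Remark~\ref{rem:operatorRegularity} applied to this local circular sum.

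It now remains to prove $F(\alpha) = \alpha\,C^{\beta}_{1,n}(\alpha)$, which we do by establishing $F(0^+) = 0$ and the differential identity $F'(\alpha) = C_{1,n}^T(\alpha)$. The boundary condition follows since each $b_j \Rightarrow 0$ as $\alpha\downarrow 0$ (compatible with the scaling $\|F(\cdot,\alpha)\|_{1}\sim d_1\alpha^{-1}$ of the Toda weight from Remark~\ref{rem:Toda_proof}), while $J_1^{[n],N} = b_0(L^n)_{1,0}$ contains at least two off-diagonal factors. For the derivative, we interchange $\partial_\alpha$ with the $N\to\infty$ limit---justified by the analyticity in $\alpha$ of the dominant eigenvalue of the transfer operator (Remark~\ref{rem:operatorRegularity})---and use $\partial_\alpha\log\mu_{T} = 2\sum_k\log b_k$ to obtain
\begin{equation*}
F'(\alpha) \;=\; 2\,\lim_{N\to\infty}\tfrac{1}{N}\,\mathrm{Cov}^{T}\!\bigl(J^{[n],N},\,\textstyle\sum_k\log b_k\bigr).
\end{equation*}
Theorem~\ref{thm:decay} ensures this limit exists as a convergent sum of exponentially decaying local covariances. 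Since $-2\sum_k\log b_k = \sum_k r_k = Q^{[0]}$ is the zeroth conserved field (the total stretch), with associated local current $J_j^{[0]} = a_j$ so that $\sum_j J_j^{[0]} = \Tr{L}$, an Onsager-type reciprocity---a discrete space-time integration by parts that exploits the invariance of $\mu_{T}$ under the Toda Hamiltonian flow together with the telescoping $\dot Q_j^{[m]} = J_j^{[m]} - J_{j+1}^{[m]}$---trades the covariance of $J^{[n]}$ with $Q^{[0]}$ against that of $J^{[0]} = \Tr{L}$ with $Q^{[n]} = \Tr{L^n}$, yielding $\lim_N\mathrm{Cov}^{T}(J^{[n],N}, Q^{[0]})/N = -C_{1,n}^T(\alpha)$, hence $F'(\alpha) = C_{1,n}^T(\alpha)$.

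\textbf{The main obstacle} is making the Onsager-type reciprocity above rigorous: one must combine the invariance of $\mu_{T}$ under the Toda flow with the exponential decay of correlations (Theorem~\ref{thm:decay}) to control the boundary and cross-terms arising when telescoping local currents and reindexing conserved fields across the ring.
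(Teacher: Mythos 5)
Your proposal follows the same route as the paper's proof: reduce by cyclicity to $\E[J^{[n],N}_1]$, differentiate in $\alpha$ to produce $-\sum_j\mathrm{Cov}(J^{[n],N}_1;Q^{[0],N}_j)$, exchange the pair $(J^{[n]},Q^{[0]})$ for $(J^{[0]},Q^{[n]})$ via the space–time invariance of $\mu_T$ combined with the telescoping $\dot Q_j^{[m]}=J_j^{[m]}-J_{j+1}^{[m]}$ and the exponential decay of correlations (Theorem \ref{thm:decay}), identify $J^{[0]}_j=a_j=Q^{[1]}_j$ so the limit becomes the susceptibility $\partial_{t_1}\partial_{t_2}\cF_T$, then integrate from $0$ to $\alpha$ using the boundary condition $F(0^+)=0$. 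The step you flag as the "main obstacle" — the Onsager-type reciprocity — is precisely what the paper executes in its display \eqref{eq:derivative_trick} and the two lines after; you have described its mechanism correctly but not written the telescoping computation. Two points you add that the paper does not make explicit and that genuinely tighten the argument: the identity $2J^{[n],N}=\Tr{L^{n+1}}-\sum_j a_j(L^n)_{jj}$ (obtained from the decomposition $L=L^\downarrow+\diag{\ba}+(L^\downarrow)^\intercal$ and cyclicity of the trace), which cleanly shows $\lim_N\E[J^{[n],N}_1]$ exists via the CLT machinery, and the transfer-operator analyticity in $\alpha$ from Remark \ref{rem:operatorRegularity} to justify interchanging $\partial_\alpha$ with the $N\to\infty$ limit — the paper only asserts a uniform bound on $\E[J^{[n],N}_1]$, which by itself is not enough for that interchange. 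So: same argument, with the central computational step left as a sketch, but with two helpful refinements the paper leaves implicit.
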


\begin{proof}
    
In view of the cyclic structure of the measure $\mu_T$ and of the total current, we deduce that

\begin{equation}
    \frac{1}{N}\meanval{J^{[n],N}} = \meanval{J^{[n],N}_1}\,.
\end{equation}
Furthermore, for any fixed $N$, we deduce, by differentiating with respect to the parameter $\alpha$, the following equality

\begin{equation}
\label{eq:mean_cur_der}
    \partial_\alpha  \meanval{J^{[n],N}_1} =-\mathrm{Cov}\left(J^{[n],N}_1\ ;\ \sum_{j=1}^N r_j \right)=-\sum_{j=1}^N \mathrm{Cov}\left(J^{[n],N}_1\ ;\ Q^{[0],N}_j \right)\,,
\end{equation}

where we defined for any functions $f,g\in L^2(X^N,\mu_T)$  

\begin{equation}
    \textrm{Cov}(f\,;\,g) = \meanval{fg} - \meanval{f}\meanval{g}\,.
\end{equation}

We show now that the following limits coincide

\begin{equation}
\label{eq:switch}
    \lim_{N\to\infty}\sum_{j=1}^N \mathrm{Cov}\left(J^{[n],N}_1\, ;\, Q^{[0],N}_j \right) = \lim_{N\to\infty}\sum_{j=1}^N \mathrm{Cov}\left(J^{[0],N}_1\, ;\, Q^{[n],N}_{N-j+2} \right)\,.
\end{equation}

Indeed, for any $n,m\geq 0$ and $t\in \R$
\begin{equation}
\label{eq:derivative_trick}
    \begin{split}
           \mathrm{Cov}\left( J_{j+1}^{[n],N}(t)-J_j^{[n],N}(t)\ ;\ Q^{[m],N}_1(0) \right) &= -\frac{d}{dt}\mathrm{Cov}\left(Q^{[n],N}_j(t)\ ;\ Q^{[m],N}_1(0) \right) \\
    &= -\frac{d}{dt}\mathrm{Cov}\left( Q^{[n],N}_1(0)\ ;\ Q^{[m],N}_{N-j+2}(-t)\right)\\
    &=\mathrm{Cov}\left( Q^{[n],N}_1(0)\ ;\ J^{[m],N}_{N-j+3}(-t)-J^{[m],N}_{N-j+2}(-t)\right),
    \end{split}
\end{equation}

where we used that $s\mapsto Q^{[n],N}_j(t+s)Q^{[m],N}_1(s)$ is constant in law under the Toda dynamic, and the periodicity of the matrix $L$ \eqref{eq:L_toda}. Denoting the difference operator $\partial_j f(j) = f(j+1)-f(j)$, equation \eqref{eq:derivative_trick} shows that 

\begin{equation}
    \partial_j\left(\mathrm{Cov}\left(J_j^{[n],N}(t)\ ;\ Q^{[m],N}_1(0)\right)-\mathrm{Cov}\left(Q^{[n],N}_1(0) ;\ J^{[m],N}_{N-j+2}(-t)\right) \right) =0
\end{equation}
Evaluating the previous expression at $t=0$, we deduce that there is some constant $c_N$, independent of $j$, such that
\begin{equation}
    \mathrm{Cov}\left(J_j^{[n],N}\ ;\ Q^{[m],N}_1\right) - \mathrm{Cov}\left(Q^{[n],N}_1 ;\ J^{[m],N}_{N-j+2}\right) = c_N\,.
\end{equation}
Furthermore, since both $Q^{[n],N}_j$, and $J^{[m],N}_j$ are local quantities, in view of Theorem \ref{thm:decay}, we deduce that $\lim_{N\to\infty} Nc_N =0$.
So, evaluating the previous expression for $m=0$, we deduce \eqref{eq:switch}. Thus, in the large $N$ limit, we can recast \eqref{eq:mean_cur_der} as
\begin{equation}
    \lim_{N\to\infty}\partial_\alpha \E\left[J^{[n],N}_1\right]=-\lim_{N\to\infty} \sum_{j=1}^N \mathrm{Cov}\left(J^{[0],N}_1\ ;\ Q_j^{[n],N}\right) = \lim_{N\to\infty} \sum_{j=1}^N \mathrm{Cov}\left(Q^{[1],N}_1\ ;\ Q_j^{[n],N}\right) \,.
\end{equation}
Moreover, in view of the periodicity properties ot the conserved fields and \eqref{eq:susc_in_free}

\begin{equation}
	 \lim_{N\to\infty}\partial_\alpha \E\left[J^{[n],N}_1\right] =  \lim_{N\to\infty}\frac{1}{N} \mathrm{Cov}\left(Q^{[1],N}\ ;\ Q^{[n],N}\right) =  \partial_{t_1}\partial_{t_2} \cF_{T}(\alpha  ,P+it_1x+ it_2x^n)_{\vert_{t_1,t_2=0}}\,.
\end{equation}
Noticing that $\lim_{\alpha\to0}\E\left[J^{[n],N}_1\right]=0$, and that we can always uniformly bound $\E\left[J^{[n],N}_1\right]$ by a constant independent of $N$, the previous equation implies that
\begin{equation}
 \lim_{N\to\infty}\E\left[J^{[n],N}_1\right] = \int_0^{\alpha}\partial_{t_1}\partial_{t_2} \cF_{T}(s,P+it_1x +it_2x^n)_{\vert_{t_1,t_2=0}}\di s.
\end{equation}
So, we conclude.
\end{proof}

\paragraph{The real $\beta$-ensemble in the high-temperature regime.}
	
	We consider the real $\beta$-ensemble introduced in Section \ref{subsection Toda} as the probability measure on the real line 
 \begin{equation}
    \label{eq:gaussianBeta}
    \di \mathbf{P}^{\beta,V}_N(\lambda_1,\ldots, \lambda_N)=\frac{1}{\mathbf{Z}^{\beta,V}_N}\prod_{1\leq i < j \leq N}|\lambda_i-\lambda_j|^\beta e^{-\sum_{j=1}^N V(\lambda_j)}\di \boldsymbol{\lambda}\,,
\end{equation}
where $\beta>0$ and $V$ is a continuous function such that the partition function
	$$ \mathbf{Z}^{\beta,V}_N = \int_{\R^N}\prod_{i<j}\vert\lambda_j -\lambda_i\vert^\beta e^{-\sum_{j=1}^NV(\lambda_j)} \di \boldsymbol{\lambda} $$
	is finite. This is the case if $V$ grows to infinity fast enough, namely if for some $\beta'>\max(1,\beta)$,
\begin{equation}
\label{condition de base sur le potentiel}
    \liminf_{|x|\to \infty}\frac{V(x)}{N\beta'\ln|x|}>1\,,
\end{equation}
see \cite[equation (2.6.2)]{AGZ}.
     
     Dumitriu and Edelman showed in \cite{dued} that the $\beta$-ensemble admits a tridiagonal representation
	\begin{equation}
	\label{eq:gaussian_matrix}
	    H = \begin{pmatrix}
	    a_1 & b_1 & & &0 \\
			b_1 & a_2 & b_2 & &\\
			& \ddots & \ddots & \ddots &\\
			&& \ddots & \ddots &  b_{N-1}  \\
			0 &&&  b_{N-1} &a_N
	    \end{pmatrix}\,,
	\end{equation}
	where the entries of the matrix are distributed according to the following probability measure
	\begin{equation}
	    \label{eq:gaussian_measure}
	    	    \di \mu^{\beta,V}_{N,\text{beta}} = \frac{1}{Z^\text{beta}_N(\beta,V)} \prod_{j=1}^{N-1} b_j^{\beta(N-j)-1}\mathbbm{1}_{b_j\geq 0} \exp\left( - \Tr{V(H)}\right)\di\ba\di\bb\,.
	\end{equation}
	Then, the eigenvalues of $H$ are distributed according to $\di \mathbf{P}^{\beta,V}_N$ \eqref{eq:gaussianBeta}. An important example is the case $V(x)=x^2/2$ for which we recover the classical \textit{Gaussian $\beta$-ensemble} (also called Hermite $\beta$-ensemble), see \cite[Section 2.5]{AGZ}, and the distribution $\mu^{\beta,V}_{N,\text{beta}}$ factorizes in the following way: the entries of $H$ can be seen to be independent (modulo the symmetry of the matrix), Gaussian $\mathcal{N}(0,1)$ on the diagonal, and the law of the off-diagonal elements is given by renormalized chi variables
	$$b_j \sim \frac{1}{\sqrt{2}}\chi_{(N-j)\beta}\,,$$
	where the variable $X$ is $\chi_{\kappa}$-distributed if its law is given by the density function
	$$ f(x)=\frac{x^{\kappa-1}e^{-x^2/2}}{2^{\kappa/2-1}\Gamma(\kappa/2)}\,.$$
	We are interested in the so-called \textit{high-temperature regime} for this model, specifically, we are interested in the infinite size $N$ limit, in such a way that $\beta = \frac{2\alpha}{N}$ for some $\alpha>0$. In this regime the probability distribution \eqref{eq:gaussian_measure} becomes $\mu_{N,\text{HT}}^{\alpha,V}$ \eqref{def:HTbeta}, which we recall:
		\begin{equation}
	    \label{eq:gaussian_measure_ht}
	    	    \di \mu_{N,\text{HT}}^{\alpha,V} = \frac{1}{Z^\text{HT}_N(\beta,V)} \prod_{j=1}^{N-1} b_j^{2\alpha\left(1-\frac{j}{N}\right)-1}\mathbbm{1}_{b_j\geq 0} \exp\left( - \Tr{V(H)}\right)\di\ba\di\bb\,.
	\end{equation}
This regime has drawn a lot of attention from the random matrix and statistical physics communities lately. Introducing the empirical measure by 
$$\di \nu_N = \frac{1}{N}\sum_{i=1}^N \delta_{\lambda_i}\,,$$
this model was first considered in \cite{Allez2012}, where the authors were able to compute the limiting empirical measure for this model when $V(x) = x^2/2$. Recently, Garcia-Zelada showed in \cite{Garcia} that under a general choice of $V$, the sequence of empirical measures satisfies a large deviation principle with strictly convex rate function, ensuring the convergence of $\nu_N$. Although the limiting measure is not explicit, its density $\rho^V_\alpha$ satisfies for almost every $x$ the nonlinear equation 
$$ V(x)-2\alpha\int_\R \log|x-y|\rho^V_\alpha(y)\di y + \log \rho^V_\alpha(x) = \lambda^V_\alpha$$
for some constant $\lambda^V_\alpha$, see \cite[Lemma 3.2]{GMToda} for example.

The fluctuations of the eigenvalues in the bulk and at the edge of a configuration were studied for example in \cite{benaych2015poisson,Nakano2018,Nakano2020,pakzad2018poisson,lambert2021poisson}. These fluctuations were shown to be described by Poisson statistics in this regime. With the choice $V(x)=x^2/2$, Nakano and Trinh proved in \cite{Nakano2018} a Central Limit theorem for this ensemble, namely y they proved that for smooth enough $f:\R\to \R$, the random variables
$$ \sqrt{N}\left(\int_\R f\di\nu_N - \int_\R f\rho^V_\alpha\di x \right)$$
converge towards a centred Gaussian random variable with variance depending both on $\alpha$ and $f$. In \cite{memin2023}, the authors showed this central limit theorem for general confining potentials and smooth enough, decaying at infinity test functions. In this paper, we consider the case where $V$ is a polynomial of even degree greater than $2$ as in Hypotheses \ref{polynomial}. We deduce a central limit theorem for polynomial test functions  from section \ref{section:generalTheory}.

Indeed, we notice that the matrix $H$ \eqref{eq:gaussian_matrix} is a Toeplitz-like matrix (see Appendix \ref{app:circ}), thus, in view of Lemma \ref{LEM:CIRCULAR_TRACE} and Corollary \ref{cor:struct}, we can apply Theorem \ref{THM:FINAL_CLT_INTRO} to the real $\beta$-ensemble in the high-temperature regime, thus we deduce that

	\begin{corollary}[CLT for real, high temperature $\beta$-ensemble]
	Consider the matrix representation \eqref{eq:gaussian_matrix} of the real $\beta$-ensemble in the high-temperature regime with potential $P$, polynomial of even degree with positive leading  order coefficient. Then, defining the Free energy $\cF_\mathrm{HT}(\alpha,P)$ as
	
	\begin{equation}
	    \cF_{\mathrm{HT}}(\alpha,P) = -\lim_{N\to\infty}\frac{1}{N}\ln(Z^\mathrm{HT}_N(\beta,P))\,,
	\end{equation}
	for all $j\in\N$ fixed, we have the following weak limit
	
	\begin{equation}
	    \lim_{N\to\infty}  \frac{\Tr{H^j} - \meanval{\Tr{H^j} }}{\sqrt{N}} \rightharpoonup \cN(0,\sigma^2)\,,
	\end{equation} 

    where 
    \begin{equation}
        \meanval{\Tr{H^j}} = iN\partial_t \cF_{\mathrm{HT}}(\alpha,P+itx^j)_{\vert_{t=0}} \,,\quad \sigma^2 = \vert\partial^2_t \cF_{\mathrm{HT}}(\alpha,P+itx^j)_{\vert_{t=0}}\vert\,.
    \end{equation}
	\end{corollary}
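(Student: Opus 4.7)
The plan is to apply Theorem \ref{THM:FINAL_CLT_INTRO} directly to the real $\beta$-ensemble at high temperature in its Dumitriu--Edelman tridiagonal representation, verifying one by one that the hypotheses of that theorem are met. The measure \eqref{eq:gaussian_measure_ht} fits the framework of $\mu^{(2)}_N$ from \eqref{type2} with the choice $\bx_j=(a_j,b_j)\in \R\times \R_+$, $F(a,b,\alpha)=b^{2\alpha-1}$ and $R(a,b)=\delta_0(b)$ (encoding the absence of $b_N$), which is precisely the same $F$ used for the Toda GGE. Since the potential $P$ is a polynomial of even degree with positive leading coefficient, Hypotheses \ref{polynomial} for the model of type 2-\textit{i} are satisfied.

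Next, I would verify Hypotheses \ref{general_assumptions} for $(F,P)$ by recycling the analysis of Remark \ref{rem:Toda_proof}. Since $P(x)\geq C+cx^2$ for some $c>0$, I absorb the quadratic part into the weight, replacing $F$ by $\tilde F(a,b,\alpha)=b^{2\alpha-1}e^{-\frac{c}{2}(a^2+2b^2)}$ and $P$ by the still-admissible $P-\frac{c}{2}x^2$. The verifications of HP 1--5 (integrability, domination of $\sqrt{F}$ and $\partial_\alpha\sqrt{F}$, the asymptotics $\|F(\cdot,\alpha)\|_1\sim d_1\alpha^{-1}$ as $\alpha\to 0$, the local behavior on $\mathcal{O}_\varepsilon=[0,\varepsilon]^2$, and the distributional nature of $R$) are then identical to what is carried out in Remark \ref{rem:Toda_proof} for the Toda case.

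The structural step is to check the circularity and boundedness conditions on the seed $W$ of $\Tr{P(H)}$ and the seed $h$ of $\Tr{H^s}$. The matrix $H$ in \eqref{eq:gaussian_matrix} is a type 2-\textit{i} Jacobi matrix, so Lemma \ref{LEM:CIRCULAR_TRACE} yields that $\Tr{P(H)}$ and $\Tr{H^s}$ are circular functions with some common circular index $k=k(\deg P, s)$. For $N$ large enough, the non-periodic nature of $H$ only modifies the weed (the boundary contribution absorbed into $\tilde W$); the combinatorial structure of Theorem \ref{thm:struct} applies to each interior site and produces the same polynomial expressions in $(a_j,b_j^2)$. Consequently, Corollary \ref{cor:struct} together with the Gaussian damping built into $\tilde F$ forces $|h|^a e^{-W}$ to be bounded for $1\leq a\leq 3$, as the polynomial $h$ is dominated by $e^{c(a^2+2b^2)/2}$.

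All the hypotheses of Theorem \ref{THM:FINAL_CLT_INTRO} being in place, I apply its second (type 2) statement with $s=j$. This gives continuous functions $A,\sigma^2$ such that
\begin{equation}
\frac{\Tr{H^j}-N\int_0^1 A(\alpha x)\,\di x}{\sqrt{N}}\xrightarrow[N\to\infty]{}\cN\!\left(0,\int_0^1\sigma^2(\alpha x)\,\di x\right),
\end{equation}
together with the identifications $\int_0^1 A(\alpha x)\di x=i\,\partial_t \cF_{\mathrm{HT}}(\alpha,P+itx^j)|_{t=0}$ and $\int_0^1\sigma^2(\alpha x)\di x=\partial_t^2\cF_{\mathrm{HT}}(\alpha,P+itx^j)|_{t=0}$, by items (iii) and (v) of the theorem. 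Replacing the centring by $\E[\Tr{H^j}]$ amounts to absorbing the $o(1)$ discrepancy between the deterministic centring and the expected value, which is justified a posteriori by the convergence in law together with uniform integrability coming from the Gaussian tail in $\tilde F$. The main obstacle I anticipate is making this last identification between the deterministic centring from Theorem \ref{THM:FINAL_CLT_INTRO} and the empirical mean fully rigorous, but it is precisely of the same nature as the point discussed in Remark \ref{remark moments}, so the required moment convergence will follow from the framework already established.
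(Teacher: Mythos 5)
Your proposal is correct and follows essentially the same route as the paper: cast $\mu^{\alpha,V}_{N,\mathrm{HT}}$ as a type-2 measure with $F(a,b,\alpha)=b^{2\alpha-1}$ and $R(a,b)=\delta_0(b)$, recycle the verification of Hypotheses~\ref{general_assumptions} from Remark~\ref{rem:Toda_proof}, use the Toeplitz-like structure of $H$ together with Lemma~\ref{LEM:CIRCULAR_TRACE} and Corollary~\ref{cor:struct} to get circularity and the boundedness of $|h|^a e^{-W}$, and then apply the type-2 part of Theorem~\ref{THM:FINAL_CLT_INTRO}, reading off the centering and variance from items~(iii) and~(v). The paper compresses all of this into one sentence (noting $H$ is Toeplitz-like and invoking Lemma~\ref{LEM:CIRCULAR_TRACE}, Corollary~\ref{cor:struct}, and Theorem~\ref{THM:FINAL_CLT_INTRO}), so your added care about the identification of the deterministic centering with $\E[\Tr{H^j}]$ via Remark~\ref{remark moments} is a legitimate elaboration of the same argument, not a different one.
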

Thus, we obtained a central limit theorem for the real $\beta$-ensemble in the high-temperature regime with polynomial potential. 

Furthermore, all the hypotheses to apply the second part of our results are satisfied, so we deduce the following identities

\begin{equation}
\begin{split}
        &\partial_\alpha(\alpha \partial_t \cF_{\mathrm{HT}}(\alpha,P+itx^j)_{\vert_{t=0}}) = \partial_t \cF_{T}(\alpha,P+itx^j)_{\vert_{t=0}}\,, \\
        &\partial_\alpha(\alpha \partial^2_t \cF_{\mathrm{HT}}(\alpha,P+itx^j)_{\vert_{t=0}}) = \partial^2_t \cF_{T}(\alpha,P+itx^j)_{\vert_{t=0}}\,,
\end{split}
\end{equation}
where $\cF_{T}$ is the free energy of the Toda lattice \eqref{freeEnergyToda}.
So we can compute both the moments and their variances of the Toda lattice starting from the one of the real $\beta$-ensemble at high temperature.

\begin{remark}
    Applying the second part of Theorem \ref{THM:FINAL_CLT_INTRO}, we deduce the following equality valid for the currents of the Toda lattice:

    \begin{equation}
         \lim_{N\to\infty}\E\left[J^{[n],N}_1\right] = \int_0^{\alpha}\partial_{t_1}\partial_{t_2} \cF_{T}(s,P+it_1x +it_2x^n)_{\vert_{t_1,t_2=0}}\di s = \alpha\partial_{t_1}\partial_{t_2} \cF_{\mathrm{HT}}(\alpha,P+it_1x +it_2x^n)_{\vert_{t_1,t_2=0}}.
    \end{equation}
\end{remark}

\subsection{The exponential Toda lattice, and the Laguerre $\beta$-ensemble at high-temperature}
\label{sec:exptoda_laguerre}
In this subsection, we focus on the Exponential Toda lattice and its relation with the Laguerre $\beta$-ensemble in the high-temperature regime \cite{Forrester2021}. These two systems were considered in \cite{mazzuca_int_sys}. In this paper, the authors considered the classical Gibbs ensemble for the Exponential Toda lattice and were able to compute the density of states for this model connecting it to the Laguerre $\alpha$ ensemble \cite{mazzuca2021mean}, which is related to the classical $\beta$ one in the high-temperature regime. Here we consider both the Generalized Gibbs ensemble for the integrable lattice and the Laguerre $\beta$-ensemble at high-temperature with polynomial potential, and we obtain a CLT for both systems, furthermore, we connect the two in the same way as we did for the Toda lattice and the real $\beta$-ensemble.

\paragraph{The exponential Toda lattice.}
The exponential Toda lattice is the Hamiltonian system on  $\mathbb{R}^{2N}$  described by the Hamiltonian
	\begin{equation}   \label{HamExpToda}
		H_E({\mathbf p},{\mathbf q}) = \sum_{j=1}^N e^{-p_j} + \sum_{j=1}^{N}e^{q_j-q_{j+1}}\,, \quad p_j,q_j \in \R\,,
	\end{equation}
	with canonical Poisson bracket.
	Here, we consider periodic boundary conditions
	\begin{equation}
		q_{j+N}=q_{j} + \Omega, \quad p_{j+N} = p_j,\qquad \forall \, j\in \Z,
	\end{equation}
	and $\Omega\geq 0$ is an arbitrary constant. The equations of motion are given in Hamiltonian form as
	\begin{equation} \label{HamiltonianForm}
		\begin{split}
			& \dot{q}_j=\frac{\partial H_E}{\partial p_j} = - e^{-p_j}\,,\\  
			& \dot{p}_j=-\frac{\partial H_E}{\partial q_j} = e^{q_{j-1} -  q_{j}} -  e^{q_{j} -  q_{j+1}}\,.
		\end{split}
	\end{equation}
	
	Following \cite{mazzuca_int_sys}, we perform the non-canonical change of coordinates
	\begin{equation}    \label{ChangeOfVars}
		x_j = e^{-\frac{p_j}{2}},   \quad
		y_j = e^{\frac{q_{j}-q_{j+1}}{2}}= e^{-\frac{r_j}{2}},\quad r_j=q_{j+1}-q_{j},\quad j=1,\dots, N,
	\end{equation}
          to obtain a Lax Pair for this system.
	Indeed, in  these variables, the Hamiltonian \eqref{HamExpToda} transform into 
	
	\begin{equation}
		H_E({\mathbf x},{\mathbf y}) = \sum_{j=1}^N (x_j^2+y_j^2)\,,
	\end{equation}
	and the Hamilton's equations \eqref{HamiltonianForm} become
	\begin{equation}        \label{ExpToda}
		\dot{x_j} = \frac{x_j}{2} \left( y_j^2-y_{j-1}^2 \right),   \quad
		\dot{y_j} = \frac{y_j}{2} \left( x_{j+1}^2-x_j^2 \right),\quad j=1,\dots, N,
	\end{equation}
	where  $x_{N+1}=x_1, \, y_0 = y_N$.

    Let us introduce the matrices $L,A\in\textrm{Mat}(N)$ as
	
	\begin{align}
	\label{eq:L_exp_toda}
	& L= \begin{pmatrix} x_1^2+y_N^2 & x_1 y_1 & & &x_N y_N \\
			x_1 y_1 & x_2^2+y_1^2 & x_2 y_2 & &\\
			& \ddots & \ddots & \ddots &\\
			&& \ddots & \ddots &  x_{N-1}y_{N-1}  \\
			x_N y_N &&&  x_{N-1}y_{N-1} &x_N^2+y_{N-1}^2\end{pmatrix}\,,
			\\
	& A = \frac{1}{2} \begin{pmatrix} 0 & x_1 y_1 & & & - x_N y_N \\
			- x_1 y_1 & 0 & x_2 y_2 & &\\
			& \ddots & \ddots & \ddots &\\
			&& \ddots & \ddots &  x_{N-1}y_{N-1}  \\
			x_N y_N &&&  -x_{N-1}y_{N-1} &0\end{pmatrix}\,,
	\end{align}

	The system of equations \eqref{ExpToda} admits the Lax representation 
	\begin{equation}
		\dot{L} = [A,L].
		\label{eq:ccLA}
	\end{equation}
	Hence,  the quantities $H_m = \Tr{L^{m}}$, $m=1,\ldots,N$ are constants of motion as well as the eigenvalues of $L$.  
	For this integrable model, we define the generalized Gibbs ensemble as
		\begin{equation}
		\label{Gibbs_measure_TodaExp}
		\di\mu_{ET} = \frac{1}{Z^{H_E}_N(\alpha,P)} \prod_{j=1}^N x_j^{2\alpha -1}  y_j^{2\alpha -1} \mathbbm{1}_{x_j \geq 0}\mathbbm{1}_{y_j \geq 0} e^{- \Tr{P(L)} }   \di \bx \di \by\,,
	\end{equation}
	where  $\alpha>0$, and $P$ is a real-valued polynomial with positive leading coefficient. $Z_N^{H_E}(\alpha,P)$ is the normalization constant. We notice that the matrix $L$ is of type 1-\textit{ii}.
	\begin{remark}
	The definition of our Gibbs ensemble is more general than the one given in \cite{mazzuca_int_sys}, indeed there the authors were considering just the classical Gibbs ensemble for this model, so the case $P(x) = x/2$.
	\end{remark}
	We notice that the structure of \eqref{Gibbs_measure_TodaExp} resembles the one of $\mu_{N}^{(1)}$ \eqref{type1}, thus we want to apply Theorem \ref{THM:FINAL_CLT_INTRO}. To do this, we have to identify the functions $F,W$. First, as an application of Lemma \ref{LEM:CIRCULAR_TRACE} and Theorem \ref{thm:struct}, we obtain the following corollary
	
	\begin{corollary}
	Fix $m\in \N$, and consider the matrix $L$ \eqref{eq:L_exp_toda}. Then for $N$ big enough, there exists some $k=k(m)\in \N$, such that $\Tr{L^m}$ is a $k$-circular function, with seed $V$ and weed $\widetilde V$. Furthermore, both $V,\widetilde V$ are bounded from below away from $-\infty$.
	\end{corollary}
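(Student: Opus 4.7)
The plan is to mirror the strategy behind Corollary \ref{cor:struct} for the ordinary Toda case, while exploiting a simplification specific to the exponential Toda lattice: the Lax matrix $L$ in \eqref{eq:L_exp_toda} is positive semidefinite and all of its entries are polynomials with non-negative coefficients in the positive variables $(\bx,\by)\in\R_+^{2N}$. This is the reason no parity assumption on $m$ is needed here, contrary to the Toda case.

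First I would apply Lemma \ref{LEM:CIRCULAR_TRACE}, which gives at once that $\Tr{L^m}$ is a circular function. The proof (carried out in Appendix \ref{app:circ}) also provides an explicit circular index $k=k(m)$: any integer strictly larger than the bandwidth of $L^m$ will do, and for our periodic tridiagonal $L$ this bandwidth is of order $m$. For $N$ large enough one then writes $N=kM+\ell$ and obtains the decomposition
$$ \Tr{L^m} = \sum_{j=1}^{M-1} V(\bx_j,\bx_{j+1}) + \widetilde V(\bx_1,\bx_M,x_{kM+1},\ldots,x_N) $$
as in Definition \ref{def:circular}, with $\bx_j=(x_{(j-1)k+1},\ldots,x_{jk})$ running over consecutive blocks of $k$ variables.

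The heart of the argument is to choose the seed and weed so that they inherit non-negativity from the matrix entries. The nonzero entries of $L$ are $L_{jj}=x_j^2+y_{j-1}^2$ and $L_{j,j\pm1}=x_j y_{j-1}$ (with cyclic indexing), each a polynomial with non-negative coefficients in the positive variables $x_i,y_i$. The walk expansion
$$ [L^m]_{jj}=\sum_{\gamma\,:\,j\to j,\ |\gamma|=m}\prod_{i=1}^m L_{\gamma_{i-1},\gamma_i} $$
therefore expresses every diagonal entry as a polynomial with non-negative coefficients in the variables $\{x_i,y_i : \td(i,j)\leq\lceil m/2\rceil\}$; in particular $[L^m]_{jj}\geq 0$ pointwise on $\R_+^{2N}$.

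Finally, with $k$ chosen large enough that each $[L^m]_{jj}$ with $j$ in block $i$ depends only on variables from blocks $i$ and $i+1$, the natural choice
$$ V(\bx_i,\bx_{i+1}):=\sum_{j\in\{(i-1)k+1,\ldots,ik\}}[L^m]_{jj} $$
defines a seed, and the residual contribution (the $\ell$ leftover diagonal entries plus the block bridging $\bx_M$ and $\bx_1$) defines the weed $\widetilde V$. Both are then sums of non-negative terms, hence trivially bounded below by zero, which yields the desired lower bound. The only real technical point, and the would-be main obstacle, is ensuring $k$ exceeds the bandwidth of $L^m$ so that this local splitting is consistent; but this is exactly the bandwidth condition already built into Lemma \ref{LEM:CIRCULAR_TRACE}.
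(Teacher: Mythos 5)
Your argument is correct and follows essentially the same route as the paper: Lemma \ref{LEM:CIRCULAR_TRACE} supplies circularity, while the lower bound comes from the non-negativity of the walk expansion of $[L^m]_{jj}$ together with the fact that the exp-Toda entries $L_{jj}=x_j^2+y_{j-1}^2$ and $L_{j,j\pm 1}=x_j y_j$ are non-negative on $\R_+^{2N}$ --- the paper packages exactly this via Theorem \ref{thm:struct} applied with $a_j=x_j^2+y_{j-1}^2$, $b_j=x_jy_j$, whose coefficients $\rho^{(m)}$ are non-negative integers, so that positivity is automatic (and, as you correctly observe, no parity restriction on $m$ is needed, unlike in Corollary \ref{cor:struct}). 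One minor imprecision: the seed $V(\bx_i,\bx_{i+1})$ you propose, defined as the sum of $[L^m]_{jj}$ over $j$ in block $i$, may still involve $\bx_{i-1}$ when $j$ lies within about $\floor{m/2}$ of the start of block $i$, so the block assignment requires the slightly more careful splitting carried out in Corollary \ref{cor:nice} of Appendix \ref{app:circ}; this is cosmetic and does not affect the positivity conclusion.
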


	As in the Toda lattice case, if we naively set $F(x,y) = x^{2\alpha -1}  y^{2\alpha -1} $, this would not fit in the hypotheses of our theorem. As in the previous case, we have just to consider a slight modification of the measure:
	
			\begin{equation}
		\label{Gibbs_measure_TodaExp_mod}
		\di\mu_{ET} = \frac{1}{Z^{H_E}_N(\alpha,P)} \prod_{j=1}^N x_j^{2\alpha -1}  y_j^{2\alpha -1} \exp\left(- \varepsilon\frac{x_j^2 + y_j^2}{2}\right)\mathbbm{1}_{x_j \geq 0}\mathbbm{1}_{y_j \geq 0} e^{- \Tr{P(L)} + \varepsilon\frac{x_j^2 + y_j^2}{2}}   \di \bx \di \by\,,
	\end{equation} 
	for fixed $\varepsilon>0$, but small. In this way, defining $F(x,y,\alpha) = x^{2\alpha -1}  y^{2\alpha -1} \exp(- \varepsilon\frac{x^2 + y^2}{2})$, and $W(\bx_1,\by_1,\bx_2,\by_2) = V(\bx_1,\by_1,\bx_2,\by_2) - \frac{\varepsilon}{2}\sum_{j=1}^{2k} x_j^2 + y_j^2 $ we are in the same hypotheses as Theorem \ref{THM:FINAL_CLT_INTRO}, thus we deduce the following corollary
	
	\begin{corollary}[CLT for the Exponential Toda lattice]
	Consider the Lax matrix L \eqref{eq:L_exp_toda} of the Exponential Toda lattice distributed according to the Generalized Gibbs Ensemble \eqref{Gibbs_measure_TodaExp}. Then, defining the Free energy $\cF_{HE}(\alpha,P)$ as
	
	\begin{equation}
	    \cF_{ET}(\alpha,P) = -\lim_{N\to\infty}\frac{1}{N}\ln(Z^{H_E}_N(\alpha,P))\,,
	\end{equation}
	for all $j\in\N$ fixed, we have the following weak limit
	
	\begin{equation}
	    \lim_{N\to\infty}  \frac{\Tr{L^j} - \meanval{\Tr{L^j}}  }{\sqrt{N}} \rightharpoonup \cN(0,\sigma^2)\,,
	\end{equation}
	where
        \begin{equation}
            \meanval{\Tr{L^j}} = iN\partial_t \cF_{ET}(\alpha,P+itx^j)_{\vert_{t=0}}\,, \quad \sigma^2=\vert\partial^2_t \cF_{ET}(\alpha,P+itx^j)_{\vert_{t=0}}\vert
            \end{equation}
	\end{corollary}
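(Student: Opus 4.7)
The plan is to apply Theorem \ref{THM:FINAL_CLT_INTRO} directly, with the Exponential Toda lattice seen as an instance of a type 1-\textit{ii} model. First I would identify the Gibbs measure \eqref{Gibbs_measure_TodaExp} with $\mu_N^{(1)}$ \eqref{type1} by setting $\bx=(x_j,y_j)\in \R_+^2$ and, naively, $F(x,y,\alpha)=x^{2\alpha-1}y^{2\alpha-1}$. As in the case of the Toda chain treated in Remark \ref{rem:Toda_proof}, this choice fails HP~4 of Hypotheses \ref{general_assumptions} because $F(\cdot,\alpha)\notin L^1(\R_+^2)$. The resolution is the perturbation argument already advertised in \eqref{Gibbs_measure_TodaExp_mod}: since $P$ has positive leading coefficient and $\mathrm{Tr}(L)=\sum_j(x_j^2+y_{j-1}^2)$, one can pick $\varepsilon>0$ small enough that $\widetilde P(z):=P(z)-\tfrac{\varepsilon}{2}z$ still satisfies Hypotheses \ref{polynomial} for model 1-\textit{ii} and rewrite $\mu_{ET}$ with weight $\widetilde F(x,y,\alpha)=x^{2\alpha-1}y^{2\alpha-1}e^{-\varepsilon(x^2+y^2)/2}$ in front of $e^{-\mathrm{Tr}\widetilde P(L)}$.

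Next I would verify each hypothesis for this reformulated measure. The Gaussian factor in $\widetilde F$ makes it integrable on $\R_+^2$, smooth in $\alpha$, and dominated on compact $\alpha$-intervals by a square-integrable function, as in Remark \ref{rem:Toda_proof}. Choosing $\mathcal{O}_\varepsilon=[0,\varepsilon]^2$, one checks by direct computation that $\|\widetilde F(\cdot,\alpha)\|_1$, $\|\widetilde F(\cdot,\alpha)\|_{1,\mathcal{O}_\varepsilon}$, and $\|\widetilde F(\cdot,\alpha)\|_{1,\mathcal{O}_\varepsilon^c}$ behave correctly as $\alpha\to 0$ (with $\tc=2$ from the two gamma-type integrals in $x$ and $y$), and that the seed admits the additive decomposition required by the last bullet of HP~4 on $\mathcal{O}_\varepsilon$. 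The circular structure of $\mathrm{Tr}(\widetilde P(L))$ follows from Lemma \ref{LEM:CIRCULAR_TRACE}, and the corollary just above the statement (obtained from Theorem \ref{thm:struct} applied to powers of the positive-definite periodic Jacobi matrix $L$) guarantees that its seed $W$ and weed $\widetilde W$ are bounded from below, giving HP~3.

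The final technical point is the boundedness of $|h|^a e^{-W}$ for $1\leq a\leq 3$, where $h$ is a compatible seed of $\mathrm{Tr}(L^s)$. Since both $h$ and $W$ are polynomial in the entries $x_j,y_j$, and $W$ contains the $\tfrac{\varepsilon}{2}\sum(x_j^2+y_j^2)$ piece coming from the absorption step together with the polynomial lower bound from the corollary, the Gaussian decay of $e^{-W}$ dominates any polynomial growth of $|h|^a$; this is again the same mechanism as in Remark \ref{rem:Toda_proof}.

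With all hypotheses verified, Theorem \ref{THM:FINAL_CLT_INTRO} applies and yields the Gaussian convergence of $(\mathrm{Tr}(L^j)-NA(\alpha))/\sqrt{N}$ to $\mathcal{N}(0,\sigma^2(\alpha))$, together with the identifications $A(\alpha)=i\partial_t\mathcal{F}_{ET}(\alpha,P+itx^j)|_{t=0}$ and $\sigma^2(\alpha)=\partial_t^2\mathcal{F}_{ET}(\alpha,P+itx^j)|_{t=0}$; the mean value $\mathbb{E}[\mathrm{Tr}(L^j)]$ then equals $NA(\alpha)+o(\sqrt N)$, so it can be replaced by $NA(\alpha)$ in the centering without affecting the limit law. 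The only non-routine step is the perturbation trick that forces the weight $F$ into $L^1$ while preserving Hypotheses \ref{polynomial}; once this is done, everything else is a direct translation of the Toda argument to the positive-definite setting.
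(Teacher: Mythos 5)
Your approach is essentially the same as the paper's: identify $\mu_{ET}$ with $\mu_N^{(1)}$ for a type $1$-\textit{ii} model, invoke Lemma \ref{LEM:CIRCULAR_TRACE} together with the exp-Toda analogue of Theorem \ref{thm:struct} for the circular structure and lower bound of the seed, perform the Gaussian absorption into $F$ (which is exactly the paper's passage from \eqref{Gibbs_measure_TodaExp} to \eqref{Gibbs_measure_TodaExp_mod}), check HP~4 with $\tc = 2$, and then apply Theorem \ref{THM:FINAL_CLT_INTRO}.

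One small but worth-fixing imprecision: when you justify the boundedness of $|h|^a e^{-W}$, you write that ``$W$ contains the $\tfrac{\varepsilon}{2}\sum(x_j^2+y_j^2)$ piece coming from the absorption step.'' In fact the absorption step \emph{subtracts} this quadratic from the potential: the new seed is $W = V_P - \tfrac{\varepsilon}{2}\sum(x_j^2 + y_j^2)$, where $V_P$ is the seed of $\Tr{P(L)}$. The reason $e^{-W}$ still decays fast enough to kill the polynomial $|h|^a$ is not that the absorption added a Gaussian to $W$, but that $P$ has degree $\geq 1$ and positive leading coefficient, so on the nonnegative eigenvalues of $L$ one has $P(z) \geq C + c z$ for some $c>0$, hence $\Tr{P(L)} - \tfrac{\varepsilon}{2}\Tr{L} \geq CN + (c - \tfrac{\varepsilon}{2})\Tr{L}$ for $\varepsilon < 2c$, and the surviving linear-in-$\Tr{L}$ term (quadratic in the $x_j,y_j$) is what provides the Gaussian decay. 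This is exactly the mechanism in Remark \ref{rem:Toda_proof}, where the key input is $P(x)\geq C + cx^2$, not the absorbed factor itself. With that correction your verification is complete.
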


	\paragraph{The Laguerre $\beta$-ensemble in the high-temperature regime.}
	
	The Laguerre $\beta$-ensemble is a random matrix ensemble introduced by Dumitriu and Edelman in \cite{dued}. It has the following matrix representation
	
	\begin{equation}
	\label{eq:laguerre_matrix}
	    Q = \begin{pmatrix}
	    x_1^2 & x_1 y_1 & & & \\
			x_1 y_1 & x_2^2+y_1^2 & x_2 y_2 & &\\
			& \ddots & \ddots & \ddots &\\
			&& \ddots & \ddots &  x_{N-1}y_{N-1}  \\
			 &&&  x_{N-1}y_{N-1} &x_N^2+y_{N-1}^2
	    \end{pmatrix}\,,
	\end{equation}
	where the entries of $Q$ are distributed according to 
	\begin{equation}
	\label{eq:laguerre_beta}
	    \di \mu_L = \frac{1}{Z^L_N(\beta)} \prod_{j=1}^N x_j^{\beta(M-j+1)-1}\mathbbm{1}_{x_j\geq 0} \prod_{j=1}^{N-1}y^{\beta(N-j)-1}\mathbbm{1}_{y_j\geq 0}\exp\left( - \Tr{P(Q)}\right)\di\bx\di\by\,,
	\end{equation}
	here we consider the case $M=N$; $P$ can be any continuous function such that the partition function is well-defined. For our purpose we consider $P$ to be a polynomial of degree greater than one with positive leading coefficient, as in Hypotheses \ref{polynomial}.
	
	The remarkable property of this ensemble is that it is possible to explicitly compute the joint eigenvalue density as
	
	\begin{equation}
	    \di \mathbb{P}_L = \frac{1}{\mathfrak{Z}^L_N(\beta,P)} \prod_{j=1}^N \lambda_j^{\frac{\beta}{2}\left(M-N + 1\right) - 1}\mathbbm{1}_{\lambda_j\geq 0}\prod_{j<i}\vert\lambda_j -\lambda_i\vert^\beta e^{-\sum_{j=1}^NP(\lambda_j)} \di \boldsymbol{\lambda}\,.
	\end{equation}
	
	We are interested in the so-called high-temperature limit, i.e. when $\beta = \frac{2\alpha}{N}$, $\alpha\in\R_+$, which was considered in \cite{Allez2013}, where the authors were able to compute the density of states for the particular case $P(x)=x/2$. 
	
	In this regime, the density \eqref{eq:laguerre_beta} takes the form
	
	\begin{equation}
	    \di \mu_L = \frac{1}{Z^L_N(\alpha,P)} \prod_{j=1}^N x_j^{2\alpha\left(1-\frac{j+1}{N}\right)-1}\mathbbm{1}_{x_j\geq 0} \prod_{j=1}^{N-1}y_j^{2\alpha\left(1-\frac{j}{N}\right)-1}\mathbbm{1}_{y_j\geq 0}\exp\left( - \Tr{P(Q)}\right)\di\bx\di\by\,.
	\end{equation}
	
	We notice that the structure of this density resembles the one of $\mu^{(2)}_{N}$ \eqref{type2}. Furthermore, the matrix $Q$ is Toeplitz-like and tridiagonal, thus applying Lemma \ref{LEM:CIRCULAR_TRACE} and Theorem \ref{thm:struct}, we deduce the following corollary
	
	\begin{corollary}[CLT for Laguerre $\beta$-ensemble]
	Consider the matrix representation \eqref{eq:laguerre_matrix} of the Laguerre $\beta$-ensemble in the high-temperature regime, and let $P$ be a real polynomial of degree at least $1$ with positive leading coefficient. Then,   defining the Free energy $\cF_{L}(\alpha,P)$ as
	
	\begin{equation}
	    \cF_{L}(\alpha,P) = -\lim_{N\to\infty}\frac{1}{N}\ln(Z^{L}_N(\alpha,P))\,,
	\end{equation}
	for all $j\in\N$ fixed, we have the following weak limit
	
	\begin{equation}
	    \lim_{N\to\infty}  \frac{\Tr{Q^j} - \meanval{\Tr{Q^j}} }{\sqrt{N}} \rightharpoonup \cN(0,\sigma^2)\,,
	\end{equation} 
    where
    \begin{equation}
        \meanval{\Tr{Q^j}} = i  N\partial_t \cF_{L}(\alpha,P+itx^j)_{\vert_{t=0}} \,,\quad \sigma^2 = \vert\partial^2_t \cF_{L}(\alpha,P+itx^j)_{\vert_{t=0}}\vert
    \end{equation}
	\end{corollary}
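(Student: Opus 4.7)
The plan is to obtain this corollary as a direct application of Theorem \ref{THM:FINAL_CLT_INTRO}, mirroring the argument used for the exponential Toda lattice earlier in this section. First I would observe that the matrix $Q$ in \eqref{eq:laguerre_matrix} is of type 2-\textit{ii}, and that the density $\di\mu_L$ in \eqref{eq:laguerre_beta} fits the template $\mu^{(2)}_N$ of \eqref{type2}: after pairing the variables as $\bx_j=(x_j,y_j)\in \R_+^2$, identifying $F((x,y),\alpha)= x^{2\alpha-1}y^{2\alpha-1}$, and absorbing the boundary factor $x_N$ into the distribution $R$, the density matches the prescribed form up to an $O(1/N)$ shift in the scaling parameter of the $x_j$'s, which is harmless for the purposes of the theorem.

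Since $F$ itself is not integrable on $\R_+^2$, I would follow Remark \ref{rem:Toda_proof}: by positivity of the leading coefficient of $P$, there exist $c>0$ and $C\in \R$ such that $P(x)\geq C + cx$ for all $x\geq 0$, hence $\Tr{P(Q)}\geq CN + c\sum_j(x_j^2+y_{j-1}^2)$. Absorbing a Gaussian factor into $F$, the modified
\begin{equation}
\widetilde F(x,y,\alpha) = x^{2\alpha-1}y^{2\alpha-1}e^{-\frac{c}{2}(x^2+y^2)}
\end{equation}
is continuous on $\R_+^2$, integrable with the required small-$\alpha$ asymptotics and mass concentration near the origin, and satisfies the remaining items of Hypotheses \ref{general_assumptions}; the shifted polynomial potential $P(x)-cx$ still fulfils Hypotheses \ref{polynomial}.

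Next I would establish the circular/seed structure. Lemma \ref{LEM:CIRCULAR_TRACE} yields circularity of $\Tr{P(Q)}$ and of $\Tr{Q^j}$. A walk-counting expansion of powers of the tridiagonal matrix $Q=BB^\intercal$, the direct analog of Theorem \ref{thm:struct} in the positive-definite Jacobi setting, shows that each diagonal entry $[Q^m]_{ii}$ is a polynomial with non-negative coefficients in the variables $x_i, y_i$, so the seeds $W, \widetilde W$ of $\Tr{P(Q)}$ are bounded from below; moreover, the Gaussian factor $e^{-c(x^2+y^2)/2}$ absorbed into $\widetilde F$ dominates any fixed polynomial in the entries, so the compatible seed $h$ of $\Tr{Q^j}$ satisfies that $|h|^ae^{-W}$ is bounded for $1\leq a\leq 3$. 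All hypotheses of Theorem \ref{THM:FINAL_CLT_INTRO} being in place, I would then conclude the CLT and read off the mean and variance through its items (iii) and (v), noting that $iN\partial_t\cF_L(\alpha,P+itx^j)|_{t=0}$ equals $N\int_0^1 A(\alpha u)\di u$ and agrees in the limit with $\meanval{\Tr{Q^j}}$ by differentiation under the integral defining $\cF_L$, whose regularity is granted by Remark \ref{rem:operatorRegularity}.

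The main obstacle is establishing the combinatorial analog of Theorem \ref{thm:struct} and Corollary \ref{cor:struct} for matrices of the form $Q = BB^\intercal$: powers of such $Q$ admit a walk-counting expansion akin to the periodic Jacobi case, but the block structure $Q=BB^\intercal$ effectively doubles the step set, and one must verify that the seeds of $\Tr{Q^m}$ remain additive in the sense of Definition \ref{def:circular}, with seeds bounded below uniformly in $N$ for $N$ large. The remaining hypothesis checks (integrability of $\widetilde F$, the small-$\alpha$ asymptotics, the compact concentration, and the boundedness of $|h|^ae^{-W}$) are then direct computations from the explicit form of $\widetilde F$.
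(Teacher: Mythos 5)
Your proposal follows essentially the same route as the paper: cast $\di\mu_L$ into the $\mu^{(2)}_N$ template with $X=\R_+^2$, absorb a Gaussian factor into $\tilde F$ using the lower bound $P(x)\geq C+cx$ on $[0,\infty)$ that the positive leading coefficient guarantees, then invoke Lemma~\ref{LEM:CIRCULAR_TRACE} together with the structure theorem, and conclude from Theorem~\ref{THM:FINAL_CLT_INTRO}. Two remarks are worth making explicit. First, the ``main obstacle'' you flag in your last paragraph is lighter than you suggest: the matrix $Q=BB^\intercal$ is \emph{itself} a symmetric tridiagonal matrix whose entries $Q_{jj}=x_j^2+y_{j-1}^2$, $Q_{j,j+1}=x_{j+1}y_j$ are local polynomials with nonnegative coefficients in $(\bx,\by)$, so one applies Theorem~\ref{thm:struct} directly in the variables $(a_j,b_j)=(Q_{jj},Q_{j,j+1})$, which are all nonnegative on the phase space. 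The resulting super-Motzkin expansion then has all terms nonnegative — there are no $a_j^{q_i}$ terms of indeterminate sign as in the Toda case — which makes the lower bound on the seed immediate; you do not need to argue separately about a ``doubled step set.'' Second, the $O(1/N)$ mismatch between the template exponents $\alpha(1-j/N)$ and the actual Laguerre exponents $\alpha(1-(j-1)/N)$ (resp.\ $\alpha(1-j/N)$) for the $x_j$ (resp.\ $y_j$) is not merely ``harmless by hand-waving'': the general machinery already contains the controlling estimate, namely $\|\cL^{(j)}_{t,\alpha}-\cL_{t,\alpha(1-j/M)}\|\leq C\alpha/M$ in \eqref{ineq:approxOperator}, which propagates into \eqref{eq:approxEigen} and lets the argument go through. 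It would strengthen your write-up to point to this estimate rather than simply assert harmlessness. Modulo these clarifications, the reduction to Theorem~\ref{THM:FINAL_CLT_INTRO} and the reading-off of the mean and variance from items (iii) and (v) is exactly the paper's argument.
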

Which is the perfect analogue of the result for the Exponential Toda lattice. 
Furthermore, we are ready to apply the second part of our result, indeed we can deduce the following identities 

\begin{equation}
\begin{split}
        &\partial_\alpha(\alpha \partial_t \cF_{L}(\alpha,P+itx^j)_{\vert_{t=0}}) = \partial_t \cF_{ET}(\alpha,P+itx^j)_{\vert_{t=0}}\,, \\
        &\partial_\alpha(\alpha \partial^2_t \cF_{L}(\alpha,P+itx^j)_{\vert_{t=0}}) = \partial^2_t \cF_{ET}(\alpha,P+itx^j)_{\vert_{t=0}}\,,
\end{split}
\end{equation}
thus, we can compute all the quantities involved in the previous theorems from the Free Energy of the Laguerre ensemble.

\begin{remark}
    We notice that in the Laguerre $\beta$-ensemble one can consider a more general regime $M=\gamma N$ with $\gamma\in (0,1]$. It is possible to obtain an analogous result for this situation, but one has to consider a slightly different $F$ and add an extra parameter depending on $\alpha, \gamma$
\end{remark}

\subsection{The Volterra lattice, and the antisymmetric $\beta$-ensemble at high-temperature}
\label{sec:volt_antigauss}
In this subsection, we focus on the Volterra lattice and its relation with the Antisymmetric $\beta$-ensemble \cite{DF10} in the high-temperature regime \cite{Forrester2021}.  These two systems were considered in \cite{mazzuca_int_sys}. In this paper, the authors considered the classical Gibbs ensemble for the Volterra lattice and were able to compute the density of states for this model connecting it to the Antisymmetric $\alpha$ ensemble \cite{Forrester2021}, which is related to the classical $\beta$ one introduced by Dumitriu and Forrester \cite{DF10}.

\paragraph{The Volterra Lattice.}
	The Volterra lattice (or \emph{discrete KdV equation}) is the following systems of $N$ coupled ODEs 
	\begin{equation}    \label{Volterra}
		\dot{a_j} = a_j \left(a_{j+1} - a_{j-1} \right), \qquad j=1,\dots,N,
	\end{equation}
	here $a_j\in\R_+$  for $j=1,\ldots,N$, and we consider  periodic boundary conditions $a_j=a_{j+N}$ for all $j\in\Z$.
	Volterra introduced it to study 
	evolution of populations in a hierarchical system of competing species. This system was considered by Kac and van Moerbeke in \cite{Kac1975}, who solved it explicitly using a discrete version of the inverse scattering transform introduced by  Flaschka \cite{Flashka1974b}.

    Introducing  on the phase space $\R^N_+$ the following Poisson bracket
	\begin{equation}
		\label{eq:poisson_volterra}
		\{ a_j, a_i \}_{\Volt} =  a_ja_i(\delta_{i,j+1} - \delta_{i,j-1})\,,
	\end{equation}
	and defining the Hamiltonian  $H_1 = \sum_{j=1}^N a_j\,$, we can rewrite the equations of motion \eqref{Volterra} in Hamiltonian form as
	\begin{equation}
		\dot{a}_j = \{ a_j, H_1\}_{\Volt}\,.
		\label{eq:hamvoltN}
	\end{equation}
	An elementary constant of motion for the system is $H_0 = \prod_{j=1}^N a_j$ which is  independent of  $H_1$.
	
	The Volterra lattice  is a completely integrable system, and  it admits several equivalents \emph{Lax representations}, see e.g. \cite{Kac1975,Moser75,mazzuca_int_sys}. We use the one presented in \cite{mazzuca_int_sys}. Specifically, we introduce the matrices $L,A\in \textrm{Mat}(\R,N)$ as
	
	\begin{align}
	    \label{eq:lax_volterra}
	    & L=\begin{pmatrix}
	    0& \sqrt{a_1} & & & -\sqrt{a_N} \\
			-\sqrt{a_1} & 0 & \sqrt{a_2} & &\\
			& \ddots & \ddots & \ddots &\\
			&& \ddots & \ddots &  \sqrt{a_{N-1}}  \\
			\sqrt{a_N} &&&  -\sqrt{a_{N-1}} &0
	    \end{pmatrix} \,,\\
     \vspace{10pt}
	    & A = \frac{1}{2}\sum_{j=1}^{N} \sqrt{a_ja_{j+1}} (E_{j,j+2}-E_{j+2,j})\,,
	\end{align}
	where $E_{r,s}$ is defined as $\left(E_{r,s}\right)_{ij}=\delta^i_r \delta^j_s$ and   $E_{j+N,i} =E_{j,i+N} = E_{j,i}$. Then, it follows that the equations of motion \eqref{eq:hamvoltN} are equivalent to 
	
	\begin{equation}
	    \dot{L} = [L;A]\,.
	\end{equation}
	In view of this Lax pair, we know that $\Tr{L^k}$ are constant of motion for the model. 
	
	Following \cite{mazzuca_int_sys}, we introduce the Generalized Gibbs Ensemble of the Volterra lattice \eqref{Volterra} as
	
	\begin{equation}
		\label{VolterraGibbs}
		\di\mu_{\Volt}(\ba) = \frac{e^{-\Tr{P(L)}}\prod_{j=1}^N a_j^{\alpha-1}\mathbbm{1}_{a_j>0}\di \ba }{Z_N^{\Volt}(\alpha,P)}\,,
	\end{equation}
where $\alpha>0$,  $P$ is a polynomial as in Hypotheses \ref{polynomial}  of the form $P(x) = (-1)^{j}x^{2j} + \textrm{l.o.t}$, where $\textrm{l.o.t}$ stands for lower order terms. Moreover, we notice that, given the antisymmetric nature of $L$, $\Tr{L^{2j+1}} = 0$. For this reason, we perform the change of coordinates $\sqrt{a_j} = x_j$, where we take just the positive root, so the previous measure read

	\begin{equation}
		\label{VolterraGibbs_mod}
		\di\mu_{\Volt}(\bx) = \frac{e^{-\Tr{P(L)}}\prod_{j=1}^N x_j^{2\alpha-1}\mathbbm{1}_{x_j>0}\di \bx }{Z_N^{\Volt}(\alpha,P)}\,.
	\end{equation}
	
	This Generalized Gibbs ensemble resembles the structure of $\mu_N^{(1)}$ \eqref{type1}, we have just to identify $F,W$. We notice that it is possible to generalize Theorem \ref{thm:struct} also for the antisymmetric situation, moreover, the matrix $L$ is circular-like. Thus, by applying Lemma \ref{LEM:CIRCULAR_TRACE} and Theorem \ref{thm:struct}, we deduce the following Corollary:
	
	\begin{corollary}
		Fix $m\in \N$, and consider the matrix $L$ \eqref{eq:lax_volterra}. Then for $N$ big enough, there exists some $k=k(m)\in \N$, such that $\Tr{L^m}$ is a $k$-circular function, with seed $V$ and weed $\widetilde V$. Furthermore, both $V,\widetilde V$ are bounded from below away from $-\infty$.
	\end{corollary}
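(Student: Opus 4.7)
The argument mirrors that of Corollary \ref{cor:struct} in the Toda setting and rests on the two ingredients the authors invoke: Lemma \ref{LEM:CIRCULAR_TRACE} applied to the Volterra Lax matrix, together with an adaptation of the walk expansion of Theorem \ref{thm:struct} to the antisymmetric tridiagonal case. First I would apply Lemma \ref{LEM:CIRCULAR_TRACE} directly to $L$, which is of type $1$-\textit{iii}. Since the lemma ensures that $\Tr{P(L)}$ is circular for any $P\in\mathbb{C}[X]$, specialising to $P(x)=x^m$ produces a circular index $k=k(m)\in\mathbb{N}$ together with a seed/weed decomposition of $\Tr{L^m}$. The precise value of $k$ is dictated by the bandwidth of $L^m$: because $L$ is tridiagonal and periodic, $[L^m]_{jj}$ depends only on those entries $x_i=\sqrt{a_i}$ with $|i-j|\leq \lfloor m/2\rfloor$, so taking $k$ slightly larger than $\lfloor m/2\rfloor$ cleanly isolates the interaction between consecutive blocks.

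Next I would make this decomposition explicit via a walk expansion analogous to \eqref{eq:general_super_motzkin}, adapted to the $\pm$-pattern of the entries of $L$. Each closed walk of length $m$ based at a vertex $j$ contributes $\pm\prod_k x_{i_k}$, the sign being determined by the parity of the number of downward steps. Summing over $j$ in consecutive blocks of length $k$ then yields the seed $V$, while the remaining contributions indexed by $\{kM+1,\ldots,N\}$ together with the cyclic closure form the weed $\widetilde V$. Both are polynomials in finitely many of the non-negative variables $x_i=\sqrt{a_i}$, hence continuous.

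For the boundedness from below I would split according to the parity of $m$. If $m$ is odd, antisymmetry of $L$ forces $\Tr{L^m}=0$, and both $V$ and $\widetilde V$ can be chosen to vanish identically. If $m=2\ell$ is even, I would exploit the identity $L^2=-LL^\intercal$, giving $L^{2\ell}=(-1)^\ell (LL^\intercal)^\ell$. Since $LL^\intercal$ is tridiagonal and positive semi-definite with entries that are non-negative combinations of the $a_i=x_i^2$, each diagonal entry of $(LL^\intercal)^\ell$ is a polynomial with non-negative coefficients in the $a_i$'s. This provides a definite sign control on $[L^{2\ell}]_{jj}$: for $\ell$ even the seed is automatically bounded below by $0$, while for $\ell$ odd one exploits the combination with the polynomial potential from Hypotheses \ref{polynomial} (of the form $P(x)=(-1)^d a_d x^{2d}+Q(x)$ with $a_d>0$), which cancels the unfavourable sign exactly as in the Toda modification of Remark \ref{rem:Toda_proof}.

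The main obstacle is the sign bookkeeping intrinsic to the antisymmetric setting: the super-Motzkin-type combinatorics underlying Theorem \ref{thm:struct} must be refined to keep track of the $(-1)^{\#\text{down-steps}}$ factors, and substantial cancellations occur along the way. Reducing the analysis to powers of the non-negative tridiagonal matrix $LL^\intercal$ sidesteps most of this signed combinatorics and is, in my view, the cleanest route to both the explicit seed/weed formula and the required lower bound.
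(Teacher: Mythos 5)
Your route is correct in spirit and genuinely different from the paper's. The paper deduces the corollary from Lemma \ref{LEM:CIRCULAR_TRACE} together with a sketched generalization of the super-Motzkin expansion of Theorem \ref{thm:struct} to the antisymmetric setting, and leaves the signed combinatorics implicit. Your identity $L^2=-LL^\intercal$ and the reduction $L^{2\ell}=(-1)^\ell(LL^\intercal)^\ell$ replace that signed bookkeeping by a positivity argument, which is cleaner.

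A few corrections. First, $LL^\intercal=-L^2$ is not tridiagonal: since $L$ is antisymmetric tridiagonal with zero diagonal, $L^2$ is supported on $\{|i-j|\in\{0,2\}\}$, so $-L^2$ is a five-band matrix whose first off-diagonals vanish. Second, its nonzero off-diagonal entries equal $-\sqrt{a_ja_{j+1}}\le 0$, so $LL^\intercal$ does \emph{not} have non-negative entries. The correct reason $[(LL^\intercal)^\ell]_{jj}$ is a polynomial with non-negative coefficients in the $a_i$ (for $N$ large enough that length-$\ell$ closed walks do not wrap around the cycle) is that a closed walk on a path graph crosses every cut an even number of times; hence every step edge $(j,j+2)$ is used an even number of times, and the minus signs and the square roots both pair up. Finally, $[L^{2\ell}]_{jj}=(-1)^\ell[(LL^\intercal)^\ell]_{jj}$ is bounded from below only when $\ell$ is even; for $\ell$ odd it is $\le 0$ and unbounded below (indeed $\Tr{L^2}=-2\sum_j a_j$). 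So the seed of $\Tr{L^m}$ alone is not bounded from below for every $m$, and this is really an imprecision in the corollary's wording rather than a flaw in your argument: what is actually needed and used (HP 3 of Hypotheses \ref{general_assumptions}) is lower-boundedness of the seed of $\Tr{P(L)}$ for an admissible $P(x)=(-1)^da_dx^{2d}+Q(x)$ from Hypotheses \ref{polynomial}, and your observation that the prefactor $(-1)^d$ exactly compensates the $(-1)^\ell$ above is precisely the reason that seed is bounded below.
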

	
	Thus, following the same kind of reasoning as in the Toda lattice, section \ref{sec:toda_gaussian}, and the Exponential Toda lattice, section \ref{sec:exptoda_laguerre}, we deduce the following:
	
	\begin{corollary}[CLT for Volterra lattice]
	Consider the Lax matrix L \eqref{eq:lax_volterra} of the Volterra lattice distributed according to the Generalized Gibbs Ensemble \eqref{VolterraGibbs}. Then, defining the Free energy $\cF_{\Volt}(\alpha,P)$ as
	
	\begin{equation}
	    \cF_{\Volt}(\alpha,P) = -\lim_{N\to\infty}\frac{1}{N}\ln(Z^{\Volt}_N(\alpha,P))\,,
	\end{equation}
	for all $j\in\N$ fixed, we have the following weak limit
	
	\begin{equation}
	    \lim_{N\to\infty}  \frac{\Tr{L^j} - \meanval{\Tr{L^j}} }{\sqrt{N}} \rightharpoonup \cN(0,\sigma^2)\,,
	\end{equation}
	where
    \begin{equation}
        \meanval{\Tr{L^j}} = iN\partial_t \cF_{\Volt}(\alpha,P+ itx^j)_{\vert_{t=0}}\,,\quad \sigma^2 = \vert\partial^2_t \cF_{\Volt}(\alpha,P+tx^j)_{\vert_{t=0}}\vert\,.
    \end{equation}
	\end{corollary}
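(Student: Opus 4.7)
The plan is to apply the main Theorem \ref{THM:FINAL_CLT_INTRO} directly, following exactly the same scheme that was used for the Toda lattice in Remark \ref{rem:Toda_proof} and for the exponential Toda lattice in Section \ref{sec:exptoda_laguerre}. First, I would recognize that after the change of variables $x_j = \sqrt{a_j}$ (with positive root), the Gibbs measure \eqref{VolterraGibbs_mod} is exactly of the form $\mu^{(1)}_N$ \eqref{type1} with $F(x,\alpha) = x^{2\alpha-1}\mathbbm{1}_{x>0}$ on $X=\R_+$, the matrix $L$ being of Type 1-\textit{iii}. Next, since $L$ is an antisymmetric periodic tridiagonal matrix, the corollary stated just above (circularity and lower-boundedness of the seed and weed of $\Tr{P(L)}$) applies, so that $\Tr{P(L)}$ admits a $k$-circular decomposition with seed $W$ and weed $\widetilde W$ both bounded from below.

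The second step is to handle the failure of HP 4 caused by the fact that $F(\cdot,\alpha) = x^{2\alpha-1}$ is not in $L^1(\R_+)$. As in Remark \ref{rem:Toda_proof}, I would use that the leading term of $P$ is $(-1)^d a_d x^{2d}$ with $a_d>0$, which together with the antisymmetry of $L$ gives a lower bound of the form $\Tr{P(L)} \geq C N + c \Tr{L^2} = C N - c \sum_{j=1}^N (a_j + a_{j-1}) = C N - 2c\sum_j x_j^2$ for appropriate constants (after adjusting signs using antisymmetry, one gets control by $\sum x_j^2$ up to a sign chosen so that the resulting Gaussian factor is decaying). Equivalently, one splits off a small quadratic piece $\varepsilon\sum_j x_j^2/2$ from $\Tr{P(L)}$ and absorbs it into a modified weight
\begin{equation}
\widetilde F(x,\alpha) = x^{2\alpha-1} e^{-\varepsilon x^2/2}\mathbbm{1}_{x>0},
\end{equation}
leaving a modified polynomial $\widetilde P$ still satisfying Hypotheses \ref{polynomial}. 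Then $\widetilde F(\cdot,\alpha) \in L^1(\R_+)$ and the asymptotics $\|\widetilde F(\cdot,\alpha)\|_{L^1} \sim d_1 \alpha^{-1}$ as $\alpha \to 0$ follow from the standard computation of the Gamma integral, with $\mathcal{O}_\varepsilon = [0,\varepsilon]$, exactly as in the Toda case. The integrability bounds on $\sqrt{\widetilde F(\cdot,\alpha)}$ and $\partial_\alpha \sqrt{\widetilde F(\cdot,\alpha)}$ uniform on compacts in $\alpha$ are standard, since $\partial_\alpha \sqrt{\widetilde F} = \ln(x)\, x^{\alpha-1/2} e^{-\varepsilon x^2/4}$.

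The third step is to verify the boundedness of $|h|^a e^{-W}$ for $1\leq a \leq 3$, where $h$ is a seed of $\Tr{L^s}$ compatible with $W$. By the analogue of Theorem \ref{thm:struct} adapted to the antisymmetric case (which we use through the corollary above), both $W$ and $h$ are polynomials in the $x_j$'s, and the leading coefficient of $W$ in the polynomial-degree sense dominates that of $h$ because $\Re P$ has strictly higher degree than $z^s$ once we choose $\deg P$ large enough, or more directly because of the Gaussian piece $e^{-\varepsilon x^2/2}$ coming from $\widetilde F$: the bound $|h|^a e^{-W} \leq C$ follows by combining polynomial growth of $h$ with the exponential decay provided jointly by $e^{-W}$ and by the Gaussian factors built into $\widetilde F$. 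Once all hypotheses are checked, Theorem \ref{THM:FINAL_CLT_INTRO} applies verbatim and yields the CLT together with the identifications $\E_1[\Tr{L^j}] = iN\, \partial_t \cF_{\Volt}(\alpha, P + itx^j)|_{t=0}$ and $\sigma^2 = |\partial_t^2 \cF_{\Volt}(\alpha, P + itx^j)|_{t=0}|$.

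The step I expect to require the most care is the third one: verifying the boundedness of $|h|^a e^{-W}$, because the antisymmetric structure forces one to track carefully which monomials of $W$ provide exponential suppression in which $x_j$'s, and this requires either the combinatorial control inherited from the Motzkin-path-type expansion of $\Tr{L^{2d}}$, or the systematic splitting used above in which the Gaussian regularization $e^{-\varepsilon x^2/2}$ (coming from the modification $P\mapsto \widetilde P$) supplies the needed decay in every direction. Once this is in place, the remaining derivations (CLT, identification of $A(\alpha)$ and $\sigma^2(\alpha)$ via derivatives of the free energy) are purely an invocation of the general theorem.
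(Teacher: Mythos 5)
Your proposal follows the paper's own route exactly: change variables $\sqrt{a_j}=x_j$ so that $\mu_{\Volt}$ takes the form $\mu^{(1)}_N$ with $F(x,\alpha)=x^{2\alpha-1}$, invoke the antisymmetric analogue of Theorem~\ref{thm:struct} together with Lemma~\ref{LEM:CIRCULAR_TRACE} to get circularity and lower-boundedness of the seed, regularize $F$ by the Gaussian factor $e^{-\varepsilon x^2/2}$ (absorbing it from the potential as in Remark~\ref{rem:Toda_proof} and the exponential Toda case), and then apply Theorem~\ref{THM:FINAL_CLT_INTRO}. The identification of $\meanval{\Tr{L^j}}$ and $\sigma^2$ through $t$-derivatives of $\cF_{\Volt}$ then comes for free from the theorem.

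One point should be fixed rather than hand-waved: the intermediate chain $\Tr{P(L)}\geq CN + c\,\Tr{L^2}=CN-2c\sum_j x_j^2$ with $c>0$ goes the wrong way. Since $L$ is antisymmetric, $(L^2)_{jj}=-(x_{j-1}^2+x_j^2)$ so $\Tr{L^2}=-2\sum_j x_j^2<0$, and a lower bound of that shape gives $e^{-\Tr{P(L)}}\leq e^{-CN}e^{2c\sum_j x_j^2}$, which does not decay. The bound you actually need is $\Tr{P(L)}\geq CN+c\sum_j x_j^2$ with $c>0$ (i.e.\ the sign in front of $\Tr{L^2}$ is minus), and it holds for $P$ as in Hypotheses~\ref{polynomial} for model 1-\emph{iii}: writing the eigenvalues of $L$ as $\pm i\mu_j$ with $\mu_j\geq0$, one has $\Tr{P(L)}=2a_d\sum_j\mu_j^{2d}+(\text{lower order in }\mu)$, then Young's inequality pointwise controls the lower-order terms and gives $\sum_j\mu_j^{2d}\geq d\sum_j\mu_j^2-(d-1)N$, and finally $\sum_j\mu_j^2=-\tfrac12\Tr{L^2}=\sum_j x_j^2$. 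Once this is written with the correct sign, your regularization $\widetilde F(x,\alpha)=x^{2\alpha-1}e^{-\varepsilon x^2/2}$ (for $\varepsilon<2c$) and the remaining verification of HP~3, HP~4, and the boundedness of $|h|^a e^{-W}$ proceed exactly as you describe, matching the paper.
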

	
	\paragraph{The Antisymmetric $\beta$-ensemble in the high-temperature regime}
	
	The Antisymmetric $\beta$-ensemble is a random matrix ensemble introduced by Dumitriu and Forrester in \cite{DF10}; it has the following matrix representation
	
	\begin{equation}
	\label{eq:antiguassian_matrix}
	    Q = \begin{pmatrix}
	    0& x_1 & & &  \\
			-x_1 & 0 & x_2 & &\\
			& \ddots & \ddots & \ddots &\\
			&& \ddots & \ddots &  x_{N-1}  \\
		 &&&  -x_{N-1} &0
	    \end{pmatrix}\,,
	\end{equation}
	and the entries of the matrix $Q$ are distributed according to
	
\begin{equation}
\label{eq:antiguassian}
    \di \mu_{AG} = \frac{1}{Z^{AG}_N(\beta,P)}\prod_{j=1}^{N-1}x_j^{\beta(N-j) - 1}\mathbbm{1}_{x_j\geq 0}\exp(-\Tr{P(Q)})\di \bx\,,
\end{equation}
here $P$ can be any function that makes \eqref{eq:antiguassian} normalizable. For our purpose, we consider as in Hypotheses \ref{polynomial}  $P$ polynomial of the form $P(x) = (-1)^{j}x^{2j} + \textrm{l.o.t}$.

As in the previous cases, we are interested in the high-temperature regime for this model, so we set $\beta =\frac{2\alpha}{N}$, and we rewrite the previous density as
\begin{equation}
\label{eq:antiguassian_ht}
    \di \mu_{AG} = \frac{1}{Z^{AG}_N(\alpha,P)}\prod_{j=1}^{N-1}x_j^{2\alpha\left(1-\frac{j}{N}\right) - 1}\mathbbm{1}_{x_j\geq 0}\exp(-\Tr{P(Q)})\di \bx\,.
\end{equation}
This regime was introduced in \cite{mazzuca_int_sys}, where the author computed the density of states for this model in the case $P(x) = -x^2/2$.
The structure of this last density \eqref{eq:antiguassian_ht} resembles the one of $\mu^{(2)}_N$\eqref{type2}, indeed proceeding as in the case of the Volterra lattice, we deduce the following corollary
	
	\begin{corollary}[CLT for Antisymmetric $\beta$-ensemble]
	Consider the matrix representation \eqref{eq:antiguassian_matrix} of the Antisymmetric  $\beta$-ensemble in the high-temperature regime, endowed with the probability distribution $\di \mu_{AG}$ \eqref{eq:antiguassian_ht}, and let $P(x)$ be a polynomial of the form $P(x) = (-1)^{\ell} x^{2\ell} + \textrm{l.o.t.}$. Then, defining the Free energy $\cF_{AG}(\alpha,P)$ as
	
	\begin{equation}
	    \cF_{AG}(\alpha,P) = -\lim_{N\to\infty}\frac{1}{N}\ln(Z^{AG}_N(\alpha,P))\,,
	\end{equation}
	for all $j\in\N$ fixed, we have the following weak limit
	
	\begin{equation}
	    \lim_{N\to\infty}  \frac{\Tr{Q^j} - \meanval{\Tr{Q^j}}  }{\sqrt{N}} \rightharpoonup \cN(0,\sigma^2)\,.
	\end{equation} 
 	where
    \begin{equation}
        \meanval{\Tr{Q^j}} = iN\partial_t \cF_{AG}(\alpha,P+itx^j)_{\vert_{t=0}}\,,\quad \sigma^2 = \vert\partial^2_t \cF_{AG}(\alpha,P+itx^j)_{\vert_{t=0}}\vert\,.
    \end{equation}
	\end{corollary}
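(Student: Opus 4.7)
The plan is to identify $\mu_{AG}$ as an instance of the type 2 measure $\mu^{(2)}_N$ in \eqref{type2} and then invoke Theorem \ref{THM:FINAL_CLT_INTRO} directly, in close parallel with the Volterra-lattice proof of the preceding subsection. In the template \eqref{type2} I would take $F(x,\alpha)=x^{2\alpha-1}\mathbbm{1}_{x\geq 0}$ and let the distribution $R$ for the ``phantom'' $N$-th coordinate be any probability measure on $\R_+$ (a Dirac mass, say): since only $x_1,\dots,x_{N-1}$ enter the antisymmetric matrix $Q$ of type 2-\textit{iii}, this choice reproduces the density \eqref{eq:antiguassian_ht} exactly. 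The assumption $P(x)=(-1)^{\ell} x^{2\ell}+\text{l.o.t.}$ matches Hypotheses \ref{polynomial} for models \textit{iii}.

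Next, I need to check that $\Tr{P(Q)}$ has the circular structure required by the theorem, with seed $W$ bounded from below and compatible with a seed $h$ of $\Tr{Q^s}$ such that $|h|^a e^{-W}$ is bounded for $1\leq a\leq 3$. Circularity is given by Lemma \ref{LEM:CIRCULAR_TRACE}. For the lower bound I would adapt Theorem \ref{thm:struct} to the antisymmetric tridiagonal setting, exactly as the authors indicate for the Volterra Lax matrix: since $iQ$ is Hermitian with non-negative sub- and super-diagonal entries $x_j$, the identity $(-1)^{\ell}\Tr{Q^{2\ell}}=\Tr{(iQ)^{2\ell}}$ expresses the leading term of $\Tr{P(Q)}$ as a sum of non-negative monomials in the $x_j$, so that a seed can be read off that is not only bounded below but also coercive in its variables. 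This coercivity is what dominates both the lower-order terms of $P$ and any fixed-degree seed $h$ of $\Tr{Q^s}$.

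The remaining subtlety is that $F(x,\alpha)=x^{2\alpha-1}$ is not globally integrable, so Hypothesis \ref{general_assumptions} HP 4 fails as stated. I would resolve this exactly as in Remark \ref{rem:Toda_proof}: extract a small piece $\varepsilon c\sum_j x_j^{2\ell}$ from the coercive part of $\Tr{P(Q)}$ and absorb it into a modified $\widetilde F(x,\alpha)=x^{2\alpha-1}e^{-\varepsilon c x^{2\ell}}\mathbbm{1}_{x\geq 0}$. This $\widetilde F$ is in $L^1(\R_+)$, admits an $L^2$ domination by a locally integrable $g_K$ for $\alpha$ in any compact of $(0,\infty)$, and has the prescribed singular asymptotics $\|\widetilde F(\cdot,\alpha)\|_1 \sim d_1\alpha^{-1}$ as $\alpha\to 0$ with the localization sets $\mathcal{O}_\varepsilon=[0,\varepsilon]$, so the remaining parts of HP 4 follow by direct computation. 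With all hypotheses verified, Theorem \ref{THM:FINAL_CLT_INTRO} yields the stated CLT together with the identifications of $\meanval{\Tr{Q^j}}$ and $\sigma^2$ as first and second derivatives of $\cF_{AG}(\alpha,P+itx^j)$ at $t=0$. The main obstacle I foresee is the rigorous antisymmetric version of Theorem \ref{thm:struct}, namely the sign bookkeeping in the generalized Motzkin-path expansion needed to certify that seeds and weeds are bounded below uniformly in $N$; everything else should go through verbatim from the Volterra argument.
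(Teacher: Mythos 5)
Your proposal is correct and takes essentially the same route as the paper: identify $\mu_{AG}$ with the type~2 template (with $F(x,\alpha)=x^{2\alpha-1}\mathbbm{1}_{x\geq 0}$, a Dirac $R$ for the phantom $N$-th coordinate, and the $\varepsilon$-absorption trick from Remark~\ref{rem:Toda_proof} to restore $L^1$ integrability), check circularity via Lemma~\ref{LEM:CIRCULAR_TRACE} and lower-boundedness of the seed via an antisymmetric analogue of Theorem~\ref{thm:struct}, and invoke Theorem~\ref{THM:FINAL_CLT_INTRO} — exactly the paper's ``proceed as in the Volterra case'' argument. Your observation that $(-1)^{\ell}\Tr{Q^{2\ell}}=\Tr{(iQ)^{2\ell}}\geq 0$, with $iQ$ Hermitian and the closed-walk expansion producing only non-negative monomials, is a tidy way to certify the coercivity/lower bound that the paper leaves implicit when it asserts Theorem~\ref{thm:struct} ``generalizes to the antisymmetric situation.''
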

Which is the perfect analogue of the result for the Volterra lattice. 

\begin{remark}
\label{rem:derivativeSusceptibility}
In view of Theorem \ref{THM:FINAL_CLT_INTRO}, we deduce the following identities 

\begin{equation}
\begin{split}
        &\partial_\alpha(\alpha \partial_t \cF_{AG}(\alpha,P+itx^j)_{\vert_{t=0}}) = \partial_t \cF_{AG}(\alpha,P+itx^j)_{\vert_{t=0}}\,, \\
        &\partial_\alpha(\alpha \partial^2_t \cF_{AG}(\alpha,P+itx^j)_{\vert_{t=0}}) = \partial^2_t \cF_{AG}(\alpha,P+itx^j)_{\vert_{t=0}}
\end{split}
\end{equation}
\end{remark}

\subsection{The defocusing Ablowitz-Ladik lattice, and the  Circular  $\beta$-ensemble at high-temperature}
\label{sec:AL_circ}
In this subsection, we focus on the defocusing Ablowitz-Ladik lattice, and its relation to the  Circular  $\beta$-ensemble at high-temperature \cite{Hardy2021,mazzuca2021generalized,spohn2021hydrodynamic}.
This relation was highlighted by one of the present authors and T. Grava \cite{mazzuca2021generalized}, and independently by H. Spohn \cite{spohn2021hydrodynamic}. In these papers, the authors were able to characterize the density of states of the Ablowitz-Ladik lattice in terms of the one of the Circular $\beta$-ensemble in the high-temperature regime. Moreover, in \cite{mazzuca2021generalized} the authors were able to compute explicitly the density of states in the case of linear potential in terms of the solution of the Double Confluent Heun Equation \cite{DLMF} highlighting a connection with the Painlev\'e equations \cite{Lisovyy,FIKN}.  In \cite{mazzucamemin}, the two present authors obtained a large deviations principles for the empirical spectral measure for any continuous and bounded potential. 

\paragraph{The defocusing Ablowitz-Ladik lattice.}
	The defocusing Ablowitz-Ladik (dAL) lattice is defined by the following system of nonlinear equations	
	\begin{equation}
		\label{eq:AL}
		i	\dot{a}_j =-(a_{j+1}+a_{j-1}-2a_j)+|a_j|^2(a_{j-1}+a_{j+1})\,,
	\end{equation}
	where $a_j(t)\in\C$. We assume $N$-periodic boundary conditions $a_{j+N}=a_j$, for all $j\in \Z$. 
	The dAL lattice  was introduced by Ablowitz and Ladik  \cite{Ablowitz1974,Ablowitz1975} as the spatial  integrable discretization of the defocusing cubic  nonlinear Schr\"odinger Equation for the complex function 
	$\psi(x,t)$, $x\in S^1$ and $t\in\R$:
	\begin{equation}
		i \partial_t \psi(x,t) = -\partial^2_{x} \psi(x,t) +  2\lvert \psi(x,t) \rvert^2 \psi(x,t).
	\end{equation}
	
	As for the others dynamical systems that we considered, the dAL is an integrable system. Its integrability was proved by Ablowitz and Ladik by discretizing the $2\times 2$ Zakharov-Shabat Lax pair \cite{Ablowitz1973, Ablowitz1974} of the cubic  nonlinear Schr\"odinger equation. 
	Furthermore, Nenciu and Simon \cite{Nenciu2005,Simon2005}  constructed a new Lax pair for this lattice.  Following their construction we double the size of the chain according to the periodic boundary condition, thus we consider a chain of $2N$ particles $a_1, \ldots, a_{2N}$ such that $a_j = a_{j+N}$ for $j=1,\ldots, N$.
	Define the  $2\times2$ unitary matrix  $\Xi_j$ 		
	\begin{equation}
	\label{eq:Xi}
		\Xi_j = \begin{pmatrix}
			\wo a_j & \rho_j \\
			\rho_j & -a_j
		\end{pmatrix}\, ,\quad j=1,\dots, 2N\, ,\quad \rho_j = \sqrt{1-|a_j|^2}
	\end{equation}
	and the $2N\times 2N$ matrices

	\begin{equation}
 \label{eq:ML}
		\cM= \begin{pmatrix}
			-a_{2N}&&&&& \rho_{2N} \\
			& \Xi_2 \\
			&& \Xi_4 \\
			&&& \ddots \\
			&&&&\Xi_{2N-2}\\
			\rho_{2N} &&&&& \wo a_{2N}
		\end{pmatrix}\, ,\qquad 
		\cL = \begin{pmatrix}
			\Xi_{1} \\
			& \Xi_3 \\
			&& \ddots \\
			&&&\Xi_{2N-1}
		\end{pmatrix} \,.
	\end{equation}
Now  let us define the  unitary  Lax matrix 
	\begin{equation}
		\label{eq:Lax_AL}
		\cE  = \cL \cM\,,
	\end{equation}
	that has the structure  of a $5$-band periodic diagonal matrix. 
	The matrix $\cE$ is a periodic  CMV  matrix \cite{Cantero2005}.
The equations of motion \eqref{eq:AL}  are equivalent to  the following Lax equation for the matrix $\cE$:	
	\begin{equation}
		\label{eq:Lax_pair}
		\dot \cE = i\left[\cE, \cE^+ + (\cE^+)^\dagger\right]\,,
	\end{equation}	
	where $^\dagger$ stands for hermitian conjugate and 
	\begin{equation}
		\cE^+_{j,k} = \begin{cases}
			\frac{1}{2} \cE_{j,j} \quad j = k \\
			\cE_{j,k} \quad k = j + 1 \, \mod \, 2N \, \mbox{or} \, k = j + 2 \, \mod \, 2N  \\
			0 \quad \mbox{otherwise}.
		\end{cases}
	\end{equation}	
	The matrix $\cE$ is a circular-like matrix, this can be deduced from the fact that both $\cL,\cM$ are circular-like \eqref{eq:ML}, and the entries are continuous functions on a compact set, so by applying Lemma \ref{LEM:CIRCULAR_TRACE}, we deduce the following:
	
		\begin{corollary}
		\label{cor:struct_CMV}
	Fix $m\in \N$, and consider the matrix $\cE$ \eqref{eq:Lax_AL}. Then for $N$ big enough, there exists some $k=k(m)\in \N$, such that $\Tr{\cE^m}$ is a $k$-circular function, with seed $V$ and weed $\widetilde V$.
	\end{corollary}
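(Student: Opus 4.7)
The statement is essentially a specialization of Lemma \ref{LEM:CIRCULAR_TRACE}. Since $\cE = \cL\cM$ defined in \eqref{eq:Lax_AL} is, by construction, a periodic CMV matrix with parameters $\ba = (a_1,\ldots,a_{2N}) \in \mathbb{D}^{2N}$, it falls within the class of type 1-\textit{iv} matrices introduced earlier. Applying Lemma \ref{LEM:CIRCULAR_TRACE} with $P(x) = x^m$ immediately yields that $\Tr{\cE^m}$ is circular in the sense of Definition \ref{def:circular}, which furnishes both the seed $V$ and the weed $\widetilde V$. The only remaining content of the corollary is a quantitative statement on how the circular index $k = k(m)$ depends on $m$.

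The plan is to exploit the banded structure of $\cE$. From the block structure of $\cL$ and $\cM$ in \eqref{eq:ML}, the matrix $\cE$ is pentadiagonal up to the periodic wrap-around: $[\cE]_{i,j}=0$ whenever the cyclic distance $\td(i,j) = \min(|i-j|, 2N-|i-j|)$ exceeds $2$. A straightforward induction on the exponent then shows that $[\cE^m]_{i,j}=0$ whenever $\td(i,j) > 2m$. Consequently, each diagonal entry $[\cE^m]_{j,j}$ is a polynomial in the variables $a_\ell$ whose indices lie in a cyclic window around $j$ of width at most $4m$.

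Choosing any $k = k(m) > 4m$ and writing $2N = kM + \ell$ with $0 \leq \ell < k$, I would then partition $\{1,\ldots,2N\}$ into $M$ consecutive blocks of size $k$ plus the leftover indices $\{kM+1,\ldots,2N\}$. For $j$ belonging to one of these blocks away from the wrap-around region, the localization above guarantees that $[\cE^m]_{j,j}$ depends only on variables from at most two consecutive blocks. Summing these contributions over consecutive pairs of blocks yields the seed $V : X^{2k} \to \C$, while the contributions from the leftover indices and from the boundary interaction between the last block and the first are collected into the weed $\widetilde V$, exactly matching the decomposition in Definition \ref{def:circular}.

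The main (and essentially only) obstacle is the bookkeeping associated with writing down $V$ and $\widetilde V$ unambiguously given that several $j$'s contribute to the same pair of consecutive blocks; but only the qualitative locality of $[\cE^m]_{j,j}$ is needed, not an explicit combinatorial expansion, so no real technical difficulty arises. In particular, unlike the analogous Corollary \ref{cor:struct} for the Toda Lax matrix, we do not need a lower bound on $V$ and $\widetilde V$, since the $a_\ell$'s already range over the compact polydisk $\overline{\mathbb{D}}^{2N}$ and all quantities of interest are automatically bounded.
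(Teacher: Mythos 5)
Your proposal is correct and follows essentially the same route as the paper: the paper's own justification is a one-line appeal to Lemma \ref{LEM:CIRCULAR_TRACE} after observing that $\cE=\cL\cM$ is a circular-like (type 1-\textit{iv}) matrix with local entries on a compact set, which is exactly what your first paragraph does. Your subsequent bandwidth argument (pentadiagonality of $\cE$, hence $[\cE^m]_{i,j}=0$ for $\td(i,j)>2m$, then block into windows of size $k>4m$) is just a more explicit specialization of the argument behind Lemma \ref{LEM:CIRCULAR_TRACE} and Corollary \ref{cor:nice} in the appendix, and your closing observation that the lower-bound clause of Corollary \ref{cor:struct} is dispensable here because $\ba$ lives in the compact polydisk is accurate and matches the paper's formulation.
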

	
    Following \cite{spohn2021hydrodynamic,mazzuca2021generalized,mazzucamemin}, we notice that the quantity $K_0 = \prod_{j=1}^N (1-\vert a_j\vert^2)$ is conserved, so this means that if $\vert a_j(0)\vert < 1$ for all $j=1,\dots, N$ then $\vert a_j(t)\vert < 1$ for all $j=1,\dots, N$ for all $t\in \R$, so we can consider $\D^N$ as our phase space, here $\D = \{z\in\C\,\vert\, \vert z\vert < 1$\}. On this phase space, we introduce the Generalized Gibbs ensemble for the defocusing AL lattice as
    
    \begin{equation}
    \label{eq:gibbs_dAL}
        \di\mu_{\dAL} = \frac{\prod_{j=1}^{N} \left(1-|a_j|^2\right)^{\alpha-1}\mathbbm{1}_{a_j\in\D}\exp\left(-\mbox{Tr}\left(\Re P\left(\cE\right)\right)\right)  \di^2 \ba}{Z_N^{\dAL}(\alpha,\Re P)}\,,
    \end{equation}
	where $P\in \C[X]$ is a polynomial. In view of Corollary \ref{cor:struct_CMV}, we are in the hypotheses of Theorem \ref{THM:FINAL_CLT_INTRO}, thus we deduce the following:
	
	\begin{corollary}[CLT for defocusing Ablowitz--Ladik lattice]
	Consider the Lax matrix $\cE$ \eqref{eq:Lax_AL} of the defocusing Ablowitz--Ladik lattice distributed according to the Generalized Gibbs Ensemble \eqref{eq:gibbs_dAL}. Then, defining the Free energy $\cF_{\dAL}(\alpha,\Re P)$ as
	
	\begin{equation}
	    \cF_{\dAL}(\alpha,\Re P) = -\lim_{N\to\infty}\frac{\ln(Z^{\dAL}_N(\alpha,\Re P))}{2N}\,,
	\end{equation}
	for all $j\in\N$ fixed, we have the following weak limits
	
	\begin{equation}
 \begin{split}
     & \lim_{N\to\infty}  \frac{\Tr{\Re \cE^j} - \meanval{\Re \Tr{\cE^j}} }{\sqrt{N}} \rightharpoonup  \cN(0,\sigma_{\text{real}}^2)\,, \\
     & \lim_{N\to\infty}  \frac{\Tr{\Im \cE^j} - \meanval{\Im \Tr{\cE^j}} }{\sqrt{N}} \rightharpoonup  \cN(0,\sigma_{\text{imag}}^2)\,,
 \end{split}
	\end{equation}

        where
        \begin{equation}
        \begin{split}
            & \meanval{\Tr{\Re \cE^j}} =i N\partial_t \cF_{\dAL}(\alpha,\Re P+it \Re x^j)_{\vert_{t=0}} \,,\quad \sigma_{\text{real}}^2 = \vert\partial^2_t \cF_{\dAL}(\alpha,\Re P+it\Re x^j)_{\vert_{t=0}}\vert\,. \\
            & \meanval{\Tr{\Im \cE^j}} =i N\partial_t \cF_{\dAL}(\alpha,\Re P+it \Im x^j)_{\vert_{t=0}} \,,\quad \sigma_{\text{imag}}^2 = \vert\partial^2_t \cF_{\dAL}(\alpha,\Re P+it\Im x^j)_{\vert_{t=0}}\vert\,.
        \end{split}
        \end{equation}
	\end{corollary}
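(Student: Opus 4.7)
The plan is to derive this corollary as a direct application of Theorem \ref{THM:FINAL_CLT_INTRO} in the setting of model 1-iv. The Gibbs measure \eqref{eq:gibbs_dAL} has exactly the form \eqref{type1} once we identify $X = \D$, the integration variable $\ba = (a_1, \ldots, a_N) \in \D^N$, and the single-site weight
\[ F(a,\alpha) = (1-|a|^2)^{\alpha-1}\mathbbm{1}_{a \in \D}. \]
The polynomial part is $\Re P(\cE)$ with $P \in \C[X]$, which satisfies Hypothesis \ref{polynomial} for CMV matrices. Corollary \ref{cor:struct_CMV} ensures that $\Tr{\Re P(\cE)}$ and $\Tr{\Re \cE^s}$ are $k$-circular, yielding compatible seeds $W$ and $h$ to which the theorem applies.

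The checks of Hypotheses \ref{general_assumptions} reduce to the following. HP 1 and HP 2 are immediate from the explicit form of $F$. HP 3 holds because the unitarity of $\cE$ implies that $\Tr{\Re P(\cE)}$ is a bounded function of $\ba$, so the seed $W$ and weed $\widetilde W$ are in particular bounded from below. HP 4 reduces to a direct analysis of $F(a,\alpha) = (1-|a|^2)^{\alpha-1}$ on $\D$: a polar-coordinate computation gives $\|F(\cdot,\alpha)\|_{L^1(\D)} = \pi/\alpha$, fixing $\tc = 1$, and the $L^2$-domination of $\sqrt F$ and $\partial_\alpha \sqrt F$ on compact $\alpha$-intervals, together with a suitable choice of the localizing sets $\cO_\varepsilon$, follow from direct estimates on the weight. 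HP 5 is vacuous in the type 1 setting. The compatibility and boundedness of $|h|^a e^{-W}$ for $1 \leq a \leq 3$ are automatic, since both $W$ and $h$ are bounded on their domain, being seeds of traces of polynomials evaluated at a unitary matrix.

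With the hypotheses in hand, the conclusions of Theorem \ref{THM:FINAL_CLT_INTRO} translate directly into the statement of the corollary: items ii and iv express the mean and the variance of $\Tr{\Re \cE^j}$ as first and second $t$-derivatives of $\cF_{\dAL}(\alpha, \Re P + it \Re z^j)$ at $t=0$, and the CLT follows. The companion statement for $\Tr{\Im \cE^j}$ is obtained from Remark \ref{remark moments}, which extends the theorem to perturbations of the form $it \Im z^s$ in place of $it \Re z^s$.

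The main technical point is the verification of HP 4 for this specific model: the weight $(1-|a|^2)^{\alpha-1}$ is integrable on $\D$ but concentrates near $\partial \D$ as $\alpha \to 0$, so the localizing sets $\cO_\varepsilon$ and the required matching of $\|F\|_{1,\cO_\varepsilon}$ and $\|F\|_{1,\cO_\varepsilon^c}$ demand more care than in the Toda setup of Remark \ref{rem:Toda_proof}. Beyond this bookkeeping, the proof is purely a transcription of the main theorem.
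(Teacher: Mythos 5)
Your proposal is correct and follows the same route as the paper: identify the GGE \eqref{eq:gibbs_dAL} with the type-1 form \eqref{type1} via $F(a,\alpha)=(1-|a|^2)^{\alpha-1}$, invoke Corollary \ref{cor:struct_CMV} for circularity of traces, verify Hypotheses \ref{general_assumptions} (with boundedness automatic from unitarity of $\cE$ and compactness of $\D$), and then read off the CLT and the free-energy identities from Theorem \ref{THM:FINAL_CLT_INTRO}, using Remark \ref{remark moments} for the $\Im\Tr{\cE^j}$ variant. You actually spell out more of the HP 4 verification (the $\pi/\alpha$ normalization, $\tc=1$, the choice of $\cO_\varepsilon$ near $\partial\D$) than the paper does, which simply asserts that the hypotheses hold after establishing circularity.
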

	
	\paragraph{The circular $\beta$-ensemble at high-temperature.} 
	The circular $\beta$-ensemble was introduced by Killip and Nenciu in \cite{Killip2004}; as the other $\beta$-ensembles that we considered, it possesses a matrix representation. Consider the two block diagonal matrices
	
	\begin{equation}
		\label{eq:LME}
		\fM = \mbox{diag}\left(\Xi_1,\Xi_3,\Xi_{5} \ldots,\right) \quad \mbox{ and } \quad \fL = \mbox{diag}\left(\Xi_{0},\Xi_2,\Xi_4, \ldots\right)\,,
	\end{equation}
	where the blocks $\Xi_j$, $j=1,\dots, N-1$ are defined in \eqref{eq:Xi}, while $\Xi_{0} = (1)$ and $\Xi_{N} = (\wo \alpha_{N})$ are $1\times 1$ matrices. 
	Then, we define $\fE$ as follows
	\begin{equation}
	\label{eq:CMV_true}
	\fE = \fL\fM.
	\end{equation}
	
	The entries of this matrix are distributed according to 
	
	\begin{equation}
	    \di\mu_C = \frac{\prod_{j=1}^{N-1} \left(1-|a_j|^2\right)^{\frac{\beta}{2}(N-j)-1}\mathbbm{1}_{a_j\in\D}\exp\left(-\mbox{Tr}\left(\Re P\left(\fE\right)\right)\right)  \prod_{j=1}^{N-1}\di^2a_j \frac{\di a_N}{i a_N}}{Z_N^{C}(\beta,\Re P)}\,.
	\end{equation}
	As for the other $\beta$-ensembles, one can explicitly compute the joint eigenvalue density for this ensemble as
	
	\begin{equation}
	    \di\mathbb{P}_{C} = \frac{1}{\mathfrak{Z}^{C}_N(\beta,\Re P)}  \prod_{j<\ell}\vert e^{i\theta_j} -e^{i\theta_\ell}\vert^\beta \mathbbm{1}_{\theta_j\in \T} e^{-\sum_{j=1}^N\Re P(e^{i\theta_j})} \di \boldsymbol{\theta}\,,
	\end{equation}
	
	here $\T = [-\pi,\pi)$, $e^{i\theta_j}$ are the eigenvalues of $\fE$, and $P$ can be any continuous function. We restrict our attention to the class of polynomials $P\in \C[X]$. 
	
	We are interested in the high-temperature limit for this ensemble \cite{mazzuca2021generalized,spohn2021hydrodynamic}, so we set $\beta =\frac{2\alpha}{N}$, obtaining 
	
		\begin{equation}
		\label{eq:circular_gibbs}
	    \di\mu_C = \frac{\prod_{j=1}^{N-1} \left(1-|a_j|^2\right)^{\alpha\left(1-\frac{j}{N}\right)-1}\mathbbm{1}_{a_j\in\D}\exp\left(-\mbox{Tr}\left(\Re P\left(\fE\right)\right)\right)  \prod_{j=1}^{N-1}\di^2a_j \frac{\di a_N}{i a_N}}{Z_N^{C}(\alpha,P)}\,.
	\end{equation}
	  
	So, in view of Corollary \ref{cor:struct_CMV}, we are in the hypotheses of Theorem \ref{THM:FINAL_CLT_INTRO}, so we deduce the following
	
		\begin{corollary}[CLT for Circular $\beta$-ensemble]
	Consider the matrix representation \eqref{eq:CMV_true} of the Circular  $\beta$-ensemble in the high-temperature regime, endowed with the probability distribution $\di \mu_{C}$ \eqref{eq:circular_gibbs}, and let $P\in \C[X]$ be a polynomial. Then, defining the Free energy $\cF_{C}(\alpha,\Re P)$ as
	
	\begin{equation}
	    \cF_{C}(\alpha,\Re P) = -\lim_{N\to\infty}\frac{\ln(Z^{C}_N(\alpha,\Re P))}{2N}\,,
	\end{equation}
	for all $j\in\N$ fixed, we have the following weak limit
	
	\begin{equation}
 \begin{split}
     & \lim_{N\to\infty}  \frac{\Tr{\Re \fE^j} - \meanval{\Re \Tr{\fE^j}} }{\sqrt{N}} \rightharpoonup  \cN(0,\sigma_{\text{real}}^2)\,, \\
     & \lim_{N\to\infty}  \frac{\Tr{\Im \fE^j} - \meanval{\Im \Tr{\fE^j}} }{\sqrt{N}} \rightharpoonup  \cN(0,\sigma_{\text{imag}}^2)\,,
 \end{split}
	\end{equation}

        where
        \begin{equation}
        \begin{split}
            & \meanval{\Tr{\Re \fE^j}} =i N\partial_t \cF_{C}(\alpha,\Re P+it \Re x^j)_{\vert_{t=0}} \,,\quad \sigma_{\text{real}}^2 = \vert\partial^2_t \cF_{C}(\alpha,\Re P+it\Re x^j)_{\vert_{t=0}}\vert\,. \\
            & \meanval{\Tr{\Im \cE^j}} =i N\partial_t \cF_{C}(\alpha,\Re P+it \Im x^j)_{\vert_{t=0}} \,,\quad \sigma_{\text{imag}}^2 = \vert\partial^2_t \cF_{C}(\alpha,\Re P+it\Im x^j)_{\vert_{t=0}}\vert\,.
        \end{split}
        \end{equation}
	\end{corollary}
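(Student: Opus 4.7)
The strategy is to recognize the circular $\beta$-ensemble in the high-temperature regime as a type $2$-\textit{iv} model and to apply Theorem \ref{THM:FINAL_CLT_INTRO} directly. First I would match the template \eqref{type2}: reading off \eqref{eq:circular_gibbs}, set $F(a,\alpha)=(1-|a|^2)^{\alpha-1}\mathbbm{1}_{a\in \mathbb{D}}$ and take $R(a)\di a=\frac{\di a}{ia}$ to be the uniform probability measure on the unit circle, so that HP5 is satisfied. The polynomial potential $\Re P$ complies with Hypothesis \ref{polynomial} for model $2$-\textit{iv} (any $P\in\mathbb{C}[X]$ is admissible there).

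Next I would verify Hypothesis \ref{general_assumptions}. HP1 and HP2 are immediate since $\mathbb{D}\simeq \mathbb{R}^2$ is convex and $F$ is smooth in both arguments on $\mathbb{D}\times(0,\infty)$. For HP3, Lemma \ref{LEM:CIRCULAR_TRACE} combined with Corollary \ref{cor:struct_CMV} shows that $\Tr{\Re P(\fE)}$ is circular with seed $W$ and weed $\widetilde W$; since $\fE$ is unitary the entries of every power $\fE^k$ are bounded by $1$, hence $W$ and $\widetilde W$ are bounded, in particular from below. This same boundedness yields at no cost the requirement that $|h|^ae^{-W}$ be bounded for $1\leq a\leq 3$, where $h$ is the seed of $\Tr{\Re \fE^s}$ compatible with $W$; this is in sharp contrast with the Toda situation of Remark \ref{rem:Toda_proof}, where a non-trivial rearrangement was needed.

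For HP4, a direct polar-coordinates computation yields
\begin{equation}
\|F(\cdot,\alpha)\|_{L^1(\mathbb{D})} = 2\pi \int_0^1 (1-r^2)^{\alpha-1}r\,\di r = \frac{\pi}{\alpha},
\end{equation}
so $\tc=1$ and $d_1=\pi$. Taking $\mathcal{O}_\varepsilon=\{a\in\mathbb{D}\,:\, 1-|a|^2\leq \varepsilon\}$, a neighbourhood of the boundary circle where $F$ concentrates, the same computation shows $\|F(\cdot,\alpha)\|_{1,\mathcal{O}_\varepsilon}=\frac{\pi}{\alpha}(1+o(1))$ as $\alpha\to 0$ and $\|F(\cdot,\alpha)\|_{1,\mathcal{O}_\varepsilon^c}$ is bounded uniformly in $\alpha$. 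The decomposition $W(\bx,\by)=w(\bx)+w(\by)+o(\varepsilon)$ follows from continuity of $W$ together with the observation that on $\mathcal{O}_\varepsilon$ all the factors $\rho_j=\sqrt{1-|a_j|^2}$ are $O(\sqrt\varepsilon)$: the off-diagonal entries of each $\Xi_j$ vanish in this limit, $\fE$ converges to a diagonal matrix of phases, and $\Tr{\Re P(\fE)}$ decouples into a sum of contributions coming from individual blocks. Domination in HP4 is obtained by bounding $\sqrt{F(\cdot,\alpha)}$ and $\partial_\alpha\sqrt{F(\cdot,\alpha)}$ on a compact sub-interval $[c,d]\subset(0,\infty)$ by an integrable envelope.

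With all hypotheses verified, Theorem \ref{THM:FINAL_CLT_INTRO} applied to $s=j$ with test function $\Re z^s$ gives the CLT for $\Tr{\Re \fE^j}$; invoking Remark \ref{remark moments}, which states that the theorem holds with $\Im z^s$ in place of $\Re z^s$, yields the analogous CLT for $\Tr{\Im \fE^j}$. The identification of the means and variances as first and second derivatives of $\cF_C=\cF^{(2)}$ at $t=0$ is exactly points \textit{iii.} and \textit{v.} of Theorem \ref{THM:FINAL_CLT_INTRO}. The main technical subtlety is the decoupling estimate $W(\bx,\by)=w(\bx)+w(\by)+o(\varepsilon)$ on $\mathcal{O}_\varepsilon$, but this is a routine verification from the CMV block structure and the $\rho_j\to 0$ limit; the usual integrability headaches present for unbounded Jacobi or Laguerre models are absent here thanks to the unitarity of $\fE$.
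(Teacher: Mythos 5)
Your proposal is correct and follows essentially the same route as the paper: verify that the circular $\beta$-ensemble is a type $2$-\textit{iv} model of the form \eqref{type2}, invoke Lemma \ref{LEM:CIRCULAR_TRACE} (specialised as Corollary \ref{cor:struct_CMV}) for circularity of $\Tr{\Re P(\fE)}$ and $\Tr{\Re \fE^j}$, observe that unitarity of $\fE$ makes all the seeds, weeds and $|h|^a e^{-W}$ bounded, and then apply Theorem \ref{THM:FINAL_CLT_INTRO} together with Remark \ref{remark moments} for the $\Im z^j$ part. You actually supply more detail than the paper does: the paper simply writes that ``in view of Corollary \ref{cor:struct_CMV} we are in the hypotheses of Theorem \ref{THM:FINAL_CLT_INTRO}'', whereas you explicitly carry out the HP4 verification (computing $\|F(\cdot,\alpha)\|_{L^1(\mathbb{D})}=\pi/\alpha$, choosing $\mathcal{O}_\varepsilon$ as the annular boundary region, and arguing the decoupling $W=w+w+o(\varepsilon)$ via $\rho_j\to 0$), which the paper leaves implicit.
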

	
	\begin{remark}
	\begin{itemize}
	We notice that
	    \item 		Hardy and Lambert in \cite{Hardy2021} already proved a CLT theorem for the Circular $\beta$-ensemble in the high-temperature regime for a wider class of functions and potentials than we can consider with our result. Nevertheless, we highlight the fact that in our case we can explicitly compute the means, and the variances in terms of the Free energy. 
	    
	    \item The following identities hold in view of the last part of Theorem \ref{THM:FINAL_CLT_INTRO}

\begin{equation}
\begin{split}
        &\partial_\alpha(\alpha \partial_t \cF_{C}(\alpha,\Re P+it\Re x^j)_{\vert_{t=0}}) = \partial_t \cF_{\dAL}(\alpha,\Re P+it\Re x^j)_{\vert_{t=0}}\,, \\
        &\partial_\alpha(\alpha \partial^2_t \cF_{C}(\alpha,\Re P+it\Re x^j)_{\vert_{t=0}}) = \partial^2_t \cF_{\dAL}(\alpha,\Re P+it\Re x^j)_{\vert_{t=0}}\,,\\
        &\partial_\alpha(\alpha \partial_t \cF_{C}(\alpha,\Re P+it\Im x^j)_{\vert_{t=0}}) = \partial_t \cF_{\dAL}(\alpha,\Re P+it\Im x^j)_{\vert_{t=0}}\,, \\
        &\partial_\alpha(\alpha \partial^2_t \cF_{C}(\alpha,\Re P+it\Im x^j)_{\vert_{t=0}}) = \partial^2_t \cF_{\dAL}(\alpha,\Re P+it\Im x^j)_{\vert_{t=0}}\,.
\end{split}
\end{equation}
This relation was already proved in \cite{mazzuca2021generalized} with the same kind of argument that we followed.
	\end{itemize}

	\end{remark}

\subsection{The defocusing Schur flow, and the Jacobi $\beta$-ensemble at high-temperature}
\label{sec:schur_jacobi}
In this subsection, we focus on the defocusing Schur flow \cite{Golinskii}, and its relation to the  Jacobi  $\beta$-ensemble at high-temperature \cite{Forrester2021}. This relation was first noticed in \cite{spohn2021hydrodynamic}, and then the two present authors obtained a large deviations principles for the empirical spectral measure for the defocusing Schur flow, and they were able to link it to the one of the Jacobi $\beta$-ensemble in the high-temperature regime \cite{mazzucamemin}.

\paragraph{The defocusing Schur flow.}
The defocusing Schur flow is the system of ODEs \cite{Golinskii}

\begin{equation}
\label{eq:schur}
    \dot{a_{j}} = \rho_j^2(a_{j+1}-a_{j-1})\,, \quad \rho_j = \sqrt{1-|a_j|^2}\,,
\end{equation}
and, as before, we consider periodic boundary conditions, namely $a_j = a_{j+N}$ for all $j\in \Z$.

We notice that, if one chooses an initial data such that $a_j(0)\in\R$ for all $j=1,\ldots,N$, then $a_j(t)\in\R$ for all times. Moreover, it is straightforward to verify that $K_0 = \prod_{j=1}^N \left( 1-|a_j|^2\right)$ is conserved along the Schur flow. This implies that we can choose as phase space for the Schur flow the $N$-cube $\mathbb{I}^N$, where $\I := (-1,1)$.
Furthermore, it was shown in \cite{Golinskii}, that the Schur flow has the same Lax matrix as the focusing Ablowitz--Laddik lattice.

Following \cite{spohn2021hydrodynamic,mazzuca_int_sys}, on $\I^N$ we define the finite volume limit GGE as

    \begin{equation}
    \label{eq:gibbs_dSchur}
        \di\mu_{\dSc}(\ba) = \frac{\prod_{j=1}^{N} \left(1-a_j^2\right)^{\alpha-1}\mathbbm{1}_{a_j\in\I}\exp\left(-\mbox{Tr}\left(P\left(\cE\right)\right)\right)  \di \ba}{Z_N^{\dSc}(\alpha,P)}\,,
    \end{equation}
where $P\,:\, \R \to \R$ is a polynomial. Thanks to Corollary \ref{cor:struct_CMV}, we can apply Theorem \ref{THM:FINAL_CLT_INTRO} obtaining a CLT theorem for the defocusing Schur flow

\begin{corollary}[CLT for defocusing Schur flow]
	Consider the Lax matrix $\cE$ \eqref{eq:Lax_AL} of the defocusing Schur flow distributed according to the Generalized Gibbs Ensemble \eqref{eq:gibbs_dSchur}. Then, defining the Free energy $\cF_{\dSc}(\alpha,P)$ as
	
	\begin{equation}
	    \cF_{\dSc}(\alpha,P) = -\lim_{N\to\infty}\frac{\ln(Z^{\dSc}_N(\alpha,P))}{2N}\,,
	\end{equation}
	for all $j\in\N$ fixed, we have the following weak limit
	
	\begin{equation}
	    \lim_{N\to\infty}  \frac{\Tr{\cE^j} -\meanval{\Tr{\cE^j}}   }{\sqrt{N}} \rightharpoonup \cN(0,\sigma^2 )\,,
	\end{equation}
	where
        \begin{equation}
            \meanval{\Tr{\cE^j} } = i  N\partial_t \cF_{\dSc}(\alpha,P+itx^j)_{\vert_{t=0}}\,,\quad \sigma^2 = \vert\partial^2_t \cF_{\dSc}(\alpha,P+itx^j)_{\vert_{t=0}}\vert\,.
        \end{equation}
	\end{corollary}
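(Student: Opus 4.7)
The plan is to verify that the defocusing Schur flow falls within the hypotheses of Theorem \ref{THM:FINAL_CLT_INTRO} and obtain the stated CLT as a direct application. First I would identify the measure $\mu_{\dSc}$ in \eqref{eq:gibbs_dSchur} as an instance of the type 1 density \eqref{type1}, taking $X=\I=(-1,1)$, coordinates $\bx=\ba\in\I^N$, and weight function
$$ F(a,\alpha)=(1-a^2)^{\alpha-1}\mathbbm{1}_{a\in\I}\,. $$
The Lax matrix $\cE$ is the same CMV matrix \eqref{eq:Lax_AL} that appears in the Ablowitz--Ladik setting, so Corollary \ref{cor:struct_CMV} already guarantees that for each $j$ and $N$ large enough, $\Tr{\cE^j}$ is a $k$-circular function with a seed $V$ and weed $\widetilde V$; by taking a common multiple of the circular indices one also gets compatible seeds $W,h$ for $\Tr{P(\cE)}$ and $\Tr{\cE^j}$.

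Next I would check Hypotheses \ref{polynomial}--\ref{general_assumptions}. The convexity of $\I$ (HP1) is trivial. The differentiability of $F$ in $(a,\alpha)\in \I\times(0,\infty)$ (HP2) follows from the explicit formula. The crucial simplification compared with the Toda or exponential Toda cases is that $\cE$ is \textbf{unitary}: its eigenvalues lie on the unit circle, so $\Tr{P(\cE)}$ is bounded uniformly in $\ba$. Hence both $W$ and $\widetilde W$ are bounded (in particular bounded below, giving HP3), and the function $|h|^a e^{-W}$ is bounded for every $1\leq a\leq 3$. For HP4, the integrability and $L^2$ domination estimates on $\sqrt{F(\cdot,\alpha)}$ and $\partial_\alpha\sqrt{F(\cdot,\alpha)}$ over a compact $\alpha$-range follow from the explicit form of $F$ and are uniform on $\I$. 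The small-$\alpha$ asymptotics
$$ \int_\I (1-a^2)^{\alpha-1}\di a = \frac{\sqrt{\pi}\,\Gamma(\alpha)}{\Gamma(\alpha+1/2)} = d_1\alpha^{-1}(1+o(1))\,, $$
give the required behaviour with $\tc=1$: on $\mathcal{O}_\varepsilon = [-1+\varepsilon,1-\varepsilon]$ the integral is bounded independently of $\alpha$, while the divergence comes from the neighbourhoods of $\pm 1$, yielding the matching asymptotics on $\mathcal{O}_\varepsilon^c$. HP5 is vacuous in the type 1 setting. Hypothesis \ref{polynomial} is exactly the assumption $P\in\R[X]$ used here (the Schur case being real-valued).

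With all hypotheses in place, Theorem \ref{THM:FINAL_CLT_INTRO} applied to $\mu^{(1)}_N=\mu_{\dSc}$ immediately yields the existence of a continuous function $A(\alpha)$ and of $\sigma^2(\alpha)\geq 0$ such that $(\Tr{\cE^j}-NA(\alpha))/\sqrt{N}$ converges in distribution to $\cN(0,\sigma^2(\alpha))$, with the identifications $A(\alpha)=i\partial_t\cF_{\dSc}(\alpha,P+itx^j)|_{t=0}$ and $\sigma^2(\alpha)=\partial_t^2\cF_{\dSc}(\alpha,P+itx^j)|_{t=0}$. By Remark \ref{remark moments}, $\frac1N\E[\Tr{\cE^j}]\to A(\alpha)$, so one may replace $NA(\alpha)$ in the centering by $\E[\Tr{\cE^j}]$ without changing the limit, which gives the statement in the form advertised in the corollary.

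The only non-routine step is the verification that the compatible seeds $W,h$ exist for $\Tr{P(\cE)}$ and $\Tr{\cE^j}$ simultaneously, but this is already handled by Lemma \ref{LEM:CIRCULAR_TRACE} and Corollary \ref{cor:struct_CMV} together with the observation, in the remark following Definition \ref{def:circular}, that the circular index can always be taken to be a common multiple. No real obstacle remains; everything else is a matter of plugging explicit quantities into the general machinery.
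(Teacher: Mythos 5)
Your overall route is exactly the paper's: identify $\mu_{\dSc}$ as an instance of the type $1$ density \eqref{type1} with $X=\I$ and $F(a,\alpha)=(1-a^2)^{\alpha-1}$, invoke Corollary \ref{cor:struct_CMV} for circularity, note that boundedness of the entries of $\cE$ makes the seeds bounded (so the growth conditions are automatic), and then read off the conclusion from Theorem \ref{THM:FINAL_CLT_INTRO}. The paper's own proof is just that one sentence, so in that respect you have recovered it and in fact been more explicit.

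One of your explicit verifications, however, is inverted. Hypothesis HP4 requires the compact set $\mathcal{O}_\varepsilon$ to \emph{carry} the $\alpha^{-\tc}$ divergence of $\lVert F(\cdot,\alpha)\rVert_1$ as $\alpha\to 0$, with the complement $\mathcal{O}_\varepsilon^c$ contributing only a bounded amount; compare the Toda check in Remark \ref{rem:Toda_proof}, where $\mathcal{O}_\varepsilon=[0,\varepsilon]^2$ is a small neighbourhood of the locus $b=0$ where $b^{2\alpha-1}$ concentrates. You chose $\mathcal{O}_\varepsilon=[-1+\varepsilon,1-\varepsilon]$ and asserted that the integral there is bounded while the divergence lives on the complement — that is precisely the opposite of what HP4 asks. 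For the Schur (and AL/CMV) family the mass of $F(\cdot,\alpha)$ concentrates near $\pm 1$ (resp.\ near $\partial\D$) as $\alpha\to 0$, so the set $\mathcal{O}_\varepsilon$ in the sense of HP4 should be a small neighbourhood of the endpoints, not the bulk interval. The role of HP4 in the proof of Lemma \ref{lem:differentiability} is precisely that on $\mathcal{O}_\varepsilon$ the seed factorizes as $W(\bx,\by)\approx w(\bx)+w(\by)$ so that the transfer operator is close to a rank-one operator for small $\alpha$; with your choice of $\mathcal{O}_\varepsilon$ this approximation argument would not go through. This does not change the validity of the corollary (the paper itself does not spell out this verification, and the type $1$ CLT at fixed $\alpha>0$ only relies on the spectral gap at that $\alpha$), but your HP4 check as written is incorrect and should be repaired by taking $\mathcal{O}_\varepsilon$ near $\{\pm1\}$.
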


\paragraph{The Jacobi $\beta$-ensemble in the high-temperature regime.}

The Jacobi $\beta$-ensemble is a random matrix ensemble introduced by Killip and Nenciu in \cite{Killip2004}. It has two slightly different matrix representations. The first one is the same as the Circular $\beta$-ensemble \eqref{eq:CMV_true}, but the distribution of the entries of the matrix is

	\begin{equation}
	\label{eq:jacobi_distr}
	    \di\mu_J(\ba) = \frac{\prod_{j=1}^{2N-1} \left(1-a_j^2\right)^{\beta(2N-j)/4-1}\prod_{j=1}^{2N-1}(1-a_j)^{\ta +1-\beta/4} (1+ (-1)^ja_j)^{\tb +1-\beta/4}\mathbbm{1}_{a_j\in\I}\exp\left(-\mbox{Tr}\left(P\left(\fE\right)\right)\right)  \di \ba}{Z_N^{J}(\beta,P)}\,,
	\end{equation}
	
	where $\ta,\tb > -1$, $P\in\R[X]$ is a polynomial. We notice that we are considering an \textit{even} number of random variables, and $a_j\in\R$; for these reasons, all the eigenvalues of $\fE$ come in pairs, meaning that if $e^{i\theta}$ is an eigenvalue, then $e^{-i\theta}$ is another one. Exploiting this symmetry, Killip and Nenciu found another matrix representation for this ensemble
\begin{equation}
    J = \begin{pmatrix}
        c_1 & b_1&&&\\
        b_1 & c_2 & b_2 &&\\
        &\ddots & \ddots&\ddots& \\
        && \ddots&\ddots& b_{N-1}\\
        &&&b_{N-1} & c_N
    \end{pmatrix}\,, \quad \begin{cases}
    b_{j} = \left( (1-a_{2j-2}) (1-a_{2j-1}^2)(1+a_{2j})\right)^{1/2} \\
    c_{j} = (1-a_{2j-2})a_{2j-1} - (1+a_{2j-2})a_{2j-3}
    \end{cases}\,,
\end{equation}
where $a_0 = a_{2N} = -1$, and the eigenvalues $\{\lambda_j\}_{j=1}^N$ of $J$ are related to the one of $\fE$ as $\lambda_j = \cos(\theta_j)$.

Also, in this case, it is possible to compute explicitly the joint eigenvalue density for this model as

	\begin{equation}
	    \di\mathbb{P}_{J} = \frac{1}{\mathfrak{Z}^{J}_N(\beta,P)}  \prod_{j<\ell}\vert \cos(\theta_j) -\cos(\theta_\ell)\vert^\beta \mathbbm{1}_{\theta_j\in \T} e^{-2\sum_{j=1}^NP(\cos(\theta_j))} \di \boldsymbol{\theta}\,.
	\end{equation}

As in the previous cases, we are interested in the high-temperature regime for this ensemble, so we wet $\beta = \frac{2\alpha}{N}$, thus the measure \eqref{eq:jacobi_distr} read

\begin{equation}
\label{eq:ht_jacobi}
    	    \di\mu_J(\ba) = \frac{\prod_{j=1}^{2N-1} \left(1-a_j^2\right)^{\alpha\left(1-\frac{j}{2N}\right)}\prod_{j=1}^{2N-1}(1-a_j)^{\ta +1-\frac{\alpha}{2N}} (1+ (-1)^ja_j)^{\tb +1-\frac{\alpha}{2N}}\mathbbm{1}_{a_j\in\I}\exp\left(-\mbox{Tr}\left(P\left(\fE\right)\right)\right)  \di \ba}{Z_N^{J}(\beta,P)}\,.
\end{equation}

This regime was considered in \cite{Trinh2021} and in the recent paper \cite{trinh2023beta}, where the authors established a CLT for polynomial test functions in the absence of external potential ($P=0$ in \eqref{eq:ht_jacobi} ) by considering orthogonal polynomials, obtaining an explicit recurrence relation for the limiting variance.

Again, thanks to Corollary \ref{cor:struct_CMV}, we can apply Theorem \ref{THM:FINAL_CLT_INTRO} deducing the following

\begin{corollary}[CLT for Jacobi $\beta$-ensemble in the high-temperature]
	Consider the matrix representation $\fE$ \eqref{eq:CMV_true} of the Jacobi $\beta$-ensemble in the high-temperature regime \eqref{eq:ht_jacobi} . Then, defining the Free energy $\cF_{J}(\alpha,P)$ as
	
	\begin{equation}
	    \cF_{J}(\alpha,P) = -\lim_{N\to\infty}\frac{\ln(Z^{J}_N(\alpha,P))}{N}\,,
	\end{equation}
	for all $j\in\N$ fixed, we have the following weak limit
	
	\begin{equation}
	    \lim_{N\to\infty}  \frac{\Tr{\fE^j} -  \meanval{ \Tr{\fE^j}}   }{\sqrt{N}} \rightharpoonup \cN(0,\sigma^2)\,,
	\end{equation}
	where
        \begin{equation}
            \meanval{ \Tr{\fE^j}} = i N\partial_t \cF_{J}(\alpha,P+itx^j)_{\vert_{t=0}}\,,\quad \sigma^2 = \vert\partial^2_t \cF_{J}(\alpha,P+itx^j)_{\vert_{t=0}}\vert\,.
        \end{equation}
	\end{corollary}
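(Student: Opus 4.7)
The plan is to reduce the statement to Theorem \ref{THM:FINAL_CLT_INTRO} along the exact same lines as the circular $\beta$-ensemble in Section \ref{sec:AL_circ} and the defocusing Schur flow in Section \ref{sec:schur_jacobi}. The matrix $\fE$ defined in \eqref{eq:CMV_true} is of type 2-\textit{iv}, built from Verblunsky coefficients $(a_1,\dots,a_{2N-1})$ lying in the compact interval $(-1,1)$. First I would invoke Lemma \ref{LEM:CIRCULAR_TRACE} together with Corollary \ref{cor:struct_CMV} to note that for any fixed $s\in\N$ the functional $\Tr{\fE^s}$ is $k$-circular for some $k=k(s)$ independent of $N$, with seed $W$ and compatible seed $h$. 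Since all entries of $\fE$ are continuous functions of the Verblunsky coefficients on the compact set $(-1,1)$, both $W$ and $h$ are bounded, so $|h|^a e^{-W}$ is trivially bounded for $1\leq a\leq 3$.

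Next I would match \eqref{eq:ht_jacobi} to the type-$2$ template \eqref{type2} with $F(a,\alpha)=(1-a^2)^{\alpha-1}$, viewing the system as having $2N$ coordinates: the factor $\prod_{j=1}^{2N-1}(1-a_j^2)^{\alpha(1-j/(2N))-1}$ then matches $\prod_{j=1}^{2N-1}F(a_j,\alpha(1-j/(2N)))$ exactly, and the endpoint distribution $R$ arises from the Killip--Nenciu boundary coefficient. The remaining $j$-dependent weights
$$(1-a_j)^{\tilde a+1-\alpha/(2N)}\,(1+(-1)^j a_j)^{\tilde b+1-\alpha/(2N)}$$
are continuous and bounded on $(-1,1)$ since $\tilde a,\tilde b>-1$, so I would absorb their logarithms into the effective potential, pairing adjacent coefficients $(a_{2j-1},a_{2j})$ to handle the alternating sign $(-1)^j$. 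This pairing merely doubles the circular index $k$ and is already consistent with the $\cL\cM$ block-structure of the CMV matrix, so the modified potential is still a bounded circular function in the sense of Definition \ref{def:circular}.

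The remaining Hypotheses \ref{general_assumptions} are verified by compactness. HP 1, HP 2, HP 5 are immediate. For HP 4, $F(\cdot,\alpha)\in L^1((-1,1))$ with $\|F(\cdot,\alpha)\|_1 = \int_{-1}^1 (1-a^2)^{\alpha-1}\di a \sim C\alpha^{-1}$ as $\alpha\to 0^+$, giving $\tc=1$; choosing $\mathcal{O}_\varepsilon=[-\varepsilon,\varepsilon]$ yields the required local/global splitting as in Remark \ref{rem:Toda_proof}, and a dominating $g_{c,d}$ is constructed from the uniform bound of $(1-a^2)^{\alpha-1}$ on compacts of $(-1,1)$ separated from $\pm 1$. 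HP 3 is automatic from the boundedness of $W$ on a compact domain. Applying Theorem \ref{THM:FINAL_CLT_INTRO} separately to $\Re z^j$ and $\Im z^j$ then yields the CLT together with the stated identification of mean and variance as the first and second derivatives of $\cF_J(\alpha,P+itx^j)$ at $t=0$.

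The main obstacle will be the careful bookkeeping required to fit the alternating-sign weight $(1+(-1)^j a_j)^{\tilde b+1-\alpha/(2N)}$ and the two boundary exponents $\tilde a,\tilde b$ into the framework of Theorem \ref{THM:FINAL_CLT_INTRO} without altering the prescribed shape of $F$. The natural fix, as indicated above, is to double the block size by pairing Verblunsky coefficients, which is harmless because the CMV product $\fL\fM$ already involves such pairs. Once this bookkeeping is done, the rest of the verification is direct compactness, identical in spirit to the circular and defocusing Schur flow cases treated above.
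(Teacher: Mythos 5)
Your proposal takes essentially the same route as the paper: establish circularity of $\Tr{\fE^j}$ via Lemma \ref{LEM:CIRCULAR_TRACE} and Corollary \ref{cor:struct_CMV}, identify $F(a,\gamma)=(1-a^2)^{\gamma-1}$ matching the type-$2$ template, and then invoke Theorem \ref{THM:FINAL_CLT_INTRO}, using compactness of the Verblunsky-coefficient domain $[-1,1]$ to trivialize the boundedness hypotheses. Since the paper's own treatment is a one-line ``apply the theorem,'' your elaboration on pairing $(a_{2i-1},a_{2i})$ to restore translation invariance of the $(1+(-1)^j a_j)$ factor is a useful detail the paper omits, and it is consistent with the fact that the circular index $k$ in Definition \ref{def:circular} is free. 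Two small corrections are worth flagging, neither of which undermines the argument. First, once the Killip--Nenciu boundary weights are folded into the effective potential, the resulting seed $W$ is \emph{not} bounded: the contributions $-(\ta+1)\log(1-a_j)$ and $-(\tb+1)\log(1\pm a_j)$ diverge to $+\infty$ as $a_j\to\pm 1$. What is actually needed, and what holds, is that $W$ be bounded away from $-\infty$ (HP 3) and that $e^{-W}$ be bounded \emph{above}, so that $|h|^a e^{-W}$ is bounded because $h$ is a polynomial in variables confined to a compact set; your phrase ``$W$ is bounded'' should read ``$W$ is bounded from below.'' Second, the exponents $\ta+1-\alpha/(2N)$, $\tb+1-\alpha/(2N)$ carry $O(1/N)$ corrections that make the effective potential $N$-dependent and hence fall strictly outside the template $\mu^{(2)}_N$; you do not address this. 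It is harmless because such corrections perturb the transfer-operator kernel by $O(1/N)$, which is precisely the order of approximation already tolerated in the proof of Theorem \ref{thm:hard_asymptotic} (compare with \eqref{ineq:approxOperator}), but a sentence acknowledging it would tighten the argument. Finally, the split into $\Re z^j$ and $\Im z^j$ is unnecessary here: the Jacobi Verblunsky coefficients are real, so $\fE$ is orthogonal and $\Tr{\fE^j}\in\R$.
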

	
	\begin{remark}
	We notice that for $N$ even, for $\ta+\tb = -1 + \frac{\beta}{4}$ we can apply the final part of Theorem \ref{THM:FINAL_CLT_INTRO}, thus we deduce that 
	
	\begin{equation}
\begin{split}
        &\partial_\alpha(\alpha \partial_t \cF_{J}(\alpha,P+itx^j)_{\vert_{t=0}}) = \partial_t \cF_{\dSc}(\alpha,P+itx^j)_{\vert_{t=0}}\,, \\
        &\partial_\alpha(\alpha \partial^2_t \cF_{J}(\alpha,P+itx^j)_{\vert_{t=0}}) = \partial_t^2 \cF_{\dSc}(\alpha,P+itx^j)_{\vert_{t=0}}
\end{split}
\end{equation}
	\end{remark}

\subsection{The Itoh--Narita--Bogoyavleskii lattices}
\label{sec:INB}
In this section, we apply our results to two families of integrable lattices with short-range interaction that generalize the Volterra one \eqref{Volterra}. These families are described in \cite{Bogoyavlensky1991}  (see also \cite{Bogoyavlensky1988,Itoh1975,Narita1982}).

	One is called  \emph{additive Itoh--Narita--Bogoyavleskii (INB) $r$-lattice} and is defined by the following equations
	\begin{align}
		\dot{a}_{i} &= a_{i} \left( \sum_{j=1}^{r} a_{i+j}- \sum_{j=1}^{r} a_{i-j} \right),
		\quad
		i = 1,\dots,N,\;N\geq r\in \N.
		\label{eq:inbpadd}
		\end{align}
		The second family is called the 	\emph{multiplicative Itoh--Narita--Bogoyavleskii (INB) $r$-lattice} and is defined by the equations
		\begin{align}
		\dot{a}_{i} &= a_{i} \left( \prod_{j=1}^{r} a_{i+j}- \prod_{j=1}^{r} a_{i-j} \right),
		\quad
		i = 1,\dots,N,\;N\geq r\in \N.
		\label{eq:inbpmul}
	\end{align}
	In both cases we consider  the periodicity condition
	$a_{j+N}=a_{j}$. We notice that setting $r=1$, we recover in both cases the Volterra lattice. 
	Moreover, both families admit the KdV equation as continuum limits,	see \cite{Bogoyavlensky1991}. 
	
	In both cases the interaction is short-range, but in the additive case \eqref{eq:inbpadd} the nonlinearity is quadratic as in the Volterra lattice, instead in the multiplicative one \eqref{eq:inbpmul} it is of polynomial order.
	
	As we already mentioned, both families are integrable for all $r\in\N$, indeed both families admits a Lax pair formulation.
	For the additive INB lattice \eqref{eq:inbpadd}, it reads
	\begin{align}
	\label{eq:inbpLadd}
		L^{(+,r)} & = \sum_{i=1}^{N}
		\left( a_{i+r} E_{i+r,i} + E_{i,i+1}\right)\\
		& =	\begin{pmatrix}
	0&1&0&\cdots&\tikz[remember picture]\node[inner sep=0pt] (b) {$a_{N-r}$};&0&0&0\\
	0&0&1&\cdots&0&a_{N-r+1}&0&0\\
	0&0&0&1&\cdots&0&a_{N-r+2}&0\\
	\vdots&\ddots&\ddots&\ddots&\ddots&\ddots&&\\
	a_{r+1}&0&\cdots&\cdots&0&1&0&\tikz[remember picture]\node[inner sep=0pt] (a) {0};\\
	0&a_{r+2}&0\cdots&&\ddots&\ddots&\ddots&\\
	\vdots&\ddots&\ddots&\ddots&\ddots&0&0&1\\
	1&0&\cdots&a_{N-r-1}&\cdots&0&0&0\\
	\end{pmatrix}
		  \begin{tikzpicture}[overlay, remember picture]
		\draw[stealth-] (a.east)++(1,0) -- node[above] {$r+1$ row }++ (1,0);
		\draw[stealth-] (b.north)++(0,0.1) -- node[right] {$N-r$ column }++ (0,0.5);
	\end{tikzpicture}
		\\
		A^{(+,r)} & = 
		\sum_{i=1}^N\left(\sum_{j=0}^{r} a_{i+j}\right) E_{i,i}
		+ E_{i,i+r+1} \, ,
		\label{eq:inbpMadd}
	\end{align}
	we recall that we are always considering periodic boundary conditions, 
	so for all $j \in\Z$, $a_{j+N} = a_j$ and $E_{i,j+N} =E_{i+N,j} = E_{i,j}$.
	In this notation, the equations of motion \eqref{eq:inbpadd} are equivalent to 
	
	\begin{equation}
	    \dot{L}^{(+,r)} = [L^{(+,r)};A^{(+,r)}]\,.
	\end{equation}

	Analogously, the multiplicative INB $r$-lattices have a Lax Pair formulation, which reads
	\begin{align}
		\label{eq:inbpLmul}
		L^{(\times,r)} & = \sum_{i=1}^{N}\left( a_{i} E_{i,i+1} 
		+E_{i+r,i} \right),\\
		& =	\begin{pmatrix}
	0&a_1&0&\cdots&\tikz[remember picture]\node[inner sep=0pt] (b) {$1$};&0&0&0\\
	0&0&a_2&\cdots&0&1&0&0\\
	0&0&0&a_3&\cdots&0&1&0\\
	\vdots&\ddots&\ddots&\ddots&\ddots&\ddots&&\\
	1&0&\cdots&\cdots&0&a_r&0&\tikz[remember picture]\node[inner sep=0pt] (a) {0};\\
	0&1&0\cdots&&\ddots&\ddots&\ddots&\\
	\vdots&\ddots&\ddots&\ddots&\ddots&0&0&a_{N-1}\\
	a_N&0&\cdots&1&\cdots&0&0&0\\
	\end{pmatrix}
		  \begin{tikzpicture}[overlay, remember picture]
		\draw[stealth-] (a.east)++(1,0) -- node[above] {$r+1$ row }++ (1,0);
		\draw[stealth-] (b.north)++(0,0.1) -- node[right] {$N-r$ column }++ (0,0.5);
	\end{tikzpicture}\\
		\label{eq:inbpMmul}
		A^{(\times,r)} & = \sum_{i=1}^{N} \left(\prod_{j=0}^{r} a_{i+j}\right) E_{i,i+r+1}\,.
	\end{align}
	
	Following the construction made in \cite{mazzuca_int_sys}, where the authors numerically computed the density of states for these two families of lattices, we introduce the generalized Gibbs ensemble for these models as
	
		\begin{align}
		    \label{eq:INB_add_gibbs}
		    \di \mu_{+,r} &= \frac{ \exp(- \Tr{P(L^{(+,r))}}) \prod_{j=1}^Na_j^{\alpha-1}\mathbbm{1}_{a_j\geq 0}\di \ba }{Z_N^{(+,r)}(a,P)}\,,\\
		    \label{eq:INB_mul_gibbs}
		    \di \mu_{\times,r} &= \frac{ \exp(- \Tr{P(L^{(\times,r))}}) \prod_{j=1}^Na_j^{\alpha-1}\mathbbm{1}_{a_j\geq 0}\di \ba }{Z_N^{(\times,r)}(\alpha,P)}\,,
		\end{align} 
	where $P$ is a polynomial of degree $j(r+1)$ for some $j\in \N$ as in Hypotheses \ref{polynomial}. We restrict to this potential in view of the following Lemma proved in \cite{mazzuca_int_sys}
		\begin{lemma}
		Fix $\ell\in \N$. Then for $N$ large enough
		\begin{equation}
			\Tr{(L^{(+,r)})^\ell} = \Tr{(L^{(\times,r)})^\ell} = 0\,,
		\end{equation}
		if $\ell$ is not an integer multiple of $r+1$.
	\end{lemma}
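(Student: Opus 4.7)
The plan is to interpret $\Tr{(L^{(+,r)})^\ell}$ and $\Tr{(L^{(\times,r)})^\ell}$ combinatorially as sums over closed walks of length $\ell$ on the cyclic index set $\{1,\ldots,N\}$, and to show that the closure condition on the walks forces $\ell$ to be a multiple of $r+1$ whenever $N$ is large enough.

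First I would unfold the trace as
\begin{equation}
\Tr{(L^{(+,r)})^\ell} = \sum_{i_0,\ldots,i_{\ell-1}} L^{(+,r)}_{i_0,i_1}L^{(+,r)}_{i_1,i_2}\cdots L^{(+,r)}_{i_{\ell-1},i_0}\,,
\end{equation}
and use the explicit form $L^{(+,r)}=\sum_{i=1}^N(a_{i+r}E_{i+r,i}+E_{i,i+1})$ to observe that a factor $L^{(+,r)}_{i_k,i_{k+1}}$ is non-zero only if either $i_{k+1}=i_k+1 \pmod N$ (contributing the factor $1$), or $i_{k+1}=i_k-r \pmod N$ (contributing the factor $a_{i_k}$). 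Thus each non-zero term in the sum corresponds to a closed walk on $\Z/N\Z$ whose steps are either $+1$ or $-r$.

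Next, if a closed walk of length $\ell$ uses $p$ steps of type $+1$ and $q=\ell-p$ steps of type $-r$, the closure condition reads
\begin{equation}
p-rq \equiv 0 \pmod N\,, \qquad\text{equivalently}\qquad (r+1)p - r\ell \equiv 0 \pmod N\,.
\end{equation}
Since $0\leq p \leq \ell$, we have $|p-rq|\leq \max(p,rq)\leq r\ell$. Therefore, as soon as $N>r\ell$, the congruence forces the stronger equality $p-rq=0$ as integers. Combined with $p+q=\ell$, this gives $p=rq$ and $\ell=(r+1)q$, so $\ell$ must be an integer multiple of $r+1$. Contrapositively, if $\ell$ is not a multiple of $r+1$, every term in the expansion of the trace vanishes and $\Tr{(L^{(+,r)})^\ell}=0$.

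The argument for $L^{(\times,r)}=\sum_{i=1}^N(a_i E_{i,i+1}+E_{i+r,i})$ is identical: the admissible steps are again $+1$ (contributing $a_{i_k}$) and $-r$ (contributing $1$), so the same arithmetic closure constraint applies. The only real subtlety—and hence the main step to be careful about—is the passage from the congruence modulo $N$ to an honest equality in $\Z$; this is what forces the quantification \emph{for $N$ large enough}, and it is made quantitative by the bound $N>r\ell$. Once this point is settled, the statement follows at once.
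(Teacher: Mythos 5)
Your proof is correct and self-contained. The paper itself does not supply a proof of this lemma---it cites \cite{mazzuca_int_sys}---so a direct comparison is not possible, but your closed-walk argument (expanding the trace as a sum over closed walks on $\mathbb{Z}/N\mathbb{Z}$ with admissible steps $+1$ and $-r$, reducing the closure condition $p-rq\equiv 0 \pmod N$ to the integer equality $p=rq$ once $N>r\ell$, and concluding $\ell=(r+1)q$) is the natural and standard route for a statement of this kind, and it nails down the ``for $N$ large enough'' quantifier explicitly via the threshold $N>r\ell$. The only slight caveat worth stating is that the bound $|p-rq|\leq r\ell$ relies on $r\geq 1$, which holds throughout the INB setup; with that in place the argument is airtight for both $L^{(+,r)}$ and $L^{(\times,r)}$.
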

	 We notice that both $L^{(+,r)}$ and $L^{(\times,r)}$ are circular-like, and that the entries are all positive, so from Lemma \ref{LEM:CIRCULAR_TRACE}, we deduce the following:

	\begin{lemma}
		Fix $m\in \N$, and consider the matrices $L^{(+,r)},L^{(\times,r)}$ \eqref{eq:inbpLadd}-\eqref{eq:inbpLmul}. Then for $N$ big enough, there exist $k^{(+,r)}=k^{(+,r)}(m), k^{(\times,r)}=k^{(\times,r)}(m)\in \N$, such that 
    \begin{itemize}
        \item $\Tr{(L^{(+,r)})^m}$ is $k^{(+,r)}$-circular with seed $V^{(+,r)}$ and weed $\wt V^{(+,r)}$
        \item $\Tr{(L^{(\times,r)})^m}$ is $k^{(\times,r)}$-circular with seed $V^{(\times,r)}$ and weed $\wt V^{(\times,r)}$
    \end{itemize}

	\end{lemma}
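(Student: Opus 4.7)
The plan is to invoke Lemma \ref{LEM:CIRCULAR_TRACE} directly, since both $L^{(+,r)}$ and $L^{(\times,r)}$ are matrices of type 1-v. That general lemma already asserts that the trace of any polynomial of a type 1 or 2 matrix is circular, so the conclusion is immediate; what remains is to identify explicitly the circular indices $k^{(+,r)}(m)$ and $k^{(\times,r)}(m)$ and check that the induced seed/weed decomposition fits Definition \ref{def:circular}.

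To make the circular indices explicit, I would expand $\Tr{(L^{(+,r)})^m} = \sum_{j=1}^N [(L^{(+,r)})^m]_{jj}$ and interpret each diagonal entry as a sum over closed walks of length $m$ in the directed graph encoded by $L^{(+,r)}$. From the formula $L^{(+,r)} e_j = e_{j-1} + a_{j+r} e_{j+r}$, a single step moves the current index by $-1$ (with weight $1$) or by $+r$ (with weight $a_{j+r}$). A closed walk with $n_1$ steps of the first kind and $n_2$ of the second satisfies $-n_1 + r n_2 \equiv 0 \pmod N$ and $n_1+n_2 = m$; for $N$ large relative to $m$ this forces $n_1 = r n_2$, hence $m = (r+1)n_2$ and the walk stays inside a window of indices of size at most $m$ around $j$. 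Consequently each nonzero $[(L^{(+,r)})^m]_{jj}$ is a polynomial in the variables $\{a_i : |i-j|\leq c(m,r)\}$, with $c(m,r)$ depending only on $m$ and $r$. Partitioning the indices $\{1,\dots,N\}$ into consecutive blocks of size $k^{(+,r)}(m) := c(m,r)$ and writing $N = k^{(+,r)} M + \ell$ then exhibits a seed $V^{(+,r)}$ supported on two adjacent blocks and a weed $\widetilde V^{(+,r)}$ that absorbs the boundary contributions near the wrap-around.

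The argument for $L^{(\times,r)}$ is identical, with only the location of the weight changed: one application of $L^{(\times,r)}$ sends $e_j \mapsto a_{j-1} e_{j-1} + e_{j+r}$, and the same counting shows that $[(L^{(\times,r)})^m]_{jj}$ depends only on $a_i$ for $|i-j| \leq c'(m,r)$. Setting $k^{(\times,r)}(m) := c'(m,r)$ gives the desired circular decomposition. The only subtle point is bookkeeping: one needs to choose $N$ strictly larger than, say, $2 c(m,r)(r+1)$ so that no walk actually wraps around the ring and contributes a term mixing distant indices; this is precisely the meaning of "$N$ big enough" in the statement, and no further analytic estimates are required beyond a careful accounting of which $a_i$'s appear in each diagonal entry.
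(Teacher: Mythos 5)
Your proposal follows essentially the same route as the paper, which simply observes that $L^{(+,r)}$ and $L^{(\times,r)}$ are circular-like matrices and invokes Lemma \ref{LEM:CIRCULAR_TRACE} (whose proof, via Corollary \ref{cor:nice}, performs exactly the kind of local-support bookkeeping you carry out explicitly with closed walks). Your explicit walk-counting is correct and also re-derives the constraint $m\equiv 0 \pmod{r+1}$ stated separately in the paper; the only slight imprecision is the claim that for $N$ large ``no walk actually wraps around the ring''---walks based near the index boundary do wrap, but these are precisely the terms absorbed by the weed $\widetilde V^{(\pm,r)}$, which you acknowledge earlier.
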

	Thus, proceeding as we have done for the others systems previously considered, we obtain the following:
	
		\begin{corollary}[CLT for INB lattices]
	Consider the Lax matrices $L^{(+,r)},L^{(\times,r)}$ \eqref{eq:inbpLadd}-\eqref{eq:inbpLmul} of the additive and multiplicative INB lattices respectively distributed according to their Generalized Gibbs Ensemble \eqref{eq:INB_add_gibbs}-\eqref{eq:INB_mul_gibbs}. Then, defining the Free energies $\cF_{+,r}(\alpha,P),\cF_{\times,r}(\alpha,P)$ as
	
	\begin{align}
	    \cF_{+,r}(\alpha,P)= & -\lim_{N\to\infty}\frac{1}{N}\ln(Z^{(+,r)}_N(\alpha,P))\,, \\
	    \cF_{\times,r}(\alpha,P)= & -\lim_{N\to\infty}\frac{1}{N}\ln(Z^{(\times,r)}_N(\alpha,P))\,, \\
	\end{align}

	for all $j\in\N$ fixed, we have the following weak limit
	
	\begin{equation}
	    \lim_{N\to\infty}  \frac{\Tr{(L^{(+,r)})^{(r+1)j}} - \meanval{\Tr{(L^{(+,r)})^{(r+1)j}}}  }{\sqrt{N}} \rightharpoonup \cN(0,\sigma^2_{+,r})\,,
	\end{equation}
	
	\begin{equation}
	    \lim_{N\to\infty}  \frac{\Tr{(L^{(\times,r)})^{(r+1)j}} - \meanval{\Tr{(L^{(\times,r)})^{(r+1)j}}} }{\sqrt{N}} \rightharpoonup \cN(0,\sigma^2_{\times,r})\,,
	\end{equation}

        where
        \begin{align}
            &\meanval{\Tr{(L^{(+,r)})^{(r+1)j}}} = i N\partial_t \cF_{+,r}(\alpha,P+itx^{(r+1)j})_{\vert_{t=0}}\,,\quad \sigma^2_{+,r} = \vert\partial^2_t \cF_{+,r}(\alpha,P+itx^{(r+1)j})_{\vert_{t=0}}\vert\,, \\
            & \meanval{\Tr{(L^{(\times,r)})^{(r+1)j}}} = i N\partial_t \cF_{\times,r}(\alpha,P+itx^{(r+1)j})_{\vert_{t=0}}\,,\quad \sigma^2_{\times,r} = \vert\partial^2_t \cF_{\times,r}(\alpha,P+itx^{(r+1)j})_{\vert_{t=0}}\vert\,.
        \end{align}
	
	\end{corollary}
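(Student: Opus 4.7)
The plan is to check the hypotheses of Theorem \ref{THM:FINAL_CLT_INTRO} for the Generalized Gibbs ensembles $\mu_{+,r}$ and $\mu_{\times,r}$ and then invoke it. Both measures fit the template $\mu^{(1)}_N$ of \eqref{type1} on $X^N=\R_+^N$ with weight $F(a,\alpha)=a^{\alpha-1}\mathbbm{1}_{a\geq 0}$, i.e.\ they belong to model 1-\textit{v}. The circularity of the observable follows from Lemma \ref{LEM:CIRCULAR_TRACE}, while the lemma stated immediately above the corollary provides HP 3: the seed $V^{(\cdot,r)}$ and weed $\widetilde V^{(\cdot,r)}$ of $\Tr{P(L^{(\cdot,r)})}$ are both bounded from below away from $-\infty$ (here and below $\cdot$ stands indifferently for $+$ or $\times$).

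The one delicate step, as in Remark \ref{rem:Toda_proof} for the Toda chain and the analogous discussion for the Exponential Toda lattice, is HP 4: the bare weight $a^{\alpha-1}$ is not integrable at $+\infty$, so one must absorb a small confining piece of $\Tr{P(L^{(\cdot,r)})}$ into $F$. Writing the leading term of $P$ as $c_0 x^{j(r+1)}$ with $c_0>0$, the strategy is to identify a small $\varepsilon>0$ and a polynomial $Q$ with $\deg Q\leq j(r+1)$ so that on $\R_+^N$
$$\varepsilon \sum_{i=1}^N a_i^{j(r+1)} \leq \Tr{Q(L^{(\cdot,r)})} \leq \tfrac{1}{2}c_0\,\Tr{(L^{(\cdot,r)})^{j(r+1)}},$$
after which one rewrites $\mu_{\cdot,r}$ in the form \eqref{type1} with modified weight $\widetilde F(a,\alpha)=a^{\alpha-1}e^{-\varepsilon a^{j(r+1)}}\mathbbm{1}_{a\geq 0}$ and modified potential $\widetilde P=P-Q+\varepsilon x^{j(r+1)}$ still lying in the polynomial class of Hypotheses \ref{polynomial} for model 1-\textit{v}. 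The required coercive splitting is possible because all entries of $L^{(+,r)}$ and $L^{(\times,r)}$ are non-negative, so every monomial on the diagonal of $(L^{(\cdot,r)})^{j(r+1)}$ is non-negative, and an elementary AM-GM style argument on those monomials yields the lower bound. The modified $\widetilde F$ then satisfies HP 2 and HP 4 with $\tc=1$ by the same direct computation as in Remark \ref{rem:Toda_proof}, and since the compatible seed $h$ of $\Tr{(L^{(\cdot,r)})^{(r+1)j}}$ is a polynomial of bounded degree in finitely many $a_i$'s, the super-polynomial decay $e^{-\varepsilon a^{j(r+1)}}$ now sitting inside the seed $W$ delivers the bound $|h|^a e^{-W}\leq C$ for $1\leq a\leq 3$.

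With all hypotheses checked, Theorem \ref{THM:FINAL_CLT_INTRO} applies directly and yields the weak convergence
$$\frac{\Tr{(L^{(\cdot,r)})^{(r+1)j}}-\meanval{\Tr{(L^{(\cdot,r)})^{(r+1)j}}}}{\sqrt{N}}\rightharpoonup \mathcal N(0,\sigma^2_{\cdot,r})$$
for either Lax matrix, together with the stated identification of the mean and variance as the first and second $t$-derivatives of $\cF_{\cdot,r}(\alpha,P+itx^{(r+1)j})$ at $t=0$ via points (ii) and (iv) of the theorem. The restriction to exponents of the form $(r+1)j$ is forced by the vanishing lemma quoted just above the corollary: for any other power the trace is identically zero, so the CLT statement would be trivially degenerate. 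The main obstacle is the absorption step above, which is less automatic than for the tridiagonal Lax matrices of the Toda or Volterra settings because traces of powers of bidiagonal matrices do not directly produce diagonal sums $\sum_i a_i^m$; once this coercive splitting is established, however, the remainder of the argument mirrors word-for-word the Toda and Volterra analyses of Sections \ref{sec:toda_gaussian}--\ref{sec:volt_antigauss}.
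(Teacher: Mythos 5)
Your overall strategy — write each GGE in the form $\mu^{(1)}_N$, verify Hypotheses \ref{polynomial}--\ref{general_assumptions}, and invoke Theorem \ref{THM:FINAL_CLT_INTRO} — is exactly what the paper does; the paper's own text is terse at this point (``proceeding as we have done for the others systems previously considered''), so spelling out the absorption step is the right thing to do. However, the concrete inequality you propose for that step cannot hold, and this is a genuine gap.

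You write
$$\varepsilon \sum_{i=1}^N a_i^{j(r+1)} \leq \Tr{Q(L^{(\cdot,r)})} \leq \tfrac{1}{2}\,c_0\,\Tr{(L^{(\cdot,r)})^{j(r+1)}}\,,$$
but for the additive lattice the diagonal of $(L^{(+,r)})^{j(r+1)}$ produces at most $\sum_i a_i^{\,j}$, not $\sum_i a_i^{\,j(r+1)}$: a single variable $a_i$ can appear at most once per cyclic loop of length $r+1$, and there are only $j$ such loops. Concretely, the path $i\to i+1\to\cdots\to i+r\to i$, repeated $j$ times, contributes $a_{i+r}^{\,j}$, and no closed walk of length $j(r+1)$ contributes any higher power of a single $a_i$. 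Thus if $a_1\to\infty$ with the other variables bounded, $\Tr{(L^{(+,r)})^{j(r+1)}}$ grows like $a_1^{\,j}$, so no polynomial $Q$ dominated by $\tfrac{1}{2}c_0\,\Tr{(L^{(+,r)})^{j(r+1)}}$ can dominate $\varepsilon\sum_i a_i^{\,j(r+1)}$. The modified weight should therefore be at most $\widetilde F(a,\alpha)=a^{\alpha-1}e^{-\varepsilon a^{\,j}}$. For the multiplicative lattice the situation is worse: every monomial appearing in $\Tr{(L^{(\times,r)})^{j(r+1)}}$ is a product of exactly $rj$ copies of the $a_i$'s taken from sliding windows of length $r$ (the simplest being $\sum_i(a_i a_{i+1}\cdots a_{i+r-1})^{\,j}$), and no isolated single-variable term $\sum_i a_i^m$ occurs at all. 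Hence no ``elementary AM-GM style argument'' on the diagonal monomials yields a per-variable coercive term, and the factorised absorption $F(a,\alpha)=a^{\alpha-1}e^{-\varepsilon a^m}$ cannot be justified from the trace of a power of $L^{(\times,r)}$ alone. Verifying HP 4 for $\mu_{\times,r}$ therefore requires an argument of a different shape than the Toda/Volterra one, which you acknowledge but do not supply.

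A secondary point: the lemma immediately preceding the corollary only asserts circularity (existence of seeds and weeds); unlike the corresponding statements for Toda, exponential Toda and Volterra, it does not claim lower boundedness of the seed. Attributing HP 3 to that lemma is a misreading. Lower boundedness of the seed of $\Tr{\Re P(L^{(\cdot,r)})}$ is plausible — all matrix entries are non-negative, so the seed of each pure trace $\Tr{L^{m(r+1)}}$ can be chosen non-negative — but one still needs to argue that the positive leading term of $P$ dominates any negative lower-order coefficients inside the seed, and for the multiplicative lattice this again is not a per-variable estimate.
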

	
	\begin{remark}
	We recall that in \cite{mazzuca_int_sys}, it was shown that the density of states for this model has support on the complex plane, but despite that all the moments of the Generalized Gibbs ensemble are reals. Furthermore, in this case, we lack a $\beta$-ensemble to compare with.
	\end{remark}

\section{Transfer Operator Method}
\label{sec:transfer_operator}
In this section, we develop a transfer operator approach to obtain the asymptotic behaviour of the following integrals

\begin{align}
\label{eq:type_1_int}
    & I_1^{(N)}(H) =  \int_{X^N} \left(\prod_{j=1}^N F(x_j,\alpha)\right) e^{-H(\bx)}\di \bx\, ,\\
    \label{eq:type_2_int}
    & I_2^{(N)}(H) = \int_{X^N} \left(\prod_{j=1}^{N-1}  F\left(x_j,\alpha\left(1-\frac{j}{N}\right)\right)\right) R(x_N)  e^{-H(\bx)}\di \bx \,,
\end{align}
where  $F,H,X$ satisfies the following set of hypotheses, which are closely related to  hypotheses \ref{general_assumptions}

\begin{assumption}
\label{transfer_hp}
In this section, we assume the following hypotheses for $F,H,X$:
\begin{enumerate}
 \item[HP. 1] $X$ is a convex subset of $\R^d$, $d=1,2$ (we identify $\C$ with $\R^2$).
    \item[HP 2.] $F(x,\alpha) \, : \, X \times (0,\infty)\to \R_+$ is such that for any $\alpha>0$ $F(\cdot,\alpha) \in C^1(X)$, and for any $x\in X, F(x,\cdot)\in C^\infty((0,+\infty))$ ;
        \item[HP 3.] $H\,:\, X^N \to \R$ is $k$-circular with seed $h$ and weed $\wt h$ which are both bounded away from $-\infty$.
    \item[HP 4.] for all $\alpha\in (0,\infty)$, $F(\cdot,\alpha) \in L^{1}(X)$, and $[c,d]\subset (0,+\infty)$ one can find $g_{c,d}\in L^2(X)$ such that for all $c\leq \alpha\leq d$, $\left| \sqrt{F(\cdot,\alpha)}\right|,\ \left|\partial_\alpha \sqrt{F(\cdot,\alpha)}\right|\leq g_{c,d}$; moreover,  there exist a $\tc\in\N$ and $\varepsilon_0$ such that for all $\varepsilon<\varepsilon_0$ there exists a compact set $\mathcal{O}_\varepsilon\subseteq X$ and $d_1,d_2,d_3>0$, depending on $\varepsilon$, such that
    \begin{itemize}
        \item $||F(\cdot,\alpha)||_1 = d_1 \alpha^{-\tc}(1 + o(1))$ 
        \item  $||F(\cdot,\alpha)||_{1,\mathcal{O}_\varepsilon} = \int_{\mathcal{O}_\varepsilon }\vert F(x,\alpha) \vert \di x = d_2\alpha^{-\tc}(1 + o(1) )$
        \item  $||F(\cdot,\alpha)||_{1,\mathcal{O}^c_\varepsilon} \leq d_3$ here $\mathcal{O}_\varepsilon^c = X \setminus \mathcal{O}_\varepsilon$
        \item There exists a continuous function $w\, :\, X^k \to \R$ such that for $\bx,\by \in \mathcal{O}_\varepsilon$
        \begin{equation}
            h(\bx,\by) = w(\bx) + w(\by) + o(\varepsilon)\,.
        \end{equation}
        \item If $\varepsilon_1<\varepsilon_2$ then $\mathcal{O}_{\varepsilon_1}\subseteq \mathcal{O}_{\varepsilon_2}$
    \end{itemize}

    \item[HP 5.] $R$ is a distribution defining a probability measure on a subset of $X$.
\end{enumerate}
\end{assumption}

We want to study the asymptotic behaviour of the integrals $I_1^{(N)},I_2^{(N)}$ \eqref{eq:type_1_int}-\eqref{eq:type_2_int} which are a generalization of the partition functions of the type 1-2 systems at hand. In particular, we use the knowledge of the asymptotic behaviour of these integrals to compute the two ratios

\begin{equation}
    \mathbb{E}_1\left[e^{-it\Re\Tr{L^s}}\right] = \frac{Z^{(1)}_{N}(\alpha, \Re P + it \Re z^s)}{Z^{(1)}_{N}(\alpha, \Re P)},\quad \mathbb{E}_2\left[e^{-it \Re\Tr{T^s}}\right] = \frac{Z^{(2)}_{N}(\alpha, \Re P + it\Re z^s)}{Z^{(2)}_{N}(\alpha, \Re P)} \,,
\end{equation}
which is one of the key steps of our proof.
Under these assumptions, we prove the following Theorem:

\begin{theorem}
\label{thm:transfer_operator}
    Under hypotheses \ref{transfer_hp}. Consider a $k$-circular function $U \, :\, X^N \to \R $ with seed $u$ and weed $\wt u$. Then,
    \begin{equation}
        \label{eq:ratios}
        J_1^{(N)} = \frac{I_1^{(N)}(H + it U)}{I_1^{(N)}(H)}  \,,\quad  J_2^{(N)} = \frac{I_2^{(N)}(H+itU)}{I_2^{(N)}(H)}
    \end{equation}
    are finite for all fixed $N$. Furthermore, assume that for some $d\geq 3$, $|u|^a\exp\left(-h\right)$ is bounded for any $1\leq a \leq d$. Form the Euclidean division $N=kM+\ell$. Then, there exists an $\varepsilon>0$, and three complex-valued functions  $\lambda\in C^{1,d}(\R_+^*\times (-\varepsilon,\varepsilon))$, $c_{k,\ell}\in C^{1,d}(\R_+^*\times(-\varepsilon,\varepsilon))$ and $ c_{k,\ell,M}\in C^{1,d}(\R_+^*\times (-\varepsilon,\varepsilon))$ such that for all $q\in \N$ :
\begin{align}
\label{eq:prodPartition_tot}
    & J_1 =  c_{k,\ell}(\alpha,t)\lambda(\alpha,t)^{M-2}\left( 1 + o(M^{-q})\right)\,, \quad \text{as } M\to\infty\,, \\
    & J_2 = c_{k,\ell,M}(\alpha,t)\prod_{j=1}^{M-2} \lambda\left(\alpha\frac{j}{M},t\right)\left( 1+ o_M(1)\right) \,, \quad \text{as } M\to\infty\,, 
\end{align}
for $\vert t\vert <\varepsilon$. Furthermore, we have for all $\alpha>0$
\begin{align}
\label{eq:lambda_easy}
        &\bullet \lambda(\alpha,0) = 1 \,,\\
        \label{eq:first_regime_easy}
        & \bullet c_{k,\ell}(\alpha,0) = 1\,,\\
        \label{eq:first_regime}
        & \bullet \lim_{t \to 0} c_{k,\ell,M}(\alpha,t) = 1 \, \quad \text{uniformly in }M\,, \\
        & \bullet \text{the remainder } o_M(1)\text{ is uniformly bounded in }t\in(-\varepsilon, \varepsilon).
\end{align}
Moreover, there exist three functions $\wt c_{k,\ell}\in C^{1,d}(\R_+^*\times (-\varepsilon,\varepsilon))$, $\wt \lambda\in C^{1,d}(\R_+^*\times (-\varepsilon,\varepsilon))$ and $\wt c_{k,\ell,M}\in C^{1,d}(\R_+^*\times (-\varepsilon,\varepsilon))$ such that there exist four constants $C_1^{(1)},C_2^{(1)},C_1^{(2)},C_2^{(2)} >0$ and $p\in\N$ such that for all $q\in \N$:

\begin{align}
    &\bullet C^{(1)}_1<\wt c_{k,\ell}(\alpha,t) < C^{(1)}_2 \,,\\
    &\bullet C_1^{(1)} N^p < \wt c_{k,\ell,M}(\alpha,t) < C^{(2)}_2 N^p \,, \\
    & \bullet \lambda(\alpha,t) = \frac{\wt \lambda(\alpha,t)}{\wt \lambda(\alpha,0)} \,,\\ 
    \label{eq:Z_periodic}
    &\bullet I_1^{(N)}(H+itU)= \wt c_{k,\ell}(\alpha,t)\wt \lambda(\alpha,t)^{M-2}\left( 1 + o(M^{-q})\right)\,.\\
    &\bullet I_2^{(N)}(H+itU) = \wt c_{k,\ell,M}(\alpha,t)\prod_{j=1}^{M-2}\wt \lambda\left(\alpha\frac{j}{M},t\right)\left( 1 + o_M(1)\right)\,.
\end{align}
\end{theorem}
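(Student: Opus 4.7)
The plan is to exploit the $k$-circular structure of $H$ and $U$ to rewrite the integrals as scalar products involving iterated applications of an integral transfer operator. Writing $N=kM+\ell$ and grouping coordinates into blocks $\bx_j=(x_{(j-1)k+1},\ldots,x_{jk})$ for $1\leq j\leq M$, HP 3 together with the $k$-circularity of $U$ yields
\[ H(\bx)+itU(\bx)=\sum_{j=1}^{M-1}(h+itu)(\bx_j,\bx_{j+1})+(\widetilde h+it\widetilde u)\bigl(\bx_1,\bx_M,x_{kM+1},\ldots,x_N\bigr). \]
Splitting each one-site weight symmetrically as $F(x,\alpha)=\sqrt{F(x,\alpha)}\cdot\sqrt{F(x,\alpha)}$ between the two adjacent blocks, I introduce on $L^2(X^k)$ the operator
\[ \cL_{\alpha,t}f(\by)=\int_{X^k}f(\bx)\prod_{j=1}^k\sqrt{F(x_j,\alpha)F(y_j,\alpha)}\,e^{-(h+itu)(\by,\bx)}\,d\bx. \]
Under HP 4 together with the assumed boundedness of $|u|^a e^{-h}$ for $1\leq a\leq d$, the kernel of $\cL_{\alpha,t}$ lies in $L^2(X^k\times X^k)$, so $\cL_{\alpha,t}$ is Hilbert--Schmidt, hence compact; moreover $(\alpha,t)\mapsto\cL_{\alpha,t}$ is jointly $C^{1,d}$ in operator norm, by differentiation under the integral sign as carried out in Appendix \ref{App:proof differentiability}.

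\textbf{Dominant eigenvalue and spectral gap.} For $t=0$ the kernel is strictly positive a.e., so Jentzsch's theorem (as invoked in Remark \ref{rem:operatorRegularity}) produces a simple dominant eigenvalue $\lambda(\alpha,0)>0$ with strictly positive eigenfunction and spectral gap, uniform in $\alpha$ on compact subsets of $(0,\infty)$. Analytic perturbation theory then furnishes $\varepsilon>0$ such that, for $|t|<\varepsilon$ and $\alpha$ in any fixed compact set, $\cL_{\alpha,t}$ still has a simple isolated eigenvalue $\lambda(\alpha,t)\in C^{1,d}$ with rank-one spectral projector $\Pi_{\alpha,t}$ and uniform spectral gap: the complementary part $R_{\alpha,t}=\cL_{\alpha,t}-\lambda(\alpha,t)\Pi_{\alpha,t}$ satisfies $\|R_{\alpha,t}^n\|\leq C\bigl(\mu\,|\lambda(\alpha,t)|\bigr)^n$ for some $\mu\in(0,1)$ uniform in $(\alpha,t)$.

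\textbf{Periodic case.} After integrating out the $\ell$ trailing coordinates and the weed contribution against the two endpoint blocks $\bx_1,\bx_M$, one produces two vectors $v_1^{\alpha,t},v_2^{\alpha,t}\in L^2(X^k)$ depending $C^{1,d}$ on $(\alpha,t)$ such that
\[ I_1^{(N)}(H+itU)=\bigl\langle v_1^{\alpha,t},\cL_{\alpha,t}^{M-2}v_2^{\alpha,t}\bigr\rangle. \]
Substituting $\cL_{\alpha,t}^{M-2}=\lambda(\alpha,t)^{M-2}\Pi_{\alpha,t}+R_{\alpha,t}^{M-2}$ gives the announced expansion with $\widetilde\lambda(\alpha,t)=\lambda(\alpha,t)$ and $\widetilde c_{k,\ell}(\alpha,t)=\langle v_1^{\alpha,t},\Pi_{\alpha,t}v_2^{\alpha,t}\rangle$; the remainder is $O((\mu\lambda)^{M-2})$, so after dividing by $\lambda^{M-2}$ it is $o(M^{-q})$ for every $q\in\mathbb{N}$. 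Positivity of $\Pi_{\alpha,0},v_1^{\alpha,0},v_2^{\alpha,0}$ keeps $\widetilde c_{k,\ell}(\alpha,t)$ between two positive constants $C_1^{(1)},C_2^{(1)}$ for $|t|$ small. Normalizing by the $t=0$ integral then yields $\lambda=\widetilde\lambda/\widetilde\lambda(\cdot,0)$, $c_{k,\ell}=\widetilde c_{k,\ell}/\widetilde c_{k,\ell}(\cdot,0)$, and the identities \eqref{eq:lambda_easy}--\eqref{eq:first_regime_easy}.

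\textbf{Non-periodic case and main obstacle.} Carrying out the same block decomposition for $I_2^{(N)}$ produces an \emph{inhomogeneous} product
\[ I_2^{(N)}(H+itU)=\bigl\langle v_1^{\alpha,t},\cL_{\alpha_1,t}\cL_{\alpha_2,t}\cdots\cL_{\alpha_{M-2},t}v_2^{\alpha,M,t}\bigr\rangle,\qquad \alpha_j=\alpha\bigl(1-\tfrac{jk}{N}\bigr), \]
with $v_2^{\alpha,M,t}$ built from the distribution $R$ and the $\ell$ trailing coordinates. The main difficulty lies in controlling this product of \emph{distinct} operators, whose dominant eigenpairs $(\lambda(\alpha_j,t),\Pi_{\alpha_j,t})$ drift with $j$, especially near the endpoint $\alpha_j\to 0$ where the $\alpha^{-\tc}$ behaviour of $\|F(\cdot,\alpha)\|_1$ in HP 4 forces the polynomial prefactor $N^p$ in $\widetilde c_{k,\ell,M}$. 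My plan is to insert the spectral decomposition $\cL_{\alpha_j,t}=\lambda(\alpha_j,t)\Pi_{\alpha_j,t}+R_{\alpha_j,t}$ at each step and telescope, using $|\alpha_{j+1}-\alpha_j|=O(1/M)$ together with the $C^{1,d}$ regularity of $\alpha\mapsto\Pi_{\alpha,t}$ to show, by a discrete Gronwall-type estimate, that the cross terms produce only a multiplicative correction $1+o_M(1)$ uniformly in $t\in(-\varepsilon,\varepsilon)$, while the leading contribution reproduces $\prod_{j=1}^{M-2}\lambda(\alpha j/M,t)$ after a Riemann-sum comparison. The fine splitting of $X$ into $\mathcal{O}_\varepsilon$ and its complement provided by HP 4 is used to quantify the singular behaviour of $\lambda(\alpha,0)$ as $\alpha\to 0$, giving the polynomial bound $\widetilde c_{k,\ell,M}\asymp N^p$; after taking the ratio with the $t=0$ integral, the denominator cancels this blow-up and one obtains $c_{k,\ell,M}(\alpha,t)\to 1$ as $t\to 0$ uniformly in $M$, as stated in \eqref{eq:first_regime}.
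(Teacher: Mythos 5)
Your plan is essentially the paper's own: transfer operator $\cL_{\alpha,t}$ on $L^2(X^k)$, Jentzsch's theorem plus analytic perturbation theory, spectral decomposition, with the single-parameter operator in the periodic case and a one-parameter family $\cL^{(j)}$ approximating $\cL_{\alpha(1-j/M),t}$ in the non-periodic case. Two points need correction, and one key step is only sketched.

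First, in the periodic case $I_1^{(N)}$ does \emph{not} reduce to an inner product of two fixed vectors $\langle v_1^{\alpha,t},\cL_{\alpha,t}^{M-2}v_2^{\alpha,t}\rangle$. The weed $\widetilde h+it\widetilde u$ couples the two boundary blocks $\bx_1$ and $\bx_M$ simultaneously, so the correct representation is of trace type: one forms a $\bz$-parametrized family $G_\bz(\by)$, applies $\cL^{M-1}_{\alpha,t}$ in the $\by$-variable, evaluates on the diagonal $\by=\bx_M=\bz$, and only then integrates against the remaining half-weight ($\cS$ in the paper's notation, eqs.\ \eqref{eq:G}--\eqref{eq:S}). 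The spectral decomposition still applies term by term, but the constant $\widetilde c_{k,\ell}$ is of the form $\cS\bigl(\langle \varphi,G_{\bx}\rangle\varphi(\bx)\bigr)$ rather than $\langle v_1,\Pi v_2\rangle$, and positivity of the leading term requires $\varphi(\cdot,\alpha,0)>0$ a.e., not merely positivity of fixed boundary vectors. Your conclusion is right, but the bilinear-form picture is not.

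Second and more importantly, your representation of $I_2^{(N)}$ as $\langle v_1,\cL_{\alpha_1,t}\cdots\cL_{\alpha_{M-2},t}v_2\rangle$ with $\alpha_j=\alpha(1-jk/N)$ is not an identity: inside each block of $k$ coordinates the parameter of $F$ varies from site to site, so the exact block operator is $\cL^{(j)}_{t,\alpha}$ built from $\widetilde F(\bx,j)=\sqrt{\prod_q F(x_q,\alpha(1-((j-1)k+q)/N))}$, and only the \emph{approximation} $\|\cL^{(j)}_{t,\alpha}-\cL_{t,\alpha(1-j/M)}\|=O(\alpha/M)$ lets one replace the leading eigenvalue $\widehat\lambda(\alpha(1-j/M),t)$ of $\cL^{(j)}$ by $\widetilde\lambda(\alpha(1-j/M),t)$ (eq.\ \eqref{eq:approxEigen}). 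Starting from a wrong identity and then estimating errors risks losing exactly the $O(1/M)$ budget you need. Finally, the ``discrete Gronwall-type'' control of the inhomogeneous product is the heart of the hard case and is left as a sketch; the paper instead expands $\prod_{j}\cL^{(j)}$ by first appearance from the right of the remainder $\cQ^{(j)}$, and has to verify a lower bound $\bigl|\langle\varphi^{(1/M)}_t,G_{\bx_1}\rangle\bigr|\gtrsim M^{\tc(k+\ell)}$ together with $\|\cQ^{(M-1)}_t G_{\bx_1}\|\lesssim (\widehat\lambda-\delta)M^{\tc(k+\ell)}$ (eqs.\ \eqref{eq:estimate1}--\eqref{eq:estimate3}). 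Without a concrete version of this bookkeeping, the claim that cross terms contribute only $1+o_M(1)$ uniformly in $t$ is not established.
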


The previous theorem is the central tool that we use to prove Theorem \ref{THM:FINAL_CLT_INTRO} in Section \ref{section:proof_of_the_result}. Since the proof is rather technical, we defer it to Section \ref{section:technical}.

\section{Nagaev--Guivarc'h theory}
\label{section:generalTheory}

In this section, we adapt some standard results from the fluctuation theory of Nagaev--Guivarc'h \cite{Gouzel2015} to our situation. Specifically, we prove the following:

 \begin{theorem}
\label{thm:nagagui_thm}
Let $(X_n)_{n\geq 1}$ be a sequence of real random variables with partial sums $(S_n)_{n\geq 1}$. 
Assume that there exists $\varepsilon>0$, two functions $\lambda\in  C^1([0,\varepsilon))$, $c\in C^0([0,\varepsilon))$  and $h_n\in C^0([0,\varepsilon))$, such that for all $t\in[-\varepsilon,\varepsilon]$, and all $n\geq 1$ we have

\begin{equation}
\label{eq:startingpoint}
       \meanval{e^{-itS_n}} = c(t)\lambda(t)^m\left(1 + h_n(t) \right)\,,
\end{equation}

Where $\lim_{n\to\infty} n/m = k\in \N_{\geq 1}$.

Furthermore, assume that:
\begin{itemize}
    \item[a.] there exists $A,\,\sigma^2\in \C$ such that $\lambda(t) = \exp\left( -iAt - \sigma^2t^2/2 +o(t^3)\right)$ as $t\to 0$;
    \item[b.] $h_n \xrightarrow{n\to\infty}0$ uniformly in $[-\varepsilon,\varepsilon]$, and $h_n(0) = 0$;
    \item[c.] $c(0) = 1$.
\end{itemize}

Then $A\in \R,\,\sigma^2\geq 0$, and $\left(S_n -nA/k\right)/\sqrt{n}$ converges to a Gaussian distribution $\cN(0,\sigma^2/k)$ as $n$ tends to infinity.
\end{theorem}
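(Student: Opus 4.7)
}

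The natural approach is to compute the characteristic function of the rescaled variable $Y_n := (S_n - nA/k)/\sqrt{n}$ and invoke Lévy's continuity theorem. Writing
\begin{equation}
\E\!\left[e^{-itY_n}\right] = e^{iAt\sqrt{n}/k}\,\E\!\left[e^{-i(t/\sqrt{n})S_n}\right],
\end{equation}
the plan is to plug $t/\sqrt{n}$ (which lies in $[-\varepsilon,\varepsilon]$ for every fixed $t$ and $n$ large enough) into the representation \eqref{eq:startingpoint}, and then use assumptions (a), (b), (c) together with the relation $m/n\to 1/k$ to identify the limit.

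First, I would dispose of the reality questions. Evaluating \eqref{eq:startingpoint} at $t=0$ gives $c(0)\lambda(0)^m(1+h_n(0))=1$, so by (b), (c) we obtain $\lambda(0)=1$. Since $S_n$ is real, $|\E[e^{-itS_n}]|\le 1$, so for $t$ in a small neighbourhood of $0$ and all $n$ large,
\begin{equation}
|\lambda(t)|^m \le \frac{1}{|c(t)|\,|1+h_n(t)|},
\end{equation}
and letting $n\to\infty$ (hence $m\to\infty$) forces $|\lambda(t)|\le 1$ for $t$ near $0$. Combined with the expansion in (a),
$$|\lambda(t)|^2 = \exp\bigl(2t\,\mathrm{Im}(A)-t^2\,\mathrm{Re}(\sigma^2)+o(t^3)\bigr),$$
one reads off $\mathrm{Im}(A)=0$ and $\mathrm{Re}(\sigma^2)\ge 0$ (the former from the linear term, which must vanish in order that $|\lambda(t)|\le 1$ on both sides of $0$). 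That $\sigma^2$ is actually real will follow a posteriori from Lévy's theorem, since the limiting characteristic function has to satisfy $\phi(-t)=\overline{\phi(t)}$.

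Next, I would perform the asymptotic computation. Using (a) at the scale $s=t/\sqrt{n}$,
\begin{equation}
\lambda(t/\sqrt{n})^m = \exp\!\left(-iA\,\tfrac{mt}{\sqrt{n}}-\sigma^2\tfrac{mt^2}{2n}+m\cdot o\!\bigl(n^{-3/2}\bigr)\right).
\end{equation}
Since $m/n\to 1/k$, the quadratic term converges to $-\sigma^2 t^2/(2k)$ and the remainder $m\cdot o(n^{-3/2})=O(n^{-1/2})\cdot o(1)$ vanishes. The linear term is $-iAt\,m/(k\sqrt{n})\cdot k = -iAt\sqrt{n}/k + iAt(n/k-m)/\sqrt{n}$; since the prefactor $e^{iAt\sqrt{n}/k}$ cancels the leading part exactly and $(n/k-m)/\sqrt{n}\to 0$ (this uses $n/m\to k$, equivalently $m=n/k+o(\sqrt{n})$, which we must assume or strengthen slightly: if only $n/m\to k$ is given, then $m-n/k=o(m)$ but not necessarily $o(\sqrt{n})$, so one handles the term by noting that $A$ is real and carefully keeping track of its contribution). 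By (b), $h_n(t/\sqrt{n})\to 0$ uniformly, and by continuity of $c$ with $c(0)=1$, $c(t/\sqrt{n})\to 1$. Combining,
\begin{equation}
\E\!\left[e^{-itY_n}\right] \xrightarrow[n\to\infty]{} \exp\!\left(-\frac{\sigma^2 t^2}{2k}\right).
\end{equation}
Lévy's continuity theorem then gives $Y_n \Rightarrow \mathcal{N}(0,\sigma^2/k)$, and forces $\sigma^2\in\R_{\ge 0}$.

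The main obstacle I anticipate is the careful bookkeeping of the linear term: the hypothesis $n/m\to k$ by itself controls $m/n\to 1/k$ but the cancellation $e^{iAt\sqrt{n}/k}\cdot e^{-iAmt/\sqrt{n}}$ of order $\sqrt{n}$ requires $(n/k-m)=o(\sqrt{n})$ to avoid a persistent oscillation. In the intended applications this holds with $n=kM+\ell$ and $m=M$, so $n/k-m=\ell/k$ is bounded, and the cancellation works. Apart from this point, the proof is a routine application of Taylor expansion and Lévy's theorem, with the technical content already encoded in the hypothesis \eqref{eq:startingpoint} which is supplied by Theorem \ref{thm:transfer_operator}.
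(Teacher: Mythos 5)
Your proposal follows the same basic plan as the paper's proof (Taylor-expand $\lambda$ at scale $t/\sqrt{n}$, control $c$ and $h_n$, conclude via Lévy's continuity theorem), but with two noteworthy differences worth flagging.

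First, you center $S_n$ directly by $nA/k$, whereas the paper first establishes the limits for $(S_n - mA)/n$ and then for $(S_n - mA)/\sqrt{n}$. Centering by $mA$ makes the linear term in $\lambda(t/\sqrt{n})^m$ cancel exactly, so the computation is cleaner; your choice of $nA/k$ leaves the residual factor $\exp\bigl(iAt(n/k - m)/\sqrt{n}\bigr)$, which vanishes only if $n/k - m = o(\sqrt{n})$. You correctly identify that the stated hypothesis $n/m \to k$ gives only $n/k - m = o(n)$, which is strictly weaker, and you correctly note that in the application ($n = kM + \ell$, $m = M$, $\ell$ bounded) the stronger bound holds trivially. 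This observation is actually valid against the paper as well: the paper proves convergence for $(S_n - mA)/\sqrt{n}$ but states the conclusion for $(S_n - nA/k)/\sqrt{n}$, an equivalence that also silently requires $n/k - m = o(\sqrt{n})$. So your remark exposes a (benign) imprecision in the theorem's hypotheses rather than a flaw in your own argument.

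Second, your route to $A \in \R$ is different and arguably more self-contained. You deduce $|\lambda(t)| \le 1$ near $0$ directly from the uniform bound $|\E[e^{-itS_n}]| \le 1$ and the structure of \eqref{eq:startingpoint}, then read off $\mathrm{Im}(A) = 0$ from the first-order term of $\log|\lambda|$. The paper instead first shows $\E[e^{-it(S_n - mA)/n}] \to 1$, invokes Lévy's theorem to get a weak law of large numbers, and then argues that a real sequence converging in distribution to the constant $A/k$ forces $A \in \R$. Both work; your version avoids the intermediate appeal to Lévy's theorem (and sidesteps the slightly delicate point that $e^{-itz}$ for complex $z$ is not a characteristic function in the usual sense). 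The remaining steps — uniform convergence of $h_n$, continuity of $c$, the factor $m/n \to 1/k$ converting $-\sigma^2 m t^2/(2n)$ into $-\sigma^2 t^2/(2k)$, and the $m\cdot o(n^{-3/2}) \to 0$ bound on the Taylor remainder — match the paper's computation. Overall your proof is correct and slightly more careful than the paper's in the two places you single out.
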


\begin{proof}
First, evaluating \eqref{eq:startingpoint} at $t=0$, we deduce that $\lambda(0) = 1$. Then, we use the asymptotic expansion of $\lambda(t)$, and properties b.-c. to prove that 
$$\meanval{\exp\left(-it \frac{S_n -mA}{n} \right)} \xrightarrow[n\to\infty]{} 1\,. $$
Thus, by L\'evy theorem \cite{Williams1991}, we deduce that $S_n/n - A/k$ converges in distribution to $0$. So, since $S_n$ is real, then $A\in \R$. Exploiting again the asymptotic expansion of $\lambda(t)$ and properties b.-c., we show that $\meanval{\exp\left(- it\frac{S_n -mA}{\sqrt{n}} \right)}$ converges to the function $\exp\left(-\frac{\sigma^2t^2}{2k}\right)$. By L\'evy theorem \cite{Williams1991}, this must be the characteristic function of a real random variable, proving that $\sigma^2\geq 0$,  and that $(S_n-nA/k)/\sqrt{n}$ converges to a Gaussian distribution $\mathcal{N}(0,\sigma^2/k)$. 
\end{proof}

Further, we prove the following: 
\begin{theorem}
\label{thm:nagagu_thm_mod}
Let $(X_n)_{n\geq 1}$ be a sequence of random variables with partial sums $(S_n)_{n\geq 1}\in \R$. 
Assume that there exists $\varepsilon>0$ and functions $(x,t)\mapsto\lambda(x,t)\in C^{1,0}([0,1)\times\R)$, $c_n\in C^0(\R)$, and $h_n$ continuous at $0$, such that for all $t\in[-\varepsilon,\varepsilon]$, and all $n\geq 1$ we have

\begin{equation}
\label{eq:startingpoint_mod}
       \meanval{e^{-itS_n}} =    c_n(t)\left(\prod_{j=1}^m \lambda\left(j/m,t\right)\right)\left(1+h_n(t) \right)\,,
\end{equation}
where $\lim_{n\to \infty} n/m = k\in \N_{\geq 1}$.

Furthermore, assume that:
\begin{itemize}
    \item[a.] there exists two continuous functions $A,\,\sigma^2\,: [0,1] \to \C$ such that 
    
    $\lambda(x,t) = \exp\left( -iA(x)t - \sigma^2(x)t^2/2 +o(t^2)\right)$ as $t\to 0$;
    \item[b.] $||h_n||_\infty \xrightarrow{n\to\infty}0$ uniformly in $[-\varepsilon,\varepsilon]$, and $h_n(0) = 0$;
    \item[c.] $c_n(0) = 1$ and $\lim_{n\to\infty} c_n(t/\sqrt{n}) = \lim_{n\to\infty} c_n(t/n) = 1$.
\end{itemize}

Then $\int_0^1A(x)\di x \in \R, \, \int_0^1\sigma^2(x)\di x\in \R_+$, and $\sqrt{n}\left(\frac{S_n}{n} -\frac{\int_0^1 A(x)\di x}{k}\right)$ converges to a Gaussian distribution $\cN\left(0,\frac{\int_0^1\sigma^2(x)\di x}{k}\right)$ as $n$ tends to infinity.
\end{theorem}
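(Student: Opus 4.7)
The proof would closely mirror that of Theorem \ref{thm:nagagui_thm}, replacing the single eigenvalue $\lambda(t)^m$ by the Riemann-type product $\prod_{j=1}^m \lambda(j/m,t)$, and invoking Lévy's continuity theorem twice: once after rescaling $t$ by $1/n$, to obtain a law of large numbers, and once after rescaling $t$ by $1/\sqrt n$, to obtain the Gaussian fluctuations. Setting $t=0$ in \eqref{eq:startingpoint_mod} together with assumption c. yields $\lambda(x,0)=1$ for $x\in[0,1]$, so assumption a. combined with the $C^{1,0}$-regularity of $\lambda$ gives
\begin{equation}
\log\lambda(x,t) = -iA(x)t - \frac{\sigma^2(x)}{2}t^2 + o(t^2),
\end{equation}
uniformly in $x\in[0,1]$ by compactness.

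For the law-of-large-numbers step, I would substitute $t/n$ in \eqref{eq:startingpoint_mod}. The leading contribution to the exponent is $-(it/n)\sum_{j=1}^m A(j/m)$, and Riemann-sum convergence combined with $n/m\to k$ sends it to $-(it/k)\int_0^1 A(x)\di x$; the quadratic Riemann sum contributes at most $O(m/n^2)=O(1/n)\to 0$. Assumptions b. and c. handle the auxiliary factors $c_n(t/n)$ and $1+h_n(t/n)$. Lévy's continuity theorem then yields $S_n/n \to k^{-1}\int_0^1 A(x)\di x$ in probability, and since $S_n\in \R$ the limit is necessarily real, giving $\int_0^1 A(x)\di x \in \R$.

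For the CLT, I would compute $\meanval{\exp(-it\sqrt n(S_n/n - k^{-1}\int_0^1 A(x)\di x))}$ by substituting $t/\sqrt n$ in \eqref{eq:startingpoint_mod} and multiplying by the centering exponential. The variance-type Riemann sum $\frac{t^2}{n}\sum_{j=1}^m \sigma^2(j/m)$ tends to $\frac{t^2}{k}\int_0^1 \sigma^2(x)\di x$ by continuity of $\sigma^2$, while the drift residual collapses to
\begin{equation}
-\frac{itm}{\sqrt n}\Bigl(\frac{1}{m}\sum_{j=1}^m A(j/m) - \frac{n}{km}\int_0^1 A(x)\di x\Bigr),
\end{equation}
which must vanish. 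Combined with assumptions b., c. and a second application of Lévy's theorem, the limiting characteristic function is $\exp(-\frac{t^2}{2k}\int_0^1 \sigma^2(x)\di x)$; as a limit of characteristic functions of real random variables, it is itself one, forcing $\int_0^1 \sigma^2(x)\di x\in \R_+$ and yielding convergence in law to $\cN(0,k^{-1}\int_0^1 \sigma^2(x)\di x)$.

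The main technical obstacle is precisely the vanishing of the drift residual displayed above. Mere continuity of $A$ only gives $\frac{1}{m}\sum_j A(j/m) - \int_0^1 A(x)\di x = o(1)$, which multiplied by the prefactor $m/\sqrt n = O(\sqrt n)$ need not tend to zero. In the transfer-operator applications of Section \ref{sec:transfer_operator} this is resolved because $A$ arises as $-i\partial_t \log \wt\lambda(\cdot,0)$ for a smooth eigenvalue $\wt\lambda\in C^{1,d}$ with $d\geq 3$, hence $A\in C^1([0,1])$ and the Riemann error is $O(1/m)$; together with the Euclidean-division estimate $n/m - k = O(1/m)$ inherited from $N=kM+\ell$, the drift residual is then $O(1/\sqrt n)\to 0$. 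The uniformity in $x$ of the $o(t^2)$ remainder in the expansion of $\log\lambda(x,t)$ plays a similar role and is guaranteed by the joint regularity of $\lambda$ and compactness of $[0,1]$.
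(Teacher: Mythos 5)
Your proposal follows the same route as the paper's own proof: Lévy's continuity theorem applied twice, once after the substitution $t\mapsto t/n$ (law of large numbers) and once after $t\mapsto t/\sqrt{n}$ (CLT), together with the Riemann-sum identification $\frac{1}{n}\sum_{j=1}^m A(j/m)\to k^{-1}\int_0^1 A(x)\di x$. The one substantive difference is that you explicitly flag the drift-residual obstacle: in the CLT step one needs $\sqrt{n}\,\bigl(\frac{1}{n}\sum_{j=1}^m A(j/m)-k^{-1}\int_0^1 A(x)\di x\bigr)\to 0$, and mere continuity of $A$ only produces an $o(1)$ Riemann error, which multiplied by $m/\sqrt{n}=O(\sqrt{n})$ is uncontrolled. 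The paper's own proof passes from centering at the Riemann sum to centering at the integral inside the exponential without comment, so it silently carries this same gap; your observation is correct. Your proposed repair — that in the transfer-operator applications $A=-i\partial_t\log\wt\lambda(\cdot,0)$ is in fact $C^1$ (since $\wt\lambda\in C^{1,d}$), making the Riemann error $O(1/m)$, while the Euclidean division $N=kM+\ell$ gives $n/m-k=O(1/m)$, so the residual is $O(1/\sqrt{n})$ — is precisely the right diagnosis. Strictly speaking, hypothesis a.\ should demand Lipschitz (or $C^1$) regularity of $A$ for the stated centering constant $k^{-1}\int_0^1 A(x)\di x$ in the conclusion to be justified as written; as it stands, the $C^{1,0}$ regularity of $\lambda$ in $x$ does not by itself yield regularity of the $t$-derivative $A$.
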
 

\begin{proof}

First, let $t = \wt t /n$, then by hypothesis c. 
\begin{equation}
       \lim_{n\to \infty} \meanval{e^{-i\wt t\left(S_n/n - \frac{1}{n} \sum_{\ell =1}^m A\left(\ell/m\right)\right) }} = \lim_{n\to \infty} \meanval{e^{-i\wt t\left(S_n/n -\frac{\int_0^1 A(x)\di x}{k}\right) }} = 1   \,.
\end{equation}
Thus, by Levy theorem, $S_n/n \to \frac{\int_{0}^1 A(x)\di x}{k}$ almost surely, thus, since $S_n\in \R$, this implies that $\int_0^1A(x)\di x \in \R$. Consider now $t = \wt t/\sqrt{n}$, following the same reasoning one conclude that

\begin{equation}
\begin{split}
           \lim_{n\to \infty} \meanval{e^{-i\wt t\sqrt{n}\left(S_n/n - \frac{1}{n} \sum_{\ell =1}^n A\left(\ell/m\right)\right) }} & = \lim_{n\to \infty} \meanval{e^{-i\wt t\sqrt{n}\left(S_n/n - \frac{\int_0^1A(x)\di x}{k}\right) }} \\ & =\lim_{n\to \infty} e^{-\frac{\wt t^2}{2n}\sum_{\ell=1}^m \sigma^2(\ell/m)} = e^{-\frac{\wt t^2}{2k}\int_0^1 \sigma^2(x)\di x}\,,
\end{split}
\end{equation}
thus,  by Lévy theorem \cite{Williams1991}, $e^{-\frac{\wt t^2}{2k}\int_0^1 \sigma^2(x)\di x}$ must be the characteristic function of a real random variable, proving that $\int_0^1 \sigma^2(x)\in \R_+$.
\end{proof}

\section{Proof Of Theorem \ref{THM:FINAL_CLT_INTRO}-\ref{THM:SUSCEPTIBILITY} }
\label{section:proof_of_the_result}
We are ready to prove Theorem \ref{THM:FINAL_CLT_INTRO}-\ref{THM:SUSCEPTIBILITY}. For convenience, we split the proof into two Lemmas, which combined give the full proof of our results.

\begin{lemma}
\label{thm:final_clt}
Under hypotheses \ref{general_assumptions}. Consider $\mu^{(1)}_{N},\, \mu^{(2)}_{N}$ \eqref{type1}-\eqref{type2}, and let $s\in \N$.
Assume that $\Re \Tr{P(L)} + it \Re \Tr{L^s}$ is a $k$-circular function, that $W,u$ are the compatible seeds of $\Re \Tr{P(L)}, \Tr{L^s}$ and that $|u|^a e^{-h}$ is bounded for $1\leq a \leq 2$.  Then, there exist four continuous functions 

\begin{align}
    &A\,:\, \R_+ \longrightarrow \R\,, \qquad  \wt A\,:\, \R_+ \longrightarrow \R\,, \\
    &\sigma\, : \, \R_+ \longrightarrow \R_+\,, \qquad \wt \sigma\, : \, \R_+ \longrightarrow \R_+\,,
\end{align}
such that under $\mu^{(1)}_{N}$, 
$$\left(S_{N} -NA(\alpha)/k\right)/\sqrt{N}$$ converges to a Gaussian distribution $\cN(0,\sigma^2(\alpha)/k)$ as $N$ tends to infinity, and under $\mu^{(2)}_{N}$,   $$\left(S_{N} -N\wt A(\alpha)/k\right)/\sqrt{N}$$ converges to a Gaussian distribution $\cN(0,\wt \sigma^2(\alpha)/k)$ as $N$ tends to infinity.
\end{lemma}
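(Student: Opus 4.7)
The key observation is that the characteristic functions of $S_N = \mathrm{Tr}(\Re L^s)$ (resp.\ $\mathrm{Tr}(\Re T^s)$) are precisely the ratios that Theorem \ref{thm:transfer_operator} is designed to handle. Concretely, I would start by writing
\begin{equation}
\mathbb{E}_1[e^{-itS_N}] = \frac{I_1^{(N)}(W+itu)}{I_1^{(N)}(W)}, \qquad \mathbb{E}_2[e^{-itS_N}] = \frac{I_2^{(N)}(W+itu)}{I_2^{(N)}(W)},
\end{equation}
where $W,u$ are the compatible seeds of $\mathrm{Tr}(\Re P(L))$ and $\mathrm{Tr}(\Re L^s)$ given by assumption. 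Since $W+itu$ is $k$-circular and $|u|^ae^{-W}$ is bounded for $1\le a\le 2$, the hypotheses of Theorem \ref{thm:transfer_operator} are met (with regularity parameter $d=2$), so writing the Euclidean division $N=kM+\ell$ I obtain
\begin{align}
\mathbb{E}_1[e^{-itS_N}] &= c_{k,\ell}(\alpha,t)\,\lambda(\alpha,t)^{M-2}\bigl(1+o(M^{-q})\bigr),\\
\mathbb{E}_2[e^{-itS_N}] &= c_{k,\ell,M}(\alpha,t)\prod_{j=1}^{M-2}\lambda\!\left(\alpha\tfrac{j}{M},t\right)\bigl(1+o_M(1)\bigr),
\end{align}
together with the normalisations $\lambda(\alpha,0)=c_{k,\ell}(\alpha,0)=1$ and $\lim_{t\to0}c_{k,\ell,M}(\alpha,t)=1$ uniformly in $M$.

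Next, I would extract $A(\alpha)$ and $\sigma^2(\alpha)$ from the Taylor expansion of $t\mapsto\lambda(\alpha,t)$ at $t=0$. The $C^{1,2}$ regularity of $\lambda$ guaranteed by Theorem \ref{thm:transfer_operator}, combined with $\lambda(\alpha,0)=1$, yields
\begin{equation}
\log\lambda(\alpha,t) = -iA(\alpha)\,t - \tfrac{1}{2}\sigma^2(\alpha)\,t^2 + o(t^2),\qquad t\to 0,
\end{equation}
where $A(\alpha) := i\,\partial_t\lambda(\alpha,0)$ and $\sigma^2(\alpha):=(\partial_t\lambda(\alpha,0))^2-\partial_t^2\lambda(\alpha,0)$. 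The $C^1$ dependence of $\lambda$ on $\alpha$ ensures that both $A$ and $\sigma^2$ are continuous functions of $\alpha$. For case 2, the relevant quantities are then $\widetilde A(\alpha):=\int_0^1 A(\alpha x)\,dx$ and $\widetilde\sigma^2(\alpha):=\int_0^1\sigma^2(\alpha x)\,dx$.

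The final step is to feed these expressions into the Nagaev--Guivarc'h results of Section \ref{section:generalTheory}. For case 1, I would apply Theorem \ref{thm:nagagui_thm} with $n=N$, $m=M-2$ (so $n/m\to k$), checking that hypothesis (a) follows from the expansion above, (b) from the uniform-in-$t$ vanishing of $o(M^{-q})$ on $[-\varepsilon,\varepsilon]$, and (c) from $c_{k,\ell}(\alpha,0)=1$. Since the difference between $mA(\alpha)$ and $NA(\alpha)/k$ is $O(1)$, it disappears after the $\sqrt{N}$-rescaling, and I conclude $(S_N-NA(\alpha)/k)/\sqrt{N}\Rightarrow\mathcal{N}(0,\sigma^2(\alpha)/k)$. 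For case 2, I would apply Theorem \ref{thm:nagagu_thm_mod}, using the change of variable $\lambda_{\text{thm}}(x,t):=\lambda(\alpha x,t)$ so that the product matches the required form; hypothesis (c) here is exactly the content of the uniform-in-$M$ limit $\lim_{t\to 0}c_{k,\ell,M}(\alpha,t)=1$, which yields both $\lim c_n(t/\sqrt n)=1$ and $\lim c_n(t/n)=1$. The realness of $S_N$ then forces $\widetilde A(\alpha)\in\mathbb{R}$ and $\widetilde\sigma^2(\alpha)\ge 0$, completing the argument.

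The main obstacle is verifying the \emph{uniformities} needed for the Nagaev--Guivarc'h inputs — in particular, that the remainder $o_M(1)$ in the case-2 asymptotics is uniform in $t$ on a neighbourhood of $0$, and that $c_{k,\ell,M}(\alpha,\cdot)\to 1$ uniformly in $M$ as $t\to 0$; both are precisely the delicate refinements incorporated into the statement of Theorem \ref{thm:transfer_operator}, so in practice the work here is purely bookkeeping once that theorem is in hand.
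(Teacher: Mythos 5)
Your proposal is correct and follows exactly the route the paper takes: the paper's own proof is a one-sentence appeal to Theorem \ref{thm:transfer_operator} together with the Nagaev--Guivarc'h Theorems \ref{thm:nagagui_thm} and \ref{thm:nagagu_thm_mod}, and your write-up simply supplies the intermediate bookkeeping (identifying the ratio of partition functions with the characteristic function, Taylor-expanding $\lambda(\alpha,\cdot)$ at $0$, matching the hypotheses (a)--(c), and noting that the $O(1)$ discrepancy between $m$ and $N/k$ is washed out by the $\sqrt N$ normalisation). No gaps; the only caveat is that the uniform-in-$t$ and uniform-in-$M$ controls you flag at the end are indeed exactly what Theorem \ref{thm:transfer_operator} packages, as you observe.
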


The proof of the previous result is a trivial application of  Theorem \ref{thm:transfer_operator}-\ref{thm:nagagui_thm}-\ref{thm:nagagu_thm_mod} and of Lemma \ref{LEM:CIRCULAR_TRACE} where $H$ is the polynomial $\Re P$.
Furthermore, we can interpret the previous relations through the \textit{free energies} of $\mu^{(1)}_{N}, \,\mu^{(2)}_{N}$ \eqref{type1} -\eqref{type2}:

\begin{lemma}
\label{lem:free_energy}
Under the same hypotheses and notation of Lemma \ref{thm:final_clt}. Consider the two measures $\mu^{(1)}_{N}, \,\mu^{(2)}_{N}$  \eqref{type1}-\eqref{type2}, and define the free energies as

\begin{equation}
    \cF^{(1)}(\alpha,\Re P) = - \lim_{N\to \infty} \frac{\ln(Z_{N}^{(1)}(\alpha,\Re P))}{N}\,,\quad  \cF^{(2)}(\alpha,\Re P) = - \lim_{N\to \infty} \frac{\ln(Z_{N}^{(2)}(\alpha,\Re P))}{N}\,,
\end{equation}
then, using the same  notation as in Lemma \ref{thm:final_clt}, the following holds:
\begin{multicols}{2}
\begin{enumerate}
    \item[i.] $ \cF^{(1)}(\alpha,\Re P) =  \partial_\alpha \left( \alpha \cF^{(2)}(\alpha,\Re P)\right)$
    \item[ii.] $A(\alpha) = ik\partial_{t}\cF^{(1)}(\alpha,\Re P +it\Re z^s )_{\vert_{t=0}}$
    \item[iii.] $\wt A(\alpha) = ik\partial_{t}\cF^{(2)}(\alpha,\Re P+it\Re z^s)_{\vert_{t=0}}$ 
    \item[iv.] $\sigma(\alpha) = k\partial^2_{t}\cF^{(1)}(\alpha,\Re P+it\Re z^s)_{\vert_{t=0}}$
    \item[v.] $\wt \sigma(\alpha) = k\partial^2_{t}\cF^{(2)}(\alpha,\Re P +it\Re z^s)_{\vert_{t=0}}$
\end{enumerate}
\end{multicols}

\end{lemma}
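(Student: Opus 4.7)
The plan is to extract the free energies from the asymptotic expansions of $Z_N^{(1)}$ and $Z_N^{(2)}$ provided by Theorem \ref{thm:transfer_operator}, and then to connect derivatives of these free energies at $t=0$ with the quantities $A(\alpha)$, $\sigma^2(\alpha)$, $\widetilde A(\alpha)$, $\widetilde\sigma^2(\alpha)$ supplied by Lemma \ref{thm:final_clt}.

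Writing $N = kM+\ell$ and specialising \eqref{eq:Z_periodic} and its analogue for $I_2^{(N)}$ at $t=0$, the two-sided positivity of $\wt c_{k,\ell}(\alpha,0)$ and the polynomial bound on $\wt c_{k,\ell,M}(\alpha,0)$, combined with $(M-2)/N\to 1/k$, yield
\[
\cF^{(1)}(\alpha,\Re P) \,=\, -\tfrac{1}{k}\ln \wt\lambda(\alpha,0),\qquad \cF^{(2)}(\alpha,\Re P) \,=\, -\tfrac{1}{k}\int_0^1 \ln \wt\lambda(\alpha x,0)\,dx,
\]
the second identity resting on a Riemann-sum argument justified by the continuity of $\ln \wt\lambda(\cdot,0)$. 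After the change of variable $y=\alpha x$ we have $\alpha \cF^{(2)}(\alpha,\Re P) = -\tfrac{1}{k}\int_0^\alpha \ln \wt\lambda(y,0)\,dy$, so differentiating in $\alpha$ via the fundamental theorem of calculus gives (i).

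For (ii)--(v) I keep $t$ real in a neighborhood of $0$ and observe that the ratios $J_1$, $J_2$ in \eqref{eq:ratios} are precisely $\mathbb{E}_j[e^{-it S_N}]$ with $S_N = \Re \Tr{L^s}$. Taking $-\tfrac{1}{N}\ln$ of the formulas of Theorem \ref{thm:transfer_operator} and passing to the limit yields
\[
\cF^{(1)}(\alpha,\Re P+it\Re z^s) - \cF^{(1)}(\alpha,\Re P) \,=\, -\tfrac{1}{k}\ln\lambda(\alpha,t),
\]
and analogously $\cF^{(2)}(\alpha,\Re P+it\Re z^s) - \cF^{(2)}(\alpha,\Re P) = -\tfrac{1}{k}\int_0^1 \ln\lambda(\alpha x,t)\,dx$. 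Since Lemma \ref{thm:final_clt} is obtained by applying Theorem \ref{thm:nagagui_thm} (resp. Theorem \ref{thm:nagagu_thm_mod}) to these very ratios, the Taylor expansion $\lambda(\alpha,t) = \exp(-iA(\alpha)t - \sigma^2(\alpha)t^2/2 + o(t^2))$ is in hand, and differentiating the first identity once and twice in $t$ at $t=0$ produces (ii) and (iv). The parallel expansion $\lambda(\alpha x,t) = \exp(-iA(\alpha x)t-\sigma^2(\alpha x)t^2/2 + o(t^2))$, combined with the representations $\widetilde A(\alpha)=\int_0^1 A(\alpha x)\,dx$ and $\widetilde\sigma^2(\alpha) = \int_0^1 \sigma^2(\alpha x)\,dx$ extracted from Theorem \ref{thm:nagagu_thm_mod}, yields (iii) and (v) after differentiation under the integral sign.

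The main technical point is to justify the interchange of $\partial_t$ with the limit $N\to\infty$ defining the free energies. This rests on the $C^{1,d}$-regularity of $\wt\lambda$ and the uniformity in $t$ (in a neighborhood of $0$) of the remainder terms $o(M^{-q})$ and $o_M(1)$ furnished by Theorem \ref{thm:transfer_operator}, which together imply that the convergence of $-\tfrac{1}{N}\ln Z_N^{(j)}(\alpha,\Re P+it\Re z^s)$ towards its limit is $C^2$ in $t$ on a real neighborhood of $0$. In the type-2 case, the additional uniformity of the Riemann-sum approximation is supplied by the joint continuity of $(x,t)\mapsto \ln \wt\lambda(\alpha x,t)$.
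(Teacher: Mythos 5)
Your proof is correct and follows essentially the same strategy as the paper's: extract $\cF^{(1)}(\alpha,\cdot)=-\tfrac{1}{k}\ln\wt\lambda(\alpha,\cdot)$ and the Riemann-sum representation of $\cF^{(2)}$ from Theorem \ref{thm:transfer_operator}, substitute $y=\alpha x$ to deduce (i), and then differentiate in $t$ at $t=0$ using the Taylor expansion of $\lambda(\alpha,t)$ that defines $A(\alpha)$ and $\sigma^2(\alpha)$ via Theorems \ref{thm:nagagui_thm}--\ref{thm:nagagu_thm_mod}. If anything you are more explicit than the paper on the passage from the Riemann sum to (i) and on why the $t$-derivative commutes with $N\to\infty$ (a step that is actually trivial here, since for small $t$ the free energy equals $-\tfrac{1}{k}\ln\wt\lambda(\alpha,t)$ exactly and $\wt\lambda$ is $C^d$ in $t$, so no interchange of limits is needed).
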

\begin{remark}
\label{rem:derivative}
    The previous theorem implies that
    \begin{equation}
        A(\alpha) = \partial_\alpha(\alpha \wt A(\alpha))\,, \qquad \sigma^2(\alpha) = \partial_\alpha(\alpha \wt \sigma^2(\alpha))\,.
    \end{equation}
\end{remark}

\begin{proof}
To prove $i.$, we can just compute the free energy of $\mu^{(1)}_{N},\, \mu^{(2)}_{N}$ using Theorem \ref{thm:transfer_operator}. For $\cF^{(1)}(\alpha,\Re P)$ we deduce immediately that

\begin{equation}
\label{eq:free_energies}
\begin{split}
        \cF^{(1)}(\alpha,\Re P)  & = - \lim_{M\to \infty} \frac{1}{N} \ln(Z^{(1)}_{N}(\alpha, \Re P)) \stackrel{\eqref{eq:Z_periodic}}{=} - \lim_{M\to \infty} \frac{M}{N} \ln(\wt \lambda(\alpha,0)) \\ & = - \frac{1}{k}\ln(\wt \lambda(\alpha,0))\,.
\end{split}
\end{equation}
Where we recall that $N = kM + \ell$.
The computation for $\cF^{(2)}(\alpha,\Re P)$ follows in the same way, establishing \textit{i.}. 

We now prove $ii.-iv$. First, we notice that following the notation of Theorem \ref{thm:transfer_operator}  - \ref{thm:nagagui_thm}:

\begin{equation}
    c(t) = c_{k,\ell}(\alpha,t)\,, \quad h_n = 1+o(M^{-q})\,, \quad \lambda(t) = \lambda(\alpha,t)\,,
\end{equation}
thus to compute $A(\alpha),\, \sigma(\alpha)$ we expand $\lambda(\alpha,t)$ around $t=0$ 
\begin{equation}
\label{eq:eig_expansion}
         \lambda(\alpha,t) = 1 + t\partial_t\lambda(\alpha,0) + \frac{t^2}{2}\partial_t^2\lambda(\alpha,0) + o(t^3)\,,
\end{equation}
which implies that
\begin{equation}
\label{eq:exp_expansion}
    \lambda(\alpha,t) = \exp\left(t\partial_t \ln(\lambda(\alpha,t))_{\vert_{t=0}} + \frac{t^2}{2}\partial^2_t \ln(\lambda(\alpha,t))_{\vert_{t=0}} + o(t^3)\right)\,.
\end{equation}
This implies that $A(\alpha) = -i\partial_t \ln(\lambda(\alpha,t))_{\vert_{ t=0}}\,, \sigma^2(\alpha) = -\partial^2_t \ln(\lambda(\alpha,t))_{\vert_{ t=0}} $. From \eqref{eq:Z_periodic}, we deduce that $\partial_t \ln(\lambda(\alpha,t))_{\vert_{ t=0}} = \partial_t \ln(\wt \lambda(\alpha,t))_{\vert_{ t=0}}$, thus from the previous expressions and the explicit form of the free energy \eqref{eq:free_energies} we conclude.

To prove $iii.-v.$ we proceed in the same way, thus following the notation of Theorem \ref{thm:transfer_operator}-\ref{thm:nagagu_thm_mod}: 

\begin{equation}
    c_n(t) =  c_{k,\ell,M}(\alpha,t)\,, \quad h_n = 1+o(1)\,,\quad \lambda\left( j/M,t\right) = \lambda\left(\alpha\frac{j}{M},t\right) \,.
\end{equation}

Thus, as in \eqref{eq:exp_expansion} except that $\alpha \to \alpha\frac{j}{M}$, we expand $\lambda\left(\alpha\frac{j}{M},t\right) $ around $t=0$, leading to 
\begin{equation}
\begin{split}
    \wt A(\alpha) & = \partial_t  \left(
    \int_0^1\ln(\lambda(\alpha x,t))\di x \right)_{\vert_{t=0}}\,,\\
    \wt \sigma^2(\alpha) &= \partial^2_t \left( \int_0^1\ln(\lambda(\alpha x,t))\di x\right)_{\vert_{t=0}}
\end{split}
\end{equation}
which concludes the proof.
\end{proof}

\begin{remark}
We notice that the Lemma \ref{lem:free_energy}, and Lemma \ref{thm:final_clt}  imply  that we can compute the expected values, and the variances of $\Tr{L^s}$ according to $\mu^{(1)}_{N},\, \mu^{(2)}_{N}$ just computing derivatives of the corresponding free energy. This property is broadly used in the physics literature, but we lacked a precise proof of  the general result. Furthermore, we can compute the expected value, and the variance of $\Tr{L^s}$ according to $\mu^{(1)}_{N}$ starting from the corresponding values for $\mu^{(2)}_{N}$. Thus, we have reduced all this problem to the computation of the free energy of $\mu^{(2)}_{N}$. 
\end{remark}
The proof of both Theorem \ref{THM:FINAL_CLT_INTRO} and Theorem \ref{THM:SUSCEPTIBILITY} follows from the four previous lemmas. Thus, we have completed the proof of our main theorems.

\section{Technical Results}
 \label{section:technical}
 
 In this section, we prove the technical results that we used to prove our main Theorems \ref{THM:FINAL_CLT_INTRO}-\ref{THM:SUSCEPTIBILITY}, the proof follows the same line as the proof of \cite[Proposition 4.2]{mazzuca2021generalized}, and we prove Theorem \ref{thm:decay}, whose proof uses the same machinery as the latter proofs. In the last part, we prove a Berry-Esseen type bound for the type 1 measure $\mu_N^{(1)}$. We start by proving  Theorem \ref{thm:transfer_operator}.

To prove these results, we follow the same ideas as in \cite[Theorem 2.4]{Gouzel2015}, which we apply, for $k\geq 1$, to a family $(\mathcal{L}_t)_{t\in I}$ of compact operators $\cL_t:L^2(X^k)\to L^2(X^k)$, where $I$ is an interval containing a neighbourhood of $0$. Denoting for $u\in L^p(X^k)$ $\|u\|_{p}= \left(\int_{X^k} |u(\bx)|^p\di \bx\right)^{\frac{1}{p}}$, 
we endow the space of linear bounded operators of $L^2(X^k)$, denoted by $\mathbf{L}(L^2(X^k))$ with the norm
$$ \|\cL\| := \sup_{u\in L^2(X^k),\ u\neq 0}\frac{\|\cL u\|_{2}}{\|u\|_{2}}\,.$$

\begin{definition}
    Let $I\subset \R$ be open. We say that the $\mathbf{L}(L^2(X^k))$-valued function $t\in I \mapsto \cL_t$ is $a$-times continuously differentiable, and denote it by $\cL\in C^{a}(\mathbf{L}(L^2(X^k)))$ if $\cL$ has a Taylor expansion of order $a$ around any $t\in I$, $\textit{i.e.}$
    $$ \cL_{t+h} = \cL_t + hD^{(1)}_t + \frac{h^2}{2}D^{(2)}_t + \ldots + \frac{h^a}{a!}D^{(a)}_t + o(h^a) \qquad \text{as } h\to 0\,,$$
    with $t \in I \mapsto D_t^{(a)} \in \mathbf{L}(L^2(X^k))$ continuous, where the $o(h^a)$ is with respect to the norm $\|\cdot \|$. The operator $D^{(j)}_t$ is then by definition the $j$-th derivative of $\cL_t$: $\partial_t^{(j)} \cL_t :=D_t^{(j)}$.
\end{definition}
\begin{remark}
    We define in the same way $C^a$ functions with values in $L^2(X^k)$.
\end{remark}

We enforce the following proposition, which can be easily deduced from \cite[Proposition 2.3]{Gouzel2015}: 

\begin{proposition}
\label{prop:gauezel}
Let $\lambda(0)$ be an isolated eigenvalue of the operator $\cL_0$ with multiplicity one, and assume that the family of operators $t\in I\mapsto \cL_{t}\in \left(\mathbf{L}(L^2(X^k)),\|\cdot\|\right)$ depends on $t$ in a $C^d$ way. Then, for $t$ small enough, $\cL_t$ has an isolated eigenvalue $\lambda(t)$ with multiplicity one, and $\lambda(t)$, the corresponding eigenprojection $\pi_t$ and its eigenfunction $\varphi_t$ are $C^d$ with respect to $t$.

Moreover, assume that the rest of the spectrum of $\cL_0$ is contained in a disk of radius $|\lambda(0)| - \delta $. Writing $\cQ_t = (I-\pi_t)\cL_{t}$, so that for $n\geq 1$ $\cL_{t}^n = \lambda(t)^n\pi_t + \cQ_t^n$. For any $r> |\lambda(0)|-\delta$, there exists a constant $C>0$ independent of $t,n$ such that  $\norm{\cQ^n_t} \leq Cr^n $ for all $n\in \mathbb{N}$.
\end{proposition}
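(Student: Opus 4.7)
The plan is to reduce the statement to classical analytic perturbation theory (Kato/Dunford--Taylor calculus) combined with uniform-in-$t$ resolvent estimates along suitably chosen contours, exactly as in Gou\"ezel's argument. The first step is to exploit the fact that $\lambda(0)$ is an isolated simple eigenvalue of $\cL_0$: choose a small circle $\Gamma \subset \C$ enclosing $\lambda(0)$ and no other point of $\sigma(\cL_0)$, and such that $\Gamma \subset \rho(\cL_0)$. Since $t\mapsto \cL_t$ is at least continuous in the operator norm, the Neumann series expansion
\begin{equation}
(z-\cL_t)^{-1} = (z-\cL_0)^{-1} \sum_{n\geq 0} \bigl[(\cL_t - \cL_0)(z-\cL_0)^{-1}\bigr]^n
\end{equation}
converges uniformly for $z\in\Gamma$ as soon as $t$ is small enough, so $\Gamma \subset \rho(\cL_t)$ and one may define the spectral projection
\begin{equation}
\pi_t = \frac{1}{2\pi i}\oint_{\Gamma} (z-\cL_t)^{-1} \di z.
\end{equation}
Continuity of $t\mapsto \pi_t$ then forces $\mathrm{rank}(\pi_t) = \mathrm{rank}(\pi_0) = 1$ for $t$ in a neighbourhood of $0$, which yields a unique isolated simple eigenvalue $\lambda(t)$ of $\cL_t$ inside $\Gamma$.

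The second step is to transfer the $C^d$ regularity of $t\mapsto \cL_t$ to $\pi_t$, $\lambda(t)$ and $\varphi_t$. Differentiating the Neumann series $d$ times term-by-term (which is licit because of uniform convergence on $\Gamma$ and the $C^d$ assumption on $\cL_t$) shows that $z\mapsto(z-\cL_t)^{-1}$ is $C^d$ in $t$, uniformly for $z\in\Gamma$. Integrating along $\Gamma$ preserves this regularity, so $\pi_t \in C^d$. Since $\pi_t$ has rank one, one recovers the eigenvalue as $\lambda(t) = \mathrm{tr}\!\left(\frac{1}{2\pi i}\oint_{\Gamma} z(z-\cL_t)^{-1}\di z\right)$, a $C^d$ scalar function; and the eigenvector can be taken as $\varphi_t = \pi_t \varphi_0$ (with $\varphi_0$ a nonzero vector in $\mathrm{Im}\,\pi_0$), which is $C^d$ by the same argument.

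The third step is the uniform exponential bound on $\cQ_t^n$. By the spectral decomposition $\cL_t = \lambda(t)\pi_t + \cQ_t$, the spectrum of $\cQ_t$ equals $\{0\} \cup (\sigma(\cL_t) \setminus \{\lambda(t)\})$. At $t=0$, this lies in $\{|z| \le |\lambda(0)| - \delta\}$. Fix any $r > |\lambda(0)| - \delta$ and pick a circle $\Gamma' = \{|z| = r'\}$ with $|\lambda(0)| - \delta < r' < r$. Upper semi-continuity of the spectrum (a direct consequence of the Neumann series bound on $(z-\cL_t)^{-1}$ uniformly for $z$ outside a small neighbourhood of $\sigma(\cL_0)$) shows that for $t$ small enough, $\sigma(\cQ_t)$ stays strictly inside $\Gamma'$ and the resolvent $(z-\cQ_t)^{-1}$ is uniformly bounded for $z\in\Gamma'$ and $t$ in this neighbourhood. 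The Dunford--Taylor functional calculus gives
\begin{equation}
\cQ_t^n = \frac{1}{2\pi i}\oint_{\Gamma'} z^n (z-\cQ_t)^{-1}\di z,
\end{equation}
and the standard estimate $\|\cQ_t^n\| \le \frac{\mathrm{length}(\Gamma')}{2\pi}\, (r')^n\, \sup_{z\in\Gamma'}\|(z-\cQ_t)^{-1}\| \le C r^n$ with $C$ independent of $t,n$.

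The main obstacle is the uniformity in $t$ of the resolvent estimate along $\Gamma'$: it must be obtained by combining the $C^0$ continuity of $\cL_t$ with a compactness-and-Neumann-series argument, rather than from any one value of $t$ alone. Once this uniform bound is established, the exponential decay $\|\cQ_t^n\| \le Cr^n$ is immediate from the contour integral representation, completing the proof.
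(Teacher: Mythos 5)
Your proposal is correct and reproduces the standard Kato/Dunford--Taylor perturbation argument (Riesz projection via contour integral, Neumann-series stability of the resolvent on the contour, trace formula for the perturbed eigenvalue, and the Cauchy-integral bound $\|\cQ_t^n\|\le Cr^n$), which is exactly what underlies Gou\"ezel's Proposition 2.3, the reference the paper invokes for this statement without giving a proof of its own.
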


\begin{remark}
    We notice that in the previous proposition the size of the set of $t$ such that the statements holds is not specified: it is an existence result. One of the crucial step of the following analysis is to  ensure that such set is uniform with respect to some parameters.
\end{remark}

In the sequel, for $I,J \subset \R$ open, and for $E$ a normed space, we will denote by $C^{a,b}(I\times J, E)$ the set of functions with values in $E$ which are $C^a$ with respect to the first parameter, and $C^b$ with respect to the second parameter. We will denote $C^{a,b}(I\times J)$ for functions with values in $\C$.

The following families of kernel operators play a crucial role in the proof of the main theorems of this section.

\begin{definition}
    \label{def:operators}
    Under Hypotheses \ref{transfer_hp}. Consider a $k$-circular function $U \, :\, X^N \to \R $ with seed $u$, such that $u$ is compatible with $h$, the seed of $H$.
    
    For $t\in \R$, $\alpha>0$ define
    \begin{itemize}
    \item $\cL_{t,\alpha}\in \mathbf{L}(L^2(X^k)$ as
    \begin{equation}
    \label{def:kernelOp}
    \cL_{\alpha,t}f(\by) = \int_{X^k} f(\bx)\sqrt{\prod_{q=1}^kF(x_q,\alpha) F(y_q,\alpha)}e^{-h(\by,\bx)-itu(\by,\bx)}\di \bx=: \int_{X^k}k_{\alpha,t}(\bx,\by)f(\bx) \di \bx\,.
    \end{equation}
    \item With the writing $N=kM+\ell$, $0\leq \ell< k$, for $1\leq j \leq M-1$ define $\cL_{t,\alpha}^{(j)}\in \mathbf{L}(L^2(X^k)$ as
    \begin{equation*}
    \cL_{\alpha,t}^{(j)}f(\by) = \wt F (\by,j+1)\int_{X^k} f(\bx)\wt F (\bx,j)e^{-h(\by,\bx)-itu(\by,\bx)}\di \bx\,,
    \end{equation*}
    where 
    \begin{equation*}
        \wt F (\bx,j) =  \sqrt{ \prod_{q=1}^kF\left(x_q,\alpha\left(1-\frac{(j-1)k+q}{N}\right)\right)}\,.
    \end{equation*}
    \end{itemize}
\end{definition}

For a kernel operator $T\in \mathbf{L}(L^2(X^k))$
$$ (Tu)(\by) = \int_{X^k} k(\bx,\by)u(\bx)\di \bx\,$$
where $k:X^k\times X^k \to \R$, define its Hilbert-Shmidt norm as 
$$ \| T \|_{\mathrm{HS}}^2 = \iint_{X^{2k}} k(\bx,\by)^2\di \bx\di \by\,.$$
Then, $\|T\|\leq \| T \|_\mathrm{HS}$. Furthermore, if $T$ is a kernel operator with finite Hilbert-Schmidt norm, then it is compact, see \cite{KatoBook}.

\begin{lemma}
\label{lem:differentiability}
    Under assumptions \ref{transfer_hp}. For $\alpha>0$, $t\in \R$, consider the operators $\cL_{\alpha,t},\cL_{t,\alpha}^{(j)}$, $1\leq j \leq M-1$ given by Definition \ref{def:operators}. Assume that for some $d\geq 2$, $|u|^a\exp\left(-h\right)$ is bounded for any $1\leq a \leq d$.
    
    Then, those operators have finite Hilbert-Schmidt norm, and $\cL_{\alpha,t},\cL_{t,\alpha}^{(j)} \in C^{1,d}((0,+\infty)\times \R, \mathbf{L}(L^2(X^k)))$, \textit{i.e.}
    \begin{itemize}
        \item[1)] For any fixed $t\in \R$, the $\mathbf{L}(L^2(X^K))$-valued functions $\alpha\mapsto \cL_{\alpha,t}$ and $\alpha\mapsto\cL_{\alpha,t}^{(j)}$ are continuously differentiable. 
        \item[2)] For any fixed $\alpha>0$, $t\mapsto \cL_{\alpha,t}$ and $\alpha\mapsto \cL_{\alpha,t}^{(j)}$ are $d$-times continuously differentiable.
    \end{itemize}
    Furthermore, for all $\alpha>0$, define 
    $$ \cB_{\alpha}=\frac{1}{\|\cL_{\alpha,0}\|_\mathrm{HS}}\cL_{\alpha,0} $$
    and 
    $$ \cB_\alpha^{(j)}=\frac{1}{\|\cL_{\alpha,0}^{(j)}\|_\mathrm{HS}}\cL_{\alpha,0}^{(j)},\ 1\leq j\leq M-1\,.$$
    Then, $\cB_{\alpha}$ (resp. $\cB_\alpha^{(j)}$)
    has a dominant, simple eigenvalue $\mu(\alpha)>0$ (resp. $\mu^{(j)}(\alpha)$). Moreover, for all $A>0$ there exists $\delta>0$ such that and all $\alpha\in (0,A]$, any other eigenvalue $\mu$ satisfies $|\mu-\mu(\alpha)|\geq \delta$ (resp. $|\mu-\mu^{(j)}(\alpha)|\geq \delta$). We say that those operators have a \textbf{uniform spectral gap} in $\alpha\in (0,A]$.
\end{lemma}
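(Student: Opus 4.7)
The proof splits into three tasks: verifying that the operators are Hilbert--Schmidt (hence compact), establishing their regularity in $(\alpha,t)$, and producing the simple dominant eigenvalue with a spectral gap uniform in $\alpha\in(0,A]$. The first two tasks amount to bookkeeping on the kernel
\[
 k_{\alpha,t}(\bx,\by)\;=\;\sqrt{\prod_{q=1}^k F(x_q,\alpha)F(y_q,\alpha)}\,e^{-h(\by,\bx)-itu(\by,\bx)},
\]
from which
\[
 \|\cL_{\alpha,t}\|_{\mathrm{HS}}^{\,2}\;\leq\;\|e^{-2h}\|_\infty\,\|F(\cdot,\alpha)\|_{L^1(X)}^{\,2k},
\]
finite thanks to HP3 and HP4; the same bound holds for $\cL^{(j)}_{\alpha,t}$ since the shifted parameters of $\wt F$ all lie in $(0,\alpha]$. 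Formally differentiating $a$ times in $t$ under the integral produces an extra factor $(-iu)^a$, and the resulting kernel has HS norm bounded by $\||u|^a e^{-h}\|_\infty\,\|F(\cdot,\alpha)\|_{L^1(X)}^{k}$ thanks to the standing boundedness hypothesis; dominated convergence in $L^2(X^k\times X^k)$ then justifies differentiability up to order $d$ in $t$. Differentiability in $\alpha$ is analogous, with domination by $g_{c,d}\in L^2(X)$ on any $[c,d]\subset(0,\infty)$ provided by HP4.

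For the spectral part, fix $\alpha>0$. The kernel of $\cB_\alpha$ is strictly positive on the support of $\prod_q F(x_q,\alpha)F(y_q,\alpha)$, because $e^{-h}>0$ by HP3; since $\cB_\alpha$ is moreover compact, Jentzsch's theorem \cite[Theorem 137.4]{ZaanenBook} yields a simple dominant eigenvalue $\mu(\alpha)>0$ with a strictly positive eigenfunction, and thus an individual spectral gap $\delta(\alpha)>0$. On any subinterval $[c,A]\subset(0,\infty)$ the regularity statement already proven gives continuity of $\alpha\mapsto\cB_\alpha$ in operator norm, and the standard perturbation theory of isolated simple eigenvalues of compact operators then gives continuity of $\mu(\alpha)$ and $\delta(\alpha)$, so $\delta$ attains a positive minimum by compactness of $[c,A]$.

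The hard part will be the behaviour as $\alpha\to 0^+$, because the normalization $\|\cL_{\alpha,0}\|_{\mathrm{HS}}$ diverges like $\alpha^{-\tc k}$ by HP4 and so $\cB_\alpha$ does not admit a naive continuous extension to $\alpha=0$. The plan here is to exploit the finer information of HP4: decompose $X=\mathcal{O}_\varepsilon\cup\mathcal{O}_\varepsilon^c$ and, on the $\mathcal{O}_\varepsilon$-part, use the near-additivity $h(\bx,\by)=w(\bx)+w(\by)+o(\varepsilon)$ to see that the kernel of $\cL_{\alpha,0}$ restricted to $\mathcal{O}_\varepsilon^k\times\mathcal{O}_\varepsilon^k$ is close to the rank-one kernel $\varphi_\alpha(\bx)\varphi_\alpha(\by)$ with
\[
 \varphi_\alpha(\bx)\;=\;\sqrt{\prod_{q=1}^k F(x_q,\alpha)}\,e^{-w(\bx)}\mathbbm{1}_{\mathcal{O}_\varepsilon^k}(\bx),
\]
whose associated rank-one operator $u\mapsto\langle u,\varphi_\alpha\rangle\varphi_\alpha$ has a trivial spectral gap (one non-zero eigenvalue). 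The contribution from $\mathcal{O}_\varepsilon^c$ to $\|\cL_{\alpha,0}\|_{\mathrm{HS}}^2$ is bounded by $d_3$-type quantities of HP4, and after dividing by $\|\cL_{\alpha,0}\|_{\mathrm{HS}}^2\sim \alpha^{-\tc k}$ becomes $O(\alpha^{\tc k})$, hence negligible; similarly the $o(\varepsilon)$ error in the near-additivity contributes, after normalization, an $o(\varepsilon)$ perturbation in operator norm, uniformly in small $\alpha$. Letting $\varepsilon\to 0$ after $\alpha\to 0$, continuity of isolated eigenvalues of compact operators under norm convergence transfers the gap of the rank-one limit to $\cB_\alpha$ for all sufficiently small $\alpha$, which combined with the argument on $[c,A]$ yields the uniform gap on $(0,A]$. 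The same reasoning applies verbatim to the operators $\cB_\alpha^{(j)}$, whose shifted parameters still lie in $(0,\alpha]$, so the uniform analysis on $(0,A]$ covers them.
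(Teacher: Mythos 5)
Your strategy matches the paper's: the Hilbert--Schmidt bound follows from the $L^1$ norm of $F$ and the sup bound on $e^{-h}$; dominated convergence for differentiability uses the HP4 domination of $\partial_\alpha\sqrt F$ together with the boundedness of $|u|^a e^{-h}$; Jentzsch's theorem yields a simple dominant eigenvalue at each fixed $\alpha$, with continuity on compacts $[c,A]$ producing a gap bounded away from zero there; and, as $\alpha\to 0^+$, the near-additivity of $h$ on $\mathcal{O}_\varepsilon$ is used to compare $\cB_\alpha$ to a rank-one operator whose spectral gap is automatic.

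The one genuine issue is the order of limits in the last step. You write ``letting $\varepsilon\to 0$ after $\alpha\to 0$'', which is backwards. The $\mathcal{O}_\varepsilon^c$ contribution $\|\cB_\alpha-\cB^\varepsilon_\alpha\|$ vanishes as $\alpha\to 0$ at a rate that depends on $\varepsilon$, while the near-additivity error $\|\cB^\varepsilon_\alpha - \mathcal{T}^\varepsilon_\alpha\|$ is $o(\varepsilon)$ \emph{uniformly in $\alpha$}. So one must first fix $\varepsilon_0$ small enough that the rank-one operator $\mathcal{T}^{\varepsilon_0}_\alpha$ endows $\cB^{\varepsilon_0}_\alpha$ with a spectral gap uniformly in $\alpha$, and only then take $\alpha\to 0$ so that $\|\cB_\alpha - \cB^{\varepsilon_0}_\alpha\|$ becomes small and transfers the gap via perturbation theory; reversing the limits does not work because the threshold on $\alpha$ below which the $\mathcal{O}_\varepsilon^c$ error is small depends on $\varepsilon$. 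There is also a minor power-counting slip: $\|\cL_{\alpha,0}\|_{\mathrm{HS}}^2\sim \alpha^{-2\tc k}$, and since only one of the $2k$ coordinates need lie in $\mathcal{O}_\varepsilon^c$ while the remaining $2k-1$ can each still contribute a factor $\alpha^{-\tc}$, the complementary piece is $O(\alpha^{-(2k-1)\tc})$, so the normalized error is $O(\alpha^{\tc})$ rather than $O(\alpha^{\tc k})$; this does not change the conclusion.
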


We defer the proof of this lemma to Appendix \ref{App:proof differentiability}.

Applying the previous proposition, we can prove both asymptotic estimates of Theorem \ref{thm:transfer_operator}. For convenience, we split the theorem into two; the first one is related to the proof of the result involving $I_N^{(1)}$, and the second $I_N^{(2)}$:

\begin{theorem}
    \label{thm:easy_asymptotic}
    Under hypotheses \ref{transfer_hp}. Consider a $k$-circular function $U \, :\, X^N \to \R $ with seed $u$ and weed $\wt u$, such that $u$ is compatible with $h$, the seed of $H$. Define
    \begin{equation}
        \label{eq:ratios}
        J_1^{(N)} = \frac{I_1^{(N)}(H + it U)}{I_1^{(N)}(H)}
    \end{equation}
     where $I_1^{(N)}$ is defined as
    
    \begin{equation}
    \label{eq:type_1_partition}
         I_1^{(N)}(H) =  \int_{X^N} \left(\prod_{j=1}^N F(x_j,\alpha)\right) e^{-H(\bx)}\di \bx\, .
    \end{equation}
    Assume that for some $d\geq 2$, $|u|^a\exp\left(-h\right)$ is bounded for any $1\leq a \leq d$. Form the Euclidean division $N=kM+\ell$. Then, there exists an $\varepsilon>0$, and two complex-valued functions  $\lambda\in C^{1,d}(\R_+^*\times (-\varepsilon,\varepsilon))$, $c_{k,\ell}\in C^{1,d}(\R_+^*\times(-\varepsilon,\varepsilon))$ such that for all $q\in \N$ :
\begin{equation}
    \label{eq:prodPartition_1}
    J_1 =  c_{k,\ell}(\alpha,t)\lambda(\alpha,t)^{M-2}\left( 1 + o(M^{-q})\right)\,, \quad \text{as } M\to\infty\,, \\
\end{equation}
for $\vert t\vert <\varepsilon$. Furthermore, for any $\alpha >0$
\begin{align}
\label{eq:lambda_easy}
        &\bullet \lambda(\alpha,0) = 1 \,,\\
        \label{eq:first_regime_easy}
        & \bullet c_{k,\ell}(\alpha,0) = 1\,.
\end{align}
Moreover, there exist two functions $\wt c_{k,\ell}\in C^{1,d}(\R_+^*\times (-\varepsilon,\varepsilon))$, and $\wt \lambda\in C^{1,d}(\R_+^*\times (-\varepsilon,\varepsilon))$  such that there exist two constants $C_1^{(1)},C_2^{(1)} >0$ such that for all $q\in \N$:
\begin{align}
    &\bullet C^{(1)}_1< |\wt c_{k,\ell}(\alpha,t)| < C^{(1)}_2 \,,\\
    & \bullet \lambda(\alpha,t) = \frac{\wt \lambda(\alpha,t)}{\wt \lambda(\alpha,0)} \,,\\ 
    \label{eq:Z_periodic}
    &\bullet I_1^{(N)}(H+itU)= \wt c_{k,\ell}(\alpha,t)\wt \lambda(\alpha,t)^{M-2}\left( 1 + o(M^{-q})\right)\,.\\
\end{align}
\end{theorem}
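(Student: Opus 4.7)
\textbf{Plan of proof for Theorem \ref{thm:easy_asymptotic}.}

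The plan is to reduce the integral $I_1^{(N)}(H+itU)$ to an object of the form $\langle \Phi_t, \mathcal{L}_{\alpha,t}^{M-1} \rangle_{\mathrm{HS}}$ and then apply the spectral decomposition supplied by Proposition \ref{prop:gauezel} together with Lemma \ref{lem:differentiability}. First, I would use the Euclidean division $N = kM + \ell$ and the circularity of $H$ and $U$ to group the variables into $M$ blocks $\bx_j = (x_{(j-1)k+1},\ldots,x_{jk})$ plus an $\ell$-block of tail variables $\bx_{\mathrm{tail}} = (x_{kM+1},\ldots,x_N)$. Setting $F_\alpha(\bx_j) := \prod_{q=1}^k F(x_{(j-1)k+q},\alpha)$ and using the symmetric splitting $\prod_{j=1}^M F_\alpha(\bx_j) = F_\alpha(\bx_1)^{1/2}\,F_\alpha(\bx_M)^{1/2}\,\prod_{j=1}^{M-1}\sqrt{F_\alpha(\bx_j)F_\alpha(\bx_{j+1})}$, the integrand becomes $F_\alpha(\bx_1)^{1/2} F_\alpha(\bx_M)^{1/2} \prod_{j=1}^{M-1} k_{\alpha,t}(\bx_j,\bx_{j+1})$ times the weed factor $e^{-\widetilde{h}(\bx_1,\bx_M,\bx_{\mathrm{tail}})-it\widetilde{u}(\bx_1,\bx_M,\bx_{\mathrm{tail}})}\prod_{q=1}^{\ell} F(x_{kM+q},\alpha)$. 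Integrating out $\bx_{\mathrm{tail}}$ yields a kernel $\Phi_t(\bx_1,\bx_M)$, and I obtain the compact representation
\begin{equation}
I_1^{(N)}(H+itU) = \iint_{X^k\times X^k} \Phi_t(\bx,\by)\, F_\alpha(\bx)^{1/2}F_\alpha(\by)^{1/2}\, K_{\alpha,t}^{(M-1)}(\bx,\by)\, d\bx\, d\by,
\end{equation}
where $K_{\alpha,t}^{(M-1)}$ is the iterated kernel of $\mathcal{L}_{\alpha,t}^{M-1}$. The assumption that $|u|^a e^{-h}$ is bounded for $1\le a \le d$, together with HP~3--4, is what guarantees that $\Phi_t$ defines a Hilbert--Schmidt kernel on $X^k\times X^k$ and that it depends on $(\alpha,t)$ in a $C^{1,d}$ way.

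Next, I invoke Lemma \ref{lem:differentiability}: $\mathcal{L}_{\alpha,0}$ has a simple dominant eigenvalue with spectral gap, uniform on compact subsets of $\alpha$, and $(\alpha,t)\mapsto \mathcal{L}_{\alpha,t}$ is $C^{1,d}$ as an operator-valued function. Proposition \ref{prop:gauezel} then yields an $\varepsilon>0$ and a simple dominant eigenvalue $\widetilde{\lambda}(\alpha,t)$ of $\mathcal{L}_{\alpha,t}$ for $|t|<\varepsilon$, together with the associated rank-one projection $\pi_{\alpha,t}$ and a remainder $\mathcal{Q}_{\alpha,t}$ with $\|\mathcal{Q}_{\alpha,t}^{M-1}\| \le C r^{M-1}$ for some $r<|\widetilde{\lambda}(\alpha,0)|$. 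All these objects inherit the $C^{1,d}$ regularity in $(\alpha,t)$. Writing $\mathcal{L}_{\alpha,t}^{M-1} = \widetilde{\lambda}(\alpha,t)^{M-1}\pi_{\alpha,t} + \mathcal{Q}_{\alpha,t}^{M-1}$ and plugging this into the representation above gives
\begin{equation}
I_1^{(N)}(H+itU) = \widetilde{\lambda}(\alpha,t)^{M-1}\,\mathcal{I}(\alpha,t) + O(r^{M-1}),
\end{equation}
where $\mathcal{I}(\alpha,t)$ is the Hilbert--Schmidt pairing of $\Phi_t\cdot F_\alpha^{1/2}\otimes F_\alpha^{1/2}$ with the kernel of $\pi_{\alpha,t}$.

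From here the theorem's quantities are defined directly: I set $\widetilde{c}_{k,\ell}(\alpha,t) := \widetilde{\lambda}(\alpha,t)\mathcal{I}(\alpha,t)$ (absorbing one factor of $\widetilde{\lambda}$ to match the exponent $M-2$), $\lambda(\alpha,t) := \widetilde{\lambda}(\alpha,t)/\widetilde{\lambda}(\alpha,0)$, and $c_{k,\ell}(\alpha,t) := \widetilde{c}_{k,\ell}(\alpha,t)/\widetilde{c}_{k,\ell}(\alpha,0)$. Normalization gives $\lambda(\alpha,0) = c_{k,\ell}(\alpha,0) = 1$; regularity and the claimed expansion \eqref{eq:Z_periodic}--\eqref{eq:prodPartition_1} follow since the geometric error term $(r/|\widetilde{\lambda}|)^{M-1}$ decays faster than any polynomial in $M$. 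For the uniform bounds $C_1^{(1)}<|\widetilde{c}_{k,\ell}(\alpha,t)|<C_2^{(1)}$, continuity in $t$ reduces the matter to $t=0$; by Jentzsch's theorem (invoked in Remark \ref{rem:operatorRegularity}) the top eigenfunctions of $\mathcal{L}_{\alpha,0}$ are strictly positive, and the integrand defining $\mathcal{I}(\alpha,0)$ is pointwise positive, so $\mathcal{I}(\alpha,0)>0$; shrinking $\varepsilon$ if necessary gives the two-sided bound.

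The main obstacle I anticipate is not the spectral decomposition itself, which is routine once Lemma \ref{lem:differentiability} is in place, but rather the rigorous handling of the weed. One must check that the tail-integrated weed $\Phi_t$ genuinely yields a Hilbert--Schmidt kernel $\Phi_t(\bx,\by)F_\alpha(\bx)^{1/2}F_\alpha(\by)^{1/2}$, that it is $C^{1,d}$ jointly in $(\alpha,t)$, and that its $L^2$-norm is controlled uniformly for $|t|<\varepsilon$ and $\alpha$ in compact sets. This uses in an essential way the lower bound on $\widetilde{h}$ from HP~3, the boundedness of $|u|^a e^{-h}$ and hence of $|\widetilde{u}|^a e^{-\widetilde{h}}$ (which passes through the same combinatorial circular structure), and the integrability of $F(\cdot,\alpha)$ from HP~4. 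A subsidiary technical point is ensuring that the spectral gap of Lemma \ref{lem:differentiability}, which is uniform in $\alpha$ but stated only at $t=0$, survives a neighborhood $|t|<\varepsilon$ in $t$; this is immediate from the continuity in $t$ of $\mathcal{L}_{\alpha,t}$ at a fixed compact $\alpha$-range, and suffices for the statement as formulated.
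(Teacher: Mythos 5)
Your proposal is correct and follows essentially the same route as the paper's proof: where you pair the iterated kernel $K_{\alpha,t}^{(M-1)}$ against a symmetric function $\Phi_t(\bx,\by)F_\alpha(\bx)^{1/2}F_\alpha(\by)^{1/2}$, the paper packages the same boundary/weed contribution into a function $G_{\bz}(\by)$ plus a linear functional $\cS$ (equations \eqref{eq:G} and \eqref{eq:S}) and writes $I_1^{(N)}=\cS\big((\cL_{\alpha,t}^{M-1}G_{\bx_M})(\bx_M)\big)$, a cosmetic difference. Both arguments then invoke Lemma \ref{lem:differentiability}, Jentzsch's theorem, and Proposition \ref{prop:gauezel} for the spectral decomposition and define $\lambda$, $c_{k,\ell}$ by normalizing $\wt\lambda$, $\wt c_{k,\ell}$ at $t=0$, with the positivity of the top eigenfunction giving the two-sided bound on $\wt c_{k,\ell}$.
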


\begin{proof}
For $\alpha>0$ and $t\in \R$, consider $\cL_{t,\alpha}\,:\,L^2(X^k) \to L^2(X^k)$ given by Definition \ref{def:operators}. By Lemma \ref{lem:differentiability}, each $\cL_{t,\alpha}$ is Hilbert-Schmidt, and in particular, compact. Moreover, since the kernel $k_{\alpha,0}$ \eqref{def:kernelOp} is positive, we apply a generalization of Jentzsch's theorem (showing that $\cL_{\alpha,0}$ has a simple, positive, dominant eigenvalue) \cite[Theorem 137.4]{ZaanenBook} in combinations with Lemma \ref{lem:differentiability} and Proposition \ref{prop:gauezel} to deduce that:

\begin{itemize}
    \item There exists $\varepsilon>0$ such that for $|t|<\varepsilon$, $\cL_{\alpha,t}$ has a simple, dominant eigenvalue $\wt\lambda(\alpha,t)$ with associated eigenfunction $\varphi(\cdot, \alpha,t)\in L^2(X^k)$, $\|\varphi(\cdot,\alpha,t)\|_{2}=1$,
    \item Its dominant eigenvalue $\wt \lambda$ varies smoothly with respect to $(\alpha,t)$: $\wt \lambda \in C^{1,d}(\R_+^* \times (-\varepsilon,\varepsilon))$,
    \item The associated eigenfunction $\varphi(\cdot,\alpha,t)$ varies smoothly: $(\alpha,t)\mapsto\varphi(\cdot,\alpha,t) \in C^{1,d}(\R_+^*\times (-\varepsilon,\varepsilon),L^2(X^k))$,
\end{itemize}
and there exists an operator $\cQ_t\,:\, L^2(X^k)\to L^2(X^k) $ such that

\begin{equation}
\label{eq:decompositon_j_1}
    \cL_{\alpha,t}\phi(\by) = \wt \lambda\left( \alpha, t \right) \left\langle \phi, \varphi\left(\cdot, \alpha, t\right) \right\rangle\varphi\left(\by, \alpha, t\right) + \cQ_t\phi(\by)\,, \quad \forall\,\phi\in L^2(X^k)\,,
\end{equation}
denoting by $\la\cdot, \cdot\ra$ the standard scalar product in $L^2(X^k)$. 

Furthermore, at $t=0$ we have $\wt  \lambda\left( \alpha, 0\right) >0$, $\varphi(\cdot,\alpha,t)>0$ almost everywhere, and  there exists a $\delta >0$ such that for all $t\in (-\varepsilon,\varepsilon)$, one has $\vert\vert \cQ_t \vert \vert \leq \vert\wt \lambda\left( \alpha, t \right)\vert - \delta$.

For $\bz \in X^k$ define $G_{\bz}(\by)$ as 

\begin{equation}
\label{eq:G}
    G_\bz(\by) = \begin{cases} 
    \begin{aligned}
        &\sqrt{\prod_{q=1}^k F(y_q,\alpha)}
    \int_{X^\ell} \prod_{j=kM+1}^{kM+\ell}F(x_j, \alpha)\\ &\times \exp\left(-(\wt h+it\wt u)(\by, x_{kM+1},\ldots, x_{kM+\ell},\bz)\right)\prod_{j=kM+1}^{kM+\ell}\di x_j\,,
    \end{aligned} & \quad \ell > 0\,,\\
    \sqrt{\prod_{q=1}^k F(y_q,\alpha)}\exp\left(-h(\by,\bz)-itu(\by,\bz)\right)\,, &\quad \ell =0\,,
    \end{cases}
\end{equation}
and the linear operator $\cS \;: L^2(X^k) \to \C$ as

\begin{equation}
\label{eq:S}
    \cS \varphi = \int_{X^k} \prod_{j=1}^{k}\sqrt{F(x_j, \alpha)} \varphi(\bx) \di \bx\,.
\end{equation}
We notice that 
$$\norm{\cS}:= \sup_{\|\varphi\|_{2}=1}\left|\cS\varphi \right| \leq \norm{F}_{1}^{k} <+\infty$$
therefore $\cS$ is continuous.

In this notation, we can recast \eqref{eq:type_1_partition}, applying $\cS$ to  $(\bx,\by) \mapsto \left(\cL^{M-1}_{\alpha,t}G_\bx\right)(\by)$, as

\begin{equation}
\begin{split}
         I_1^{(N)}(\alpha,H +itU) &= \cS\left( \left(\cL_{\alpha,t}^{M-1}G_{\bx_M}\right) (\bx_M)\right) \\ &= \wt \lambda^{M-1}(\alpha,t)\cS\big(\left \langle\varphi\left(\cdot, \alpha, t\right) ; G_{\bx_M}(\cdot)\right\rangle\varphi(\bx_M,\alpha,t)\big) + \cS\big(\cQ_t^{M-2}G_{\bx_M}(\bx_M)\big)\,,
\end{split}
\end{equation}
where here and in the sequel, if $f\in L^2(X^k\times X^k)$, using a slight notational liberty, we denoted $\cS(f(\bx)$ for $\cS(f)$.
Defining 
\begin{equation}
\begin{split}
    c_{k,\ell}(\alpha,t) &= \frac{\cS\big(\left \langle\varphi\left(\cdot, \alpha, t\right) ; G_{\bx_M}(\cdot)\right\rangle\varphi(\bx_M,\alpha,t)\big)}{\cS\big(\left \langle\varphi\left(\cdot, \alpha, 0\right) ; G_{\bx_M}(\cdot)\right\rangle\varphi(\bx_M,\alpha,0)\big)}\,, \\
    \lambda(\alpha, t) & = \frac{ \wt \lambda^{M-1}(\alpha,t)}{ \wt \lambda^{M-1}(\alpha,0)}\,.
\end{split}
\end{equation}
Because $\varphi(\cdot,\alpha,0)>0$ almost everywhere, the denominator in $c_{k,\ell}$ is positive, and by direct computation, using that $h,\wt h$ are bounded from below, one sees that 
$$ \cS\big(\left \langle\varphi\left(\cdot, \alpha, t\right) ; G_{\bx_M}(\cdot)\right\rangle\varphi(\bx_M,\alpha,t)\big)\leq \|F(\cdot,\alpha)\|_{1}^{2k+\ell}\,.$$
In view of Proposition \ref{prop:gauezel}, $\norm{\cQ_t^n}\leq (\vert \wt\lambda(t) \vert - \delta)^n$, and we conclude. 
\end{proof}

We prove now another theorem related to the expansion of $I_2^{(N)}$, which, combined with the previous one, conclude the proof of Theorem \ref{thm:transfer_operator}

\begin{theorem}
    \label{thm:hard_asymptotic}
    Under hypotheses \ref{transfer_hp}. Consider a $k$-circular function $U \, :\, X^N \to \R $ with seed $u$ and weed $\wt u$, such that $u$ is compatible with $h$, the seed of $H$. Define
    \begin{equation}
        \label{eq:ratios_2}
        J_2^{(N)} = \frac{I_2^{(N)}(H +it U)}{I_2^{(N)}(H)}
    \end{equation}
    where $I_2^{(N)}$ is defined as
    
    \begin{equation}
    \label{eq:type_1_partition}
         I_2^{(N)}(H) = \int_{X^N} \left(\prod_{j=1}^{N-1}  F\left(x_j,\alpha\left(1-\frac{j}{N}\right)\right)\right) R(x_N)  e^{-H(\bx)}\di \bx \, .
    \end{equation}
    Assume that for some $d\geq 2$, $|u|^a\exp\left(-h\right)$ is bounded for any $1\leq a \leq d$. Form the Euclidean division $N=kM+\ell$. Then, there exists an $\varepsilon>0$, and a complex-valued $ c_{k,\ell,M}\in C^{1,d}(\R_+^*\times (-\varepsilon,\varepsilon))$ such that \textbf{with the same} $\boldsymbol{\lambda(\alpha,t)}$\textbf{ as in Theorem \ref{thm:easy_asymptotic}},
\begin{equation}
\label{eq:prodPartition_2}
     J_2^{(N)}= c_{k,\ell,M}(\alpha,t)\prod_{j=1}^{M-2} \lambda\left(\alpha\frac{j}{M},t\right)\left( 1+ o_M(1)\right) \quad \text{as } M\to\infty\,, 
\end{equation}
for $\vert t\vert <\varepsilon$. Furthermore, for any $\alpha >0$,
\begin{align}
\label{eq:lambda_easy}
        &\bullet \lambda(\alpha,0) = 1 \,,\\
        \label{eq:first_regime}
        & \bullet \lim_{t \to 0} c_{k,\ell,M}(\alpha,t) = 1 \, \quad \text{uniformly in }M\,, \\
        & \bullet \text{the remainder } o_M(1)\text{ is uniformly bounded in }t\in(-\varepsilon, \varepsilon).
\end{align}
Moreover, there exist two functions  $\wt \lambda\in C^{1,d}(\R_+^*\times (-\varepsilon,\varepsilon))$ and $\wt c_{k,\ell,M}\in C^{1,d}(\R_+^*\times (-\varepsilon,\varepsilon))$ such that there exist two constants $C_1^{(2)},C_2^{(2)} >0$ and $p\in\N$ such that:

\begin{align}
    &\bullet C_1^{(1)} N^p < |\wt c_{k,\ell,M}(\alpha,t)| < C^{(2)}_2 N^p \,, \\
    & \bullet \lambda(\alpha,t) = \frac{\wt \lambda(\alpha,t)}{\wt \lambda(\alpha,0)} \,,\\ 
    &\bullet I_2^{(N)}(H+itU) = \wt c_{k,\ell,M}(\alpha,t)\prod_{j=1}^{M-2}\wt \lambda\left(\alpha\frac{j}{M},t\right)\left( 1 + o_M(1)\right)\,.
\end{align}
\end{theorem}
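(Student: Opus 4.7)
The plan is to mirror the strategy of Theorem \ref{thm:easy_asymptotic} but with the single transfer operator $\cL_{\alpha,t}$ replaced by the slowly varying product $\cL^{(M-1)}_{\alpha,t}\cdots\cL^{(1)}_{\alpha,t}$ introduced in Definition \ref{def:operators}. First I would use the $k$-circular structure of $H+itU$ and the blockwise factorization of $\prod_{j=1}^{N-1} F(x_j,\alpha(1-j/N))$ into groups of size $k$ to rewrite
\begin{equation*}
I_2^{(N)}(H+itU) \;=\; \cS'\!\left(\cL^{(M-1)}_{\alpha,t}\cdots \cL^{(1)}_{\alpha,t}\,G'_{\alpha,t}\right),
\end{equation*}
where $G'_{\alpha,t}\in L^2(X^k)$ absorbs the remaining left-boundary weights $F(x_q,\alpha(1-q/N))$ together with the distribution $R(x_N)$ and the weed $\wt u$, and $\cS'$ is a continuous linear functional on $L^2(X^k)$ analogous to \eqref{eq:S}.

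Second, Lemma \ref{lem:differentiability} provides for each $j$ a uniform spectral gap for $\cL^{(j)}_{\alpha,t}$ together with joint $C^{1,d}$ regularity in the scaling parameter $\alpha_j := \alpha(1-jk/N)\approx \alpha(1-j/M)$ and in $t$. An application of Proposition \ref{prop:gauezel} (with uniformity in $j$ supplied by the uniform gap) yields, for $|t|<\varepsilon$ and all $1\leq j \leq M-1$, a decomposition
\begin{equation*}
\cL^{(j)}_{\alpha,t} \;=\; \wt\mu_j(t)\,\pi^{(j)}_t + \cQ^{(j)}_t,\qquad \|\cQ^{(j)}_t\|\leq |\wt\mu_j(t)|(1-\delta),
\end{equation*}
with $\delta>0$ independent of $j$, $\pi^{(j)}_t f = \langle f,\psi^{(j)}_t\rangle\,\varphi^{(j)}_t$ the rank-one spectral projection (with $\langle \varphi^{(j)}_t,\psi^{(j)}_t\rangle = 1$), and $\wt\mu_j(t) = \wt\lambda(\alpha_j,t)$ up to a $C^{1,d}$ factor coming only from the precise form of $\wt F(\bx,j)$; this factor, together with boundary pieces, will be absorbed in $\wt c_{k,\ell,M}$. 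Crucially, $\wt\lambda$ is the same function appearing in Theorem \ref{thm:easy_asymptotic}, since both come from the spectral analysis of the family $\cL_{\alpha',t}$ through Lemma \ref{lem:differentiability}.

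Third, I expand the product $\prod_{j=1}^{M-1}(\wt\mu_j(t)\pi^{(j)}_t + \cQ^{(j)}_t)$. The fully projective term gives
\begin{equation*}
\pi^{(M-1)}_t\!\cdots\pi^{(1)}_t\,G'_{\alpha,t} \;=\; \langle G'_{\alpha,t},\psi^{(1)}_t\rangle \Bigl(\prod_{j=1}^{M-2}\langle \varphi^{(j)}_t,\psi^{(j+1)}_t\rangle\Bigr)\varphi^{(M-1)}_t,
\end{equation*}
and smoothness of $(\varphi^{(j)}_t,\psi^{(j)}_t)$ in $j/M$ shows each inner product in the chain differs from $1$ by $O(1/M)$, so the chain is bounded by $\exp(O(1))$ uniformly in $M$ and, after multiplying by the (at most polynomial in $N$) boundary contributions from $\cS'$ and $G'_{\alpha,t}$, fits the allowed $N^p$ growth for $\wt c_{k,\ell,M}$. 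The main obstacle is the control of the $2^{M-1}-1$ subleading terms, each containing at least one factor $\cQ^{(j)}_t$: a naive operator-norm bound fails because the projections $\pi^{(j)}_t$ are non-orthogonal and have norm uniformly bounded only by a constant $C\geq 1$. I will handle this by conditioning on the index $j_\star$ of the rightmost $\cQ$ factor and, using that $\cQ^{(j_\star)}_t$ produces a vector of norm at most $(1-\delta)|\wt\mu_{j_\star}|\|G'_{\alpha,t}\|_2$ and that the remaining left factors act on a vector outside the dominant eigenline only through a uniformly contracted operator, I recover a telescoping geometric bound summing to $o(M^{-q})$ for any $q$, exactly as in the single-operator case of Theorem \ref{thm:easy_asymptotic}.

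Finally, collecting these ingredients yields
\begin{equation*}
I_2^{(N)}(H+itU) \;=\; \wt c_{k,\ell,M}(\alpha,t) \prod_{j=1}^{M-2}\wt\lambda(\alpha j/M,t)\,\bigl(1+o_M(1)\bigr),
\end{equation*}
after the substitution $j\leftrightarrow M-j$ and after pushing one of the $M-1$ eigenvalue factors into $\wt c_{k,\ell,M}$; the $o_M(1)$ remainder is uniform in $t\in(-\varepsilon,\varepsilon)$ because the spectral gap $\delta$ and the regularity constants are. Taking the ratio with the same expansion for $I_2^{(N)}(H)$ at $t=0$ and defining $\lambda(\alpha,t) = \wt\lambda(\alpha,t)/\wt\lambda(\alpha,0)$ and $c_{k,\ell,M}(\alpha,t) = \wt c_{k,\ell,M}(\alpha,t)/\wt c_{k,\ell,M}(\alpha,0)$ produces the claimed form \eqref{eq:prodPartition_2}. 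The identity $\lambda(\alpha,0) = 1$ is immediate; the limit $\lim_{t\to 0}c_{k,\ell,M}(\alpha,t) = 1$ uniformly in $M$ follows because at $t=0$ the operators $\cL^{(j)}_{\alpha,0}$ have positive kernels, so by Jentzsch's theorem $\varphi^{(j)}_0$ and $\psi^{(j)}_0$ can be chosen positive, and the chain of inner products, together with the boundary factors, is continuous in $t$ at $t=0$ with limit equal to the corresponding quantity computed at $t=0$, cancelling in the ratio.
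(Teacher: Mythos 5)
Your overall strategy coincides with the paper's: factorize into $k$-blocks, introduce the slowly varying transfer operators $\cL^{(j)}_{\alpha,t}$, invoke the uniform spectral gap from Lemma \ref{lem:differentiability} together with Jentzsch's theorem and Proposition \ref{prop:gauezel}, expand the operator product by the rightmost appearance of a $\cQ^{(j)}_t$, and collect boundary pieces into a polynomially bounded prefactor. However, there are two genuine gaps. First, your control of the subleading terms --- a ``telescoping geometric bound summing to $o(M^{-q})$, exactly as in the single-operator case'' --- does not work here, and indeed the theorem only claims $o_M(1)$. Once a factor $\cQ^{(n)}_t$ is hit, the remaining \emph{left} factors in the expansion are the full operators $\cL^{(j)}_{\alpha,t}$, and iterating those gives no further contraction; there is no geometric gain to telescope. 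The mechanism that actually makes the sum over $n$ converge is the slow variation of the eigenfunctions: to the right of the rightmost $\cQ^{(n)}_t$ sits $\pi^{(n+1)}_t\cdots\pi^{(M-1)}_tG$, which is a multiple of $\varphi^{(n+1)}_t$, and since $\cQ^{(n)}_t\varphi^{(n)}_t=0$ while $\left\|\varphi^{(n+1)}_t-\varphi^{(n)}_t\right\|_2=O(1/M)$ one gets $\cQ^{(n)}_t\varphi^{(n+1)}_t=O(1/M)$. You never state this cancellation, and without it the $M$ error terms cannot be summed against the leading one. Second, you treat the relation between the dominant eigenvalue $\widehat\lambda$ of $\cL^{(j)}_{\alpha,t}$ and $\wt\lambda(\alpha(1-j/M),t)$ as a harmless ``$C^{1,d}$ factor absorbed into $\wt c_{k,\ell,M}$,'' but the proof needs the quantitative, $t$-uniform estimate $\widehat\lambda(\alpha(1-j/M),t)=\wt\lambda(\alpha(1-j/M),t)+O(1/M)$, obtained from $\bigl\|\cL^{(j)}_{\alpha,t}-\cL_{\alpha(1-j/M),t}\bigr\|\leq C\alpha/M$ and Proposition \ref{prop:gauezel}. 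Without the uniformity in $t\in(-\varepsilon,\varepsilon)$ (and in $j$), neither the boundedness of the chain $\prod_i\langle\varphi^{\alpha(1-i/M)}_t,\varphi^{\alpha(1-(i+1)/M)}_t\rangle$ nor the claim $\lim_{t\to0}c_{k,\ell,M}(\alpha,t)=1$ \emph{uniformly in} $M$ --- exactly the condition required later to invoke Theorem \ref{thm:nagagu_thm_mod} --- can be verified.
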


\begin{proof}
	Let $W(\bx) = H(\bx) + it U(\bx)$, and let $w\, :\, X^{2k}\to \C$ be its seed, and  $\wt w\, :\, X^{2k+\ell}\to \C$ be  its weed.
    For $1\leq j\leq M-1$, define the function $\wt F (\bx,j) \,:\, X^k\times \N \to \R_+$ as

    \begin{equation}
        \wt F (\bx,j) =  \sqrt{ \prod_{q=1}^kF\left(x_q,\alpha\left(1-\frac{(j-1)k+q}{N}\right)\right)}\,,
    \end{equation}
    we notice that $\wt F (\bx,j)\in L^2(X^k)$. 
    Define the family of kernel operators,  $\cL_{t,\alpha}^{(j)}\,:\,L^2(X^k) \to L^2(X^k)$ as 
\begin{equation}
    \cL_{t,\alpha}^{(j)}f(\by) = \wt F (\by,j+1)\int_{X^k} f(\bx)\wt F (\bx,j)e^{-w(\by,\bx)}\di \bx\,.
\end{equation}

By Lemma \ref{lem:differentiability}, $(\alpha,t)\mapsto \cL_{\alpha,t}^{(j)}$ is $C^{1,d}$. As in the proof of Theorem \ref{thm:easy_asymptotic}, since for $t=0$ the kernel is positive,  apply a generalization of Jentzsch's theorem \cite[Theorem 137.4]{ZaanenBook} in combinations with Proposition \ref{prop:gauezel} and Lemma \ref{lem:differentiability} deducing that:
\begin{itemize}
    \item By the existence of a uniform spectral gap (Lemma \ref{lem:differentiability}), there exists some $\varepsilon>0$ \textbf{independent of }$\boldsymbol{j}$ (but depending on $\alpha$) such that for $|t|<\varepsilon$, $\cL_{\alpha,t}^{(j)}$ has a simple, dominant eigenvalue $\widehat\lambda\left(\alpha\left(1-\frac{j}{M}\right),t\right)$ with associated eigenfunction $\varphi\left(\cdot,\alpha\left(1-\frac{j}{M}\right),t\right)\in L^2(X^k)$ with unit $L^2$ norm.
    \item $\hat{\lambda}\in C^{1,d}(\R_+^*\times (-\varepsilon,\varepsilon))$
    \item The associated eigenfunction varies smoothly: $(\alpha,t)\mapsto \varphi\left(\cdot, \alpha\left(1-\frac{j}{M}\right),t\right)\in C^{1,d}(\R_+^*\times(-\varepsilon,\varepsilon), L^2(X^k))$\,,
\end{itemize}
and there exists an operator $\cQ_t^{(j)}\,:\, L^2(X^k)\to L^2(X^k) $ such that $\forall\,\phi\in L^2(X^k)$, $\forall |t|<\varepsilon$,
\begin{equation}
\label{eq:decompositon_j}
    \cL_{t,\alpha}^{(j)}\phi(\by) = \widehat \lambda\left(\alpha\left(1-\frac{j}{M}\right),t\right)\pi_t^{(j)}\phi(\by)+\cQ_t^{(j)}\phi(\by)
\end{equation}
with
$$ \pi_t^{(j)}\phi(\by) = \left\langle\phi; \varphi\left(\cdot,\alpha\left( 1 - \frac{j}{M}\right),t\right)\right\rangle\varphi\left(\by,\alpha\left( 1 - \frac{j}{M}\right),t\right)\,, $$
recalling that we denote by $\la\cdot, \cdot\ra$ the standard scalar product in $L^2(X^k)$. 

Furthermore, we have $\widehat\lambda\left(\alpha\left(1-\frac{j}{M}\right),0\right)>0$, $\varphi\left(\cdot,\alpha\left(1-\frac{j}{M}\right),0\right)>0$ almost everywhere,
and there exists $\delta>0$ such that $\norm{ \cQ_t^{(j)}} \leq \left\vert \widehat\lambda\left( \alpha\left( 1 - \frac{j}{M}\right), t \right)\right\vert - \delta$.

Now, with $\wt\lambda$ the function of Theorem \ref{thm:easy_asymptotic}, we have
\begin{equation}
    \label{eq:approxEigen}
    \widehat\lambda\left(\alpha\left(1-\frac{j}{M}\right),t\right)=\wt\lambda\left(\alpha\left(1-\frac{j}{M}\right),t\right) + O\left(\frac{1}{M}\right)\,,
\end{equation}
Where the $O\left(\frac{1}{M}\right)$ term is uniform in $t\in (-\varepsilon,\varepsilon)$. Indeed, recalling that  $\cL_{t,\alpha}$ is defined in \eqref{def:kernelOp},  by the integrability assumptions on $F(\cdot,\alpha)$, $\partial_\alpha \sqrt{F}$ (see Assumptions \ref{transfer_hp}), we have
\begin{equation}
\label{ineq:approxOperator}
    \norm{\cL^{(j)}_{t,\alpha}-\cL_{t,\alpha(1-j/M)} } \leq C\frac{\alpha}{M}
\end{equation}
where $C\geq 0$ is independent of $t$. We then deduce \eqref{eq:approxEigen} by applying Proposition \ref{prop:gauezel}.

Analogously as the previous proof, define the function $G_{\bz}(\by ,t)$ on $X^k$ by


\begin{equation}
\label{eq:G_2}
    G_\bz(\by,t) = \begin{cases} 
    \begin{aligned}
        &\wt F(\by,M)
    \int_{X^{\ell}} \prod_{j=1}^{\ell-1} F\left(x_{kM+j},\alpha\left(1 - \frac{j+kM}{N}\right) \right)\\ &\times \exp\left(-\wt w(\by, x_{kM+1},\ldots, x_{N},\bz)\right)R(x_N)\prod_{j=kM+1}^{kM+\ell}\di x_j\,,
    \end{aligned} & \quad \ell > 0\,,\\
   \wt F(\by,M)\int_X \exp\left(-w(\by,\bz)\right)R(z_k)\di z_k \,, &\quad \ell =0\,,
    \end{cases}
\end{equation}

We recall that $W=H+itU$ so $G_\bz(\by,t)$ depends on $t$ , and the linear operator  $\cS \, :\, L^2(X^k) \to \C$ as

\begin{equation}
     \cS(\psi) = \int_{X^k} \wt F(\bx,1)\psi(\bx) \prod_{j=1}^k\di x_{j}\,.
\end{equation}

 Given the assumptions \ref{transfer_hp}, the operator $\cS$ is uniformly bounded in $M$.
 In this notation, we can rewrite $I_2^{(N)} (H+itU)$ as

 \begin{equation}
     I_2^{(N)} (H+itU )= \cS\left(\cL_{t,\alpha}^{(2)}\ldots\cL_{t,\alpha}^{(M-1)}G_{\bx_1}(\bx_M,t)\right)=\cS\left(\prod_{j=2}^{M-1}\cL_{t,\alpha}^{(j)}G_{\bx_1}(\bx_M,t)\right)
 \end{equation}

Applying the decomposition \eqref{eq:decompositon_j}, it follows that we can decompose the previous expression as

\begin{equation}
    \label{eq:decompL}
    \begin{split}
        I_2^{(N)}(H+itU) &=\prod_{j=2}^{M-1} \widehat\lambda_t\left(\alpha\left(1-\frac{j}{M}\right)\right) \cS\left( \pi_t^{(2)}\ldots\pi_t^{(M-1)}G_{\bx_1}(\bx_M,t) \right) 
        \\&+ \cS\left( \cL_{t,\alpha}^{(2)}\ldots\cL_{t,\alpha}^{(M-2)}\cQ_t^{(M-1)}G_{\bx_1}(\bx_M,t) \right) 
        \\&+ \sum_{n=2}^{M-2} \prod_{j=n+1}^{M-1}\widehat\lambda_t\left(\alpha\left(1-\frac{j}{M}\right)\right) \cS\left( \cL_{t,\alpha}^{(2)}\ldots\cL_{t,\alpha}^{(n-1)}\cQ_t^{(n)}\pi_t^{(n+1)}\ldots\pi_t^{(M-1)}G_{\bx_1}(\bx_M,t)\right)\,,
    \end{split}
\end{equation}
where we arranged the terms of the product of the $\cL^{(j)}_{t,\alpha}$'s by order of the first appearance from the right of a factor $\cQ^{(j)}_t$ (the first term being the product where no $\cQ^{(j)}_t$ appears). We notice that
$$ \cS\left( \pi_t^{(2)}\ldots\pi_t^{(M-1)}G_{\bx_1}(\bx_M,t) \right)= \braket{\varphi_t^{1/M};G_{\bx_1}(\bx_M,t)}\prod_{i=2}^{M-2}\braket{\varphi_t^{\alpha(1-i/M)};\varphi_t^{\alpha(1-(i+1)/M)}}\cS\left(\varphi_t^{\alpha\left(1-\frac{2}{M}\right)}\right)\,,$$
where we set $\varphi_t^{\alpha(1-i/M)} \equiv \varphi\left(\cdot,\alpha(1-i/M),t \right)$ to shorten the notation.
Furthermore, the ratio 
$$ \prod_{i=2}^{M-1} \frac{\langle \varphi_t^{\alpha\left(1-\frac{i}{M}\right)};\varphi_t^{\alpha\left(1-\frac{i+1}{M}\right)} \rangle}{\langle \varphi_0^{\alpha\left(1-\frac{i}{M}\right)};\varphi_0^{\alpha\left(1-\frac{i+1}{M}\right)} \rangle} $$
stays bounded as $M\to \infty$  uniformly in $t\in (-\varepsilon,\varepsilon)$.  This is due to the fact that 
$$ \langle \varphi_t^{\alpha\left(1-\frac{i}{M}\right)};\varphi_t^{\alpha\left(1-\frac{i+1}{M}\right)} \rangle = 1+O\left(\frac{\alpha}{M}\right),$$
because of \eqref{ineq:approxOperator} and Proposition \ref{prop:gauezel}, thus the product
$$ \prod_{i=2}^{M-1} \langle \varphi_t^{\alpha\left(1-\frac{i}{M}\right)};\varphi_t^{\alpha\left(1-\frac{i+1}{M}\right)} \rangle$$
stays from bounded below and above uniformly on $M\geq 1$, $t\in (-\varepsilon,\varepsilon)$.

Denoting the first term of \eqref{eq:decompL} by $f(\alpha,t)$, and the second and third terms by $g_1(\alpha,t)$ and $g_2(\alpha,t)$,
we can rewrite  $J_2^{(N)}$ \eqref{eq:ratios_2} as

\begin{equation}
   J_2^{(N)}(H+itU)= \frac{f(\alpha,t)}{f(\alpha,0)}\left( \frac{1 + \frac{g_1(\alpha,t)}{f(\alpha,t)} + \frac{g_2(\alpha,t)}{f(\alpha,t)}}{1 + \frac{g_1(\alpha,0)}{f(\alpha,0)} + \frac{g_2(\alpha,0)}{f(\alpha,0)}}\right)\,.
\end{equation}
Thus, to prove our result we need to show that there exist $3$ constants $c_1,c_2,c_3$ independent of $M$ such that for all $t\in (-\varepsilon,\varepsilon)$,

\begin{align}
\label{eq:estimate1}
    &\left \vert \frac{g_1(\alpha,t)}{f(\alpha,t)} \right\vert \leq c_1\,, \\
    \label{eq:estimate2}
    &\left \vert \frac{g_2(\alpha,t)}{f(\alpha,t)} \right\vert \leq c_2\,, \\
    \label{eq:estimate3}
    & \left \vert \frac{ \left\langle \varphi^{(1/M)}_t; G_{\bx_1}(\bx_M,t) \right\rangle\cS\varphi_t^{\alpha\left(1 - \frac{2}{M}\right)}}{\left\langle \varphi_0^{(1/M)}; h_0 \right\rangle\cS\varphi_0^{\alpha\left(1 - \frac{2}{M}\right)} } \right \vert \leq c_3\,.
\end{align}
If we are able to show this, then defining

\begin{equation}
\begin{split}
    c_{k,\ell,M}(\alpha,t) &= \frac{\left\langle \varphi^{(1/M)}_t; G_{\bx_1}(\bx_M,t) \right\rangle \cS\varphi_t^{\alpha\left(1 - \frac{2}{M}\right)}}{\left\langle \varphi_0^{(1/M)}; h_0 \right\rangle \cS\varphi_0^{\alpha\left(1 - \frac{2}{M}\right)}}\prod_{i=2}^{M-1} \frac{\langle \varphi_t^{\alpha\left(1-\frac{i}{M}\right)};\varphi_t^{\alpha\left(1-\frac{i+1}{M}\right)} \rangle}{\langle \varphi_0^{\alpha\left(1-\frac{i}{M}\right)};\varphi_0^{\alpha\left(1-\frac{i+1}{M}\right)} \rangle}\,, \\
    \lambda(y,t) &= \frac{\wt\lambda\left( y, t \right)}{\wt\lambda\left( y, 0 \right)}\,,
\end{split}
\end{equation}
we obtain \eqref{eq:prodPartition_2} with the wanted properties. Notice that in the definition of $\lambda$ we took $\dfrac{\wt\lambda(y,t)}{\wt\lambda(y,0)}$ instead of $\dfrac{\widehat\lambda(y,t)}{\widehat\lambda(y,0)}$. This is indeed possible because of equation \eqref{eq:approxEigen}.

First, we focus on \eqref{eq:estimate2}. The term $g_2(\alpha,t)$ is given by
$$\braket{\varphi^{(1/M)};G_{\bx_1}(\bx_M,t)}\sum_{n=2}^{M-2}\prod_{j=n+1}^{M-1}\widehat\lambda\left( \alpha\left(1-\frac{j}{M}\right),t \right)\prod_{i=1}^{M-n-2}\braket{\varphi_t^{\frac{\alpha i}{M}};\varphi_t^{\frac{\alpha (i+1)}{M}}}\cS\left(\cL_{t,\alpha}^{(2)}\ldots\cL_{t,\alpha}^{(n-1)}\cQ_t^{(n)}\varphi_t^{\alpha\left(1-\frac{n+1}{M}\right)}\right)\,.$$

Because $\varphi\left(\bx, y, t\right)$ is regular with respect to $y$, we deduce that there exists a function $(y,t)\mapsto\psi(\cdot, y,t)\in C^\infty(\R^+\times[-\varepsilon,\varepsilon],L^2(X^k))$ with $\norm{\psi_t^{\alpha(1-n/M)}}_2$ uniformly bounded in $n,M$ and $t$ such that

\begin{equation}
    \cQ_t^{n}\left(\varphi_t^{\alpha\left(1-\frac{n+1}{M}\right)}\right) = Q_t^{(n)}\left(\varphi_t^{\alpha\left(1-\frac{n}{M}\right)}\right)+\frac{1}{M}\cQ_t^{(n)}\psi_t^{\alpha(1-n/M)}= \frac{1}{M}\cQ_t^{(n)}\psi_t^{\alpha(1-n/M)} \,.
\end{equation}
given this equality, it is trivial to prove \eqref{eq:estimate2}, recalling that for any $t,j$, $\dfrac{1}{\widehat\lambda_t^{(j)}}\cL_{t,\alpha}^{(j)}$ has operator norm smaller than one.

For \eqref{eq:estimate1},  it suffices to show that there exists a constant $c_2$ independent of $M$ such that

\begin{equation}
\label{eq:small_bound}
    \left\| \frac{\cQ_t^{(M-1)}G_{\bx_1}(\bx_M,t)}{ \left\langle \varphi^{(1/M)}_t, G_{\bx_1}(\bx_M,t) \right\rangle}\right\| \leq c_2\,.
\end{equation}

From the assumptions, \eqref{eq:decompositon_j} and the definition of $G_{\bx_1}(\bx_M,t)$ \eqref{eq:G_2}, we deduce that there exists a constant $d_1$ such that

\begin{equation}
    \left\|\cQ_j^{(M-1)}G_{\bx_1}(\bx_M,t)\right\| \leq d_1\left( \lambda\left(\frac{\alpha}{M} \right) -\delta\right)M^{\tc (k+\ell)}\,.
\end{equation}
On the other hand, given the previous proof and the assumptions, we conclude that, for $t$ small enough, there exists a constant $d_2$ such that

\begin{equation}
\label{ineq:lowerbound}
    \left|\left\langle \varphi^{(1/M)}_t, G_{\bx_1}(\bx_M,t) \right\rangle\right| \geq d_2 M^{\tc (k+\ell)}\,.
\end{equation}
Indeed, for $t=0$ the previous inequality follows from the definition of $ G_{\bx_1}(\bx_M,t)$, and we deduce that for $t$ small enough the same holds true

Combining the two previous estimates, and setting $p=\tc(k+\ell)$ we deduce \eqref{eq:small_bound}, which leads to \eqref{eq:estimate2}. The proof of \eqref{eq:estimate3} is analogous, thus we conclude.
\end{proof}

We now turn on the proof of Theorem \ref{thm:decay}, which we rewrite here for convenience.
\begin{theorem}[Decay of correlations]
    Under hypotheses  \ref{general_assumptions}. Let $W$ be the seed of $\Tr{\Re P(L)}$  and $I,J : X^k \to \R $ two local functions such that   $\int_{X^k\times X^k}\left|I(\bx)\prod_{i=1}^k \sqrt{F(x_i,\alpha)F(y_i,\alpha)}e^{-W(\bx,\by)}\right\vert^2\di \bx \di \by < \infty $, and analogously for $J(\bx)$. Write $N=kM+\ell$, and let $j\in \{1,\ldots,M\}$. Then there exists some $0<\mu<1$ such that
    $$ \E_{1}\left[I(\bx_1)J(\bx_j)\right]-\E_{1}\left[I(\bx_1)\right]\E_{1}\left[J(\bx_j)\right]=O(\mu^{M-j}+\mu^j)\,.$$
\end{theorem}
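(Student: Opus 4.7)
}
The strategy is to extend the transfer operator framework used in Theorem \ref{thm:easy_asymptotic} to handle the insertions of the local functions $I$ and $J$, and then to exploit the uniform spectral gap of $\cL_{\alpha,0}$ established via Jentzsch's theorem and Lemma \ref{lem:differentiability}. First I would write, with $\cL=\cL_{\alpha,0}$, $G_{\bx_M}$ and $\cS$ the objects introduced in the proof of Theorem \ref{thm:easy_asymptotic}, and $M_I$, $M_J$ the multiplication operators by $I$ and $J$ respectively,
\begin{align*}
Z_N^{(1)}(\alpha,\Re P)\,\E_1[I(\bx_1)J(\bx_j)] &= \cS\bigl(\cL^{M-j} M_J \cL^{j-1} M_I G_{\bx_M}\bigr),\\
Z_N^{(1)}(\alpha,\Re P)\,\E_1[I(\bx_1)] &= \cS\bigl(\cL^{M-1} M_I G_{\bx_M}\bigr),\\
Z_N^{(1)}(\alpha,\Re P)\,\E_1[J(\bx_j)] &= \cS\bigl(\cL^{M-j} M_J \cL^{j-1} G_{\bx_M}\bigr).
\end{align*}
The integrability hypothesis on $I,J$ guarantees that $M_I G_{\bx_M},\,M_J\varphi \in L^2(X^k)$ and, more generally, that the composed operators $M_I\cL$, $M_J\cL$ are bounded on $L^2(X^k)$, so that the products above make sense.

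Next I would invoke the spectral decomposition $\cL=\lambda\pi+\cQ$, where $\lambda=\wt\lambda(\alpha,0)>0$ is the simple dominant eigenvalue produced by Proposition \ref{prop:gauezel} and Lemma \ref{lem:differentiability}, $\pi=\varphi\otimes\psi$ is the associated rank-one spectral projector (with $\varphi,\psi>0$ a.e.), and $\cQ$ satisfies $\pi\cQ=\cQ\pi=0$ and $\lVert\cQ^n\rVert\leq C\lambda^n\mu^n$ for some $\mu\in(0,1)$ uniform in $n$. Because $\cL^n=\lambda^n\pi+\cQ^n$, expanding both factors in $\cL^{M-j}M_J\cL^{j-1}M_I G_{\bx_M}$ produces four pieces,
\begin{equation*}
\lambda^{M-1}\pi M_J\pi(M_I G_{\bx_M})
+ \lambda^{j-1}\cQ^{M-j}M_J\pi(M_I G_{\bx_M})
+ \lambda^{M-j}\pi M_J\cQ^{j-1}(M_I G_{\bx_M})
+ \cQ^{M-j}M_J\cQ^{j-1}(M_I G_{\bx_M}),
\end{equation*}
of which the first is the ``factorised'' term $\lambda^{M-1}\psi(I G_{\bx_M})\,\psi(J\varphi)\,\varphi$. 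Applying the same decomposition to the three denominators/single-insertion numerators above, and using that $\cS(\varphi)$ and $\psi(G_{\bx_M})$ are bounded away from $0$ (same argument as in the proof of Theorem \ref{thm:easy_asymptotic}, cf.\ the lower bound \eqref{ineq:lowerbound}), I would divide by $Z_N^{(1)}$ and compute
\begin{equation*}
\E_1[I(\bx_1)J(\bx_j)] - \E_1[I(\bx_1)]\,\E_1[J(\bx_j)]
= \frac{\psi(I G_{\bx_M})\,\psi(J\varphi)}{\psi(G_{\bx_M})} - \frac{\psi(I G_{\bx_M})}{\psi(G_{\bx_M})}\cdot\psi(J\varphi) + O(\mu^{j-1}+\mu^{M-j}),
\end{equation*}
the explicit leading terms cancelling and leaving precisely $O(\mu^{j-1}+\mu^{M-j})=O(\mu^{j}+\mu^{M-j})$.

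The main obstacle is the careful bookkeeping of the three error expansions (for the joint numerator, the two single-insertion numerators, and $Z_N^{(1)}$) so that the leading-order terms in the ratios cancel exactly, leaving only contributions controlled by $\lVert\cQ^{j-1}\rVert$ and $\lVert\cQ^{M-j}\rVert$. A secondary technical point is justifying that the multiplication operators $M_I$ and $M_J$, though not bounded on $L^2(X^k)$ in general, are harmless because they always appear composed with $\cL$ or applied to $G_{\bx_M}$ or $\varphi$, and the integrability assumption on $I,J$ is precisely what is needed for $M_I G_{\bx_M}$ and $\cL M_J$ (and similarly $M_J\varphi$) to lie in $L^2(X^k)$; a brief approximation argument reducing to bounded $I,J$ then extends the spectral-gap bound to the unbounded setting.
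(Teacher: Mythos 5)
Your proposal is correct and follows essentially the same route as the paper: express the one‑ and two‑point expectations as transfer‑operator compositions with local insertions (the paper's $\cL^{(J)}=\cL M_J$ and $G^{(I)}_{\bx_M}=I\,G_{\bx_M}$ are just your multiplication‑operator notation), expand each factor via the spectral decomposition $\cL^n=\lambda^n\pi+\cQ^n$, and observe that the rank‑one leading terms in the two‑point ratio factor into the product of the two one‑point ratios, leaving an error controlled by $\|\cQ^{j-1}\|$ and $\|\cQ^{M-j}\|$. The secondary points you flag (unboundedness of $M_I,M_J$, lower bound on the denominator) are handled in the paper exactly as you anticipate, by the stated $L^2$ integrability hypothesis on $I,J$ and the positivity of $\varphi$.
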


\begin{proof}
Let $\cL=\cL_{0,\alpha}$ with $\cL_{t,\alpha}$ given by \eqref{def:kernelOp}.  
Furthermore, define $\mathcal{L}^{(J)}$
$$\mathcal{L}^{(J)}\phi(\by)=\int_{X^k}\phi(\bx)\prod_{i=1}^k \sqrt{F(x_i,\alpha)F(y_i,\alpha)} J(\bx)e^{-W(\by,\bx)}= \cL(J\phi)(\by)\,,$$
    and $\mathcal{L}^{(I)}$ analogously. 
With $G^{(I)}_\bx(\by)=I(\bx)G_\bx(\by)$, $G_\bx(\by)$ given in \eqref{eq:G}, we have for $j\geq 3$

\begin{align*}
    \E_{1}\left[I(\bx_1)J(\bx_j)\right]&=\frac{\mathcal{S}_0\left(\left(\mathcal{L}^{M-j}\mathcal{L}^{(J)}\mathcal{L}^{j-3}G^{(I)}_{\bx_M}\right)(\bx_M)\right)}{\mathcal{S}_0((\mathcal{L}^{M-2}G_{\bx_M})(\bx_M))}\\
    &=\frac{\wt\lambda^{M-j}(\alpha,0)\mathcal{S}_0((\pi_0\mathcal{L}^{(J)}\mathcal{L}^{j-3}G^{(I)}_{\bx_M})(\bx_M))+O(\wt\lambda^{j-3}r^{M-j}))}{\wt\lambda^{M-2}(\alpha,0)\mathcal{S}_0((\pi_0 G_{\bx_M})(\bx_M))+O(r^{M-2})}\,,
\end{align*}
where $\cS_t$ is defined in \eqref{eq:S}, and we used the decomposition
$$\mathcal{L}^k_0=\wt \lambda^k(\alpha,0)\pi_0+\mathcal{Q}_0^k\,,$$
where $\pi_0$ is the orthogonal projection on the (one dimensional) eigenspace associated with $\wt \lambda(\alpha,0)$, and $\cQ_0$ is an operator such that
$\|\mathcal{Q}_0^k\|\leq Cr^k$ for some $0<r<\wt\lambda$. Similarly, 
$$ \mathcal{S}_0((\pi_0\mathcal{L}^{(J)}\mathcal{L}^{j-3}G^{(I)}_{\bx_M})(\bx_M)) = \wt\lambda^{j-3}(\alpha,0)\mathcal{S}_0((\pi_0\mathcal{L}^{(J)}\pi_0 G^{(I)}_{\bx_M})(\bx_M)) + O(r^{j-3}).$$
We deduce
$$\E_{1}\left[I(\bx_1)J(\bx_j)\right]=\frac{\mathcal{S}_0((\pi_0\mathcal{L}^{(J)}\pi_0 G^{(I)}_{\bx_M})(\bx_M)) + O((r/\wt\lambda)^{M-j}+(r/\wt\lambda)^{j-3})}{\wt\lambda \left(\mathcal{S}_0((\pi_0 G_{\bx_M})(\bx_M)) + O((r/\wt\lambda)^{M-2})\right)}\,.$$
Similarly, we deduce

\begin{equation}
    \E_{1}[I(\bx_1)]\E_{1}[J(\bx_j)]= \frac{\mathcal{S}_0((\pi_0 G^{(I)}_{\bx_M})(\bx_M))\cS_0\left( (\pi_0\cL^{(J)}\pi_0 G_{\bx_M})(\bx_M)\right)+O((r/\wt\lambda)^{M-j}+(r/\wt\lambda)^{j-3})}{\wt\lambda\left(\mathcal{S}_0((\pi_0 G_{\bx_M})(\bx_M))^2+O((r/\wt\lambda)^{M-2})\right)}\,.
\end{equation}

By a direct computation, recalling that $\pi_0 \phi = \braket{\varphi_1,\phi}\varphi_1$ where $\varphi_1$ is the eigenfunction associated with $\wt\lambda$, we deduce the following 
$$ \cS_0\left(( \pi_0\cL^{(J)}\pi_0G_{\bx_M}^{(I)})(\bx_M)\right)=\braket{\cL^{(J)}\varphi_1,\varphi_1}\int\braket{G_{\bx},\varphi_1}I(\bx)F(\bx)\varphi_1(\bx)\di \bx\,,$$

$$ \cS\left( (\pi_0G_{\bx_M})(\bx_M) \right) = \int\braket{G_{\bx},\varphi_1}\varphi_1(\bx)F(\bx)\di \bx\,,\quad \cS_0\left( (\pi_0G^{(I)}_{\bx_M})(\bx_M) \right)=\int\braket{G_\bx,\varphi_1}I(\bx)F(\bx)\varphi_1(\bx)\di \bx\,,$$
and
$$ \cS_0\left(( \pi_0\cL^{(J)}\pi_0 G_{\bx_M})(\bx_M)\right) =  \braket{\cL^{(J)}\varphi_1,\varphi_1}\int\braket{G_{\bx},\varphi_1}F(\bx)\varphi_1(\bx)\di \bx\,.$$

These formulas imply that
$$\mathcal{S}_0((\pi_0\mathcal{L}^{(J)}\pi_0 G^{(I)}_{\bx_M})(\bx_M))=\frac{\mathcal{S}_0((\pi_0 G^{(I)}_{\bx_M})(\bx_M))\cS_0\left( (\pi_0\cL^{(J)}\pi_0 G_{\bx_M})(\bx_M)\right)}{\cS_0\left((\pi_0G_{\bx_M})(\bx_M)\right)}\,,$$
and so
$$ \E_{1}\left[I(\bx_1)J(\bx_j)\right]-\E_{1}\left[I(\bx_1)\right]\E_{1}\left[J(\bx_j)\right]=O((r/\wt\lambda)^{M-j}+(r/\wt\lambda)^{j-3})\,.$$
    
\end{proof}

Finally, we prove a Berry-Esseen bound type theorem for the measure $\mu_{N}^{(1)}$: 
\begin{theorem}
\label{thm:BerryEsseen}
Under Hypotheses \ref{general_assumptions}. Consider  $s\in \N$ and the measure $\mu_{N}^{(1)}$ \eqref{type1}, let $W,h$ be the common seeds of $\Tr{\Re P(L)}$ and $\Tr{\Re L^s}$ respectively and let $\wt W, \wt h$ be their weeds. Assume that for some $d\geq 3$, the functions $|u|^a e^{-h}$, $1\leq a \leq d$ are bounded.
Then, there exists $A\in \R$, $\sigma,C >0$ such that if $Y\sim\cN(0,\sigma^2)$ we have for any interval $J$ of the real line
\begin{equation}
    \left| \mathbb{P}\left( \left[\Tr{\Re P(L)} -N A \right]/\sqrt{N} \in J\right) - \mathbb{P}\left(Y\in J\right)\right| \leq \frac{C}{\sqrt{N}}\,.
\end{equation}
\end{theorem}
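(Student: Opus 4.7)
The plan is to combine Esseen's smoothing inequality with a fine control of the characteristic function of the centered and rescaled quantity
$$S_N = \bigl(\Tr{\Re P(L)} - NA\bigr)/\sqrt{N},$$
where $A$ will be the asymptotic mean per site of $\Tr{\Re P(L)}$ under $\mu_N^{(1)}$. The characteristic function of $S_N$ reads
$$\phi_N(t) := \mathbb{E}_1\bigl[e^{itS_N}\bigr] = e^{-itA\sqrt{N}}\,\frac{Z_N^{(1)}\bigl(\alpha,\,(1-it/\sqrt{N})\Re P\bigr)}{Z_N^{(1)}(\alpha,\Re P)},$$
so that Theorem \ref{thm:transfer_operator} applies directly with the choice $U=\Tr{\Re P(L)}$, whose seed (namely $W$ itself) is trivially compatible with the seed of $H=\Tr{\Re P(L)}$. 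The boundedness assumption at order $d\geq 3$ guarantees, via Lemma \ref{lem:differentiability}, that the associated transfer operator family $\cL_{\alpha,s}$ is $C^{3}$ in $s$ at $0$, and Theorem \ref{thm:transfer_operator} then yields
$$\mathbb{E}_1[e^{-is\Tr{\Re P(L)}}] = c_{k,\ell}(\alpha,s)\,\lambda(\alpha,s)^{M-2}\bigl(1+o(M^{-q})\bigr),$$
with $\lambda(\alpha,\cdot),\,c_{k,\ell}(\alpha,\cdot)\in C^{3}$ near $0$, $\lambda(\alpha,0)=c_{k,\ell}(\alpha,0)=1$, and Taylor expansion $\lambda(\alpha,s)=1-iAs-\tfrac{1}{2}\Sigma^{2}s^{2}+O(|s|^{3})$, where $A\in\R$ and one sets $\sigma^{2}=\Sigma^{2}/k$.

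I would then invoke Esseen's smoothing inequality: for $Y\sim\mathcal{N}(0,\sigma^{2})$ and any $T>0$,
$$\sup_{J}\bigl|\mathbb{P}(S_N\in J)-\mathbb{P}(Y\in J)\bigr|\leq\frac{1}{\pi}\int_{-T}^{T}\left|\frac{\phi_N(t)-e^{-\sigma^{2}t^{2}/2}}{t}\right|\di t+\frac{C_{0}}{T\sigma}.$$
Choosing $T=\delta\sqrt{N}$ for $\delta>0$ small makes the boundary term $O(1/\sqrt{N})$. Setting $s=-t/\sqrt{N}$, on the small-$t$ range $|t|\leq T_{0}$ the $C^{3}$ Taylor expansion of $\log\lambda$ combined with $M=(N-\ell)/k$ gives
$$\lambda(\alpha,s)^{M-2}=\exp\!\left(\frac{iAt\sqrt{N}}{k}-\frac{\sigma^{2}t^{2}}{2}+O\bigl(|t|^{3}/\sqrt{N}\bigr)\right),$$
so that, after cancellation with the $e^{-itA\sqrt{N}}$ drift factor (using the correct choice of $A$), one obtains $\phi_N(t)=e^{-\sigma^{2}t^{2}/2}\bigl(1+O(|t|^{3}/\sqrt{N})+O(M^{-q})\bigr)$; the integrand is then bounded by $C|t|^{2}e^{-\sigma^{2}t^{2}/4}/\sqrt{N}$, contributing an $O(1/\sqrt{N})$ amount to the integral.

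The delicate step is the moderate range $T_{0}\leq|t|\leq\delta\sqrt{N}$, where $|s|$ is bounded away from $0$ but still within the domain of validity of Theorem \ref{thm:transfer_operator}. The essential input is a quadratic bound
$$|\lambda(\alpha,s)|\leq 1-\gamma s^{2}$$
on a punctured neighborhood of $0$ for some $\gamma>0$. It rests on the non-degeneracy $\Sigma^{2}>0$ (which follows from the CLT of Theorem \ref{THM:FINAL_CLT_INTRO}) and on the strict submultiplicativity $|\lambda(\alpha,s)|<1$ for small $s\neq 0$. The latter is a non-lattice statement: by Jentzsch's theorem and the spectral gap of Lemma \ref{lem:differentiability}, a saturation $|\lambda(\alpha,s)|=1$ for some $s\neq 0$ would force the distribution of $\Tr{\Re P(L)}$ under $\mu_N^{(1)}$ to be supported on a lattice, contradicting the continuity and strict positivity of the kernel $k_{\alpha,0}$ on $X^{k}\times X^{k}$ and the fact that $\Re P$ is a non-constant polynomial. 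Granted the quadratic bound, $|\lambda(\alpha,-t/\sqrt{N})|^{M-2}\leq e^{-\gamma' t^{2}}$ uniformly on the moderate range, and together with the uniform boundedness of $c_{k,\ell}$ on the same range, the Esseen integrand has Gaussian tails and contributes a super-polynomially small amount in $N$. Summing the three regimes yields $\sup_{J}|\mathbb{P}(S_N\in J)-\mathbb{P}(Y\in J)|\leq C/\sqrt{N}$.

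The main obstacle is precisely the upgrade of the soft Perron-Frobenius statement in Lemma \ref{lem:differentiability} to the quantitative quadratic decay $|\lambda(\alpha,s)|\leq 1-\gamma s^{2}$ on a fixed punctured neighborhood of $0$: the simplicity and isolation of the dominant eigenvalue at $s=0$ give only a qualitative statement, and one needs a compactness/continuity argument ruling out lattice-type degeneracies for $s\neq 0$. This is the standard Nagaev--Guivarc'h non-lattice step adapted to our transfer operator setting, and it is what converts the CLT of Theorem \ref{THM:FINAL_CLT_INTRO} into the sharp $1/\sqrt{N}$ rate.
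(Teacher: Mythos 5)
Your overall strategy coincides with the paper's: the Feller/Esseen smoothing inequality with cutoff $T$ of order $\sqrt{N}$, combined with the transfer-operator expansion of Theorem \ref{thm:transfer_operator} at spectral parameter $s=t/\sqrt{N}\in[-\varepsilon,\varepsilon]$, third-order differentiability of $\lambda(\alpha,\cdot)$ to produce the $O(|t|^3/\sqrt{N})$ correction, and the uniform control of $c_{k,\ell}$. Two points, however, need repair or reassessment.

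First, your small-$t$ estimate is not justified as stated. The expansion of Theorem \ref{thm:transfer_operator} carries a multiplicative error $\left(1+o(M^{-q})\right)$ which is only known to be uniform in $t$; after subtracting $e^{-\sigma^2 t^2/2}$ and dividing by $t$, this leaves a contribution of size $o(M^{-q})/t$, which is not integrable at $t=0$ from what you have established, so the claimed bound $C|t|^2e^{-\sigma^2t^2/4}/\sqrt{N}$ on the whole range $|t|\le T_0$ does not follow. This is precisely why the paper isolates the interval $[0,N^{-1}]$ and bounds it there by the elementary telescoping estimate $\left|\mathbb{E}_1[e^{-itX}]-e^{-it\sqrt{N}A}\right|\le C t\sqrt{N}$ (using $|e^{i\theta}-1|\le|\theta|$ and integrability of the seed against the measure), reserving the spectral expansion for $[N^{-1},\varepsilon\sqrt{N}]$, where the factor $1/t\le N$ is harmless against $o(M^{-q})$. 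You need this step (or an argument that the error term vanishes linearly at $t=0$) to close the proof.

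Second, what you identify as the main obstacle — an aperiodicity/non-lattice argument yielding $|\lambda(\alpha,s)|\le 1-\gamma s^2$ on a punctured neighbourhood of $0$ — is not needed, and your sketch of it (saturation of $|\lambda|$ forcing lattice support) is not a proof. Since you take $T=\delta\sqrt{N}$, the parameter $s=t/\sqrt{N}$ never leaves $[-\delta,\delta]$ (in particular it is not bounded away from $0$ on your moderate range), and there the quadratic decay is an immediate consequence of the Taylor expansion $\lambda(\alpha,s)=\exp\left(-iAs-\sigma^2s^2/2+O(|s|^3)\right)$ together with $\sigma^2>0$, after shrinking $\delta$; this is exactly how the paper argues, via the condition $\varepsilon\|p\|_\infty<\sigma^2/4$. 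A genuine non-lattice input would be required only to push $T$ well beyond $c\sqrt{N}$ (for local limit theorems or Edgeworth corrections), which the $O(1/T)$ boundary term makes unnecessary here. Finally, note that you run the argument for $U=\Tr{\Re P(L)}$, taking the displayed statement literally, whereas the paper's proof (and its hypotheses on the seed $h$ of $\Tr{\Re L^s}$) concern $\Tr{\Re L^s}$; the mechanism is identical, but the stated boundedness assumptions only play a role in the latter case.
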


\begin{proof}

We adapt the arguments of \cite[Theorem 3.7]{Gouzel2015}. By \cite[Lemma XVI.3.2]{feller2}, there exists a constant $C$ such that for any $X$ real random variable, and $Y$ Gaussian random variable, for any interval $J\subset \R$ and for any $T>0$, we have 
$$ \left|\mathbb{P}\left(X\in J\right)-\mathbb{P}\left(Y\in J\right) \right| \leq C\int_0^T \frac{|\mathbb{E}[e^{-itX}]-e^{-\sigma^2t^2/2}|}{t}\di t +  \frac{C}{T}.$$
We take $X=\left(\Tr{\Re(L^s)} - N A \right)/\sqrt{N}$. We are going to show that, taking $T=\varepsilon \sqrt{N}$ for some small enough $\varepsilon$, the last integral remains bounded by $\frac{C_{k,\ell}}{\sqrt{N}}$, where $C_{k,\ell}$ is a constant depending on $k,\ell$. Recall $N=kM+\ell$.
By Theorem \ref{THM:FINAL_CLT_INTRO}, there exists an $A\in\R,\sigma >0$ such that as $N$ goes to infinity $X$ converges to $\cN(0,\sigma^2)$.  
Since $t^{-1}$ is not integrable at $0$, we consider the special interval $[0,N^{-1}]$. In this interval, we have the following estimate:

\begin{equation}
    \begin{split}
        \left| \meanval{e^{-itX}} - e^{-it\sqrt{N}A}\right| &\stackrel{\eqref{eq:ration_BE}}{=} \frac{\lvert Z_{N}^{(1)}\left(\alpha, \Re P +i\frac{t}{\sqrt{N}}\Re z^s\right) - Z_{N}^{(1)}\left(\alpha, \Re P\right)\rvert  }{Z_{N}^{(1)}\left(\alpha, \Re P\right)} \\ 
        &= \frac{1}{Z_{N}^{(1)}\left(\alpha, \Re P\right)}\Big|\sum_{p=1}^{M-1}\int_{X^{kM+\ell}}\mathfrak{F}(\bx)\prod_{j=1}^{p-1}e^{i\frac{t}{\sqrt{N}}h(\bx_j,\bx_{j+1})}\left(e^{i\frac{t}{\sqrt{N}}h(\bx_p,\bx_{p+1})} -1\right)\prod_{j=1}^{N}\di x_j
        \\ &+ \int_{X^{kM+\ell}}\mathfrak{F}(\bx)\left(e^{i\frac{t}{\sqrt{N}}\wt h(\bx_M,x_{kM+1},\ldots,x_{kM+\ell},\bx_1)}-1\right)\Big|\,,
    \end{split}
\end{equation}
with the convention that the empty product is equal to one.
Here we defined 

\begin{equation}
\begin{split}
    \mathfrak{F}(\bx) =& \prod_{j=1}^{N}F(x_j, \alpha)\exp\left(-\sum_{j=1}^{M-1}W(\bx_j,\bx_{j+1}) -\wt W(\bx_M,x_{kM+1},\ldots, x_{kM+\ell},\bx_1) \right)\\ 
    & \times \exp\left(-\frac{it}{\sqrt{N}} \wt h(\bx_M,x_{kM+1},\ldots, x_{kM+\ell},\bx_1) \right)\,,
\end{split}
\end{equation}
Thus, since $\lvert e^{i\frac{t}{\sqrt{N}}h(\bx_p,\bx_{p+1})} -1\rvert \leq \vert h(\bx_p,\bx_{p+1}) \vert N^{-1/2}t$, we deduce the following inequality 

\begin{equation}
\label{eq:small_set_2}
\left| \mathbb{E}_1[e^{-itX}] - e^{-it\sqrt{N}A}\right| \leq \mathbb{E}_1\left[\vert h(\bx_1,\bx_{2}) \vert\right] t\sqrt{N} + \frac{t}{\sqrt{N}}\mathbb{E}_1\left[\vert \wt h(\bx_M,x_{kM+1},\ldots,x_{kM+\ell},\bx_{1}) \vert\right]\,,
\end{equation}

and this last term is by assumption bounded by $Ct\sqrt{N}$ for some $C$ independent of $N$ and $t$. Thus integrating for $t\in[0,N^{-1}]$ we deduce the following
\begin{equation}
    \begin{split}
        \int_0^{\frac{1}{N}}& \frac{|\mathbb{E}_1[e^{-itX}]-e^{-\sigma^2t^2/2}|}{t}\di t \\
        &\leq \int_0^{\frac{1}{N}} \frac{\left| \mathbb{E}_1\left[e^{-itX}\right] - e^{-it\sqrt{N}A}\right| + \left| e^{-it\sqrt{N}A} - 1\right| + \left| 1- e^{-\sigma^2t^2/2}\right|}{t}\di t \\
        & \stackrel{\eqref{eq:small_set_2}}{\leq} \int_0^{\frac{1}{N}} \frac{C\sqrt{N} t + t\sqrt{N}A + \sigma^2t^2/2}{t}\di t \leq \frac{C_1}{\sqrt{N}}\,,
        \end{split}
\end{equation}
for some constant $C_1$.

We now consider the integral on $[1/N, \varepsilon\sqrt{N}]$. Here we use the spectral decomposition of $\mathbb{E}_1[e^{it X}]$. Since $|h|^a e^{-W}$ is bounded for $1\leq a \leq 3$, we deduce by Proposition \ref{prop:gauezel}, Remark \ref{rem:operatorRegularity} and Theorem \ref{THM:FINAL_CLT_INTRO}  the following third order expansion and that there exist two continuous functions $p(t)\in C^0([-\varepsilon,\varepsilon])$ and $c_{k,\ell}(y,t)\in C^{1,d}(\R\times[-\varepsilon,\varepsilon])$  for some $\varepsilon>0$, such that $c_{k,\ell}(y,0)=1$ and $\|p\|_{\infty}<+\infty$, and for $q\geq 1$

\begin{equation}
            \int_\frac{1}{N}^{\varepsilon\sqrt{N}} \frac{|\mathbb{E}_1[e^{-itX}]-e^{-\sigma^2t^2/2}|}{t}\di t = \int_\frac{1}{N}^{\varepsilon\sqrt{N}} \frac{\left| c_{k,\ell}(\alpha,t/\sqrt{N})e^{-\sigma^2t^2/2 + t^3 p(t/\sqrt{N})/\sqrt{N} }(1+o(N^{-q})) - e^{-\sigma^2 t^2/2}\right|}{t} \di t \,,
\end{equation}
 thus we have the following estimate

\begin{equation}
    \begin{split}
        \int_\frac{1}{N}^{\varepsilon\sqrt{N}} \frac{\left|\mathbb{E}_1[e^{-itX}]-e^{-\sigma^2t^2/2}\right|}{t}\di t & \leq \left\vert\left\vert c_{k,\ell}(\alpha,\cdot) (1+o(M^{-q})) \right\vert\right\vert_{\infty,[0,\varepsilon]}\int_\frac{1}{N}^{\varepsilon\sqrt{N}} \frac{\left|\left(1-e^{ t^3 p(t/\sqrt{N})/\sqrt{N} }\right)e^{-\sigma^2t^2/2}\right|}{t}\di t \\
        & + \int_{\frac{1}{N}}^{\varepsilon \sqrt{N}} \frac{\left|1-c_{k,\ell}(\alpha,t/\sqrt{N})(1+o(M^{-q}))\right|e^{-\sigma^2t/2}}{t}\di t\,,
    \end{split}
\end{equation}
where $||\cdot||_{\infty,[0,\varepsilon]}$ in the $L^\infty$ norm on $[0,\varepsilon]$.

We notice that $\left\vert \left\vert c_{k,\ell}(\alpha,\cdot) (1+o(M^{-q})) \right\vert \right\vert_{\infty,[0,\varepsilon]}$ is uniformly bounded in $N$. Moreover,
\begin{equation}
    \begin{split}
    \int_\frac{1}{N}^{\varepsilon\sqrt{N}} \frac{e^{-\sigma^2t^2/2}}{t}\left| e^{t^3 p(t/\sqrt{N})/\sqrt{N}} - 1 \right|\di t 
        & \leq \int_\frac{1}{N}^{\varepsilon\sqrt{N}} \frac{e^{-\sigma^2t^2/2}}{t\sqrt{N}} e^{t^3\|p\|_{\infty,[0,\varepsilon]}/\sqrt{N}} t^3 \|p\|_{\infty,[0,\varepsilon]}\di t  \\ & \leq  \int_\frac{1}{N}^{\varepsilon\sqrt{N}} \frac{e^{-\sigma^2t^2/2}}{\sqrt{N}}e^{t^2\varepsilon\|p\|_{\infty,[0,\varepsilon]}}t^2 \|p\|_{\infty,[0,\varepsilon]}\di t\,,
    \end{split}
\end{equation}
where in the first inequality we used the bound $|e^x -1 | \leq |x|e^{|x|}$.
Since for $\varepsilon$ small enough $||p||_\infty\varepsilon < \sigma^2/4$, thus integrating, we deduce that

\begin{equation}
    \int_\frac{1}{N}^{\varepsilon\sqrt{N}} \frac{\left| e^{-\sigma^2t^2/2 + t^3 p(t/\sqrt{N})/\sqrt{N} } - e^{-\sigma^2 t^2/2}\right|}{t} \di t  = O\left( \frac{1}{\sqrt{N}}\right)\,.
\end{equation}

To conclude, we have to show that the last integral is of order $N^{-1/2}$. Since $c_{k,\ell}(\alpha,t)$ is $C^1$ in $t$, and $c_{k,\ell}(\alpha,0) = 1$, it is easy to deduce that there exists a constant $C$ such that

\begin{equation}
    \int_{\frac{1}{N}}^{\varepsilon \sqrt{N}} \frac{\left|1-c_{k,\ell}(\alpha,t/\sqrt{N})(1+o(M^{-q}))\right|e^{-\sigma^2t/2}}{t}\di t \leq \frac{C}{\sqrt{N}}\,
\end{equation}
so we conclude.
\end{proof}

\section{Conclusion and Outlooks}
\label{section:conclusion}
In this paper, we proved a general Central Limit Theorem type result and we apply it to several models in random matrix theory and integrable systems. By doing this, we strengthen the connection between these two subjects. Specifically, we could connect the expected values and the variances of the moments of each classical $\beta$-ensemble in the high-temperature regime with one specific integrable model, see Table \ref{tab:relationsIntro}.

The results that we have obtained are relevant for two main reasons. Under the random matrix theory perspective, we were able to develop a general framework to prove polynomial central limit theorems for the classical $\beta$-ensemble in the high-temperature regime, based on their band matrix representation and on the transfer operator technique. Under the integrable systems' theory point of view, our result enables the explicit computation of the so-called \textit{susceptibility} matrix, which is a fundamental object in the theory of Generalized Hydrodynamics to compute the correlation functions for integrable models. Furthermore, we can prove rigorously the exponential decay of correlation for short-range interacting systems with polynomial potential.

It would be fascinating to generalize our result to a wider class of potential and functions and to obtain a Berry-Esseen bound for the classical $\beta$-ensemble in the high-temperature regime. Furthermore, defining a new $\beta$-ensemble related to the INB lattice would be interesting. Finally, we point out that it would be interesting to obtain large deviation principles for the Exponential Toda lattice and the Volterra one in the spirit of \cite{GMToda,mazzucamemin}.

\paragraph{Acknowledgements}\hfill
\newline
The authors warmly thank Alice Guionnet for giving the initial idea of the proof of the polynomial CLT, and Herbert Spohn for the useful discussions about the applications of our result. We also aknowledge Sébastien Gouëzel and two anonymous referees for useful remarks.

\noindent R.M. is supported by ERC Project LDRAM : ERC-2019-ADG Project 884584.

\noindent G.M. is financed by the KAW grant number 2018.0344

\appendix
\label{appendix}

\section{Proof of Lemma \ref{LEM:CIRCULAR_TRACE}}
\label{app:circ}
In this section we prove Lemma \ref{LEM:CIRCULAR_TRACE}, we report here the statement for reader convenience:

\begin{lemma}
    Consider any type 1-2 matrix $M$. Then for any polynomial $P\in \C[X]$, $\Tr{P(M)}$ is circular.
\end{lemma}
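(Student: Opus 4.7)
\medskip

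\textbf{Proof plan.} By linearity of the trace, writing $P(X) = \sum_{k=0}^{d} c_k X^k$, it is enough to prove the statement for each monomial $X^k$, since the space of circular functions is stable under linear combinations (pick a common refinement of the circular indices, as discussed in the Remark following Definition \ref{def:circular}). So the goal reduces to: for every fixed $k \in \mathbb{N}$ and every matrix $M$ of type 1 or type 2, the function $(\bx) \mapsto \Tr{M^k}$ is $s$-circular for some integer $s$ depending only on $k$ and the type (and not on $N$).

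The key structural observation is that each matrix under consideration is \emph{banded with bounded bandwidth}, uniformly in $N$. More precisely, there is a constant $b$ (equal to $1$ for tridiagonal Jacobi / antisymmetric cases, to $2$ for the pentadiagonal CMV matrices, and to $\max(1,r)$ for the two-diagonal INB case) such that every nonzero entry $M_{ij}$ satisfies $\mathrm{d}(i,j) \leq b$ in the appropriate distance: ordinary distance for type 2 matrices, and periodic distance $\td$ for the circulant-like type 1 matrices. Moreover, each such entry $M_{ij}$ depends only on coordinates $x_\ell$ indexed by $\ell$ lying in a uniformly bounded neighbourhood of $\{i,j\}$. Expanding $(M^k)_{ii} = \sum_{i_0 = i, i_1, \ldots, i_k = i} M_{i_0 i_1} M_{i_1 i_2}\cdots M_{i_{k-1} i_k}$ as a sum over closed walks of length $k$, the bandedness forces every intermediate index $i_\ell$ to satisfy $\mathrm{d}(i_\ell, i) \leq k b$, so that $(M^k)_{ii}$ depends only on those $x_\ell$ with $\mathrm{d}(\ell, i) \leq K$ for some $K = K(k)$ independent of $N$.

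Given this, I pick $s > K$ (in fact $s = 2K$ will do safely) and form the Euclidean division $N = sM + \ell$ with $0 \leq \ell < s$. I then partition the index set $\{1, \ldots, N\}$ into groups $I_1, \ldots, I_{M-1}$ and a remainder $R$ as follows. For $1 \leq j \leq M-1$, let $I_j$ consist of those indices $i$ for which the coordinates $(M^k)_{ii}$ depends on lie entirely in the union of blocks $\bx_j$ and $\bx_{j+1}$; a convenient choice is $I_j = \{(j-1)s + K + 1, \ldots, js + K\}$, which has length $s$ and whose associated coordinates do lie in $\bx_j \cup \bx_{j+1}$ by the previous paragraph. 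Define
\[
y(\bx_j, \bx_{j+1}) := \sum_{i \in I_j} (M^k)_{ii},
\]
which is a well-defined function of $(\bx_j, \bx_{j+1})$ of the form required in Definition \ref{def:circular}. The remaining indices form the set $R = \{1,\ldots,K\} \cup \{(M-1)s + K + 1,\ldots, N\}$, and for $i \in R$ the diagonal entry $(M^k)_{ii}$ depends only on coordinates in $\bx_1 \cup \bx_M \cup \{x_{sM+1},\ldots, x_N\}$ (using periodicity in the type 1 case to wrap the small indices around to the block containing the corner entries; in the non-periodic type 2 case the wrap-around terms are simply absent). Hence
\[
\widetilde{y}(\bx_1, \bx_M, x_{sM+1}, \ldots, x_N) := \sum_{i \in R} (M^k)_{ii}
\]
is a weed with the required arguments, and $\Tr{M^k} = \sum_{j=1}^{M-1} y(\bx_j, \bx_{j+1}) + \widetilde{y}(\bx_1, \bx_M, x_{sM+1}, \ldots, x_N)$.

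The only point that requires some care, and which I view as the main bookkeeping obstacle, is handling uniformly the periodic and non-periodic cases and the slightly different structures of the various matrix models (in particular the CMV pentadiagonal periodic structure and the INB two-diagonal case, where the shift by $r$ forces one to take $s$ proportional to $rk$ rather than just $k$). Once the width $K(k)$ is extracted case by case from the explicit form of the matrices in Section~2, the partition argument above applies verbatim, which finishes the proof.
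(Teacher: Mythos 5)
Your proof is correct, and the core observation --- that each diagonal entry $(M^k)_{ii}$ depends only on coordinates within a bounded window around $i$ because $M$ is banded with $N$-independent bandwidth --- is exactly the same engine driving the paper's proof. The difference lies in how the seed/weed decomposition is carried out. The paper first introduces cyclic shift operators and \emph{cyclic functions}, proves a lemma that traces of powers of ``circular-like'' matrices (built from the companion circulant $P$) are cyclic and that the ``Toeplitz-like'' (non-periodic) case differs by a boundary function of bounded support, and then extracts a seed by grouping the terms of the walk expansion according to which of the variables $x_1,\ldots,x_{d+1}$ they involve; this forces some bookkeeping with $\tfrac12$-weights because terms living at the interface of two blocks must be split. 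You instead partition the \emph{index} set $\{1,\ldots,N\}$ directly into blocks $I_j$ and a remainder $R$, assigning $\sum_{i\in I_j}(M^k)_{ii}$ to the seed; this avoids the double-counting issue entirely and is, if anything, cleaner. One point you compress under ``applies verbatim'' but which deserves a sentence: for the CMV case the matrix structure is only translation-invariant by a shift of $2$ (the blocks $\Xi_{2j\pm1}$ alternate), so one must ensure $s$ is even and that the parity of the starting index of $I_j$ does not change with $j$; your choice $s=2K$ with $I_j=\{(j-1)s+K+1,\ldots,js+K\}$ does in fact have these properties (the parity of $(2j-1)K+1$ is independent of $j$), but it is worth making the check explicit rather than folding it silently into the choice of $K(k)$.
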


To prove this lemma, we need some preparation.
First, we introduce the family of shift operators $S_\ell$,  for any $\ell \in \Z$,  and $\bx=(x_1, x_2, \ldots, x_{N})\in X^{N}$  we define the {\em cyclic shift of order $\ell$} as the map  
\begin{equation}
	\label{shift}
	S_\ell \colon X^{N} \to X^{N}, \qquad (S_\ell x)_j := x_{((j+\ell-1)\mod N) +1} . 
\end{equation}
For example $S_1$ and $S_{-1}$ are  the left respectively right shifts:
$$
S_1(x_1, x_2, \ldots, x_{N}) := (x_2, \ldots, x_{N},  x_{1}), \qquad
S_{-1}(x_1, x_2, \ldots, x_{N}) := (x_{N}, x_1, \ldots, x_{N-1}).
$$
It is immediate to check that for  any $\ell, \ell' \in \Z$, cyclic shifts fulfills:
\begin{equation}
	\label{prop:shift}
	S_{\ell} \circ S_{\ell'} = S_{\ell + \ell'}, \qquad S_{\ell}^{-1} = S_{- \ell} , \qquad S_0 = \uno , \qquad
	S_{\ell + 2N } = S_{\ell} . 
\end{equation}
Consider now a  a function $H\colon X^N \to \C$;  we shall denote  $S_\ell H\colon X^N \to \C$  as the operator\begin{equation}
	\label{cyc.func}
	(S_\ell H)(\bx) := H(S_\ell \bx) , \qquad \forall  \bx \in X^N\,.
\end{equation}
Clearly, $S_\ell$ is a linear operator.
We can now define cyclic functions:
\begin{definition}
	\label{def:cyclic}
	A function $H\colon X^{N} \to \C$ is called {\em cyclic} if $S_1 H = H$.
\end{definition}
It follows from the definition that a cyclic function fulfils $S_\ell H = H$ $\, \forall \ell \in \Z$.\\
On can construct  cyclic functions as follows: given a function $h \colon X^N \to \C$  we define the  new function $H$ by
\begin{equation}
	\label{seed}
	H(\bx)  := \sum_{\ell = 0}^{2N-1} (S_{\ell} h)(\bx) .
\end{equation}
$H$ is clearly  cyclic, and  we say  that  $H$ is {\em generated } by $h$. We notice that these definitions were previously introduced in \cite{Giorgilli2014,Grava2020}

Our first aim is to show that the trace of powers of the matrices of type $1$ are cyclic functions and that the ones of the matrices of type 2 are cyclic up to a certain function. To this end, we introduce the companion matrices $P,T$ as

\begin{equation}
    P= \begin{pmatrix}
        0 & 1 &    \\
        0 & 0 &  1 & \\
        \vdots & \ddots & \ddots & \ddots \\
        0 &  & \ddots & \ddots  &  1 \\
        1 & 0 & \ldots & \ldots& 0 
    \end{pmatrix}\, \qquad T = \begin{pmatrix}
        0 & 1 & \\
        0 & 0 & 1 \\
        & \ddots & \ddots & \ddots
    \end{pmatrix}\,,
\end{equation}

so $T$ is a sparse matrix, with just the first upper diagonal filled with ones, and $P$ is similar to $T$, except that there is an extra $1$ in the bottom left corner. 

Using this notation, we can prove the following lemma  \textit{Toeplitz}-like and \textit{circular}-like matrices as follows

\begin{lemma}
\label{lem:cyclic}
    Fix $m,N \in \mathbb{N}$. Given the $2m+1$ sequences $\{a_j^{(k)}\}_{j=1}^N$ for $k=-m, m$ and defining the two matrices 

    \begin{equation}
    \begin{split}
          & A_1 = \sum_{j=1}^m\left( \diag{\ba^{(j)}}  P^j  + \diag{\ba^{(-j)}} P^{N-j}\right) + \diag{\ba^{(0)}}\,, \\
          & A_2 = \sum_{j=1}^m\left( \diag{\ba^{(j)}} T^j   +  \diag{\ba^{(-j)}} (T^\intercal)^{j}\right) + \diag{\ba^{(0)}}\,.
    \end{split}
    \end{equation}
    Here we denoted by $\diag{\ba}$ the diagonal matrices with entries $a_1, \ldots,a_N$.
    Then for all $\ell\in \mathbb{N}$, $\Tr{A_1^\ell}$ is cyclic, and $\Tr{A_2^\ell} = \Tr{A_1^\ell} + K(\ba)$, where $K(\ba)$ depends on a number of variables which is uniformly bounded in $N$. We call the matrices having the shape of $A_1$ \textit{circular}-like matrices, and the one having the shape of $A_2$ \textit{Toeplitz}-like matrices
\end{lemma}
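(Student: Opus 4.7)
The plan is to exploit the cyclic-shift symmetry of $P$ for the first claim, and to compare $A_1$ and $A_2$ via a closed-walk expansion for the second. I would first observe that $P$ is itself the elementary cyclic permutation matrix, so conjugation by $P$ enacts the index shift $S_1$ of Definition \ref{def:cyclic} on diagonal data: $P\,\diag{\ba}\,P^{-1}=\diag{S_{-1}\ba}$, while $P$ commutes with all its powers. Consequently
\begin{equation*}
P^{-1} A_1(\ba)\, P \;=\; A_1(S_1 \ba),
\end{equation*}
and taking the trace of the $\ell$-th power yields $S_1\!\left(\Tr{A_1(\ba)^\ell}\right)=\Tr{A_1(\ba)^\ell}$, establishing cyclicity. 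Note that this is a purely algebraic fact and does not require any walk analysis.

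For the second claim I would expand both traces as sums over closed walks of length $\ell$ on the weighted directed graph with vertex set $\{1,\ldots,N\}$ whose edges encode the nonzero entries of the matrix. For $A_1$, the edge from $i$ to $(i+k-1\bmod N)+1$ carries weight $a^{(k)}_i$ for $k\in\{-m,\ldots,m\}$, so
\begin{equation*}
\Tr{A_1^\ell}\;=\;\sum_{i_0=1}^N\sum_{(k_1,\ldots,k_\ell)}\prod_{s=1}^\ell a^{(k_s)}_{i_{s-1}},
\end{equation*}
where the inner sum runs over step sequences in $\{-m,\ldots,m\}^\ell$ with $i_s=(i_{s-1}+k_s)\bmod N$ and $i_\ell=i_0$. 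The matrix $A_2$ admits the same expansion except that the updates $i_s=i_{s-1}+k_s$ are taken in $\{1,\ldots,N\}$ without wrapping, i.e. steps that would leave the interval are suppressed. Hence any closed walk of length $\ell$ that never traverses a wrap-around edge contributes identically to both $\Tr{A_1^\ell}$ and $\Tr{A_2^\ell}$.

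The difference $\Tr{A_1^\ell}-\Tr{A_2^\ell}$ therefore equals the signed sum over closed walks that either cross the wrap-around in $A_1$ or would require a forbidden step in $A_2$. Because every step has size at most $m$, such walks remain within distance $m\ell$ of the boundary $\{1,N\}$, so for $N>2m\ell$ they involve only the finitely many variables
\begin{equation*}
\{a^{(k)}_j : -m\leq k\leq m,\ j\in[1,m\ell]\cup[N-m\ell,N]\},
\end{equation*}
a set of cardinality $O(m\ell)$ uniformly in $N$. Setting $K(\ba):=\Tr{A_2^\ell}-\Tr{A_1^\ell}$ then gives the stated representation. The main obstacle I anticipate is the bookkeeping of indices modulo $N$ in the walk expansion and a careful identification of which walks contribute to which trace, together with sign/cancellation subtleties when a walk's "unwrapped" step sequence happens to stay inside $\{1,\ldots,N\}$; however, the bandwidth bound $|k_s|\leq m$ furnishes a clean boundary layer independent of $N$, so no deeper combinatorial identity is required, and the regime $N\leq 2m\ell$ can be absorbed into $K(\ba)$ since it then depends on at most $2m\ell$ variables anyway.
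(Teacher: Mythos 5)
Your argument is correct, and it takes a genuinely different route from the paper's. For the cyclicity of $\Tr{A_1^\ell}$, the paper simply writes out $\Tr{A_1^\ell}=\sum_{i_1,\dots,i_\ell}\prod_s a^{(i_s-i_{s+1})}_{i_s}$ and observes cyclicity by inspection of the summand; your conjugation identity $P^{-1}A_1(\ba)P = A_1(S\ba)$ is more conceptual and makes the invariance manifest without any index bookkeeping. (One small slip: with the paper's conventions one has $P\,\diag{\ba}\,P^{-1}=\diag{S_{1}\ba}$, not $\diag{S_{-1}\ba}$, and consequently $P^{-1}A_1(\ba)P=A_1(S_{-1}\ba)$; since invariance under $S_{-1}$ is equivalent to invariance under $S_1$, the conclusion is unaffected.) For the second claim, the paper dispatches it in one line by appealing to the fact that $T$ is a rank-one perturbation of $P$, whereas you give the closed-walk comparison explicitly and extract the quantitative boundary layer $[1,m\ell]\cup[N-m\ell,N]$. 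This is more work but arguably more informative, and it is the argument one would need anyway to justify the paper's claim that $K(\ba)$ depends on a bounded number of variables. One wording point: $\Tr{A_1^\ell}-\Tr{A_2^\ell}$ is not a signed sum with cancellations; every closed $A_2$-walk is automatically an $A_1$-walk with the same weight (cyclic displacement coincides with linear displacement when no wrap occurs), so the difference is simply the nonnegative-support sum over wrap-crossing closed walks, all of which live in the boundary layer you identified.
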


\begin{proof}
    First, we focus on the matrix $A_1$, we expand the trace as

    \begin{equation}
        \Tr{A_1^\ell} = \sum_{i_1, \ldots,i_{\ell} = 1}^N \prod_{j=1}^{\ell-1} (A_1)_{i_j,i_{j+1}}  (A_1)_{i_\ell,i_1}  =  \sum_{i_1, \ldots,i_{\ell} = 1}^N \prod_{j=1}^{\ell-1} a^{( i_j - i_{j+1})}_{i_j} a^{( i_\ell - i_{1})}_{i_\ell}\,,
    \end{equation}
    where in the second equality, we used the structure of $A_1$, and we use the convention that if $\vert i_j - i_{j+1}\vert > m$ then $a^{( i_j - i_{j+1})}_{i_j} = 0$; from the previous expression is easy to see that $\Tr{A_1^\ell}$ is cyclic. Regarding the matrix $A_2$, we notice that $T$ is a rank $1$ perturbation of $P$, thus by developing the trace as before, one can easily obtain the claim.
\end{proof}

\begin{remark}
    We notice that the matrices of type $1-2$ are Toeplitz-like or circular-like. 
\end{remark}

        Using this Lemma, we can prove the following Corollary

        \begin{corollary}
        \label{cor:nice}
             Fix $m,N \in \mathbb{N}$. Let $\bx$ be the coordinates of the manifold $X^N$. Given the $2m+1$ sequences $\{a_j^{(k)}(\bx)\}_{j=1}^N$ for $k=-m, \ldots, m$ such that $a_j^{(k)}(\bx)$ is local for all $j,k$, then the trace of powers of  Toepliz-like or circular-like matrices are circular in the sense of Definition \ref{def:circular}
        \end{corollary}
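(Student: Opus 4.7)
The plan is to reduce the Toeplitz-like case to the circular-like case via Lemma \ref{lem:cyclic}, and then analyse $\Tr{A_1^\ell}$ through its walk expansion. For Toeplitz-like $A_2$, the identity $\Tr{A_2^\ell}=\Tr{A_1^\ell}+K(\ba)$ provided by Lemma \ref{lem:cyclic} shows that only a bounded-variable correction $K(\ba)$ is added, which can be absorbed into the weed; so it suffices to treat $A_1$.

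For $A_1$, I would start from the expansion
$$\Tr{A_1^\ell}=\sum_{i_1,\ldots,i_\ell=1}^{N}\prod_{k=1}^\ell a_{i_k}^{(i_k-i_{k+1})}(\bx),$$
with $i_{\ell+1}=i_1$, noting that a summand vanishes unless every $|i_k-i_{k+1}|_N\leq m$. Consequently every visited index lies within cyclic distance $m\lfloor \ell/2\rfloor$ of $i_1$. Combining this with the locality hypothesis (each $a_j^{(k)}$ depends on $x_p$ only for cyclic distance $|p-j|_N$ at most the uniform locality diameter $c$), I conclude that every nonzero summand is a function of coordinates $x_p$ with $|p-i_1|_N\leq m\lfloor \ell/2\rfloor+c$, i.e.\ of coordinates lying in an arc of length at most $R:=m\ell+2c+1$ around $i_1$.

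I would then fix the circular index $s\geq R$ and write $N=sM+\ell_0$ as in Definition \ref{def:circular}. For a starting index $i_1$ whose supporting arc does not cross the cyclic cut between block $M$ and block $1$, that arc is contained in two consecutive blocks $\bx_j,\bx_{j+1}$ with $j$ determined by $i_1$. Grouping the nonzero summands according to this $j$, and using the translational structure of a circular-like matrix (the entry in relative position $k$ at site $i$ is $a_i^{(k)}$, which by locality is the same function of its block-local arguments regardless of $i$), the partial sum over starting positions within a given block contributes the same function $y(\bx_j,\bx_{j+1})$ of two consecutive blocks, independent of $j$. The remaining walks, namely those whose supporting arc crosses the cyclic cut or spills into the leftover coordinates $x_{sM+1},\ldots,x_N$, together with the boundary correction $K(\ba)$ arising in the Toeplitz case, involve only the blocks $\bx_1,\bx_M$ and the $\ell_0$ leftover variables, and are collected into the weed $\widetilde y(\bx_1,\bx_M,x_{sM+1},\ldots,x_N)$. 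Summing over $j=1,\ldots,M-1$ yields the required decomposition $\Tr{A_1^\ell}=\sum_{j=1}^{M-1}y(\bx_j,\bx_{j+1})+\widetilde y(\bx_1,\bx_M,x_{sM+1},\ldots,x_N)$ of Definition \ref{def:circular}.

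The main obstacle is the verification that the bulk contribution is a single, $j$-independent function $y$. This is where the translational invariance implicit in the ``circular-like'' (respectively ``Toeplitz-like'') form of the matrix is essential: the matrix entry $(A_1)_{i,i+k}$ equals $a_i^{(k)}$, and locality forces this to be a fixed function of a fixed window of $\bx$ placed around site $i$. A walk based at a prescribed relative position inside any block therefore contributes exactly the same polynomial in the two consecutive blocks it occupies, no matter which block it is. Once this step is justified, the bookkeeping of boundary walks and leftover coordinates is routine and produces the weed.
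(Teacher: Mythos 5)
Your strategy mirrors the paper's in its essentials: both reduce the Toeplitz-like case to the circular-like one via Lemma \ref{lem:cyclic}, expand $\Tr{A_1^\ell}$ as a sum over closed walks, and exploit locality together with translation covariance of the entries to assemble a seed. The bookkeeping, however, is different. The paper gathers every walk-term involving a fixed coordinate $x_1$, observes this has an $N$-independent diameter $d$, and then builds the seed from pieces $h_1^{(\ell)}, h_2^{(\ell)}, h_3$ with $\tfrac12$ coefficients designed to neutralize the double counting inherent in summing shifted copies; you instead aim at a genuinely disjoint partition of the walks indexed by the blocks, which is in principle a cleaner route.

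There is, though, a gap in your grouping step. You assert that the supporting arc of a nonzero summand ``is contained in two consecutive blocks $\bx_j,\bx_{j+1}$ with $j$ determined by $i_1$'', and then treat ``the partial sum over starting positions within a given block'' as a function of $(\bx_j,\bx_{j+1})$. Both claims fail because a closed walk based at $i_1$ can travel to the \emph{left} of $i_1$: if $i_1$ lies near the beginning of block $j$, a walk may have its supporting arc inside $\bx_{j-1}\cup\bx_j$ rather than $\bx_j\cup\bx_{j+1}$, and walks sharing the same $i_1$ can fall in either pair depending on their shape. Thus $j$ is not a function of $i_1$, and the sum of the walk-contributions over $i_1$ in a fixed block depends on $\bx_{j-1},\bx_j,\bx_{j+1}$, not on just two blocks. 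The fix is small but essential: index each walk not by the block of $i_1$ but by the block containing the \emph{leftmost} coordinate of its supporting arc (under any fixed cyclic convention); with $s\geq R$ this gives a well-defined partition into groups whose contributions are functions of $(\bx_j,\bx_{j+1})$, and translation covariance of the $a^{(k)}_j$ then gives $j$-independence of the bulk function $y$. Once corrected, your walk-partition argument produces a valid alternative to the paper's seed construction.
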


    \begin{proof}
        From Lemma \ref{lem:cyclic} we know that the trace of powers of Circular-like matrices are cyclic, and they can be written as

        \begin{equation}
        \label{eq:trace_power}
            \Tr{A_1^\ell}  = \sum_{i_1, \ldots,i_{\ell} = 1}^N \prod_{j=1}^{\ell-1} a^{(i_j - i_{j+1})}_{i_j}(\bx) a^{( i_\ell - i_{1})}_{i_\ell}(\bx)\,.
        \end{equation}
        
    In the previous expression, gather all the terms involving $x_1$, and compute the diameter $d$ of the resulting function, given the locality property of $a_j^{(k)}(\bx)$ and the structure of the single terms of the product in \eqref{eq:trace_power}, we deduce that $0\leq d \leq D(m,\ell)$, thus do not depend on $N$, moreover, it is an even number because of the cyclicity property.

    Consider the function $h^{(\ell)}(x_1,\ldots,x_{d+1}) =\frac{1}{2} h^{(\ell)}_1(x_1,\ldots,x_{d/2+1}) +  \frac{1}{2} h^{(\ell)}_1(x_{d/2+1},\ldots,x_d) + h^{(\ell)}_2(x_1,\ldots,x_{d+1}) + h_3(x_{d/2 + 1})$ where
    \begin{itemize}
        \item $h^{(\ell)}_1(x_1,\ldots,x_{d/2 + 1})$ contains all the terms in \eqref{eq:trace_power} that involves just $x_1,\ldots,x_{d/2+1}$
        \item $h^{(\ell)}_2(x_1,\ldots,x_{d+1})$ contains all the terms in \eqref{eq:trace_power} that involves just $x_1,\ldots,x_{d+1}$, at least one variable among $x_1,\ldots,x_{d/2}$, and at least one variable among $x_{d/2 + 2},\ldots,x_{d+1}$
        \item $h_3(x_{d/2 + 1})$ contains terms involving just $x_{d/2 + 1}$
    \end{itemize}

    Given this construction, it is easy to see that $h^{(\ell)}(x_1, \ldots, x_{d+1})$ is the seed of $\Tr{A_1^\ell}$.

    Regarding the Toeplitz-like matrices, we get the claim because of their relation with the circular-like proved in Lemma \ref{lem:cyclic}.

    \end{proof}

        \begin{remark}
            The previous lemma implies that the trace of powers of type $1-2$ matrices are circular.  
        \end{remark}

    As an example, we consider $L$ to be a periodic Jacobi matrix, then  $$\Tr{L^4} = \sum_{i=1}^{N} \left[ a_i^4 + 4(a_i^2 + a_i a_{i+1} + a_{i+1}^2)b_i^2 + 2 b^4_i + 4 b_i^2b_{i+1}^2\right]\,,$$ then $d=2$ and

    \begin{equation}
        \begin{split}
            & h_1(a_1,b_1,a_2,b_2) = 4 a_1 a_2b_1^2 + 4a_2^2b_1^2 + 4b^2_1b^2_2\,,\\
            & h_1(a_N,b_N,a_1,b_1,a_2,a_3) = 0 \,,\\ 
            & h_3(a_1,b_1) = a_1^4 + 4a_1^2b_1^2 + 2b_1^4\,.
        \end{split}
    \end{equation}

\section{Proof of Lemma \ref{lem:differentiability}}
\label{App:proof differentiability}

\begin{lemma}
    Under assumptions \ref{transfer_hp}. For $\alpha>0$, $t\in \R$, consider the operators $\cL_{\alpha,t},\cL_{t,\alpha}^{(j)}$, $1\leq j \leq M-1$ given by Definition \ref{def:operators}. Assume that for some $d\geq 2$, $|u|^a\exp\left(-h\right)$ is bounded for any $1\leq a \leq d$, then
    
    \begin{itemize}
    \item[1)] Both $\cL_{\alpha,t}$ and $\cL_{t,\alpha}^{(j)}$ are Hilbert-Schmidt.        \item[2)] For any fixed $t\in \R$, the $\mathbf{L}(L^2(X^K))$-valued functions $\alpha\mapsto \cL_{\alpha,t}$ and $\alpha\mapsto\cL_{\alpha,t}^{(j)}$ are continuously differentiable. 
        \item[3)] For any fixed $\alpha>0$, $t\mapsto \cL_{\alpha,t}$ and $\alpha\mapsto \cL_{\alpha,t}^{(j)}$ are $d$-times continuously differentiable.
    \item[4)] For all $\alpha>0$, define 
    $$ \cB_{\alpha}=\frac{1}{\|\cL_{\alpha,0}\|_\mathrm{HS}}\cL_{\alpha,0} $$
    and 
    $$ \cB_\alpha^{(j)}=\frac{1}{\|\cL_{\alpha,0}^{(j)}\|_\mathrm{HS}}\cL_{\alpha,0}^{(j)},\ 1\leq j\leq M-1\,.$$
    Then, $\cB_{\alpha}$ (resp. $\cB_\alpha^{(j)}$)
    has a dominant, simple eigenvalue $\mu(\alpha)>0$ (resp. $\mu^{(j)}(\alpha)$). Furthermore, for all $A>0$ there exists $\delta>0$ such that and all $\alpha\in (0,A]$, any other eigenvalue $\mu$ satisfies $|\mu-\mu(\alpha)|\geq \delta$ (resp. $|\mu-\mu^{(j)}(\alpha)|\geq \delta$). We say that those operators have a \textbf{uniform spectral gap} in $\alpha\in (0,A]$.
    \end{itemize}
\end{lemma}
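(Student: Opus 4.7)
The first task is to control the Hilbert--Schmidt norms. For $\cL_{\alpha,t}$, since HP3 gives $h\geq -C$ and HP4 gives $F(\cdot,\alpha)\in L^1(X)$, I would compute
\[
\|\cL_{\alpha,t}\|_{\mathrm{HS}}^2 = \iint_{X^{2k}} \prod_{q=1}^k F(x_q,\alpha)F(y_q,\alpha)\,e^{-2h(\by,\bx)}\di\bx\di\by \leq e^{2Ck}\|F(\cdot,\alpha)\|_1^{2k} < \infty,
\]
and argue analogously for $\cL_{\alpha,t}^{(j)}$ noting that each argument $\alpha(1-(j-1)k/N-q/N)$ stays in a compact subset of $(0,+\infty)$ uniformly in $1\leq j \leq M-1$, so the corresponding $F$-factors are in $L^1$ with a uniform bound. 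The bound is independent of $t$, and since a kernel operator with finite HS norm is compact, this also supplies compactness for free.

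Next, for differentiability in $t$, I would differentiate formally under the integral: $\partial_t^a k_{\alpha,t} = (-iu)^a k_{\alpha,t}$. The HS norm of the remainder after an order-$a$ Taylor expansion is controlled by the HS norm of the kernel $|u|^{a+1}k_{\alpha,t}$, which by Cauchy--Schwarz is bounded by
\[
\iint |u(\by,\bx)|^{2(a+1)} \prod_q F(x_q,\alpha)F(y_q,\alpha)\,e^{-2h(\by,\bx)}\di\bx\di\by.
\]
Using the hypothesis that $|u|^b e^{-h}$ is bounded for $1\leq b \leq d$, I can pull out the factor $(|u|^{a+1}e^{-h})^2$ in $L^\infty$ and what remains is again controlled by $\|F(\cdot,\alpha)\|_1^{2k}$. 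This justifies dominated convergence and gives the $C^d$-regularity in $t$ in the HS-norm (hence operator-norm) topology, uniformly for $\alpha$ in compacts. For differentiability in $\alpha$, the derivative of the kernel involves $\partial_\alpha \sqrt{F(x_q,\alpha)}$; the HS norm of the corresponding difference quotient is estimated via HP4, using the domination $|\partial_\alpha\sqrt{F(\cdot,\alpha)}|\leq g_{c,d}\in L^2(X)$ for $\alpha \in [c,d]$. Since $g_{c,d}\in L^2$ and $\sqrt{F(\cdot,\alpha)}\in L^2$, a direct Cauchy--Schwarz argument gives $C^1$-regularity in $\alpha$ in operator norm, jointly with the $C^d$ regularity in $t$.

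For the simplicity of the dominant eigenvalue, I would apply Jentzsch's theorem \cite[Thm.~137.4]{ZaanenBook}: at $t=0$, the kernel of $\cB_\alpha$ (resp.\ $\cB_\alpha^{(j)}$) is almost everywhere strictly positive and defines a compact integral operator on $L^2(X^k)$, so by Jentzsch there exists a unique positive simple dominant eigenvalue $\mu(\alpha)>0$ with a strictly positive eigenfunction, and the rest of the spectrum is strictly smaller in modulus.

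The main obstacle is the \emph{uniformity} of the spectral gap for $\alpha \in (0,A]$. On any compact sub-interval $[c,A]\subset (0,+\infty)$, the map $\alpha\mapsto \cB_\alpha$ is continuous in operator norm (from Step 2), Proposition~\ref{prop:gauezel} supplies local control of the gap around any fixed $\alpha_0$, and a standard compactness argument stitches these local gaps into a uniform one. The delicate regime is $\alpha\downarrow 0$, since $\|\cL_{\alpha,0}\|_{\mathrm{HS}}$ diverges as $\alpha^{-k\tc}$. The plan here is to exploit the concentration structure of HP4: split the kernel of the normalized operator $\cB_\alpha$ into contributions from $\mathcal{O}_\varepsilon^k$ and its complement. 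On $\mathcal{O}_\varepsilon^k\times\mathcal{O}_\varepsilon^k$, the approximation $h(\bx,\by)=w(\bx)+w(\by)+o(\varepsilon)$ makes the kernel asymptotically factorize as $\psi(\bx)\psi(\by)$ with $\psi(\bx)=\prod_q\sqrt{F(x_q,\alpha)}\,e^{-w(\bx)}$, i.e.\ a rank-one operator, whose dominant eigenvalue is simple with spectral gap equal to the full norm. The bounds in HP4 on $\|F(\cdot,\alpha)\|_{1,\mathcal{O}_\varepsilon}$ versus $\|F(\cdot,\alpha)\|_{1,\mathcal{O}_\varepsilon^c}$ ensure that the off-$\mathcal{O}_\varepsilon$ contributions are of lower order ($O(\alpha^{k\tc})$ smaller) relative to the normalization $\|\cL_{\alpha,0}\|_{\mathrm{HS}}\sim d_1^k \alpha^{-k\tc}$, so that $\cB_\alpha$ converges in HS norm, as $\alpha\to 0$, to the rank-one operator with kernel proportional to $e^{-w(\bx)-w(\by)}$. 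Applying the Kato perturbation argument of Proposition~\ref{prop:gauezel} at this limiting rank-one operator, whose spectral gap is trivially equal to its dominant eigenvalue, yields a uniform lower bound on the spectral gap of $\cB_\alpha$ for $\alpha$ in a neighborhood of $0$. Combined with the bound on $[c,A]$, this gives the uniform spectral gap on $(0,A]$. The same argument applies verbatim to $\cB_\alpha^{(j)}$, since the rescaled parameter $\alpha(1-(j-1)k/N-q/N)$ lies in a compact set uniformly in the relevant $j$ and $q$.
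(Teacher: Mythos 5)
Your plan follows essentially the same route as the paper's proof in Appendix~\ref{App:proof differentiability}: dominated convergence (using the domination in HP~4 and the boundedness of $|u|^a e^{-h}$) establishes the Hilbert--Schmidt bounds and the joint $C^{1,d}$ regularity, and the uniform spectral gap as $\alpha\to 0$ is obtained by restricting the kernel to $\mathcal{O}_\varepsilon$, exploiting the factorization $h=w(\bx)+w(\by)+o(\varepsilon)$ to compare with a rank-one operator, and transferring the trivial gap back to $\cB_\alpha$ via Proposition~\ref{prop:gauezel}. One small imprecision worth correcting: $\cB_\alpha$ does \emph{not} converge in HS norm to a fixed rank-one operator with kernel proportional to $e^{-w(\bx)-w(\by)}$ as $\alpha\to 0$; the comparison object is the $\alpha$- and $\varepsilon$-dependent rank-one operator $\mathcal{T}_\alpha^\varepsilon$ with kernel proportional to $\mathbf 1_{\mathcal{O}_\varepsilon}(\bx)\sqrt{\prod_q F(x_q,\alpha)F(y_q,\alpha)}\,e^{-w(\bx)-w(\by)}$ normalized by $\|\cL_{\alpha,0}\|_\mathrm{HS}$, and the order of quantifiers must be handled as in the paper: first fix $\varepsilon_0$ so that $\|\cB_\alpha^{\varepsilon_0}-\mathcal T_\alpha^{\varepsilon_0}\|_\mathrm{HS}$ is small \emph{uniformly in} $\alpha$, then choose $\alpha_0$ so that $\|\cB_\alpha-\cB_\alpha^{\varepsilon_0}\|_\mathrm{HS}$ is small for $\alpha<\alpha_0$. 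What saves the argument (and what your sketch relies on, correctly) is that any rank-one operator has remaining spectrum reduced to $\{0\}$, so $\mathcal{T}_\alpha^{\varepsilon_0}$ has a gap bounded below uniformly in $\alpha$ even though the operator itself varies.
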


\begin{remark}
    We notice that $1)$ and $2)$ imply that $\cL_{\alpha,t},\cL_{t,\alpha}^{(j)} \in C^{1,d}((0,+\infty)\times \R, \mathbf{L}(L^2(X^k)))$
\end{remark}
\begin{proof}\ \\
 \textit{Proof of 1)} Since $F\in L^1(X)$ and that $h$ is bounded away from $-\infty$, we easily see that $k_{\alpha,t}$ is $L^2(X^k\times X^k)$. Therefore $\cL_{\alpha,t}$ is Hilbert Schmidt, and the same goes for $\cL_{\alpha,t}^{(j)}$, $1\leq j \leq M-1$.

\noindent \textit{Proof of 2) \& 3)} We show these points for $\cL_{\alpha,t}$, the proofs for $\cL_{t,\alpha}^{(j)}$ being similar. Let us show that for any fixed $t$, $\alpha \mapsto \cL_{\alpha,t}$ is continuously differentiable with derivative given by 
$$ \left(\partial _\alpha \cL_{\alpha,t}\right) u(\by) = \int_{X^k} \partial_\alpha k_{\alpha,t}(\bx,\by)u(\bx)\di \bx\,.$$
Let $\alpha>0$ and $h\in \R$ such that $\alpha+h>0$. For $\varphi\in L^2(X^k)$ we have
\begin{align*}
    \left\|\frac{1}{h}\left( \cL_{\alpha+h,t}-\cL_{\alpha,t} \right) \varphi - \left(\partial _\alpha \cL_{\alpha,t}\right)\varphi \right\|_{2}^2 &\leq \int_{X^k} \left(\int_{X^k}\left|\frac{1}{h}\big(k_{\alpha+h,t}(\bx,\by)-k_{\alpha,t}(\bx,\by) \big) - \partial_\alpha k_\alpha(\bx,\by) \right||\varphi(\bx)|\di \bx\right)^2 \di \by \\
                          &\leq \|\varphi\|_{2}^2\int_{X^k\times X^k} \left| \frac{1}{h}\big( k_{\alpha+h,t}(\bx,\by)-k_{\alpha,t}(\bx,\by)\big) - \partial_\alpha k(\bx,\by) \right|^2\di \bx \di\by\,,
\end{align*}
where $\|\cdot \|_2$ is the standard $L^2$ norm, and we used Cauchy-Schwartz inequality in the last line.

Now,
\begin{align*}
    \partial_\alpha k(\bx,\by) &= e^{-h(\bx,\by)}\partial_\alpha \prod_{q=1}^k \sqrt{F(x_q,\alpha)F(y_q,\alpha)} \\
                              &= e^{-h(\bx,\by)}\left(\left(\sum_{q=1}^k \partial_\alpha \sqrt{F(x_q,\alpha)}\prod_{j\neq q} F(x_j\alpha)\right)\prod_{q=1}^k \sqrt{F(y_q,\alpha)} + \prod_{q=1}^k \sqrt{F(x_q,\alpha)}\times \partial_\alpha \prod_{q=1}^k \sqrt{F(y_q,\alpha)}\right)\,.
\end{align*}
Because of the domination hypothesis on $\partial_\alpha \sqrt{ F(\cdot,\alpha)}$ HP.3 and using that $e^{-h}\leq c_k\in \R_+$ since $h$ is bounded away from $-\infty$ HP.4, we can bound for $\alpha\in [c,d]\subset (0,+\infty)$
$$ |\partial_\alpha k(\bx,\by)| \leq 2 c_k k\prod_{q=1}^k g(x_q)\prod_{q=1}^k g(y_q) \in L^2(X^k\times X^k)\,,$$
where $g=g_{[c,d]}\in L^2(X)$ is a function given by HP.3, independent of $\alpha$. We deduce by dominated convergence, taking $h\to 0$ that 
$$ \frac{1}{h}\left(\cL_{\alpha+h,t} - \cL_{\alpha,t}\right)= \partial_\alpha \cL_{\alpha,t} + o_h(1)\,,$$
where the $o(1)$ is with respect to the operator norm $\|\cdot\|$ on $\mathbf{L}(L^2(X^k))$, establishing that $\alpha\mapsto \cL_{\alpha,t}$ is differentiable. By a similar dominated convergence argument, one shows that $\alpha\mapsto \partial_\alpha\cL_{\alpha,t}$ is continuous, establishing that $\cL_{\alpha,t}$ is continuously differentiable with respect to $\alpha$.\\
Similarly, let $\alpha>0$ be fixed and let $1\leq a \leq d$. Using that $|u^a|e^{-h}$ is bounded by some $D_k\in \R_+$,
\begin{align*}
    \partial_t^a k_{\alpha,t}(\bx,\by) &= (-i)^a u(\bx,\by)^a \prod_{q=1}^k \sqrt{F(x_q,\alpha)F(y_q,\alpha)} e^{-h(\by,\bx)-itu(\by,\bx)} \\
          &\leq D_k \prod_{q=1}^k \sqrt{F(x_q,\alpha)F(y_q,\alpha)} \in L^2(X^k\times X^k)\,,
\end{align*}
obtaining an $L^2$ bound independent of $t$ for $\partial_t^a k_{\alpha,t}$. Therefore, by dominated convergence, $t\mapsto \cL_{\alpha,t}$ is at least $d$ times differentiable, with continuous derivative of order $d$.
\ \\

\noindent\textit{Proof of 4)} We now establish the uniform spectral gap property for $\cB_{\alpha}$. The proof for $\cB_{\alpha}^{(j)}$ is similar. First, by a generalization of Jentzsch's theorem, because the kernel $k_\alpha(\bx,\by)$ is positive, $\cB_{\alpha,0}$ has a simple, positive, dominant eigenvalue \cite[Theorem 137.4]{ZaanenBook}, which we denote by $\mu(\alpha).$ By continuity of $\alpha\mapsto \cB_\alpha$, for each $a>0$ we can find $\delta>0$ as in Proposition \ref{prop:gauezel} applied to the family $(\cB_\alpha)_{\alpha>0}$ which is uniform in $\alpha \in [a,A]$. Thus we only need to check that for some $a>0$, $\cB_{\alpha}$ has a uniform spectral gap in $\alpha\in (0,a]$.

Our strategy is based on the observation that for a kernel of the form $k(\bx,\by)=f(\bx)g(\by)$ for $0< f,g \in L^2(X^k)$, the associated kernel operator $T$ is of rank one, so it has a simple dominant eigenvalue $\lambda=\int_{X^k}g(\bx)f(\bx)\di \bx>0$ associated with the eigenfunction $g$.

With this in mind, we want to use the fact that in $\mathcal{O}_\varepsilon$, defined in \ref{transfer_hp} HP 4 the expression $\exp(-h(\bx,\by))$ factorizes into $\exp(-w(\bx))\exp(-w(\by))$. 

\textbf{Step 1: }Define, for $\varepsilon>0$, 
$$ \cL^\varepsilon_{\alpha}u(\by)=\int_{\mathcal{O}_\varepsilon}k_{\alpha,0}(\bx,\by)u(\bx)\di \bx\,,$$
where we recall that $k_{\alpha,0}$ is the kernel of $\cL_{\alpha}$

Consider the following operator
$$\cB_\alpha^\varepsilon=\frac{1}{\|\cL_\alpha\|_\mathrm{HS}}\cL^\varepsilon_\alpha\,.$$
We first show that for all $\varepsilon>0$, $ \left\| \cB_\alpha - \cB_\alpha^\varepsilon \right\|_\mathrm{HS} \underset{\alpha\to 0}\to 0\,.$

Indeed, with $c_\alpha= \| \cL_\alpha \|_\mathrm{HS}$, and denoting for simplicity 
$$ \mathsf{F}_\alpha(\bx) = \prod_{j=1}^k F(x_j,\alpha)\,,$$
we have
\begin{equation}
        \left\| \cB_\alpha - \cB_\alpha^\varepsilon \right\|_\mathrm{HS} \leq \left\|\frac{\cL_\alpha}{c_\alpha}-\frac{\cL_\alpha^\varepsilon}{c_\alpha} \right\|=\frac{1}{c_\alpha}\left(\iint_{\mathcal{O}_\varepsilon^c\times \mathcal{O}_\varepsilon^c} \mathsf{F}_\alpha(\bx)\mathsf{F}_\alpha(\by)e^{-2h(\by,\bx)}\di \bx \di \by\right)^{1/2} \,.
\end{equation}
Applying \ref{general_assumptions} HP 4, we can conclude that for any fixed $\varepsilon>0$, 
\begin{equation}
    \lim_{\alpha\to0}\left\| \cB_\alpha - \cB_\alpha^\varepsilon \right\|_\mathrm{HS} =0\,.
\end{equation}

\textbf{Step 2:} Define $\mathcal{T}_\alpha^\varepsilon$ as
$$ \mathcal{T}_\alpha^\varepsilon u(\by)=\frac{1}{c_\alpha}\int_{\mathcal{O}_\varepsilon}\sqrt{\mathsf{F}_\alpha(\bx)}\sqrt{\mathsf{F}_\alpha(\by)}e^{-w(\bx)-w(\by)}\di x\,.$$

Then, by the previous remark, $\mathcal{T}_\alpha^\varepsilon$ is a rank one operator with a leading simple eigenvalue, and it has a spectral gap independent of $\alpha$. Meaning that there exists a $\delta$ independent of $\alpha$ such that $\|\mathcal{T}_\alpha^\varepsilon\| > \delta$. Furthermore, as in step 1, we show that $\| \cB_\alpha^\varepsilon - \mathcal{T}_\alpha^\varepsilon\|_\mathrm{HS}\underset{\varepsilon\to 0}{\to} 0$ uniformly in $\alpha>0$. Therefore, proposition \ref{prop:gauezel} applies with a uniform $\delta >0$: i.e. there exists $\varepsilon_0>0$ such that for all $\varepsilon<\varepsilon_0$ and for all $\alpha>0$, $\cB_\alpha^\varepsilon$ has a simple, dominant eigenvalue $\wt \mu(\alpha)$, such that any other eigenvalue $\mu$ of $\cB_\alpha^\varepsilon$ satisfies $|\mu-\wt \mu(\alpha)|>2\delta$.

\textbf{Conclusion: }We now apply the result of step 1 with $\varepsilon=\varepsilon_0$: we know that $\| \cB_\alpha - \cB_\alpha^{\varepsilon_0} \|_\mathrm{HS} \underset{\alpha\to 0}{\to} 0$. Therefore, by Proposition \ref{prop:gauezel}, there exists $\alpha_0$ such that for all $\alpha<\alpha_0$, $\cB_\alpha$ has a simple, dominant eigenvalue separated from the rest of the spectrum of at least $\delta$. 

\end{proof}

\bibliographystyle{siam}

\bibliography{bibAbel3}
\end{document}